\providecommand{\tabularnewline}{\\}
\numberwithin{equation}{section}
\numberwithin{figure}{section}
\theoremstyle{plain}
\newtheorem{thm}{\protect\theoremname}[section]
  \theoremstyle{remark}
  \newtheorem*{rem*}{\protect\remarkname}
  \theoremstyle{plain}
  \newtheorem{lem}[thm]{\protect\lemmaname}
  \theoremstyle{plain}
  \newtheorem{prop}[thm]{\protect\propositionname}
  \providecommand{\lemmaname}{Lemma}
  \providecommand{\propositionname}{Proposition}
  \providecommand{\remarkname}{Remark}
\providecommand{\theoremname}{Theorem}
\begin{document}
\global\long\def\tri{1\kern-0.77ex  1}

\title{Explicit induction principle and symplectic-orthogonal theta lifts
}

\author{Xiang Fan}

\address{School of Mathematics, Sun Yat-sen University, Guangzhou 510275,
China}

\subjclass[2000]{Primary 22E46, 11F27}

\keywords{Theta correspondence; Reductive dual pair; Induction principle; Langlands
parameter}
\begin{abstract}
During the last two decades, great efforts have been devoted to the
calculation of the local theta correspondence for reductive dual pairs.
However, uniform formulas remain elusive for real dual pairs of type
I. The purpose of this paper is twofold: to formulate an explicit
version of induction principle for dual pairs $(O(p,q),Sp(2n,\mathbb{R}))$
with $p+q$ even, and to apply it to obtain a complete and explicit
description of the local theta correspondence when $p+q=4$. Our approach
is very elementary by analysis on the infinitesimal characters and
$K$-types under the correspondence.
\end{abstract}

\maketitle

\section{Introduction}

\subsection{Background}

To understand the unitary duals of reductive Lie groups, various 
unitary representations were constructed as local theta lifts for
reductive dual pairs \cite{Howe1987small,Li1989singular}. For example,
for $(G,G')=(O(p,q),Sp(2n,\mathbb{R}))$ which is a dual pair in $Sp(2n(p+q),\mathbb{R})$,
let $p+q$ be even (so that the metaplectic $\mathbb{C}^{\times}$-cover
splits on $G\cdot G'$) and $p+q\leqslant n$ (so that this pair is
in the stable range with $G$ the smaller member), then the local
theta lifting gives an injection from the unitary dual of $G$ to
that of $G'$. Varying $(p,q)$, the unitary duals of diverse $G$'s
of smaller size give rise to families of singular unitary representations
of $G'$.

To study the unitary representations thus obtained, and for other
applications to automorphic forms, there is growing interest to determine
the theta correspondence explicitly in terms of Langlands parameters.
Great efforts have been devoted to this calculation during the last
two decades. For complex groups in dual pairs or real dual pairs of
type II, the correspondence is completely and explicitly described
in \cite{Moeglin1989correspondance,AdamsBarbasch1995reductive,LiPaulTanZhu2003explicit}.
For other real dual pairs of type I, explicit results up to now are
mainly for compact cases \cite{EnrightHoweWallach1983classification}
and ``(almost) equal rank'' cases \cite{AdamsBarbasch1998genuine,Paul1998howe,Paul2005howe},
with the exception of \cite{LiTanZhu2003tensor,LiPaulTanZhu2003explicit}
for a few non-equal-rank cases. In general, however, a full and explicit
description of the local theta correspondence remains elusive.

The purpose of this paper is twofold: to formulate an explicit version
of induction principle for dual pairs $(O(p,q),Sp(2n,\mathbb{R}))$
with $p+q$ even, and to apply it to obtain a complete and explicit
description of the local theta correspondence when $p+q=4$. Our approach
is very elementary by analysis on the infinitesimal characters and
$K$-types under the correspondence, which will also work for other
real reductive dual pairs of type I to obtain similar explicit results.

\subsection{Description of the problem}

We aim to investigate the local theta correspondence for dual pairs
$(G,G')=(O(p,q),Sp(2n,\mathbb{R}))$ with $p+q$ even, which can be
interpreted as a correspondence between representations of $G$ and
those of $G'$. The metaplectic $\mathbb{C}^{\times}$-cover 
\[
1\rightarrow\mathbb{C}^{\times}\to Mp(2n(p+q),\mathbb{R})\to Sp(2n(p+q),\mathbb{R})\to1
\]
splits on $G\cdot G'$ as $p+q$ is even, with a particular splitting
map as described in \cite{Kudla1994splitting}, which indeed fixes
an embedding $G\cdot G'\hookrightarrow Mp(2n(p+q),\mathbb{R})$. Via
this embedding, the \emph{Segal-Shale-Weil oscillator representation}
$\omega$ \cite{LionVergne1980theweil} (associated to the character
of $\mathbb{R}$ that maps $t$ to $e^{2\pi\mathrm{i}t}$) is restricted
to $G\cdot G'$. Let $\mathcal{R}(G)$ denote the \emph{admissible
dual} of $G$, i.e., the set of infinitesimal equivalence classes
of  irreducible admissible representations of $G$.  By abuse of
notation, for a class $\pi\in\mathcal{R}(G)$, let $\pi$ also denote
a representation in this class. Let $\mathcal{R}(G,\omega)$ be the
set of elements $\pi$ of $\mathcal{R}(G)$ such that there exists
a nontrivial $G$-intertwining continuous linear map $\omega\to\pi$.
Similar notations hold for $G'$ and $G\cdot G'$. \cite{Howe1989transcending}
gives a one-to-one correspondence $\mathcal{R}(G,\omega)\leftrightarrow\mathcal{R}(G',\omega)$,
which is usually called the \emph{local theta correspondence} or \emph{Howe
duality correspondence}.

For $\pi\in\mathcal{R}(G)$ and $\pi'\in\mathcal{R}(G')$, they correspond
if and only if $\pi\otimes\pi'\in\mathcal{R}(G\cdot G',\omega)$.
In this case we write $\theta_{n}(\pi)=\pi'$ and $\theta_{p,q}(\pi')=\pi$.
If $\pi\notin\mathcal{R}(G,\omega)$, write $\theta_{n}(\pi)=0$.
Similarly, write $\theta_{p,q}(\pi')=0$ if $\pi'\notin\mathcal{R}(G',\omega)$.
Then $\theta_{n}(\pi)$ is called the \emph{theta $n$-lift} of $\pi$,
and the map $\theta_{n}:\mathcal{R}(G)\to\mathcal{R}(G',\omega)\cup\{0\}$
is called the \emph{theta $n$-lifting} for $G$. Similarly, $\theta_{p,q}(\pi')$
is called the \emph{theta $(p,q)$-lift} of $\pi'$, and $\theta_{p,q}:\mathcal{R}(G')\to\mathcal{R}(G,\omega)\cup\{0\}$
is called the \emph{theta $(p,q)$-lifting} for $G'$.

Our goal is to determine $\theta_{n}:\mathcal{R}(G)\to\mathcal{R}(G',\omega)\cup\{0\}$
explicitly in terms of \emph{Langlands parameters}. We choose Vogan's
version \cite{Vogan1984unitarizability} of Langlands classification,
which is described in \cite{Paul2005howe} and recollected in Subsection
\ref{subsec:Par-Sp} and \ref{subsec:Par-O}. Roughly speaking, $\pi\in\mathcal{R}(G)$
is parametrized in the form $\pi=\pi_{\zeta}(\lambda_{d},\xi,\Psi,\mu,\nu,\varepsilon,\kappa)$,
and $\pi'\in\mathcal{R}(G')$ in the form $\pi'=\pi(\lambda_{d},\Psi,\mu,\nu,\varepsilon,\kappa)$.

For $(G,G')=(O(p,q),Sp(2n,\mathbb{R}))$, \cite{Moeglin1989correspondance,Paul2005howe}
determine the local theta correspondence explicitly when $p+q=2n$
or $2n+2$ (equal rank or almost equal rank cases). The present paper
will give two new results when $p+q$ is even:
\begin{itemize}
\item an explicit version of induction principle for the correspondence;
\item an explicit description of the full correspondence for all $n,p,q$
when $p+q=4$.
\end{itemize}
The following two subsections outline them with basic ideas.

\subsection{Explicit induction principle}

For $\pi\in\mathcal{R}(G)$ and $\pi'\in\mathcal{R}(G')$, \emph{Kudla's
persistence principle} asserts that \cite[I.9]{Moeglin1989correspondance}:
if $\theta_{n}(\pi)\neq0$ then $\theta_{n+1}(\pi)\neq0$; if $\theta_{p,q}(\pi')\neq0$
then $\theta_{p+1,q+1}(\pi')\neq0$. To determine $\theta_{n+1}(\pi)$
from a given nonzero $\theta_{n}(\pi)$ (respectively, $\theta_{p+1,q+1}(\pi')$
from a nonzero $\theta_{p,q}(\pi')$), a most powerful tool is the
``induction principle'' developed by \cite{Kudla1986local,Moeglin1989correspondance,AdamsBarbasch1995reductive,Paul2005howe}.
This paper extends it to an explicit version for $(O(p,q),Sp(2n,\mathbb{R}))$
with $p+q$ even, in terms of Langlands parameters.
\begin{itemize}
\item For convenience, for $x=(x_{1},\dots,x_{m})\in\mathbb{C}^{m}$ and
$y=(y_{1},\dots,y_{l})\in\mathbb{C}^{l}$, write $(x\mid y)=(x_{1},\dots,x_{m},y_{1},\dots,y_{l})\in\mathbb{C}^{m+l}$.\end{itemize}
\begin{thm}
[Explicit induction principle] Let $p+q$ be even, and $1\leqslant k\in\mathbb{Z}$.

(1) If $\pi\in\mathcal{R}(O(p,q))$ and $0\neq\theta_{n}(\pi)=\pi(\lambda_{d},\Psi,\mu,\nu,\varepsilon,\kappa)$
with $p+q\leqslant2n$, then 
\begin{align*}
\theta_{n+k}(\pi) & =\pi(\lambda_{d},\Psi,\mu,\nu,(\varepsilon\mid((-1)^{\frac{p-q}{2}},(-1)^{\frac{p-q}{2}},\dots,(-1)^{\frac{p-q}{2}})),\\
 & \qquad(\kappa\mid(1+n-\frac{p+q}{2},2+n-\frac{p+q}{2},\dots,k+n-\frac{p+q}{2})))
\end{align*}
with a possible modification.

(2) If $\pi'\in\mathcal{R}(Sp(2n,\mathbb{R}))$ and $0\neq\theta_{p,q}(\pi')=\pi_{\zeta}(\lambda_{d},\xi,\Psi,\mu,\nu,\varepsilon,\kappa)$
with $p+q\geqslant2n+2$, then $\zeta=\xi=1$ and
\begin{align*}
\theta_{p+k,q+k}(\pi') & =\pi_{1}(\lambda_{d},1,\Psi,\mu,\nu,(\varepsilon\mid(1,1,\dots,1)),\\
 & \qquad(\kappa\mid(\frac{p+q}{2}-n,1+\frac{p+q}{2}-n,\dots,k-1+\frac{p+q}{2}-n)))
\end{align*}
with a possible modification.

The \emph{``possible modification''} is: if the resulting parameters
contain some entries $\kappa_{i}=\pm\kappa_{j}$ with $\varepsilon_{i}\neq\varepsilon_{j}$,
delete these $\varepsilon_{i}$, $\varepsilon_{j}$, $\kappa_{i}$,
$\kappa_{j}$ from $(\varepsilon,\kappa)$, and add entries $(0,2\kappa_{i})$
into $(\mu,\nu)$.
\end{thm}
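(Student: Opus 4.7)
The plan is to iterate on $k$, reducing the statement to the case $k=1$ and the non-explicit form of the induction principle established by Kudla, Moeglin, Adams--Barbasch and Paul. In its standard form, that principle realizes $\theta_{n+1}(\pi)$ (respectively $\theta_{p+1,q+1}(\pi')$) as the unique irreducible Langlands quotient of a standard module obtained by inducing $\theta_n(\pi)$ (resp.\ $\theta_{p,q}(\pi')$), tensored with a one-dimensional character of $GL(1,\mathbb{R})$, from the maximal parabolic whose Levi factor is $GL(1,\mathbb{R})\times Sp(2n,\mathbb{R})$ (resp.\ $GL(1,\mathbb{R})\times O(p,q)$). The first step is to pin down the exponent and sign of this inducing character: by tracking the action of the compact Cartan on the Jacquet module of the oscillator representation along this parabolic, the exponent on the $GL(1,\mathbb{R})$-factor comes out to $n+1-(p+q)/2$ in situation (1) and $(p+q)/2-n$ in situation (2), and the sign on the $GL(1,\mathbb{R})$-factor is forced to be $(-1)^{(p-q)/2}$ in (1) and trivial in (2); in situation (2) the equalities $\zeta=\xi=1$ are then immediate from Kudla's splitting, which places the theta lift to the larger orthogonal group into a prescribed connected component.

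Next I would translate this standard-module realization into Vogan's Langlands parameters as recalled in Subsections \ref{subsec:Par-Sp} and \ref{subsec:Par-O}. Since an extra $GL(1,\mathbb{R})$-induction with character $\mathrm{sgn}^{\varepsilon}|\cdot|^{\kappa}$ corresponds, in Paul's parametrization, to appending one pair $(\varepsilon,\kappa)$ at the end of the parameter tuple, iterating the $k=1$ step $k$ times exactly produces the appended tuples $((-1)^{(p-q)/2},\ldots,(-1)^{(p-q)/2})$ and $(1+n-(p+q)/2,\ldots,k+n-(p+q)/2)$ in (1), and the analogous tuples in (2). A parallel sanity check is to verify at each step that the infinitesimal character of the candidate lift agrees with the one forced by the Casimir identity $\Omega_{G'}=\Omega_G+c$ applied to the $G\cdot G'$-restricted oscillator representation; this identity, together with the infinitesimal character of $\theta_n(\pi)$, determines that of $\theta_{n+k}(\pi)$ up to the Weyl group action, and one checks that the appended entries above produce exactly the predicted shift.

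The main obstacle is the ``possible modification'' clause. Appending an entry $(\varepsilon_i,\kappa_i)$ may produce, together with some preexisting entry $(\varepsilon_j,\kappa_j)$ of the opposite sign, a pair with $\kappa_i=\pm\kappa_j$; when that happens the standard module built above is reducible and its Langlands quotient is strictly smaller than the naive parameter appending would suggest. To treat this I would compare the two constructions within Vogan's classification: the offending pair $(\varepsilon_i,\kappa_i),(\varepsilon_j,\kappa_j)$ assembles into a single block of type $SO(2,1)$, which in Paul's parametrization corresponds precisely to an additional entry $(0,2\kappa_i)$ of $(\mu,\nu)$, leaving all other parameters untouched. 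The key verification is that the infinitesimal character and lowest $K$-type of the resulting Langlands quotient still match those forced by the Casimir identity and by the theta-lifting behavior of lowest $K$-types (Howe's correspondence of joint harmonics), so that uniqueness in Howe duality forces equality with $\theta_{n+k}(\pi)$; once this is established for $k=1$, an induction on $k$ completes both parts of the theorem.
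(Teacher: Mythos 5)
Your proposal starts from the same place the paper does (the non-explicit induction principle of Kudla--Moeglin--Adams--Barbasch--Paul, with the character on the $GL(1,\mathbb{R})$-factor pinned down as you describe), but there is a genuine gap in the middle. The induction principle as used in the paper (quoting \cite[Theorem 30]{Paul2005howe}) only says that $\theta_{n+1}(\pi)$ is \emph{some} irreducible subquotient of the induced module $I'$; it does \emph{not} identify it as the Langlands quotient. You assert the latter (``realizes $\theta_{n+1}(\pi)$ ... as the unique irreducible Langlands quotient of a standard module''), which is exactly the nontrivial content of the explicit theorem and is simply false in general — outside the stated hypotheses, $\theta_{n+1}(\pi)$ can be a proper submodule or subquotient of $I'$ rather than its Langlands quotient. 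The paper's actual mechanism is the multiplicity-one property of lowest $K$-types in standard modules: if $\theta_{n+1}(\pi)$ and the prescribed quotient $\pi_1'$ share a lowest $K$-type of the standard module $\boldsymbol{I}'$, then they coincide (Propositions \ref{prop:LKT+1} and \ref{prop:LKT+11}). Establishing that shared $K$-type is where all the hypotheses are used: one needs $\mathcal{A}(\pi') \subseteq \mathcal{D}(\pi')$ (so that a lowest $K$-type is also a minimal-degree $K$-type and hence tracked through the joint-harmonics correspondence $\phi_n$), which is Lemma \ref{lem:cond_n} and Lemma \ref{lem:cond_pq} and rests on Paul's nontrivial \cite[Corollary 37]{Paul2005howe}, and one also needs the compatibility of $\phi_{n+1}\circ\phi_{p,q}$ with the algorithm computing $\mathcal{A}(\pi'_1)$ from $\mathcal{A}(\pi')$ (Propositions \ref{prop:cond_lambda} and \ref{prop:cond2}). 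Your appeal to ``uniqueness in Howe duality'' does not supply this: Howe duality gives a bijection between $\mathcal{R}(G,\omega)$ and $\mathcal{R}(G',\omega)$, but does not tell you which subquotient of a given induced module equals $\theta_{n+1}(\pi)$, and an infinitesimal character together with a set of lowest $K$-types does not in general single out a unique representation (indeed the paper explicitly exhibits an exception to exactly that in the remark after Theorem \ref{thm:main2}).

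Two smaller points. First, the claim $\zeta=\xi=1$ in part (2) is not a consequence of Kudla's splitting or any choice of connected component; $\zeta,\xi$ are sign parameters for lowest $K$-types of the orthogonal representation, and their values follow from the explicit (almost) equal-rank description of $\theta_n$ in \cite[Theorems 15, 18]{Paul2005howe} via the paper's Lemma \ref{lem:cond_pq}. Second, the $k=1$ step requires excluding the boundary $p+q = 2n+2$ (resp.\ $p+q = 2n$), since there the appended $\kappa$-entry would be $0$ and the modification analysis changes; your iteration argument should state and track this restriction (the theorem's hypotheses $2n\geqslant p+q$ and $p+q\geqslant 2n+2$, together with the direction of iteration, guarantee it is never violated, but this needs to be said).
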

This theorem will be proved as special cases of Theorem \ref{thm:EIP_n}
and Theorem \ref{thm:EIP_pq}. Indeed, \cite{Paul2005howe} already
pointed out the result (1) when $p+q=2n$ and the result (2) when
$p+q=2n+2$. Our observation is that Paul's idea can be generalized
to more cases, due to certain patterns of the Langlands parameters
of theta lifts (see Lemma \ref{lem:cond_n} and \ref{lem:cond_pq}).
We call this first main result the \emph{``explicit induction principle''}.

The nonvanishing of theta liftings in the stable range \cite{ProtsakPrzebinda2008occurrence}
asserts that $\theta_{p+q}(\pi)\neq0$ for all $\pi\in\mathcal{R}(G)$.
Given $\theta_{p+q}(\pi)$ explicitly, then $\theta_{p+q+k}(\pi)$
can be read off by our explicit induction principle.  For fixed $(p,q)$
with $p+q$ even, it is strong enough to reduce the calculation of
all $\theta_{n}$ to only finitely many $n\leqslant p+q$.

\subsection{Explicit theta correspondence when $p+q=4$}

We wish to determine $\theta_{n}:\mathcal{R}(O(p,q))\to\mathcal{R}(Sp(2n,\mathbb{R}))\cup\{0\}$
explicitly in terms of Langlands parameters for all $n$ when $p+q=4$.
Notice that by our explicit induction principle, it suffices to calculate
$\theta_{3}$ and $\theta_{4}$ (as $\theta_{1}$ and $\theta_{2}$
are explicitly given by \cite{Paul2005howe} and written in Appendix
\ref{App:Theta12}). Moreover, for $\pi\in\mathcal{R}(O(p,q))$, note
that $\theta_{p+q-1}(\pi)\neq0$ if $\pi$ is not the \emph{determinant
character} $\mathrm{det}_{p,q}$ of $O(p,q)$. Then it further reduces
the calculation to theta $4$-lifts of $\mathrm{det}_{p,q}$ and theta
$3$-lifts of other $\pi\in\mathcal{R}(O(p,q))$ with $\theta_{2}(\pi)=0$.
For every such $\pi$ (or $\mathrm{det}_{p,q}$), we can list its
Langlands parameters, and calculate two invariants: its \emph{infinitesimal
character} and the set $\mathcal{A}(\pi)$ (or $\mathcal{A}(\mathrm{det}_{p,q})$)
of all its \emph{lowest $K$-types} ($K$-types with minimal norm
in the sense of \cite{Vogan1979algebraic}), by algorithms in Subsection
\ref{sub:InfCh} and Appendix \ref{App:CompLKT}. For its theta lift,
we also obtain these two invariants explicitly as follows.

The correspondence of infinitesimal characters under theta correspondence
is clearly known from \cite{Przebinda1996duality} (see Subsection
\ref{sub:InfCh}). It gives the infinitesimal characters of $\theta_{3}(\pi)$
and $\theta_{4}(\mathrm{det}_{p,q})$,  which are of the form $(\beta,0,1)$
(with $\beta\in\mathbb{C}$) and $(0,1,1,2)$ respectively.

Let us obtain $\mathcal{A}(\theta_{3}(\pi))$ in three steps: $\mathcal{A}(\pi)\rightsquigarrow\mathcal{D}(\pi)\rightsquigarrow\mathcal{D}(\theta_{3}(\pi))\rightsquigarrow\mathcal{A}(\theta_{3}(\pi))$.
\begin{itemize}
\item By definition $\mathcal{D}(\pi)=\{K$-types of minimal degree in $\pi\}$.
We only need to check the occurrences in $\pi$ of the $K$-types
with degrees $\leqslant$ the minimal degree of $K$-types in $\mathcal{A}(\pi)$.
This will be done by Frobenius reciprocity and explicit models of
$\pi$.
\item We know $\mathcal{D}(\pi)\leftrightarrow\mathcal{D}(\theta_{3}(\pi))$
under the correspondence of $K$-types in the space of joint harmonics,
which is explicitly expressed in Proposition \ref{prop:CorKT}.
\item We have $\mathcal{A}(\theta_{3}(\pi))\subseteq\mathcal{D}(\theta_{3}(\pi))$,
where $\mathcal{D}(\theta_{3}(\pi))$ is the set of \emph{$K$-types
of minimal degree} in $\theta_{3}(\pi)$ in the sense of \cite{Howe1989transcending}.
(This fact is shown in \cite{Paul2005howe} for $p+q\leqslant2n$,
and is contained in Lemma \ref{lem:cond_n}). So $\mathcal{A}(\theta_{3}(\pi))=\{\sigma\in\mathcal{D}(\theta_{3}(\pi))\text{ with minimal norm}\}$.
\end{itemize}
Similar steps give $\mathcal{A}(\theta_{4}(\mathrm{det}_{p,q}))$,
which is easier as $\mathrm{det}_{p,q}$ contains only one $K$-type.
Fortunately, these two invariants are good enough to determine all
desired theta lifts with only one exception.
\begin{thm}
\label{thm:main2} Let $\det_{p,q}\neq\pi\in\mathcal{R}(O(p,q))$
with $p+q=4$.

(1) If $\theta_{2}(\pi)=0$ and $\pi\neq\pi_{-1}(0,1,\O,0,0,(1,1),(0,2))$,
then the infinitesimal character of $\theta_{3}(\pi)$ and $\mathcal{A}(\theta_{3}(\pi))$
determine a unique element in $\mathcal{R}(Sp(6,\mathbb{R}))$ written
explicitly in Section \ref{sec:3-lifts}.

(2) The infinitesimal character of $\theta_{4}(\det_{p,q})$ and the
set $\mathcal{A}(\theta_{4}(\det_{p,q}))$ determine a unique element
in $\mathcal{R}(Sp(8,\mathbb{R}))$, with Langlands parameters written
explicitly in Section \ref{sec:4-lifts}.\end{thm}
\begin{rem*}
For the exceptional case $\pi=\pi_{-1}(0,1,\O,0,0,(1,1),(0,2))\in\mathcal{R}(O(2,2))$,
there are two elements in $\mathcal{R}(Sp(6,\mathbb{R}))$ with the
desired infinitesimal character and set of lowest $K$-types. It is
not hard to figure out which of them lies in the local theta correspondence.
\end{rem*}
Theorem \ref{thm:main2} is checked by tedious but elementary case-by-case
consideration in Section \ref{sec:4-lifts}, Section \ref{sec:3-lifts},
and Appendix \ref{App:list} which not only lists all $\pi'\in\mathcal{R}(Sp(6,\mathbb{R}))$
with the infinitesimal character $(\beta,0,1)$ for all $\beta\in\mathbb{C}$,
but also gives $\mathcal{A}(\pi')$ explicitly. In this way, together
with our explicit induction principle, we explicitly determine the
local theta correspondence for $(O(p,q),Sp(2n,\mathbb{R}))$ for all
$n$ when $p+q=4$.

\section{\label{sec:Para_KT_Rep}Parametrization}

In this section we introduce some auxiliary notations and parametrizations.
 The advantage of our parametrizations is that there are explicit
algorithms to calculate lowest $K$-types and infinitesimal characters
from Langlands parameters.

\subsection{Notations for $K$-types}

For a compact Lie group $K$, a \emph{$K$-type }means an irreducible
representation (up to equivalence) of $K$. Let $\widehat{K}$ denote
the set of all $K$-types. A $K$-type is automatically finite-dimensional
and unitary, so $\widehat{K}=\mathcal{R}(K)$.

Let $H$ be a reductive Lie group with a maximal compact subgroup
$K_{H}$. We refer to $K_{H}$-types as \emph{$K$-types for $H$},
or simply as \emph{$K$-types} if the group $H$ is clearly understood.
Let $\pi$ be an admissible representation of $H$. Recall that ``$\pi$
is admissible'' means the \emph{multiplicity} $m(\sigma,\pi)=\dim\mathrm{Hom}_{K_{H}}(\sigma,\pi)$
 is finite for any $K_{H}$-type $\sigma$. When $m(\sigma,\pi)>0$,
we say that \emph{$\sigma$ occurs in $\pi$}, or that \emph{$\sigma$
is a $K$-type of $\pi$,} or that \emph{$\pi$ contains $\sigma$}.

For $(G,G')=(O(p,q),Sp(2n,\mathbb{R}))$, take the standard maximal
compact subgroups $O(p)\times O(q)$ and $U(n)$ of $O(p,q)$ and
$Sp(2n,\mathbb{R})$ respectively. Henceforth, for a Lie group, the
corresponding lower case Gothic letter is used to indicate the its
complexified Lie algebra.

\subsection{Parametrization for $U(n)$-types}

For $K=U(n)$, take the maximal torus $T=U(1)^{n}=\mathrm{diag}(U(1),\dots,U(1))$,
and the standard system of positive roots 
\[
\Delta^{+}(\mathfrak{t},\mathfrak{k})=\{e_{i}-e_{j}\mid1\leqslant i<j\leqslant n\}.
\]
Write each weight in $\mathrm{i}\mathfrak{t}_{0}^{*}$ as the $n$-tuple
of coefficients under the basis $e_{1}$, $\dots$, $e_{n}$. A $U(n)$-type
is parametrized by its \emph{highest weight} $(a_{1},a_{2},\dots,a_{n})$
with integers $a_{1}\geqslant a_{2}\geqslant\cdots\geqslant a_{n}$.

\subsection{Parametrization for \texorpdfstring{$O(p)\times O(q)$}{$O(p)$\texttimes $O(q)$}-types}

Notice that
\[
\widehat{O(p)\times O(q)}=\{\sigma\otimes\tau\mid\sigma\in O(p),\ \tau\in O(q)\}\simeq\widehat{O(p)}\times\widehat{O(q)}.
\]

\begin{lem}
[\cite{Weyl1997classical}] \label{lem:sign} Embed $O(p)$ in $U(p)$
as $O(p)=U(p)\cap GL(p,\mathbb{R})$. For any $\sigma\in\widehat{O(p)}$,
there exists a unique $\lambda\in\widehat{U(p)}$ such that the $O(p)$-module
generated by the highest weight vectors of $\lambda$ is equivalent
to $\sigma$. Moreover, these $\lambda$'s obtained from $\widehat{O(p)}$
are exactly those parametrized as $(b_{1},b_{2},\dots,b_{r},\underbrace{1,\dots,1}_{s},\underbrace{0,\dots,0}_{p-r-s})$
with $b_{1}\geqslant b_{2}\geqslant\cdots\geqslant b_{r}\geqslant2$
and $2r+s\leqslant p$. Therefore, we may parametrize $\sigma$ as
\[
\begin{cases}
(b_{1},b_{2},\dots,b_{r},\underbrace{1,\dots,1}_{s},\underbrace{0,\dots,0}_{[\frac{p}{2}]-r-s};+1), & \text{if }r+s\leqslant\frac{p}{2},\\
(b_{1},b_{2},\dots,b_{r},\underbrace{1,\dots,1}_{p-2r-s},\underbrace{0,\dots,0}_{[\frac{p}{2}]-p+r+s};-1), & \text{if }\frac{p}{2}\leqslant r+s\leqslant p-r.
\end{cases}
\]
\end{lem}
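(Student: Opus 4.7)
The plan is to deduce this lemma from Weyl's classical invariant-theoretic construction of the irreducible $O(p)$-representations as harmonic (trace-free) tensors inside Schur modules, and then translate the resulting bijection into the stated two-case parametrization.

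First, since every $\sigma\in\widehat{O(p)}$ is polynomial, Schur--Weyl duality for $GL(p)$ shows that $\sigma$ embeds into some $V_{\lambda}=\mathbb{S}_{\lambda}V$, where $V=\mathbb{C}^{p}$ and $V_{\lambda}$ is the $U(p)$-irrep of highest weight $\lambda$. Let $\Sigma(\lambda)\subseteq V_{\lambda}$ denote the $O(p)$-submodule generated by a $U(p)$-highest weight vector. I would invoke Weyl's theorem on harmonic tensors: $\Sigma(\lambda)$ equals the space of totally trace-free tensors in $\mathbb{S}_{\lambda}V$; it is nonzero if and only if the conjugate partition satisfies $\lambda_{1}^{\prime}+\lambda_{2}^{\prime}\leqslant p$; and when nonzero it is $O(p)$-irreducible. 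Writing $\lambda=(b_{1},\dots,b_{r},1^{s},0^{p-r-s})$ gives $\lambda_{1}^{\prime}=r+s$ and $\lambda_{2}^{\prime}=r$, so this condition reads $2r+s\leqslant p$, establishing the second sentence of the lemma.

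Next, I would establish existence and uniqueness of $\lambda$. Existence: given $\sigma\in\widehat{O(p)}$, embed it into some $V_{\lambda}$ and reduce $\lambda$ by trace-contraction to arrive at a $\lambda$ with $\sigma=\Sigma(\lambda)$. Uniqueness: the $SO(p)$-highest weight of $\Sigma(\lambda)$ is read off as $(a_{1},\dots,a_{m})$ (with $m=[p/2]$, modulo a possible sign on $a_{m}$ for even $p$), so $\lambda$ is recovered from this highest weight together with the constraint $2r+s\leqslant p$.

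For the two-case tuple parametrization, the sign $\varepsilon\in\{\pm1\}$ records the $O(p)/SO(p)$-action on a chosen extremal weight vector of $\Sigma(\lambda)$. In case (1) one takes $\varepsilon=+1$; in case (2) one first applies the involution $\sigma\mapsto\sigma\otimes\det$, which corresponds on partitions to the move $(b_{1},\dots,b_{r},1^{s},0^{p-r-s})\mapsto(b_{1},\dots,b_{r},1^{p-2r-s},0^{r+s})$ (the first column of the conjugate partition flipping from height $r+s$ to height $p-r-s$), and sets $\varepsilon=-1$. The two ranges $r+s\leqslant p/2$ and $p/2\leqslant r+s\leqslant p-r$ together cover all admissible $\lambda$ (with overlap only at the boundary), yielding the claimed encoding of $\sigma$.

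The main technical delicacy is the boundary case $r+s=p/2$ for even $p$. Here one must verify the following dichotomy: when the middle $SO(p)$-weight $a_{m}$ is zero, the $SO(p)$-rep admits two inequivalent $O(p)$-extensions distinguished by $\varepsilon=\pm1$ (both parametrizations are needed); and when $a_{m}\neq0$, the $O(p)$-rep is isomorphic to its $\det$-twist (so the two encodings with $\varepsilon=\pm1$ describe the same $\sigma$, and one may take either). Verifying this dichotomy and checking that the inequality $p/2\leqslant r+s\leqslant p-r$ in case (2) captures precisely the right data without unintended omission or double-counting is the principal combinatorial step; the remainder is routine bookkeeping on Young diagrams.
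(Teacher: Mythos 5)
The paper does not prove this lemma; it simply cites Weyl's \emph{Classical Groups}, so there is no in-paper argument to compare against. Your reconstruction follows the standard Weyl route (polynomial representations, Schur modules, harmonic tensors, the admissibility condition $\lambda_{1}'+\lambda_{2}'\leqslant p\Leftrightarrow 2r+s\leqslant p$, and the associated-diagram involution), and the translation into the two-case $(a_{1},\dots,a_{[p/2]};\epsilon)$ parametrization via the $\det$-twist is correct in substance.

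One step needs tightening. You state uniqueness as: the $SO(p)$-highest weight of $\Sigma(\lambda)$ recovers $\lambda$ ``together with the constraint $2r+s\leqslant p$.'' That is not enough on its own. For $r+s<p/2$, both $\lambda=(b_{1},\dots,b_{r},1^{s},0^{p-r-s})$ and its associated diagram $\lambda'=(b_{1},\dots,b_{r},1^{p-2r-s},0^{r+s})$ satisfy $2r+s\leqslant p$, and $\Sigma(\lambda)$, $\Sigma(\lambda')$ restrict to the same $SO(p)$-irreducible; they are distinguished only by the $O(p)/SO(p)$-twist, since $\Sigma(\lambda')\cong\Sigma(\lambda)\otimes\det$. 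So the injectivity of $\lambda\mapsto\Sigma(\lambda)$ genuinely needs the dichotomy you defer to the end (that $\Sigma(\lambda)\not\cong\Sigma(\lambda)\otimes\det$ when $a_{m}=0$, and $\Sigma(\lambda)\cong\Sigma(\lambda)\otimes\det$ when $a_{m}\neq0$), and that should be invoked already at the uniqueness step, not only as a sanity check on the encoding. A related mislabeling: you present the dichotomy as the analysis of ``the boundary case $r+s=p/2$,'' but $r+s=p/2$ is precisely the case $a_{m}\neq0$; the $a_{m}=0$ branch of your dichotomy occurs for $r+s<p/2$, away from the boundary. The mathematical content is right; only the framing conflates the two.
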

\begin{rem*}
When $r+s=\frac{p}{2}$, the two cases coincide and give the same
$\sigma$.
\end{rem*}
An $O(p)$-type is parametrized as $\sigma=(a_{1},a_{2},\dots,a_{x},0,\dots,0;\epsilon)$
with integers $a_{1}\geqslant a_{2}\geqslant\cdots\geqslant a_{x}\geqslant1$,
and $\epsilon\in\{\pm1\}$. Its corresponding $U(p)$-type is
\[
\lambda=(a_{1},a_{2},\dots,a_{x},\underbrace{1,\dots,1}_{\frac{1-\epsilon}{2}(p-2x)},0,\dots,0).
\]

\begin{itemize}
\item When $p$ is even and $p=2x$, the two choices of $\epsilon$ give
the same $\sigma$.
\item $\sigma(-Id)$ acts by the scalar $\epsilon^{p}\cdot(-1)^{\sum_{i=1}^{m}a_{i}}$
for the identity matrix $Id\in O(p)$.
\end{itemize}
An $O(p)\times O(q)$-type is parametrized as $(a_{1},a_{2},\dots,a_{[\frac{p}{2}]};\epsilon)\otimes(b_{1},b_{2},\dots,b_{[\frac{q}{2}]};\eta)$
with integers $a_{1}\geqslant a_{2}\geqslant\cdots\geqslant a_{[\frac{p}{2}]}\geqslant0$,
$b_{1}\geqslant b_{2}\geqslant\cdots\geqslant b_{[\frac{q}{2}]}\geqslant0$,
and $(\epsilon,\eta)\in\{\pm1\}\times\{\pm1\}$. We refer to $(a_{1},a_{2},\dots,a_{[\frac{p}{2}]};b_{1},b_{2},\dots,b_{[\frac{q}{2}]})$
as its \emph{highest weight}, and to $(\epsilon,\eta)$ as its \emph{signs}.

\subsection{Degree of $K$-types}

Consider the dual pair $(G,G')=(O(p,q),Sp(2n,\mathbb{R}))$ with $p+q$
even. Howe \cite{Howe1989transcending} defines a \emph{degree} for
each $K$-type $\sigma$ for $G$ or $G'$ occurring in the Fock space
$\mathcal{F}$ of $\omega$, which is the minimal degree of polynomials
in the $\sigma$-isotypic subspace of $\mathcal{F}$. He also points
out a correspondence of $K$-types in the space $\mathcal{H}$ of
joint harmonics (a certain subspace of $\mathcal{F}$ associated to
the dual pair). Let us describe the degrees and this correspondence
explicitly.
\begin{prop}
[{\cite[Prop.4]{Paul2005howe}}] \label{prop:CorKT} Let $\sigma=(a_{1},\dots,a_{x},0,\dots,0;\epsilon)\otimes(b_{1},\dots,b_{y},$
$0,$ $\dots,$$0;\eta)$ be a $K$-type for $O(p,q)$, with $a_{x}\geqslant1$
and $b_{y}\geqslant1$. Let $\sigma'$ be a $K$-type for $Sp(2n,\mathbb{R})$
written as $\sigma'=(\frac{p-q}{2},\dots,\frac{p-q}{2})+(c_{1},c_{2},\dots,c_{n})$.

(1) The $O(p)\times O(q)$-type $\sigma$ occurs in $\mathcal{H}$
if and only if $n\geqslant x+y+\frac{1-\epsilon}{2}(p-2x)+\frac{1-\eta}{2}(q-2y)$.
In that case, $\sigma$ corresponds to the $U(n)$-type
\[
(\frac{p-q}{2},\dots,\frac{p-q}{2})+(a_{1},\dots,a_{x},\underbrace{1,\dots,1}_{\frac{1-\epsilon}{2}(p-2x)},0,\dots,0,\underbrace{-1,\dots,-1}_{\frac{1-\eta}{2}(q-2y)},-b_{y},\dots,-b_{2},-b_{1}).
\]

(2) The $U(n)$-type $\sigma'$ occurs in $\mathcal{H}$ if and only
if
\[
2\#\{i\mid c_{i}\geqslant2\}+\#\{i\mid c_{i}=1\}\leqslant p\quad\text{and}\quad2\#\{i\mid c_{i}\leqslant-2\}+\#\{i\mid c_{i}=-1\}\leqslant q.
\]

(3) If $\sigma$ occurs in $\mathcal{F}$, then its degree is $\deg(\sigma)=\sum_{i=1}^{x}a_{i}+\sum_{i=1}^{y}b_{i}+\frac{1-\epsilon}{2}(p-2x)+\frac{1-\eta}{2}(q-2y)$.

(4) If $\sigma'$ occurs in $\mathcal{F}$, then its degree is $\deg(\sigma')=\sum_{i=1}^{n}|c_{i}|$.\end{prop}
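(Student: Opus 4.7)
The plan is to realize the oscillator representation in a concrete polynomial model where both the $O(p)\times O(q)$-action and the $U(n)$-action can be written down explicitly, then extract all four parts from the classical Howe duality for this model. After choosing a polarization adapted to $Sp(2n,\mathbb{R})$, the Fock space $\mathcal{F}$ of $\omega$ becomes $\mathcal{P}(M_{p\times n}) \otimes \mathcal{P}(M_{q\times n})$, the polynomial ring on a $(p+q)\times n$ complex matrix. With the splitting of \cite{Kudla1994splitting}, the group $O(p)\times O(q)$ acts naturally on the row indices, while $U(n)$ acts on the column indices via its standard representation on one factor and via its dual on the other, tensored with the character $\det^{(p-q)/2}$; this twist is precisely the source of the shift $(\frac{p-q}{2},\ldots,\frac{p-q}{2})$ that appears throughout the statement.

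Next I would identify the space of joint harmonics $\mathcal{H}$ as the subspace of $\mathcal{F}$ killed by the ``lowering'' operators coming from $\mathfrak{p}^-$ of each Cartan decomposition, equivalently the simultaneous harmonics for the $O(p)$-Laplacians on $M_{p\times n}$ and the $O(q)$-Laplacians on $M_{q\times n}$. The classical First Fundamental Theorem for the $(O(p),GL(n))$ and $(O(q),GL(n))$ actions then decomposes $\mathcal{H}$ multiplicity-freely as an $O(p)\times O(q)\times U(n)$-module, and explicit harmonic highest-weight vectors can be written as products of principal minors contributing the parameters $a_1\geqslant\cdots\geqslant a_x$ and $b_1\geqslant\cdots\geqslant b_y$, together with pfaffian-type determinants of length $p-2x$ or $q-2y$ whenever $\epsilon=-1$ or $\eta=-1$ (cf.\ Lemma \ref{lem:sign}). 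Reading off the $U(n)$-weight of such a vector yields the column-length bound $n \geqslant x + y + \frac{1-\epsilon}{2}(p-2x) + \frac{1-\eta}{2}(q-2y)$ and the explicit correspondence in (1); the conditions in (2) are the mirror statement, recording which $c_i$ must come from $\mathcal{P}(M_{p\times n})$ (bounded by $p$) and which from $\mathcal{P}(M_{q\times n})$ (bounded by $q$).

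For (3) and (4), I would exploit the natural $\mathbb{Z}_{\geqslant 0}$-grading of $\mathcal{F}$ by total polynomial degree, since the degree of a $K$-type is by definition the degree of its minimal harmonic highest-weight vector. For $\sigma$ the construction above gives a vector of total degree $\sum a_i + \sum b_i + \tfrac{1-\epsilon}{2}(p-2x) + \tfrac{1-\eta}{2}(q-2y)$; for $\sigma'$, each positive $c_i$ contributes $c_i$ from $\mathcal{P}(M_{p\times n})$ and each negative $c_i$ contributes $|c_i|$ from $\mathcal{P}(M_{q\times n})$, giving $\sum|c_i|$.

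The main obstacle is the bookkeeping in part (1): converting the sign data $\epsilon,\eta$ via Lemma \ref{lem:sign} into the $\tfrac{1-\epsilon}{2}(p-2x)$ and $\tfrac{1-\eta}{2}(q-2y)$ columns of length one in the underlying $U(p)$- and $U(q)$-highest weights, and then threading these correctly together with the run of $-1$'s and the reversed $-b_i$ entries into the final $U(n)$-weight. This is routine once the dictionary between $\widehat{O(p)}$ and the relevant $U(p)$-highest weights of Lemma \ref{lem:sign} is in hand, but it is where all the care must be exercised; everything else follows formally from the Fock model set-up.
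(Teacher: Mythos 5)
The paper does not actually prove Proposition~\ref{prop:CorKT}; it is quoted verbatim from \cite[Proposition~4]{Paul2005howe}. So there is no proof in the paper to compare against, and your submission amounts to sketching the derivation that the paper and Paul both elide. The overall plan — realize $\mathcal{F}$ as $\mathcal{P}(M_{p\times n})\otimes\mathcal{P}(M_{q\times n})$, put the $O(p)\times O(q)$-action on rows and the $U(n)$-action (standard on one factor, contragredient on the other, twisted by $\det^{(p-q)/2}$) on columns, write explicit harmonic highest weight vectors, and read off weights and degrees — is the standard and correct route, going back to Kashiwara–Vergne and Howe.

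There is, however, a genuine gap in the step where you identify $\mathcal{H}$. You describe the joint harmonics as ``the simultaneous harmonics for the $O(p)$-Laplacians on $M_{p\times n}$ and the $O(q)$-Laplacians on $M_{q\times n}$.'' That is only $\mathcal{H}(K)$, the space annihilated by $\mathfrak{m}_K^{(0,2)}$ for $K=O(p)\times O(q)$. The joint harmonics is $\mathcal{H}=\mathcal{H}(K)\cap\mathcal{H}(K')$, and the $K'=U(n)$ side contributes the \emph{mixed} degree-lowering operators $\Box_{ik}=\sum_{j=1}^n \partial^2/(\partial x_{ij}\,\partial y_{kj})$ (the $\mathfrak{p}^-$ of $\mathfrak{o}(p,q)$ in the Fock model). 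These are independent of $\Delta^p_{jk}$ and $\Delta^q_{jk}$, and dropping them makes your candidate harmonics space strictly too large. A minimal example: take $p=q=n=1$, so $\mathcal{F}=\mathbb{C}[x,y]$. The $O(1)\times O(1)$-type $\sigma=(;-1)\otimes(;-1)$ has isotypic vector $xy$, which is killed by $\partial_x^2$ and $\partial_y^2$ but satisfies $\partial_x\partial_y(xy)=1\neq 0$. So $xy\in\mathcal{H}(K)\setminus\mathcal{H}$, and indeed part (1) of the proposition says $\sigma$ needs $n\geqslant 2$; your characterization would wrongly have $\sigma$ occurring already at $n=1$. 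The same example shows $\mathcal{H}(K)$ is not multiplicity-free as a $U(n)$-module (both $1$ and $xy$ have $U(1)$-weight $0$), so the one-to-one $K\leftrightarrow K'$ correspondence in $\mathcal{H}$ cannot even be set up on $\mathcal{H}(K)$ alone.

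Concretely, the column-length bound $n\geqslant x+y+\frac{1-\epsilon}{2}(p-2x)+\frac{1-\eta}{2}(q-2y)$ is exactly the condition that the columns used on the $p$-side and $q$-side can be taken disjoint, and it is that disjointness — not mere $U(n)$-dominance of a weight — that makes the product vector killed by the $\Box_{ik}$ and hence land in $\mathcal{H}$. You need to add the $\mathcal{H}(K')$-harmonicity to the definition of $\mathcal{H}$, and then argue that within each $O(p)\times O(q)$-isotypic piece of $\mathcal{H}(K)$, imposing $\Box_{ik}=0$ selects precisely the Cartan component of the tensor product of $U(n)$-modules (the product of a highest weight vector on the $p$-side and a lowest weight vector on the $q$-side, supported on disjoint columns). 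That repair closes the gap; parts (3) and (4), which only involve degrees in $\mathcal{F}$, are unaffected, and a small wording nit is that the extra factors in the $\epsilon=-1$ or $\eta=-1$ cases are determinants of larger size, not pfaffians.
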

\begin{rem*}
For $(O(p,q),Sp(2n,\mathbb{R}))$, the degree of a $K$-type for $O(p,q)$
is independent of $n$, and the degree of a $K$-type for $Sp(2n,\mathbb{R})$
depends only on the difference $p-q$.
\end{rem*}
Write the correspondence of $K$-types in $\mathcal{H}$ as 
\[
\xymatrix{\{O(p)\times O(q)\text{-types in }\mathcal{H}\}\ar@<1ex>[rr]^{\quad\ \phi_{n}} &  & \{U(n)\text{-types in }\mathcal{H}\}.\ar@<1ex>[ll]^{\quad\ \phi_{p,q}}}
\]

\begin{itemize}
\item For an $O(p)\times O(q)$-type $\sigma$ and a $U(n)$-type $\sigma'$,
if they correspond to each other in $\mathcal{H}$, write $\phi_{n}(\sigma)=\sigma'$
and $\phi_{p,q}(\sigma')=\sigma$. In that case, $\deg(\sigma)=\deg(\sigma')$.
\item If $\sigma$ (resp. $\sigma'$) does not occur in $\mathcal{H}$,
write $\phi_{n}(\sigma)=0$ (resp. $\phi_{p,q}(\sigma')=0$).
\end{itemize}
Suppose that $\pi\in\mathcal{R}(G)$ and $\pi'\in\mathcal{R}(G')$
correspond to each other in the local theta correspondence. Let $\mathcal{D}(\pi)$
denote the set of $K$-types for $G$ which is of minimal degree in
$\pi$, and similarly for $\pi'$. Define the \emph{degree} of $\pi$
as $\deg(\pi)=\deg(\sigma)$ for any $\sigma\in\mathcal{D}(\pi)$,
and similarly for $\pi'$.
\begin{lem}
[\cite{Howe1989transcending}] \label{lem:min-deg} If  $\pi\in\mathcal{R}(G,\omega)$,
then  $\phi_{n}(\mathcal{D}(\pi))=\mathcal{D}(\theta_{n}(\pi))$
and $\deg(\pi)=\deg(\theta_{n}(\pi))$.
\end{lem}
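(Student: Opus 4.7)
The plan is to work in the Fock model, where $\omega$ is realized on a Fock space $\mathcal{F}$ of polynomials in $n(p+q)$ complex variables and carries the natural polynomial-degree grading $\mathcal{F}=\bigoplus_{d\geqslant 0}\mathcal{F}_d$. The central structural input, due to Howe \cite{Howe1989transcending}, is that $K\cdot K' = (O(p)\times O(q))\cdot U(n)$ acts by degree-preserving transformations, while $\mathfrak{p}\oplus\mathfrak{p}'\subset\mathfrak{g}\oplus\mathfrak{g}'$ shifts degree by $\pm 2$. Consequently the degree grading restricts to a $K\cdot K'$-stable filtration on every $G$-isotypic subspace of $\mathcal{F}$, and the algebra of $K\cdot K'$-invariants on $\mathcal{F}$ is generated by dual-pair contractions of pure degree~$2$.

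First I would recall the joint harmonic decomposition: $\mathcal{F}$ is generated as a module over the invariant algebra by the space $\mathcal{H}$ of joint harmonics, and every $K\cdot K'$-type occurring in $\mathcal{F}$ realizes its minimum-degree occurrence inside $\mathcal{H}$. Combined with Proposition \ref{prop:CorKT}, this yields that a $K$-type $\sigma$ first appears in $\mathcal{F}$ at degree $\deg(\sigma)$, at which degree it is paired inside $\mathcal{H}$ exactly with $\phi_n(\sigma)$; the analogous statement holds for $K'$-types, with $\deg(\phi_n(\sigma))=\deg(\sigma)$ visible from parts (3) and (4) of Proposition \ref{prop:CorKT}.

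Next I would invoke Howe duality to identify the $\pi$-isotypic $G$-subspace $\mathcal{F}[\pi]$, as a $(\mathfrak{g}\oplus\mathfrak{g}',K\cdot K')$-module, with $\pi\otimes\theta_{n}(\pi)$. The intersection $\mathcal{F}[\pi]\cap\mathcal{H}$ is nonzero by the generation-over-invariants property (the minimum-degree layer of $\mathcal{F}[\pi]$ must lie in $\mathcal{H}$) and is $K\cdot K'$-stable, so it decomposes in two equivalent ways: on the one hand as $\bigoplus_{\sigma\in\mathcal{D}(\pi)}\sigma\otimes\phi_n(\sigma)$, by pairing each minimum-degree $K$-type of $\pi$ with its harmonic partner, and on the other hand as $\bigoplus_{\sigma'\in\mathcal{D}(\theta_n(\pi))}\phi_{p,q}(\sigma')\otimes\sigma'$. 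Matching the two decompositions forces $\phi_n(\mathcal{D}(\pi))=\mathcal{D}(\theta_n(\pi))$, and since $\phi_n$ is degree-preserving we obtain $\deg(\pi)=\deg(\theta_n(\pi))$.

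The step I expect to be the main obstacle is justifying that the minimum-degree $K$-types of $\pi$, viewed as $K$-types occurring in $\mathcal{F}[\pi]$, are exactly the $\sigma$ at which $\mathcal{F}[\pi]$ first meets $\mathcal{H}$; a priori a $K$-type of $\pi$ could first appear in $\mathcal{F}[\pi]$ only after multiplication by a positive-degree invariant. This is ruled out precisely by Howe's generation-over-invariants theorem, which is the substantive content behind the formal bookkeeping above, so the proof reduces to a careful application of that structural result together with Proposition \ref{prop:CorKT}.
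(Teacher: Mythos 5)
The paper does not give a proof of Lemma~\ref{lem:min-deg}; it is cited directly to \cite{Howe1989transcending}, so there is no in-paper argument to compare against. Your strategy — graded Fock model, degree filtration, localizing to the joint harmonics, and matching the two $K\cdot K'$-decompositions of the minimum-degree layer — is the right neighborhood and close in spirit to Howe's.

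That said, the sketch has real gaps as written. First, the object you call the ``$\pi$-isotypic $G$-subspace $\mathcal{F}[\pi]$'' and identify with $\pi\otimes\theta_n(\pi)$ is not quite the right object: $\omega$ is not semisimple as a $(\mathfrak{g}\oplus\mathfrak{g}',K\cdot K')$-module, so one must work with the maximal $\pi$-coisotypic \emph{quotient} $\omega_\pi\cong\pi\otimes\Theta_n(\pi)$, where $\Theta_n(\pi)$ is the possibly reducible ``big theta''. Your harmonic-matching argument at best pins down the minimum-degree $K'$-types of $\Theta_n(\pi)$; the lemma concerns $\mathcal{D}(\theta_n(\pi))$ for the irreducible quotient, and one must still show those minimum-degree $K'$-types occur with multiplicity one in $\Theta_n(\pi)$ and survive passage to $\theta_n(\pi)$ — a separate step of Howe's proof, not a formal consequence of what you wrote. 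Second, your resolution of the obstacle you flag (that a $\sigma\in\mathcal{D}(\pi)$ could first meet $\mathcal{F}[\pi]$ only above degree $\deg\sigma$) via ``generation-over-invariants'' is not on target: generation of $\mathcal{F}$ over the invariant algebra by $\mathcal{H}$ does not by itself tell you that the quotient map $\omega\twoheadrightarrow\pi\otimes\Theta_n(\pi)$ is nonzero on $\mathcal{H}_\sigma$. What is actually used is the sharper identity $\mathcal{H}(K)_\sigma=\mathcal{F}(\deg\sigma)_\sigma$ (the fact cited in the proof of Lemma~\ref{lem:parity} as \cite[(3.9)(b)]{Howe1989transcending}) together with stability of the relevant space under the lowering operators $\mathfrak{m}^{(0,2)}$ — and establishing that stability for the object one actually works with is where Howe's argument does its real work. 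So the proposal is a plausible outline, but the steps it defers are precisely the substance of the lemma.
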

About the degrees of $K$-types, the following lemma will be quite
useful.
\begin{lem}
\label{lem:parity} If $\pi\in\mathcal{R}(G,\omega)$ contains two
$K$-types $\sigma_{1}$ and $\sigma_{2}$, then $\deg(\sigma_{1})\equiv\deg(\sigma_{2})\ (\mathrm{mod}\ 2)$.\end{lem}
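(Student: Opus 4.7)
My plan is to show that for every $K$-type $\sigma$ occurring in $\pi$, the sign $(-1)^{\deg(\sigma)}$ equals the scalar by which the central element $-Id\in O(p,q)$ acts on $\pi$. Since $\pi$ is irreducible, this scalar depends only on $\pi$ by Schur's lemma, so all $K$-types of $\pi$ share the same degree parity. The role of the hypothesis $\theta_{n}(\pi)\neq 0$ is purely to guarantee that $\deg(\sigma)$ is defined for every $K$-type of $\pi$: since $\pi\in\mathcal{R}(G,\omega)$ is a $G$-quotient of $\omega$, each of its $K$-types appears in the Fock space $\mathcal{F}$, so Proposition \ref{prop:CorKT}(3) applies.

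To carry this out, I parametrize a $K$-type of $\pi$ as $\sigma=(a_{1},\dots,a_{x},0,\dots,0;\epsilon)\otimes(b_{1},\dots,b_{y},0,\dots,0;\eta)$ with $a_{x}\geqslant 1$ and $b_{y}\geqslant 1$. By the formula for $\sigma(-Id)$ recorded just after Lemma \ref{lem:sign}, the scalar action of $-Id\in O(p,q)$ on the $\sigma$-isotypic component of $\pi$ is
\[
\chi=\epsilon^{p}(-1)^{a_{1}+\dots+a_{x}}\cdot\eta^{q}(-1)^{b_{1}+\dots+b_{y}}.
\]
Using the identity $\epsilon^{p}=(-1)^{\frac{1-\epsilon}{2}p}$ (valid for $\epsilon=\pm 1$) and likewise for $\eta^{q}$, this rewrites as $\chi=(-1)^{E(\sigma)}$ with $E(\sigma)=\sum_{i=1}^{x}a_{i}+\sum_{j=1}^{y}b_{j}+\frac{1-\epsilon}{2}p+\frac{1-\eta}{2}q$. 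On the other hand, Proposition \ref{prop:CorKT}(3) gives
\[
\deg(\sigma)=E(\sigma)-(1-\epsilon)x-(1-\eta)y,
\]
and $(1-\epsilon)x+(1-\eta)y$ is an even integer, so $(-1)^{\deg(\sigma)}=\chi$. As $\chi$ is independent of the chosen $K$-type of $\pi$, the lemma follows.

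There is no substantive obstacle here: the argument is a bookkeeping compatibility check between Howe's degree formula and the central character of $-Id\in O(p,q)$. The assumption $p+q$ even plays no further role beyond ensuring that $O(p,q)$ acts genuinely (not metaplectically) on $\omega$ via the splitting of \cite{Kudla1994splitting}, so that $\pi$ is an ordinary representation and Schur's lemma applies to $-Id$ directly. In fact the conclusion is intrinsic to any irreducible $\pi\in\mathcal{R}(G,\omega)$ and does not involve the specific theta lift $\theta_{n}(\pi)$ beyond its nonvanishing.
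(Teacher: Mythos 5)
Your proof is correct, and it takes a genuinely different and more elementary route than the paper's. The paper argues via the Fock-space parity decomposition $\mathcal{F}=\mathcal{F}_{0}\oplus\mathcal{F}_{1}$: since each $\mathcal{F}_{i}$ is $(\mathfrak{g},K)$-invariant, the Harish-Chandra module of $\pi$ is an irreducible quotient of one of them; assuming the degrees had different parities, some $K$-type $\sigma$ would occur in both parities, and the paper then invokes the classical invariant theory fact $\mathcal{H}(K)_{\sigma}=\mathcal{F}(\deg\sigma)_{\sigma}$ from \cite[(3.9)(b)]{Howe1989transcending} to reach a contradiction. You instead observe that $(-1)^{\deg(\sigma)}$ coincides with the scalar $\sigma(-Id)$ — a bookkeeping identity between the formula for $\sigma(-Id)$ recorded after Lemma \ref{lem:sign} and Proposition \ref{prop:CorKT}(3), with discrepancy $(1-\epsilon)x+(1-\eta)y$ which is even — and then apply Schur's lemma to the central element $-Id\in O(p,q)$. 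Your computation checks out, and your remark on the role of $\theta_{n}(\pi)\neq0$ (namely that $\pi$ is then a $G$-quotient of $\omega$, so all its $K$-types occur in $\mathcal{F}$ and $\deg$ is defined) is also correct. What your approach buys: it avoids the harmonics-and-parity bookkeeping entirely, identifies the common degree parity intrinsically as the central character of $-Id$ on $\pi$, and uses only Schur's lemma plus the two displayed formulas. What the paper's approach buys: it is insensitive to the specific group in the dual pair (the same Fock-space argument would apply verbatim to the $Sp(2n,\mathbb{R})$ member), whereas your argument is tailored to $O(p,q)$ via its central element $-Id$ — which is perfectly adequate here since the lemma concerns $\pi\in\mathcal{R}(O(p,q))$.
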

\begin{proof}
Consider the $(\mathfrak{g},K)$-module of $\pi$. Note that $\mathcal{F}=\mathcal{F}_{0}\oplus\mathcal{F}_{1}$,
where $\mathcal{F}_{i}$ (for $i\in\{0,1\}$) is the linear span of
all homogeneous polynomials in $\mathcal{F}$ with degree $\equiv i\ (\mathrm{mod}\ 2)$.
Each $\mathcal{F}_{i}$ is $(\mathfrak{g},K)$-invariant, so the $(\mathfrak{g},K)$-module
of $\pi$ is equivalent to an irreducible $(\mathfrak{g},K)$-quotient
of either $\mathcal{F}_{0}$ or $\mathcal{F}_{1}$. Thus one of $\mathcal{F}_{0}$
and $\mathcal{F}_{1}$ contains both $\sigma_{1}$ and $\sigma_{2}$.

Suppose $\deg(\sigma_{1})\not\equiv\deg(\sigma_{2})\ (\mathrm{mod}\ 2)$.
Then one of $\sigma_{1}$ and $\sigma_{2}$, denoted by $\sigma$,
must occur in both $\mathcal{F}_{0}$ and $\mathcal{F}_{1}$. Let
$\mathcal{F}(d)$ denote the subspace of homogeneous polynomials in
$\mathcal{F}$ with degree $d$. Since the action of $K$ preserves
the degree, $\mathcal{F}(d)$ is $K$-invariant. Write $\mathcal{F}_{\sigma}$
for the $\sigma$-isotypic subspace of $\mathcal{F}$. Then $\mathcal{F}(d)_{\sigma}=\mathcal{F}_{\sigma}\cap\mathcal{F}(d)$
is the $\sigma$-isotypic subspace of $\mathcal{F}(d)$. For $i\in\{0,1\}$,
let 
\[
d_{i}=\min\{d\mid\mathcal{F}(d)_{\sigma}\neq0,\ d\equiv i\ (\mathrm{mod}\ 2)\}
\]
Recall the space of $K$-harmonics (see \cite{Howe1989transcending})
\[
\mathcal{H}(K)=\{P\in\mathcal{F}\mid l(P)=0\text{ for all }l\in\mathfrak{m}^{(0,2)}\},
\]
where $\mathfrak{m}^{(0,2)}=\mathfrak{m}\cap\mathfrak{sp}^{(0,2)}$.
Here $\mathfrak{sp}$ is the complexified Lie algebra of $\mathbf{Sp}=Sp(2n(p+q),\mathbb{R})$,
$\mathfrak{m}$ is the complexified Lie algebra of the centralizer
$M$ of $K$ in $\mathbf{Sp}$, and $\mathfrak{sp}^{(0,2)}$ acts
as the span of $\frac{\partial^{2}}{\partial z_{i}\partial z_{j}}$.
Since $\mathfrak{m}$ and $K$ commute, $\mathcal{H}(K)$ is $K$-stable,
and $\mathcal{F}{}_{\sigma}$ is $\mathfrak{m}$-stable. The action
of $\mathfrak{sp}^{(0,2)}$ reduces the degree of polynomials by $2$,
so the action of $\mathfrak{m}^{(0,2)}$ on $\mathcal{F}(d_{i})_{\sigma}$
gives results in $\mathcal{F}_{\sigma}\cap\mathcal{F}(d_{i}-2)=\mathcal{F}(d_{i}-2)_{\sigma}=0$.
Thus $\mathcal{F}(d_{i})_{\sigma}\subseteq\mathcal{H}(K)$ for both
$i\in\{0,1\}$. So $\mathcal{F}(d_{0})_{\sigma}\cup\mathcal{F}(d_{1})_{\sigma}\subseteq\mathcal{H}(K)_{\sigma}$,
$\mathcal{F}(d_{0})_{\sigma}\cap\mathcal{F}(d_{1})_{\sigma}=\O$,
$\mathcal{F}(d_{0})_{\sigma}\neq\O$ and $\mathcal{F}(d_{1})_{\sigma}\neq\O$,
in contradiction with a result in classical invariant theory (\cite[(3.9)(b)]{Howe1989transcending})
that $\mathcal{H}(K)_{\sigma}=\mathcal{F}(d)_{\sigma}$ for $d=\deg(\sigma)=\min\{d_{0},d_{1}\}$.
\end{proof}

\subsection{Lowest $K$-types}

Let $||\cdot||$ be the norm of $K$-types in the sense of \cite{Vogan1979algebraic}:
a $K$-type $\sigma$ has norm $||\sigma||=\langle\sigma+2\rho_{c},\sigma+2\rho_{c}\rangle$
where $2\rho_{c}$ is the sum of all positive compact roots in $\Delta_{c}^{+}$.

\begin{prop}
If $\sigma=(a_{1},\dots,a_{n})\in\widehat{U(n)}$, then $||\sigma||=\sum_{i=1}^{n}(a_{i}+n+1-2i)^{2}$.\end{prop}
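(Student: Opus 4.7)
The plan is to compute $2\rho_c$ explicitly in the chosen coordinates and then simply expand the definition of $\|\sigma\|$.

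First I would note that since $K = U(n)$ is already compact, every root is compact, so $\Delta_c^+ = \Delta^+(\mathfrak{t},\mathfrak{k}) = \{e_i - e_j \mid 1 \leqslant i < j \leqslant n\}$, which is the system already fixed in the excerpt. Summing, I would compute
\[
2\rho_c = \sum_{1 \leqslant i < j \leqslant n}(e_i - e_j) = \sum_{i=1}^{n}\bigl((n-i) - (i-1)\bigr) e_i = \sum_{i=1}^{n}(n+1-2i)\, e_i,
\]
where for each fixed $i$ the coefficient counts the indices $j > i$ (contributing $+1$) minus the indices $j < i$ (contributing $-1$).

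Next I would write $\sigma = \sum_i a_i e_i$ and add, obtaining
\[
\sigma + 2\rho_c = \sum_{i=1}^{n}(a_i + n + 1 - 2i)\, e_i.
\]
Using the standard Euclidean inner product on $\mathrm{i}\mathfrak{t}_0^*$ in which $\{e_1,\dots,e_n\}$ is orthonormal, the definition $\|\sigma\| = \langle \sigma + 2\rho_c,\, \sigma + 2\rho_c\rangle$ then immediately yields
\[
\|\sigma\| = \sum_{i=1}^{n}(a_i + n + 1 - 2i)^2,
\]
as desired.

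There is no real obstacle here; the only point requiring minor care is the bookkeeping in the computation of $2\rho_c$, namely verifying that each $e_i$ appears with coefficient $(n-i) - (i-1) = n+1-2i$. Everything else is definition chasing.
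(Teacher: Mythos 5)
Your proposal is correct and is essentially identical to the paper's own proof: both compute $2\rho_c=\sum_{i=1}^{n}\bigl((n-i)-(i-1)\bigr)e_i=\sum_{i=1}^{n}(n+1-2i)e_i$ and then plug into $\|\sigma\|=\langle\sigma+2\rho_c,\sigma+2\rho_c\rangle$.
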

\begin{proof}
As $\Delta_{c}^{+}$ $=$ $\{e_{i}-e_{j}\mid1\leqslant i<j\leqslant n\}$,
\[
2\rho_{c}=\sum_{1\leqslant i<j\leqslant n}(e_{i}-e_{j})=\sum_{i=1}^{n}((n-i)-(i-1)))e_{i}
\]
So $\sigma+2\rho_{c}$ $=$ $\sum_{i=1}^{n}(a_{i}+n+1-2i)e_{i}$,
and $||\sigma||$ $=$ $\sum_{i=1}^{n}(a_{i}+n+1-2i)^{2}$.\end{proof}
\begin{prop}
If $\sigma=(a_{1},\dots,a_{\left[\frac{p}{2}\right]};\epsilon)\otimes(b_{1},\dots,b_{\left[\frac{p}{2}\right]};\eta)$
is an $O(p)\times O(q)$-type with $p+q$ even, then $||\sigma||$
$=$ $\sum_{i=1}^{\left[\frac{p}{2}\right]}(a_{i}+p-2i)^{2}+\sum_{i=1}^{\left[\frac{q}{2}\right]}(b_{i}+q-2i)^{2}$.\end{prop}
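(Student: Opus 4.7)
The plan is to reduce the proposition to the two tensor factors, and then compute each factor by producing a uniform formula for $2\rho_c$ that works across the even and odd cases.

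First, I would exploit the product structure $K = O(p) \times O(q)$. Choose the standard maximal torus $T = T_p \times T_q$ with $T_p \subset SO(p)$ and $T_q \subset SO(q)$, and the standard positive systems. The set of compact positive roots splits as $\Delta_c^+ = \Delta^+(\mathfrak{t}_p,\mathfrak{so}(p)) \sqcup \Delta^+(\mathfrak{t}_q,\mathfrak{so}(q))$, so $2\rho_c = 2\rho_c^{(p)} + 2\rho_c^{(q)}$ lies in the orthogonal direct sum $\mathrm{i}\mathfrak{t}_{p,0}^* \oplus \mathrm{i}\mathfrak{t}_{q,0}^*$, and $\sigma + 2\rho_c$ decomposes accordingly. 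Hence
\[
\|\sigma\| = \|(a_1,\dots,a_{[p/2]};\epsilon)\|_{O(p)} + \|(b_1,\dots,b_{[q/2]};\eta)\|_{O(q)},
\]
and it suffices to establish the formula for a single $O(p)$-type.

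Next, I would compute $2\rho_c^{(p)}$ by treating the two root systems separately and verifying they give the same answer. For $p = 2m+1$ (type $B_m$), the positive roots are $\{e_i - e_j : i<j\} \cup \{e_i + e_j : i<j\} \cup \{e_i\}$; summing the contributions to the coefficient of each $e_k$ yields $(m-2k+1) + (m-1) + 1 = p-2k$. For $p = 2m$ (type $D_m$), the positive roots are $\{e_i \pm e_j : i<j\}$ and the same bookkeeping gives $(m-2k+1) + (m-1) = p-2k$. Hence in both cases
\[
2\rho_c^{(p)} = \sum_{i=1}^{[p/2]} (p - 2i)\, e_i.
\]

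Then I would identify the highest weight of the $O(p)$-type $\sigma = (a_1,\dots,a_{[p/2]};\epsilon)$ in $\mathrm{i}\mathfrak{t}_{p,0}^*$. Here I lean on Lemma \ref{lem:sign}: the $U(p)$-module of weight $(a_1,\dots,a_x, 1,\dots,1, 0,\dots,0)$ associated to $\sigma$ contains a highest weight vector for $U(p)$ whose $T_p$-weight (under $T_p = U(1)^{[p/2]} \hookrightarrow T_{U(p)} = U(1)^p$) is read off from only the first $[p/2]$ coordinates, and these coordinates equal $(a_1,\dots,a_{[p/2]})$ regardless of $\epsilon$ (the $1$'s that enter in the $\epsilon = -1$ branch occupy positions beyond $[p/2]$, or get paired off by the $D_m$-reflection into the dominant chamber). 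The sign $\epsilon$ only records the action of the component group $O(p)/SO(p)$ on the relevant weight space, so it does not affect $\sigma + 2\rho_c \in \mathrm{i}\mathfrak{t}_{p,0}^*$. Substituting, $\sigma + 2\rho_c^{(p)} = \sum_{i=1}^{[p/2]}(a_i + p - 2i)\, e_i$, and the formula follows by squaring.

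The main obstacle will be the previous paragraph: making precise that the $T_p$-highest weight is genuinely $(a_1,\dots,a_{[p/2]})$ in all cases of Lemma \ref{lem:sign}, particularly for $p = 2x$ on the boundary between the two branches (where $r+s = p/2$) and for the $\epsilon = -1$ branch with auxiliary $1$'s in the $U(p)$-weight. Everything else is a direct computation.
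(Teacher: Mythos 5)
Your proposal is correct and takes essentially the same approach as the paper: compute $2\rho_c$ for each parity (you separate by type $B_m$ versus $D_m$, the paper does even and odd $p,q$ side by side, but the bookkeeping is identical), observe the uniform formula $2\rho_c=\sum_i(p-2i)e_i+\sum_j(q-2j)f_j$, and substitute. The highest-weight identification you flag as the "main obstacle" is also left implicit in the paper's proof, and your intuition about it is basically right, though the phrasing wants a correction: after conjugating $T_p$ into $T_{U(p)}$ (so that a rotation block $R(\theta)$ diagonalizes to $\mathrm{diag}(e^{\mathrm{i}\theta},e^{-\mathrm{i}\theta})$), a $T_{U(p)}$-weight $\lambda=(\lambda_1,\dots,\lambda_p)$ restricts to $T_p$ as $(\lambda_j-\lambda_{p+1-j})_{j\leqslant[p/2]}$, so the auxiliary $1$'s that enter for $\epsilon=-1$ (which occupy positions $x+1$ through $p-x$, symmetric about the middle of $\lambda$) cancel in pairs; they need not lie beyond position $[p/2]$ as your parenthetical suggests, and it is the symmetry rather than the location that makes the restricted weight come out to $(a_1,\dots,a_{[p/2]})$ in all cases.
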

\begin{proof}
If $p$, $q$ are both even,
\begin{eqnarray*}
2\rho_{c} & = & \sum_{1\leqslant i<j\leqslant\frac{p}{2}}((e_{i}+e_{j})+(e_{i}-e_{j}))+\sum_{1\leqslant i<j\leqslant\frac{q}{2}}((f_{i}+f_{j})+(f_{i}-f_{j}))\\
 & = & \sum_{i=1}^{\frac{p}{2}}(p-2i)e_{i}+\sum_{i=1}^{\frac{q}{2}}(q-2i)f_{i}.
\end{eqnarray*}
 If $p$, $q$ are both odd,
\begin{eqnarray*}
2\rho_{c} & = & \sum_{1\leqslant i<j\leqslant\left[\frac{p}{2}\right]}((e_{i}+e_{j})+(e_{i}-e_{j}))+\sum_{1\leqslant i<j\leqslant\left[\frac{q}{2}\right]}((f_{i}+f_{j})+(f_{i}-f_{j}))+\sum_{i=1}^{\left[\frac{p}{2}\right]}e_{i}+\sum_{i=1}^{\left[\frac{q}{2}\right]}f_{i}\\
 & = & \sum_{i=1}^{\left[\frac{p}{2}\right]}(p-2i)e_{i}+\sum_{i=1}^{\left[\frac{q}{2}\right]}(q-2i)f_{i}
\end{eqnarray*}
In both cases, $||\sigma||=\langle\sigma+2\rho_{c},\sigma+2\rho_{c}\rangle=\sum_{i=1}^{\left[\frac{p}{2}\right]}(a_{i}+p-2i)^{2}+\sum_{i=1}^{\left[\frac{q}{2}\right]}(b_{i}+q-2i)^{2}$. 
\end{proof}
Let $H$ be a reductive Lie group with a maximal compact subgroup
$K$. Let $\pi$ be an admissible representation of $H$. If $\sigma$
occurs with minimal norm in $\pi$, we call $\sigma$ a \emph{lowest
$K$-type}  of $\pi$. Let $\mathcal{A}(\pi)$ denote the set of
all lowest $K$-types  of $\pi$.

For $\pi\in\mathcal{R}(Sp(2n,\mathbb{R}))$ or $\mathcal{R}(O(p,q))$
with $p+q$ even, there are explicit algorithms to calculate $\mathcal{A}(\pi)$
from Langlands parameters.   which are Proposition 6, 10, and 13
of \cite{Paul2005howe}. We quote them as Proposition \ref{prop:LKT-Sp},
\ref{prop:LKT-O}, and \ref{prop:LKT-sgn} in Appendix \ref{App:CompLKT}.

\subsection{\label{subsec:Par-Sp}Parametrization for $\mathcal{R}(Sp(2n,\mathbb{R}))$}

To ``explicitly'' describe or study the local theta correspondence,
we have to use some parametrization of the representations. In this
paper, we choose Vogan's version of Langlands classification in \cite{Vogan1984unitarizability},
as described in \cite{Paul2005howe}. Irreducible admissible representations
of $Sp(2n,\mathbb{R})$ are parametrized by \emph{Langlands parameters}
$(\lambda_{d},\Psi,\mu,\nu,\varepsilon,\kappa)$ as follows.

Let $(W,\langle,\rangle)$ be a symplectic space over $\mathbb{R}$
of dimension $2n$ with the isometry group $Sp(W)\cong Sp(2n,\mathbb{R})$.
Let 
\[
\{0\}\subset W_{1}\subset W_{2}\subset\cdots\subset W_{r}
\]
be an isotropic flag in $W$, with $\dim W_{i}=d_{i}$. Set $d_{0}=0$,
and $n_{i}=d_{i}-d_{i-1}$. Let $P$ be the stabilizer of this flag
in $Sp(W)$. Then $P=MAN$ is a parabolic subgroup of $Sp(W)$ with
Levi factor 
\[
MA\cong Sp(2(n-d_{r}),\mathbb{R})\times\prod_{i=1}^{r}GL(n_{i},\mathbb{R}).
\]
Especially, let $(n_{1},\dots,n_{r})=(\underbrace{2,\dots,2}_{s},\underbrace{1,\dots,1}_{t})$
and $n-d_{r}=n-2s-t=v$. Then 
\[
MA\cong Sp(2v,\mathbb{R})\times GL(2,\mathbb{R})^{s}\times GL(1,\mathbb{R})^{t}.
\]
Take $\lambda_{d}\in\mathbb{Z}^{v}$, $\mu\in(\mathbb{Z}_{\geqslant0})^{s}$,
$\nu\in\mathbb{C}^{s}$, $\varepsilon\in\{\pm1\}^{t}$, and $\kappa\in\mathbb{C}^{t}$
subject to the following conditions.
\begin{itemize}
\item Let $(\lambda_{d},\Psi)$ parametrize a limit of discrete series $\rho=\rho(\lambda_{d},\Psi)$
of $Sp(2v,\mathbb{R})$. Here $\lambda_{d}$ is the Harish-Chandra
parameter of $\rho$, of the form
\begin{eqnarray*}
\lambda_{d} & = & (\underbrace{a_{1},\dots,a_{1}}_{k_{1}},\underbrace{a_{2},\dots,a_{2}}_{k_{2}},\dots,\underbrace{a_{b},\dots,a_{b}}_{k_{b}},\underbrace{0,\dots,0}_{z},\\
 &  & \ \ \underbrace{-a_{b},\dots,-a_{b}}_{l_{b}},\dots,\underbrace{-a_{2},\dots,-a_{2}}_{l_{2}},\underbrace{-a_{1},\dots,-a_{1}}_{l_{1}}),
\end{eqnarray*}
with integers $a_{1}>a_{2}>\cdots>a_{b}>0$, and $|k_{i}-l_{i}|\leqslant1$
for all $i$. Furthermore, $\Psi$ is a system of positive roots for
$Sp(2v,\mathbb{R})$ containing the standard set of positive compact
roots $\Delta_{c}^{+}=\{e_{i}-e_{j}\mid1\leqslant i<j\leqslant v\}$,
such that $\lambda_{d}$ is dominant with respect to $\Psi$, and
satisfies the condition (F-1): 
\[
\text{(F-1)}\quad\text{if }\alpha\in\Delta_{c}^{+}\text{ is a simple root in }\Psi\text{, then }\langle\lambda_{d},\alpha\rangle>0.
\]

\item Let $(\mu,\nu)$ parametrize a relative limit of discrete series
\[
\tau=\tau(\mu,\nu)=\tau(\mu_{1},\nu_{1})\otimes\tau(\mu_{2},\nu_{2})\otimes\cdots\otimes\tau(\mu_{s},\nu_{s})
\]
of $GL(2,\mathbb{R})^{s}$, where $\mu=(\mu_{1},\mu_{2},\dots\mu_{s})\in(\mathbb{Z}_{\geqslant0})^{s}$,
$\nu=(\nu_{1},\nu_{2},\dots,\nu_{s})\in\mathbb{C}^{s}$, and $\tau(\mu_{i},\nu_{i})$
is the relative limit of discrete series of $GL(2,\mathbb{R})$ with
infinitesimal character $(\frac{1}{2}(\mu_{i}+\nu_{i}),\frac{1}{2}(-\mu_{i}+\nu_{i}))$
and  lowest $K$-type $(\mu_{i}+1;1)$.
\item Let $(\varepsilon,\kappa)$ parametrize a character 
\[
\chi=\chi(\varepsilon,\kappa)=\chi(\varepsilon_{1},\kappa_{1})\otimes\chi(\varepsilon_{2},\kappa_{2})\otimes\cdots\otimes\chi(\varepsilon_{t},\kappa_{t}).
\]
 of $GL(1,\mathbb{R})^{t}$, where $\varepsilon=(\varepsilon_{1},\varepsilon_{2},\dots,\varepsilon_{t})\in\{\pm1\}^{t}$,
$\kappa=(\kappa_{1},\kappa_{2},\dots,\kappa_{t})\in\mathbb{C}^{t}$,
and $\chi(\varepsilon_{j},\kappa_{j})$ is the character of $GL(1,\mathbb{R})$
that maps $x$ to $\mathrm{sgn}(x)^{\frac{1-\varepsilon_{j}}{2}}|x|^{\kappa_{j}}$.
\end{itemize}
From above parameters, we obtain a standard module  $\mathrm{Ind}_{MAN}^{Sp(2n,\mathbb{R})}(\rho\otimes\tau\otimes\chi\otimes\tri_{N})$,
where $\tri_{N}$ is the trivial representation of $N$. Here the
induction is normalized so that infinitesimal characters are preserved. 
\begin{lem}
[{\cite{Vogan1984unitarizability,Paul2005howe}}] \label{lem:non-par-Sp}
(1) Let the parameters $(\lambda_{d},\Psi,\mu,\nu,\varepsilon,\kappa)$
be chosen as above, and satisfy the ``non-parity condition (F-2)''
for $Sp(2n,\mathbb{R})$:
\begin{align*}
 & \text{for }1\leqslant i\leqslant s\text{, if }\nu_{i}=0\text{, then }\mu_{i}\text{ is odd;}\\
 & \text{for }1\leqslant i,j\leqslant t\text{, if }\kappa_{i}=\pm\kappa_{j}\text{, then }\varepsilon_{i}=\varepsilon_{j}\text{;}\\
 & \text{for }1\leqslant i\leqslant t\text{, if }\kappa_{i}=0\text{, then }\varepsilon_{i}=(-1)^{v}\text{.}
\end{align*}
Then for a well chosen $N$, the standard module $\mathrm{Ind}_{MAN}^{Sp(2n,\mathbb{R})}(\rho\otimes\tau\otimes\chi\otimes\tri_{N})$
has a unique irreducible quotient, denoted by $\pi(\lambda_{d},\Psi,\mu,\nu,\varepsilon,\kappa)$.

(2) Each irreducible admissible representation of $Sp(2n,\mathbb{R})$
is infinitesimally equivalent to some $\pi(\lambda_{d},\Psi,\mu,\nu,\varepsilon,\kappa)$
obtained in (1).

(3) $\pi(\lambda_{d},\Psi,\mu,\nu,\varepsilon,\kappa)$ and $\pi(\lambda'_{d},\Psi',\mu',\nu',\varepsilon',\kappa')$
are infinitesimally equivalent if and only if: $\lambda_{d}=\lambda'_{d}$,
$\Psi=\Psi'$, $(\mu',\nu')$ is obtained from $(\mu,\nu)$ by a simultaneous
permutation of the coordinates of $\mu$ and $\nu$, and by possibly
multiplying some of the entries of $\nu$ by $-1$, and similarly
$(\varepsilon',\kappa')$ is obtained from $(\varepsilon,\kappa)$
by a simultaneous permutation of the coordinates of $\varepsilon$
and $\kappa$, and by possibly multiplying some of the entries of
$\kappa$ by $-1$. \end{lem}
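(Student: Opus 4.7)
The plan is to recognize this lemma as Vogan's version of the Langlands classification specialized to $Sp(2n,\mathbb{R})$, and to verify each clause using the general theory together with the explicit structure of cuspidal Levi subgroups. The overall strategy is: (a) show that the triple $(\rho,\tau,\chi)$ together with a suitable choice of positive system on $\mathfrak{a}$ produces a standard module with a unique irreducible quotient; (b) exhaust $\mathcal{R}(Sp(2n,\mathbb{R}))$ by varying parameters; (c) compute stabilizers in the Weyl group to determine the redundancy.

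For part (1), I would begin with the cuspidal parabolic $P=MAN$ whose Levi is $Sp(2v,\mathbb{R})\times GL(2,\mathbb{R})^s\times GL(1,\mathbb{R})^t$, and verify in each factor that the tempered building block is nonzero. For $\rho=\rho(\lambda_d,\Psi)$, the Knapp--Zuckerman non-vanishing criterion for limits of discrete series is exactly (F-1): whenever a simple compact root becomes orthogonal to $\lambda_d$, the corresponding module vanishes. For $\tau(\mu_i,\nu_i)$, the factor is a genuine relative limit of discrete series on $GL(2,\mathbb{R})$ unless $\mu_i=0$ and $\nu_i=0$, in which case the underlying principal series reduces and cannot provide a unique Langlands quotient when assembled into the full standard module; the first line of (F-2) rules this out. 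For $\chi(\varepsilon,\kappa)$, the second and third clauses of (F-2) prevent two distinct phenomena: coincidences $\kappa_i=\pm\kappa_j$ with $\varepsilon_i\neq\varepsilon_j$ cause the two one-dimensional factors to combine into an irreducible principal series of a $GL(2,\mathbb{R})$-block (so the standard module is not in Langlands normal form), and $\kappa_i=0$ with $\varepsilon_i\neq(-1)^v$ means the $GL(1)$-factor can be absorbed into $Sp(2v,\mathbb{R})$ by enlarging it to $Sp(2v+2,\mathbb{R})$. After excluding these, I would choose $N$ so that the real parts of the $\nu_i$ and $\kappa_j$, viewed as characters on $\mathfrak{a}$, all lie in the closed negative chamber, arranged so that any zero real parts occur last. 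Langlands' theorem then yields a unique irreducible quotient.

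Part (2) is a matter of showing that the parametrization is exhaustive. Given any irreducible admissible $\pi$, Langlands' theorem provides a cuspidal parabolic $P'=M'A'N'$ and a tempered irreducible $\sigma$ on $M'A'$ such that $\pi$ is the unique irreducible quotient of $\mathrm{Ind}_{P'}^{Sp(2n,\mathbb{R})}(\sigma)$. The minimal $M'$ of $Sp(2n,\mathbb{R})$ always has the product shape $Sp(2v,\mathbb{R})\times GL(2,\mathbb{R})^s\times GL(1,\mathbb{R})^t$, and $\sigma$ decomposes as $\rho\otimes\tau\otimes\chi$ with each factor of the prescribed form. If $\sigma$ at first violates (F-2), one absorbs the offending factors by inducing in stages from a larger Levi and re-expresses $\pi$ in standard form. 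Part (3) amounts to the explicit description of the $W(MA)$-action on parameters: the Weyl group of $MA$ acts by permutations of the $(\mu_i,\nu_i)$ blocks and the $(\varepsilon_j,\kappa_j)$ blocks, together with sign changes on $\nu_i$ coming from the outer automorphism of $GL(2,\mathbb{R})$ that fixes $\tau(\mu_i,\cdot)$, and sign changes on $\kappa_j$ coming from Weyl group elements that swap the two eigenspaces of the $GL(1)$-factor without altering $\chi(\varepsilon_j,\cdot)$.

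The main obstacle I anticipate is showing sharpness of the non-parity condition: one must verify that relaxing any clause of (F-2) either makes the standard module reducible in a way that destroys the uniqueness of the Langlands quotient, or produces the same $\pi$ as a standard module built from a different (usually larger) Levi, so that the parametrization would overcount. This requires a careful case-by-case analysis of reducibility points for principal series on $GL(2,\mathbb{R})$, on $GL(1,\mathbb{R})^2$, and on $Sp(2,\mathbb{R})$ embedded as a $GL(1,\mathbb{R})$-extension; these are classical but tedious, and in a complete writeup I would invoke the reducibility results of \cite{Vogan1984unitarizability} rather than reprove them.
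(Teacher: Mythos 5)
The paper does not actually prove this lemma --- it cites it as quoted from \cite{Vogan1984unitarizability,Paul2005howe} --- so there is no internal argument to compare against. Your proposal to reconstruct it via Langlands classification over cuspidal parabolics of $Sp(2n,\mathbb{R})$, followed by a computation of the restricted Weyl group action for part (3), is the standard route and is sound at the level of architecture.

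However, your reading of the first clause of (F-2) is incorrect in a way that would break the argument. You claim the clause only rules out $\mu_i=\nu_i=0$ and that the mechanism is that ``the underlying principal series reduces and cannot provide a unique Langlands quotient.'' In fact the clause forbids every pair with $\nu_i=0$ and $\mu_i$ \emph{even} (e.g.\ $(\mu_i,\nu_i)=(2,0)$), and the mechanism is not a failure of uniqueness of the Langlands quotient: $\tau(\mu_i,0)$ is still an irreducible tempered representation and Langlands' theorem still applies. The real issue is overcounting. When $\nu_i=0$ and $\mu_i$ is even, $\tau(\mu_i,0)$ occurs as a constituent of a principal series induced from $GL(1,\mathbb{R})^2$, so the same irreducible quotient $\pi$ would also be parametrized by data with one fewer $GL(2)$ block and two more $GL(1)$ blocks. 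Vogan's (F-2) resolves exactly this ambiguity by choosing the $GL(1)^2$ side when $\mu_i$ is even (and symmetrically, the second and third clauses of (F-2) forbid the $GL(1)^2$ or $GL(1)\times Sp(2v)$ side when the data would better live in a $GL(2)$ block or a larger $Sp$ factor, respectively). If you allow the even-$\mu_i$, $\nu_i=0$ pairs, part (3) of the lemma fails as stated, since inequivalent parameter tuples would then produce the same $\pi$.

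A smaller point: in part (3) the sign change $\nu_i\mapsto-\nu_i$ should not be attributed to an ``outer automorphism of $GL(2,\mathbb{R})$'' (there is no relevant one); it is implemented by an element of the restricted Weyl group $W(\mathfrak{g},\mathfrak{a})$ of $Sp(2n,\mathbb{R})$ acting on the $\mathfrak{a}$-coordinates by a sign change on the block containing the $i$-th $GL(2)$ factor, which normalizes that factor and sends $\tau(\mu_i,\nu_i)$ to $\tau(\mu_i,-\nu_i)$.
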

\begin{rem*}
Parameters $\lambda_{d}$, $\mu$, $\nu$, $\varepsilon$, $\kappa$
that do not occur will be written as $0$. To avoid confusion, each
occurring parameter must be written in the form of $(\cdot)$, even
if it has only one entry. Thus ``$\kappa=0$'' and ``$\kappa=(0)$''
have totally different meanings.
\end{rem*}

\subsection{\label{subsec:Par-O}Parametrization for $\mathcal{R}(O(p,q))$}

$\mathcal{R}(O(p,q))$ with $p+q$ even is parametrized in the same
way as in \cite{Paul2005howe}. This parametrization is similar to
that in the last subsection, but with two more parameters $(\xi,\zeta)$
about the signs of lowest $K$-types. Irreducible admissible representations
of $O(p,q)$ (with $p+q$ even) are parametrized by Langlands parameters
 $(\zeta,\lambda_{d},\xi,\Psi,\mu,\nu,\varepsilon,\kappa)$ as follows.

Let $(V,(,))$ be a real vector space with nondegenerate symmetric
bilinear form $(,)$ of signature $(p,q)$, with the isometry group
$O(V)\cong O(p,q)$. Let 
\[
\{0\}\subset V_{1}\subset V_{2}\subset\cdots\subset V_{r}
\]
be an isotropic flag in $V$, with $\dim V_{i}=d_{i}$. Set $d_{0}=0$,
and $n_{i}=d_{i}-d_{i-1}$. Let $P$ be the stabilizer of this flag
in $O(V)$. Then $P=MAN$ is a parabolic subgroup of $O(V)$ with
Levi factor 
\[
MA\cong O(p-d_{r},q-d_{r})\times\prod_{i=1}^{r}GL(n_{i},\mathbb{R}).
\]
Especially, let $(n_{1},\dots,n_{r})=(\underbrace{2,\dots,2}_{s},\underbrace{1,\dots,1}_{t})$,
and let $p-d_{r}=p-2s-t$ be even. Write $p-d_{r}=2a$, $q-d_{r}=2d$
with $a,d\in\mathbb{Z}$. Then 
\[
MA\cong O(2a,2d)\times GL(2,\mathbb{R})^{s}\times GL(1,\mathbb{R})^{t}.
\]
Take $\lambda_{d}\in\mathbb{Z}^{a+d}$, $\mu\in(\mathbb{Z}_{\geqslant0})^{s}$,
$\nu\in\mathbb{C}^{s}$, $\varepsilon\in\{\pm1\}^{t}$, $\kappa\in\mathbb{C}^{t}$,
$\xi\in\{\pm1\}$, and $\zeta\in\{\pm1\}$ subject to the following
conditions.
\begin{itemize}
\item Let $(\lambda_{d},\xi,\Psi)$ parametrize a limit of discrete series
$\rho=\rho(\lambda_{d},\xi,\Psi)$ of $O(2a,$ $2d)$. Here $\lambda_{d}$
is the Harish-Chandra parameter of $\rho$, of the form 
\begin{eqnarray*}
\lambda_{d} & = & (\underbrace{a_{1},\dots,a_{1}}_{k_{1}},\underbrace{a_{2},\dots,a_{2}}_{k_{2}},\dots,\underbrace{a_{b},\dots,a_{b}}_{k_{b}},\underbrace{0,\dots,0}_{z};\\
 &  & \ \ \underbrace{a_{1},\dots,a_{1}}_{l_{1}},\underbrace{a_{2},\dots,a_{2}}_{l_{2}},\dots,\underbrace{a_{b},\dots,a_{b}}_{l_{b}},\underbrace{0,\dots,0}_{z'}),
\end{eqnarray*}
with integers $a_{1}>a_{2}>\cdots>a_{b}>0$, $|k_{i}-l_{i}|\leqslant1$
for all $i$, and $|z-z'|\leqslant1$. Furthermore, $\Psi$ is a system
of positive roots for $O(2a,2d)$ containing the standard set of positive
compact roots
\[
\Delta_{c}^{+}=\{e_{i}\pm e_{j}\mid1\leqslant i<j\leqslant a\}\cup\{f_{i}\pm f_{j}\mid1\leqslant i<j\leqslant d\}
\]
such that $\lambda_{d}$ is dominant with respect to $\Psi$, and
satisfies the condition (F-1):
\[
\text{(F-1)}\quad\text{if }\alpha\in\Delta_{c}^{+}\text{ is a simple root in }\Psi\text{, then }\langle\lambda_{d},\alpha\rangle>0.
\]
Indeed, the Harish-Chandra parameter $\lambda_{d}$ and positive root
system $\Psi$ determine a limit of discrete series of $SO(2a,2d)$
denoted by $\rho(\lambda_{d},\Psi)$. When $z+z'=0$, $\mathrm{Ind}_{SO(2a,2d)}^{O(2a,2d)}\rho(\lambda_{d},\Psi)$
is irreducible, and thus is a limit of discrete series of $O(2a,2d)$
denoted by $\rho(\lambda_{d},1,\Psi)$ (and let $\xi=1$ in that case).
 When $z+z'>0$, $\mathrm{Ind}_{SO(2a,2d)}^{O(2a,2d)}\rho(\lambda_{d},\Psi)$
is the direct sum of two limits of discrete series of $O(2a,2d)$,
precisely one of which has the lowest $K$-type with signs $(1;1)$,
denoted by $\rho(\lambda_{d},1,\Psi)$, and the other one by $\rho(\lambda_{d},-1,\Psi)$.
\item Let $(\mu,\nu)$ parametrize a relative limit of discrete series
\[
\tau=\tau(\mu,\nu)=\tau(\mu_{1},\nu_{1})\otimes\tau(\mu_{2},\nu_{2})\otimes\cdots\otimes\tau(\mu_{s},\nu_{s})
\]
of $GL(2,\mathbb{R})^{s}$, where $\mu=(\mu_{1},\mu_{2},\dots\mu_{s})\in(\mathbb{Z}_{\geqslant0})^{s}$,
$\nu=(\nu_{1},\nu_{2},\dots,\nu_{s})\in\mathbb{C}^{s}$, and $\tau(\mu_{i},\nu_{i})$
is the relative limit of discrete series of $GL(2,\mathbb{R})$ with
infinitesimal character $(\frac{1}{2}(\mu_{i}+\nu_{i}),\frac{1}{2}(-\mu_{i}+\nu_{i}))$
and lowest $K$-type $(\mu_{i}+1;1)$.
\item Let $(\varepsilon,\kappa)$ parametrize a character 
\[
\chi=\chi(\varepsilon,\kappa)=\chi(\varepsilon_{1},\kappa_{1})\otimes\chi(\varepsilon_{2},\kappa_{2})\otimes\cdots\otimes\chi(\varepsilon_{t},\kappa_{t}).
\]
 of $GL(1,\mathbb{R})^{t}$, where $\varepsilon=(\varepsilon_{1},\varepsilon_{2},\dots,\varepsilon_{t})\in\{\pm1\}^{t}$,
$\kappa=(\kappa_{1},\kappa_{2},\dots,\kappa_{t})\in\mathbb{C}^{t}$,
and $\chi(\varepsilon_{j},\kappa_{j})$ is the character of $GL(1,\mathbb{R})$
that maps $x$ to $\mathrm{sgn}(x)^{\frac{1-\varepsilon_{j}}{2}}|x|^{\kappa_{j}}$.
\end{itemize}
From above parameters, we obtain a standard module $\mathrm{Ind}_{MAN}^{O(p,q)}(\rho\otimes\tau\otimes\chi\otimes\tri_{N})$
with normalized induction, where $\tri_{N}$ is the trivial representation
of $N$.
\begin{lem}
[{\cite{Vogan1984unitarizability,Paul2005howe}}] \label{lem:non-par-O}
(1) Let the parameters $(\lambda_{d},\xi,\Psi,\mu,\nu,\varepsilon,\kappa)$
be chosen as above, and satisfy the ``non-parity condition (F-2)''
 for $O(p,q)$ with $p+q$ even:
\begin{align*}
 & \text{for }1\leqslant i\leqslant s\text{, if }\nu_{i}=0\text{, then }\mu_{i}\text{ is odd;}\\
 & \text{for }1\leqslant i,j\leqslant t\text{, if }\kappa_{i}=\pm\kappa_{j}\text{, then }\varepsilon_{i}=\varepsilon_{j}.
\end{align*}
Then for a well chosen $N$, the standard module $\mathrm{Ind}_{MAN}^{O(p,q)}(\rho\otimes\tau\otimes\chi\otimes\tri_{N})$
has a unique irreducible quotient, denoted by $\pi_{1}(\lambda_{d},\xi,\Psi,\mu,\nu,\varepsilon,\kappa)$,
if $\lambda_{d}$ contains a zero entry or $\kappa$ contains no zero
entry. Otherwise, we get two irreducible quotients distinguished by
the signs of their lowest $K$-types (as described in Proposition
\ref{prop:LKT-sgn}), denoted by $\pi_{1}(\lambda_{d},1,\Psi,\mu,\nu,\varepsilon,\kappa)$
and $\pi_{-1}(\lambda_{d},1,\Psi,\mu,\nu,\varepsilon,\kappa)$.

(2) Each irreducible admissible representation of $O(p,q)$ is infinitesimally
equivalent to some $\pi_{\zeta}(\lambda_{d},\xi,\Psi,\mu,\nu,\varepsilon,\kappa)$
obtained in (1).

(3) Two such representations $\pi_{\zeta}(\lambda_{d},\xi,\Psi,\mu,\nu,\varepsilon,\kappa)$
and $\pi_{\zeta'}(\lambda'_{d},\xi',\Psi',\mu',\nu',\varepsilon',\kappa')$
are infinitesimally  equivalent if and only if: $\lambda_{d}=\lambda'_{d}$,
$\Psi=\Psi'$, $\xi=\xi'$, $\zeta=\zeta'$, $(\mu',\nu')$ is obtained
from $(\mu,\nu)$ by a simultaneous permutation of the coordinates
of $\mu$ and $\nu$, and by possibly multiplying some of the entries
of $\nu$ by $-1$, and similarly $(\varepsilon',\kappa')$ is obtained
from $(\varepsilon,\kappa)$ by a simultaneous permutation of the
coordinates of $\varepsilon$ and $\kappa$, and by possibly multiplying
some of the entries of $\kappa$ by $-1$.\end{lem}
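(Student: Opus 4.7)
The plan is to reduce all three parts to Vogan's Langlands classification applied to $O(p,q)$, then to track how the disconnectedness of $O(p,q)$ and of $O(2a,2d)$ introduces the two extra signs $\xi$ and $\zeta$. First, I would note that since $p+q$ is even, every cuspidal parabolic subgroup of $O(p,q)$ is conjugate to one whose Levi factor has the form $MA\cong O(2a,2d)\times GL(2,\mathbb{R})^s\times GL(1,\mathbb{R})^t$ with $2a+2d+2s+t=p+q$; this follows from the root datum of $O(p,q)$ together with the fact that $GL(2,\mathbb{R})$ and $GL(1,\mathbb{R})$ are the only non-split real Levi factors of type $A$ admitting (relative) discrete series. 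On such a Levi the tempered representations of interest are the tensor products $\rho\otimes\tau\otimes\chi$ constructed in (1)--(3), and the limit-of-discrete-series piece on $O(2a,2d)$ is obtained by inducing $\rho(\lambda_d,\Psi)$ from $SO(2a,2d)$ to $O(2a,2d)$: this induction is irreducible exactly when $z+z'=0$, and otherwise splits as the sum of two irreducibles distinguished by the signs of their lowest $K$-types, which is where the parameter $\xi$ enters.

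For part (1), I would apply the Langlands quotient theorem to the standard module $\mathrm{Ind}_{MAN}^{O(p,q)}(\rho\otimes\tau\otimes\chi\otimes\tri_N)$, choosing $N$ so that the real parts of the entries of $\nu$ and $\kappa$ are weakly decreasing; this yields a unique irreducible quotient \emph{provided} the tempered inducing representation is irreducible and the standard intertwining operators between standard modules with different orders of inducing data give genuine equivalences. The two clauses of the non-parity condition (F-2) are precisely what is needed for this: the condition on $\mu_i,\nu_i$ rules out the reducibility of $\tau(\mu_i,0)$ into two limits of discrete series, and the condition on $\varepsilon_i,\kappa_i$ rules out the reducibility of the induced representation when $\kappa_i=\pm\kappa_j$. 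To account for the exceptional case where $\lambda_d$ contains no zero entry but $\kappa$ does, I would observe that $\rho(\lambda_d,\xi,\Psi)$ is then forced to have $\xi=1$ and $z+z'=0$, while the zero entry in $\kappa$ means the corresponding $GL(1,\mathbb{R})$ character does not see the disconnection of $O(p,q)/SO(p,q)$; consequently the image in $O(p,q)$ of the $SO(p,q)$-Langlands quotient admits exactly two extensions, which I distinguish by the sign $\zeta\in\{\pm1\}$ via the signs of the lowest $K$-type as given in Proposition~\ref{prop:LKT-sgn}.

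For part (2), exhaustion follows directly from Vogan's version of the Langlands classification: every irreducible admissible representation of $O(p,q)$ is the unique irreducible quotient of some standard module induced from a (limit of) tempered representation on a cuspidal parabolic, and the analysis above shows every such standard module is labeled by a tuple in our parameter set (possibly together with $\zeta$). For part (3), I would translate the standard equivalence criterion for Langlands quotients into our language: two standard modules produce the same irreducible quotient precisely when their inducing data are conjugate under the little Weyl group $W(MA,G)$. This group acts by permuting the $GL(2,\mathbb{R})$ factors (together with simultaneous permutations of the coordinates of $\mu$ and $\nu$) and flipping $\nu_i\mapsto -\nu_i$ via the $GL(2,\mathbb{R})$ outer automorphism, and analogously permuting the $GL(1,\mathbb{R})$ factors and flipping $\kappa_j\mapsto -\kappa_j$; the $O(2a,2d)$ factor is rigid because it is the anisotropic kernel, so $\lambda_d,\Psi,\xi,\zeta$ must match on the nose.

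The main obstacle I expect is bookkeeping in the exceptional case: one must verify both that the two quotients are genuinely inequivalent (by exhibiting different lowest $K$-type signs via Proposition~\ref{prop:LKT-sgn}) and that they together exhaust all irreducible quotients of the standard module, and then check that the Weyl group action does not secretly identify them. A secondary technical issue is ensuring that the "well chosen $N$" in the Langlands data is uniquely specified up to the equivalences listed in part (3), so that the parametrization $\pi_\zeta(\lambda_d,\xi,\Psi,\mu,\nu,\varepsilon,\kappa)$ is well-defined; this is handled by insisting on weakly decreasing real parts of $\nu$ and $\kappa$, with remaining ambiguity absorbed precisely by the permutation and sign-flip equivalences of part (3).
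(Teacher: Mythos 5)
The paper offers no proof of this lemma: it is quoted verbatim from \cite{Vogan1984unitarizability} and \cite{Paul2005howe}, so there is no in-paper argument to compare against. Your sketch follows the same route as those sources — Vogan's Langlands classification applied to the cuspidal parabolics of $O(p,q)$ with Levi $O(2a,2d)\times GL(2,\mathbb{R})^{s}\times GL(1,\mathbb{R})^{t}$, the parameter $\xi$ arising from the splitting of $\mathrm{Ind}_{SO(2a,2d)}^{O(2a,2d)}\rho(\lambda_{d},\Psi)$ when $z+z'>0$, and the equivalences in (3) coming from $W(MA,G)$-conjugacy of the inducing data — and as a proof plan it is sound. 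Two caveats. First, the substantive content of part (1) is the exact dichotomy (unique quotient iff $\lambda_{d}$ has a zero entry or $\kappa$ has no zero entry), and your sketch asserts rather than derives it: the heuristic that a zero entry of $\kappa$ ``does not see the disconnection of $O(p,q)/SO(p,q)$'' must be replaced by an actual Clifford-theory/$R$-group computation showing that the tempered part of the inducing datum is fixed by the relevant outer element exactly under that condition, and that a zero entry of $\lambda_{d}$ (equivalently $z+z'>0$) already absorbs the disconnectedness into the $\xi$-parameter so that no further splitting occurs; you correctly flag this as the main obstacle, but it is where all the work lies. Second, two smaller points: $O(2a,2d)$ is the $M$-factor of a cuspidal parabolic, not the anisotropic kernel (it is far from anisotropic), and the final conditions (F-1)/(F-2) are needed not only for irreducibility in (1) but also for the ``only if'' direction of (3) — without them, distinct parameters can yield equivalent modules via Hecht--Schmid-type identities, which is precisely why the classification is stated only for parameters satisfying (F-2).
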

\begin{rem*}
$\zeta=-1\Rightarrow$ $\lambda_{d}$ contains no zero entry $\Rightarrow$
 $\xi=1$. So $(\xi,\zeta)\neq(-1,-1)$.
\end{rem*}

\subsection{\label{sub:InfCh}Infinitesimal character}

Let $G$ be a reductive Lie group with complexified Lie algebra $\mathfrak{g}$,
and $Z(\mathfrak{g})$ the center of the universal enveloping algebra
$U(\mathfrak{g})$. For $\pi\in\mathcal{R}(G)$ and $z\in Z(\mathfrak{g})$,
Schur's Lemma asserts that $\pi(z)$ acts by a scalar $\lambda(z)\in\mathbb{C}$.
Then $z\mapsto\lambda(z)$ gives a homomorphism of algebras $\lambda:Z(\mathfrak{g})\to\mathbb{C}$,
which is called the infinitesimal character of $\pi$.

Let $\mathfrak{h}$ be a Cartan subalgebra of $\mathfrak{g}$. Let
$W$ be the Weyl group $\mathrm{N}_{G}(\mathfrak{h})/\mathrm{Z}_{G}(\mathfrak{h})$.
Via the Harish-Chandra isomorphism $Z(\mathfrak{g})\simeq S(\mathfrak{h})^{W}$
(the set of $W$-fixed elements of the symmetric algebra $S(\mathfrak{h})$),
the infinitesimal character of $\pi$ is written as an element of
$\mathfrak{h}^{*}$, up to the action of $W$.
\begin{prop}
[\cite{Paul2005howe}] \label{prop:inf.char} The infinitesimal
character of $\pi(\lambda_{d},\Psi,\mu,\nu,\varepsilon,\kappa)\in\mathcal{R}(Sp(2n,\mathbb{R}))$
or $\pi_{\zeta}(\lambda_{d},\xi,\Psi,\mu,\nu,\varepsilon,\kappa)\in\mathcal{R}(O(p,q))$
with $p+q$ even is
\[
(\lambda_{d}\mid(\frac{\mu_{1}+\nu_{1}}{2},\dots,\frac{\mu_{s}+\nu_{s}}{2},\frac{-\mu_{1}+\nu_{1}}{2},\dots,\frac{-\mu_{s}+\nu_{s}}{2},\kappa_{1},\dots,\kappa_{t})),
\]
up to the action of $W$ which consists of all permutations and sign-changes
of coordinates.
\end{prop}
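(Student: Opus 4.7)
The plan is to reduce to the behavior of infinitesimal characters under normalized parabolic induction and then read off the contributions from each factor of the Levi. Since $\pi(\lambda_d,\Psi,\mu,\nu,\varepsilon,\kappa)$ (respectively $\pi_\zeta(\lambda_d,\xi,\Psi,\mu,\nu,\varepsilon,\kappa)$) is by construction the unique irreducible Langlands quotient of the standard module $\mathrm{Ind}_{MAN}^{G}(\rho\otimes\tau\otimes\chi\otimes\tri_N)$ with normalized induction, and since a quotient has the same infinitesimal character as the module it sits in, I would first invoke the standard fact that normalized parabolic induction from $MAN$ to $G$ transports infinitesimal characters exactly (via the inclusion $\mathfrak{h}^* \subseteq \mathfrak{g}^*$ coming from a Cartan $\mathfrak{h}$ of $\mathfrak{m}\oplus\mathfrak{a}$ that is also a Cartan of $\mathfrak{g}$, with the $\rho$-shift absorbed into the normalization). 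This reduces the problem to computing the infinitesimal character of $\rho\otimes\tau\otimes\chi$ on $MA$.

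Next I would compute each tensor factor separately. For the first factor $\rho=\rho(\lambda_d,\Psi)$ (respectively $\rho(\lambda_d,\xi,\Psi)$), $\rho$ is a limit of discrete series whose Harish-Chandra parameter is by definition $\lambda_d$, and the Harish-Chandra parameter of a (limit of) discrete series is its infinitesimal character written in the standard Cartan coordinates; so this factor contributes $\lambda_d \in \mathbb{C}^v$ (resp. $\mathbb{C}^{a+d}$). For each $GL(2,\mathbb{R})$ factor, $\tau(\mu_i,\nu_i)$ is specified in Subsections \ref{subsec:Par-Sp}--\ref{subsec:Par-O} to have infinitesimal character $(\tfrac{\mu_i+\nu_i}{2},\tfrac{-\mu_i+\nu_i}{2})$; this is where the pairs $(\tfrac{\mu_i+\nu_i}{2},\tfrac{-\mu_i+\nu_i}{2})$ enter. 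For each $GL(1,\mathbb{R})$ factor, $\chi(\varepsilon_j,\kappa_j)(x)=\mathrm{sgn}(x)^{(1-\varepsilon_j)/2}|x|^{\kappa_j}$, which under $\exp:\mathbb{R}\to\mathbb{R}_{>0}$ has differential $\kappa_j$, hence its infinitesimal character is $\kappa_j\in\mathbb{C}$.

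Finally, I would assemble these contributions. Choosing a Cartan of $\mathfrak{g}$ compatible with the decomposition of $\mathfrak{m}\oplus\mathfrak{a}$ as described in the construction of $P=MAN$, the infinitesimal characters of the factors concatenate into
\[
\bigl(\lambda_d\,\bigl|\,\tfrac{\mu_1+\nu_1}{2},\dots,\tfrac{\mu_s+\nu_s}{2},\tfrac{-\mu_1+\nu_1}{2},\dots,\tfrac{-\mu_s+\nu_s}{2},\kappa_1,\dots,\kappa_t\bigr),
\]
an element of $\mathfrak{h}^*$ defined up to the full Weyl group $W$ of $\mathfrak{g}$ (which acts by all permutations and sign changes of coordinates in the cases at hand). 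Passing from the Harish-Chandra isomorphism $Z(\mathfrak{g})\simeq S(\mathfrak{h})^W$ gives the claimed formula, and the ambiguity up to $W$ matches exactly the ambiguity stated in Lemmas \ref{lem:non-par-Sp}(3) and \ref{lem:non-par-O}(3) under which the parameters are defined.

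The only real obstacle is bookkeeping: one must verify that the ordering chosen for the Cartan subalgebra of $\mathfrak{g}$ is consistent with the way the Levi factors are listed, and that the $\rho$-shift from unnormalized to normalized induction is indeed absorbed so that no extra shift appears in the third block of entries. Once the conventions in Subsections \ref{subsec:Par-Sp} and \ref{subsec:Par-O} are fixed, this becomes routine; no deeper input (such as Vogan's classification theorem) is needed beyond what has already been used to define the parameters.
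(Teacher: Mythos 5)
Your proof is correct, and it is essentially the standard argument for such a statement. Note that the paper itself does not give a proof here: the proposition is cited to \cite{Paul2005howe} and stated without argument, so there is no ``paper's own proof'' to compare against. Your write-up supplies the omitted reasoning: normalized induction preserves the infinitesimal character, the infinitesimal character of a tensor product over the factors of $MA$ is the concatenation, the Harish-Chandra parameter of the (limit of) discrete series factor is its infinitesimal character, the $GL(2,\mathbb{R})$ factor contributes $(\tfrac{\mu_i+\nu_i}{2},\tfrac{-\mu_i+\nu_i}{2})$ by the very specification of $\tau(\mu_i,\nu_i)$ in Subsections \ref{subsec:Par-Sp}--\ref{subsec:Par-O}, and the $GL(1,\mathbb{R})$ character contributes $\kappa_j$ since $\tfrac{d}{dt}\bigl|_{t=0}\chi(e^t)=\kappa_j$ with no $\rho$-shift on an abelian factor. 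The only quibble is your final remark tying the $W$-ambiguity to Lemmas \ref{lem:non-par-Sp}(3) and \ref{lem:non-par-O}(3): the $W$-ambiguity is already forced by the Harish-Chandra isomorphism $Z(\mathfrak{g})\simeq S(\mathfrak{h})^W$ itself, independently of those lemmas; the lemmas merely confirm consistency of the parametrization. This is a cosmetic point, not a gap.
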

\cite{Przebinda1996duality} gives the duality correspondence of infinitesimal
characters under the local theta correspondence for all reductive
dual pairs over $\mathbb{R}$.
\begin{prop}
[\cite{Przebinda1996duality}] \label{prop:CrspIF} Let $\pi\in\mathcal{R}(O(p,q))$
with $p+q$ even corresponds to $\pi'\in\mathcal{R}(Sp(2n,\mathbb{R}))$
in the local theta correspondence. Write the infinitesimal character
of $\pi$ as $x=(x_{1},\dots,x_{m})\in\mathbb{C}^{m}$ with $m=\frac{p+q}{2}$,
and that of $\pi'$ as $y=(y_{1},\dots,y_{n})\in\mathbb{C}^{n}$.
Let $\sim$ denote the equivalence up to permutations and sign-changes
of coordinates.

(1) If $m=n$, then $x\sim y$.

(2) If $m>n$, then $x\sim(y\mid(0,1,2,\dots,m-n-1))$. 

(3) If $m<n$, then $y\sim(x\mid(1,2,3,\dots,n-m))$.
\end{prop}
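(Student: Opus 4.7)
The plan is to exploit the joint action of $U(\mathfrak{g})$ and $U(\mathfrak{g}')$ on the Fock space $\mathcal{F}$ realizing $\omega$, and to read off the map between the centers $Z(\mathfrak{g})$ and $Z(\mathfrak{g}')$ that this action induces. Since $G$ and $G'$ commute in $Sp(2n(p+q),\mathbb{R})$, the operators $\omega(z)$ for $z\in Z(\mathfrak{g})$ commute with the $G'$-action, and similarly on the other side; on each $\pi\otimes\pi'$ isotypic summand these operators act by scalars, which forces a well-defined map between infinitesimal characters. Concretely, I would fix Cartan subalgebras $\mathfrak{h}\subset\mathfrak{g}$ and $\mathfrak{h}'\subset\mathfrak{g}'$ and use the Harish-Chandra isomorphisms $Z(\mathfrak{g})\simeq S(\mathfrak{h})^W$ and $Z(\mathfrak{g}')\simeq S(\mathfrak{h}')^{W'}$, reducing the statement to a polynomial identity between generators of the two invariant rings.

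The main engine will be a Capelli-type identity for the dual pair $(O(p,q),Sp(2n,\mathbb{R}))$: writing the quadratic Casimir $C_{\mathfrak{g}}$ as a symmetric expression in the creation/annihilation operators on $\mathcal{F}$ and rearranging into a $\mathfrak{g}'$-invariant expression yields $\omega(C_{\mathfrak{g}})=\omega(C_{\mathfrak{g}'})+\text{constant}$, and similar reorderings of the higher Casimirs express all elementary symmetric functions in the $x_i$ in terms of those in the $y_j$, up to known shifts. The three cases are then distinguished by $\rho$-shift bookkeeping: for $m=n$ the coordinate map is the identity modulo $W\times W'$; for $m>n$, a seesaw with the chain of maximal parabolics of $O(p,q)$ whose Levi factors contain $GL(1)^{m-n}$ reduces to the equal-rank case and the half-sum of positive roots of the parabolic on the orthogonal side contributes the sequence $(0,1,2,\dots,m-n-1)$; for $m<n$, the symmetric argument on the symplectic side with a $GL(1)^{n-m}$ Levi produces $(1,2,\dots,n-m)$, the different starting entry reflecting the different $\rho$ on the symplectic vs.\ orthogonal factor.

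The main obstacle will be pinning down the precise form of the shifts, since this requires carefully matching the conventions in Lemma \ref{lem:non-par-Sp} and Lemma \ref{lem:non-par-O} with the Harish-Chandra normalization used by the Capelli computation, and ruling out any contribution from the metaplectic covering. A cleaner alternative, if the direct computation becomes cumbersome, is a bootstrap strategy: first establish the equal-rank case $m=n$ by the Capelli identity for the quadratic and quartic Casimirs, then extend by parabolic induction and Kudla's persistence principle, verifying that the passage $n\rightsquigarrow n+1$ (or $(p,q)\rightsquigarrow(p+1,q+1)$) inserts exactly one new entry with the predicted value into the infinitesimal character. This second route dovetails naturally with the explicit induction principle used elsewhere in the paper.
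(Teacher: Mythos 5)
The paper does not prove this proposition; it is cited directly from Przebinda's paper \cite{Przebinda1996duality}, so there is no in-paper argument to compare against. That said, your two sketches are both plausible, and each is worth a remark. Your first route (Capelli identity plus invariant theory on the Fock model) is indeed the spirit of Przebinda's actual proof. What you describe as ``similar reorderings of the higher Casimirs'' is the core technical content of that paper: the quadratic Casimir identity is elementary, but matching the full images of $Z(\mathfrak{g})$ and $Z(\mathfrak{g}')$ in $\mathrm{End}(\mathcal{F})$ requires a careful description of the homomorphism between the two Harish-Chandra images, and that is a substantial calculation, not a routine ``reordering.''

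Your bootstrap alternative is cleaner and is worth isolating. Both case (2) and case (3) reduce to the equal-rank case $m=n$ via the parabolic chains of \cite[Theorem 30]{Paul2005howe}: for $m<n$, fix $\pi'$ and go up $(p,q)\rightsquigarrow(p+1,q+1)$ until $m'=n$; for $m>n$, fix $\pi$ and go up $n\rightsquigarrow n+1$ until $n'=m$. At each step the theta lift is an irreducible subquotient of a normalized parabolic induction from a Levi $L\times GL(1,\mathbb{R})$, so its infinitesimal character is obtained from that of the inducing datum by adjoining the exponent of the $GL(1)$-character; the exponent $n-\tfrac{p+q}{2}$ (on the orthogonal side) produces the string $n-m,n-m-1,\dots,1$, while the exponent $\tfrac{p+q}{2}-n-1$ (on the symplectic side) produces $m-n-1,\dots,1,0$, which explains the asymmetry between $(1,\dots,n-m)$ and $(0,1,\dots,m-n-1)$ exactly as you guessed. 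Be careful to phrase the chain as going \emph{up}, since Kudla's persistence principle only guarantees nonvanishing in the increasing direction; you cannot descend from equal rank to lower rank without separately knowing the lower lift is nonzero. The genuine gap in this route is the equal-rank base case $x\sim y$: it is not ``established by the Capelli identity for the quadratic and quartic Casimirs'' as you suggest, since even for $m=n$ one needs to match entire invariant rings, not just two polynomials. In practice this base case is either taken from Przebinda again, or read off from the explicit equal-rank classification of Moeglin/Paul — so the bootstrap does not escape the need for an input of comparable depth; it just localizes it to the rank-balanced pair.
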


\section{Explicit induction principle}

The \emph{explicit induction principle} for $(O(p,q),Sp(2n,\mathbb{R}))$
with $p+q$ even is formulated in this section.

\subsection{Explicit induction principle on $n$}

Let $\pi\in\mathcal{R}(O(p,q))$ with $p+q$ even and $\theta_{n}(\pi)=\pi'\in\mathcal{R}(Sp(2n,\mathbb{R}))$.
Let $(W',\langle,\rangle)$ be a symplectic space over $\mathbb{R}$
of dimension $2(n+1)$ with the isometry group $Sp(W')\cong Sp(2(n+1),\mathbb{R})$.
Take an isotropic subspace $W_{0}$ of dimension $1$ in $W'$. 
Then the stabilizer of $W_{0}$ in $Sp(W')$ is a parabolic subgroup
$M'A'N'$ with Levi factor
\[
M'A'\cong Sp(2n,\mathbb{R})\times GL(1,\mathbb{R}).
\]
 
\begin{lem}
[{\cite[Th.30(1)]{Paul2005howe}}] There exists a nontrivial $O(p,q)\times Sp(2(n+1),\mathbb{R})$-map
(on the level of Harish-Chandra modules)
\[
\omega\longrightarrow\pi\otimes\mathrm{Ind}_{M'A'N'}^{Sp(2(n+1),\mathbb{R})}(\pi'\otimes\xi'\otimes\tri_{N'}),
\]
where $\xi'$ is the character of $GL(1,\mathbb{R})$ with $\xi'(g)=|\det(g)|^{\frac{p+q}{2}-n-1}\mathrm{sgn}(\det(g))^{\frac{p-q}{2}}$,
and $\tri_{N'}$ is the trivial representation of $N'$. Let $I'$
denote the above normalized induction. Then $\theta_{n+1}(\pi)$ is
an irreducible subquotient of $I'$.
\end{lem}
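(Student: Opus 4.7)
The plan is to deduce this from a computation of the $N'$-coinvariants of the oscillator representation $\omega = \omega_{n+1}$ for the pair $(O(p,q), Sp(2(n+1),\mathbb{R}))$, followed by Frobenius reciprocity.

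First, I would choose a symplectic decomposition $W' = W_0 \oplus W \oplus W_0^*$, where $W_0^*$ is an isotropic line dual to $W_0$ and $W$ is a symplectic subspace of dimension $2n$ whose isometry group is the $Sp(2n,\mathbb{R})$ factor of $M'A'$. In a mixed Schr\"odinger model of $\omega_{n+1}$ adapted to this decomposition, the unipotent radical $N'$ acts by characters on the polynomial variables attached to $W_0$, and the standard Jacquet-module computation for the oscillator representation (going back to Kudla and used in this form in \cite{Paul2005howe}) identifies the $N'$-coinvariants as
\[
(\omega_{n+1})_{N'} \;\cong\; \omega_n \otimes \xi'
\]
as $O(p,q)\times M'A'$-modules, with $\omega_n$ the oscillator representation for $(O(p,q), Sp(2n,\mathbb{R}))$. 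The twist $\xi'(g)=|\det g|^{(p+q)/2 - n - 1}\mathrm{sgn}(\det g)^{(p-q)/2}$ is pinned down by the modular character of $P'$, the normalization of parabolic induction, and the explicit splitting formulas of \cite{Kudla1994splitting}.

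Next, by the definition of theta lifts, the hypothesis $\theta_n(\pi) = \pi'$ provides a nonzero $O(p,q)\times Sp(2n,\mathbb{R})$-map $\omega_n \to \pi \otimes \pi'$. Tensoring with $\xi'$, composing with the surjection $\omega_{n+1} \twoheadrightarrow (\omega_{n+1})_{N'}$, and viewing $\pi'\otimes\xi'\otimes\tri_{N'}$ as a $P'$-module in the usual way, one obtains a nonzero $O(p,q)\times P'$-intertwining map
\[
\omega_{n+1} \longrightarrow \pi \otimes (\pi' \otimes \xi' \otimes \tri_{N'}).
\]
Frobenius reciprocity, in the normalization that matches $I'$, then yields the desired nontrivial $O(p,q)\times Sp(2(n+1),\mathbb{R})$-map $\omega \to \pi\otimes I'$.

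Finally, to see that $\theta_{n+1}(\pi)$ occurs as an irreducible subquotient of $I'$, I would project the image of this map onto the $\pi$-isotypic component for $O(p,q)$ and apply Howe duality: the maximal $\pi$-isotypic quotient of $\omega_{n+1}$ is of the form $\pi\otimes\theta_{n+1}(\pi)$, so any $Sp(2(n+1),\mathbb{R})$-module which receives a nonzero map from this quotient must contain $\theta_{n+1}(\pi)$ as a subquotient. The main obstacle is the coinvariant computation in the first paragraph: a careful bookkeeping of the metaplectic cocycle is required to verify that both the $|\det|$-exponent $\tfrac{p+q}{2}-n-1$ and the sign-exponent $\tfrac{p-q}{2}$ in $\xi'$ come out exactly as stated, since each is sensitive to the choice of splitting of $\mathbb{C}^{\times}\to Mp\to Sp$ over $G\cdot G'$ fixed at the outset.
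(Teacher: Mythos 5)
The paper does not prove this lemma; it cites it verbatim as Theorem 30(1) of \cite{Paul2005howe}, so there is no in-text argument to compare against. Your sketch does follow the standard ``induction principle'' line of reasoning (mixed model $\rightsquigarrow$ Jacquet module $\rightsquigarrow$ Frobenius reciprocity $\rightsquigarrow$ Howe duality), which is the method underlying Paul's Theorem 30 and its antecedents in Kudla and M{\oe}glin, so the \emph{shape} of your argument is right.

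Two imprecisions are worth flagging. First, the identification
$(\omega_{n+1})_{N'} \cong \omega_n\otimes\xi'$
overstates what the Jacquet-module computation gives: for the oscillator representation restricted to a maximal parabolic the $N'$-coinvariants carry a filtration, and $\omega_n\otimes\xi'$ is only its top graded piece (in the mixed model this corresponds to ``evaluation at the zero section'' of the Schr\"odinger variable attached to $W_0$). What the argument actually needs is only the existence of an $O(p,q)\times M'A'$-equivariant surjection $\omega_{n+1}\twoheadrightarrow\omega_n\otimes\xi'$ that kills $N'$; stating it as an isomorphism of coinvariants is both false and unnecessary. Second, in the final paragraph the maximal $\pi$-isotypic quotient of $\omega_{n+1}$ is $\pi\otimes\Theta_{n+1}(\pi)$, where $\Theta_{n+1}(\pi)$ is the ``big'' theta lift of finite length having $\theta_{n+1}(\pi)$ as its unique irreducible quotient, not $\pi\otimes\theta_{n+1}(\pi)$ itself. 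The deduction still goes through, since any nonzero image of $\Theta_{n+1}(\pi)$ in $I'$ retains $\theta_{n+1}(\pi)$ as a quotient, but the statement as written conflates the two theta lifts. With these two corrections the proposal is a sound reconstruction of the proof of \cite[Theorem 30(1)]{Paul2005howe}.
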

Let $\theta_{n}(\pi)=\pi'=\pi(\lambda_{d},\Psi,\mu,\nu,\varepsilon,\kappa)$,
and $\boldsymbol{I}'=\boldsymbol{I}(\lambda_{d},\Psi,\mu,\nu,(\varepsilon|((-1)^{\frac{p-q}{2}})),(\kappa|(1+n-\frac{p+q}{2})))$
be the standard module of parabolic induction of $Sp(2(n+1),\mathbb{R})$
with the given Langlands parameters. Then $I'$ is a subquotient of
$\boldsymbol{I}'$, and $\theta_{n+1}(\pi)$ is an irreducible subquotient
of $\boldsymbol{I}'$.

It is well-known that in a standard modules of parabolic induction,
each lowest $K$-type occurs with multiplicity one (cf. \cite{Vogan1979algebraic}).
Therefore, if $\tau\in\mathcal{A}(\boldsymbol{I}')$, then there is
a unique irreducible subquotient of $\boldsymbol{I}'$ containing
$\tau$.

When $p+q\neq2n+2$, let $\pi'_{1}=\pi(\lambda_{d},\Psi,\mu,\nu,(\varepsilon|((-1)^{\frac{p-q}{2}})),(\kappa|(1+n-\frac{p+q}{2})))$
with a possible modification on parameters: if $\pm\kappa_{i}=1+n-\frac{p+q}{2}$
and $\varepsilon_{i}\neq(-1)^{\frac{p-q}{2}}$ for some $i$, delete
these four entries, and add $(\mu_{s+1},\nu_{s+1})=(0,2\kappa_{i})$
into $(\mu,\nu)=((\mu_{1},\dots,\mu_{s}),(\nu_{1},\dots,\nu_{s}))$.
Since $1+n-\frac{p+q}{2}\neq0$, no zero entry is added. So the resulting
parameters satisfy the condition (F-2). Whether modified or not, $\pi'_{1}$
is infinitesimal equivalent to an irreducible subquotient of $\boldsymbol{I}'$
(see \cite{Vogan1981representations,Vogan1984unitarizability}).
\begin{prop}
\label{prop:LKT+1}  If $\theta_{n+1}(\pi)$ contains a lowest $K$-type
of $\pi'_{1}$, then $\theta_{n+1}(\pi)=\pi'_{1}$.\end{prop}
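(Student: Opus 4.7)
The plan is to apply the multiplicity-one property of lowest $K$-types in standard modules of parabolic induction, already recalled immediately before the statement. Both $\pi'_1$ and $\theta_{n+1}(\pi)$ are irreducible subquotients of $\boldsymbol{I}'$, so it will suffice to show that every lowest $K$-type $\tau$ of $\pi'_1$ is also a lowest $K$-type of $\boldsymbol{I}'$: the multiplicity-one property then guarantees that $\tau$ lies in a unique irreducible subquotient of $\boldsymbol{I}'$, and since this subquotient is necessarily $\pi'_1$ on the one hand and $\theta_{n+1}(\pi)$ on the other, it follows that $\theta_{n+1}(\pi) = \pi'_1$.

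I would split the verification $\mathcal{A}(\pi'_1) \subseteq \mathcal{A}(\boldsymbol{I}')$ according to whether the ``possible modification'' has been triggered. In the unmodified case the argument is essentially tautological: $\pi'_1$ is by construction the Langlands quotient of $\boldsymbol{I}'$ itself, and Vogan's Langlands classification by lowest $K$-types gives $\mathcal{A}(\pi'_1) = \mathcal{A}(\boldsymbol{I}')$ directly. In the modified case, $\pi'_1$ is instead the Langlands quotient of a coarser standard module $\boldsymbol{I}''$, obtained by replacing the pair of characters $\chi(\varepsilon_i, \kappa_i) \otimes \chi((-1)^{(p-q)/2}, 1+n-\frac{p+q}{2})$ of $GL(1,\mathbb{R}) \times GL(1,\mathbb{R})$ by the relative limit of discrete series $\tau(0, 2\kappa_i)$ of $GL(2,\mathbb{R})$. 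Induction in stages realizes $\boldsymbol{I}''$ as a subquotient of $\boldsymbol{I}'$, because the principal series of $GL(2,\mathbb{R})$ induced from those two characters contains $\tau(0, 2\kappa_i)$ as a subquotient precisely under the failure of the non-parity condition $\varepsilon_i \neq (-1)^{(p-q)/2}$ at the point $\pm \kappa_i = 1 + n - \frac{p+q}{2}$.

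The main obstacle is then to verify that the lowest $K$-types of $\boldsymbol{I}''$ remain lowest $K$-types of the larger $\boldsymbol{I}'$. I plan to carry this out by running the explicit algorithm of Proposition \ref{prop:LKT-Sp} side by side on the parameters of $\boldsymbol{I}'$ and $\boldsymbol{I}''$, and checking that the contribution of the single $GL(2,\mathbb{R})$-block with parameters $(0, 2\kappa_i)$ to the output reproduces the combined contribution of the two $GL(1,\mathbb{R})$-blocks it replaces. Since the algorithm is combinatorial in the ``shift data'' attached to each Levi factor, this reduces to a finite check at the specified spectral point; once it is in hand, the argument of the first paragraph concludes the proof.
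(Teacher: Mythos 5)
Your proposal is correct and follows essentially the same route as the paper: both establish that $\pi'_1$ and $\theta_{n+1}(\pi)$ are irreducible subquotients of the standard module $\boldsymbol{I}'$, verify (via the algorithm of Proposition \ref{prop:LKT-Sp}, which depends only on the discrete data) that the lowest $K$-types of $\pi'_1$ are lowest $K$-types of $\boldsymbol{I}'$, and then conclude by multiplicity-one of lowest $K$-types in standard modules. Your case split (unmodified vs.\ modified) and the explicit introduction of the intermediate module $\boldsymbol{I}''$ via induction in stages is a slightly more careful articulation of the claim the paper compresses into ``for $\pi'_1$ this algorithm outputs the same result,'' but it is the same argument.
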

\begin{proof}
The $K$-structure of a standard module depends only on the discrete
Langlands parameters, so $\mathcal{A}(\boldsymbol{I}')=\mathcal{A}(\pi_{1}')$
 by the algorithm in Proposition \ref{prop:LKT-Sp}. Both $\theta_{n+1}(\pi)$
and $\pi_{1}'$ are infinitesimal equivalent to irreducible subquotients
of $\boldsymbol{I}'$, and they contain the same lowest $K$-type
of $\boldsymbol{I}'$ which occurs with multiplicity one, so they
are infinitesimal equivalent to each other.
\end{proof}
Look for a sufficient condition so that $\theta_{n+1}(\pi)$ contains
a lowest $K$-type of $\pi'_{1}$. On the one hand,  $\mathcal{D}(\theta_{n+1}(\pi))=\phi_{n+1}\circ\phi_{p,q}(\mathcal{D}(\pi'))$.
On the other hand, $\mathcal{A}(\pi'_{1})$ can be got from $\mathcal{A}(\pi')$
by a simple operation when $\pi'$ satisfies certain conditions. If
$\mathcal{D}(\pi')\cap\mathcal{A}(\pi')$ is nonempty, and $\phi_{n+1}\circ\phi_{p,q}$
coincides with this operation, then it is done.
\begin{prop}
Let $\sigma'$ be a $U(n)$-type with $\phi_{p,q}(\sigma')\neq0$.
Then $\phi_{n+1}\circ\phi_{p,q}(\sigma')=\sigma'_{1}$ is the $U(n+1)$-type
parametrized by the same entries of $\sigma'$ with one more entry
$\frac{p-q}{2}$ added.\end{prop}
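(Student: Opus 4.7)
The plan is to unwind the definitions directly using Proposition~\ref{prop:CorKT}. Since $\sigma=\phi_{p,q}(\sigma')\neq 0$, write
\[
\sigma' = (\tfrac{p-q}{2},\dots,\tfrac{p-q}{2}) + (c_1,\dots,c_n) \quad\text{with } c_1\geqslant \cdots\geqslant c_n,
\]
and let $\sigma=(a_1,\dots,a_x,0,\dots,0;\epsilon)\otimes(b_1,\dots,b_y,0,\dots,0;\eta)$ be the corresponding $O(p)\times O(q)$-type. By the explicit formula of Proposition~\ref{prop:CorKT}(1), the $n$-tuple $(c_1,\dots,c_n)$ must be
\[
(a_1,\dots,a_x,\underbrace{1,\dots,1}_{\frac{1-\epsilon}{2}(p-2x)},\underbrace{0,\dots,0}_{z},\underbrace{-1,\dots,-1}_{\frac{1-\eta}{2}(q-2y)},-b_y,\dots,-b_1)
\]
for some $z\geqslant 0$, where $z=n-x-y-\tfrac{1-\epsilon}{2}(p-2x)-\tfrac{1-\eta}{2}(q-2y)$; the nonnegativity of $z$ is exactly the occurrence condition for $\sigma'$ in the joint harmonics at level $n$.

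Next I would verify that $\sigma$ occurs in the joint harmonics for the dual pair $(O(p,q),Sp(2(n+1),\mathbb{R}))$, i.e.\ that
\[
n+1\geqslant x+y+\tfrac{1-\epsilon}{2}(p-2x)+\tfrac{1-\eta}{2}(q-2y).
\]
This is immediate from $z\geqslant 0$ at level $n$. Thus $\phi_{n+1}(\sigma)\neq 0$, and Proposition~\ref{prop:CorKT}(1) applies to compute it.

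Applying that formula with $n$ replaced by $n+1$, and using the same $x,y,\epsilon,\eta,a_i,b_j$, the new "zero-block" in the middle has length
\[
(n+1)-x-y-\tfrac{1-\epsilon}{2}(p-2x)-\tfrac{1-\eta}{2}(q-2y)=z+1,
\]
exactly one larger than before. Hence $\phi_{n+1}(\sigma)$ is obtained from $\sigma'$ by inserting one extra coordinate $\tfrac{p-q}{2}$ into the zero-block (the shift vector $(\tfrac{p-q}{2},\dots,\tfrac{p-q}{2})$ grows by one entry, and the inserted $c$-coordinate is $0$). This is precisely the description of $\sigma'_1$, completing the proof.

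No serious obstacle is expected: the only thing to be careful about is matching the conventions of Proposition~\ref{prop:CorKT} when the blocks of $1$'s, $0$'s, or $-1$'s are empty (e.g.\ when $\epsilon=+1$ so that $\tfrac{1-\epsilon}{2}(p-2x)=0$, or when $x=[\tfrac{p}{2}]$ so that there are no interior zeros on the $O(p)$-side). In every such degenerate case the computation of the new entry's location reduces to the same conclusion — one additional $0$ in the middle block — so the identification with $\sigma'_1$ still holds.
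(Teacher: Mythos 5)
Your argument is correct and follows the same route as the paper's proof, which is just the one-line instruction to check via the explicit formulas of Proposition~\ref{prop:CorKT}; you have simply spelled out the bookkeeping (identifying the middle zero-block of length $z$ and noting that passing from $n$ to $n+1$ lengthens both the constant shift vector and that zero-block by one, yielding exactly one new coordinate equal to $\tfrac{p-q}{2}$).
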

\begin{proof}
Check by the explicit descriptions of $\phi_{n}$ and $\phi_{n+1}$
in Proposition \ref{prop:CorKT}.
\end{proof}
Let $\lambda_{d}$ contain exactly $k(\lambda_{d})$ positive entries,
$l(\lambda_{d})$ negative ones, and $z(\lambda_{d})$ zero ones.
\begin{prop}
\label{prop:cond_lambda} Let $(\lambda_{d},\Psi)$ satisfy one of
the following conditions:

(1) $\frac{p-q}{2}=k(\lambda_{d})-l(\lambda_{d})$;

(2) $\frac{p-q}{2}=k(\lambda_{d})-l(\lambda_{d})+1$, $z(\lambda_{d})>0$,
and $e_{k(\lambda_{d})+1}+e_{k(\lambda_{d})+z(\lambda_{d})}\in\Psi$; 

(3) $\frac{p-q}{2}=k(\lambda_{d})-l(\lambda_{d})-1$, $z(\lambda_{d})>0$,
and $e_{k(\lambda_{d})+1}+e_{k(\lambda_{d})+z(\lambda_{d})}\notin\Psi$.

 Then $\mathcal{A}(\pi'_{1})=\{\sigma'_{1}\mid\sigma'\in\mathcal{A}(\pi')\}$,
where $\sigma'_{1}$ is got from $\sigma'$ by adding an entry $\frac{p-q}{2}$.\end{prop}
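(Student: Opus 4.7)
The approach is to apply the explicit LKT algorithm (Proposition \ref{prop:LKT-Sp}) to both $\pi'$ and $\pi'_1$ and compare the outputs coordinate by coordinate. Since the parameters of $\pi'_1$ are obtained from those of $\pi'$ simply by appending one pair $(\varepsilon_{t+1},\kappa_{t+1}) = ((-1)^{(p-q)/2},\ 1+n-(p+q)/2)$ to $(\varepsilon,\kappa)$, and under our three hypotheses the non-parity condition (F-2) for $Sp(2(n+1),\mathbb{R})$ remains in force so that the ``possible modification'' does not trigger, the algorithm will produce highest weights differing from those of $\pi'$ by the insertion of a single new coordinate. The task reduces to showing this coordinate equals $\frac{p-q}{2}$ and that it is placed in the position dictated by the map $\sigma'\mapsto\sigma'_1$.

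First I would lay out the structure of a generic highest weight in $\mathcal{A}(\pi')$ according to Proposition \ref{prop:LKT-Sp}: coordinates coming from the discrete blocks of $\lambda_d$ (of sizes $k_i,\ l_i,\ z$, placed according to $\Psi$), two coordinates per relative-discrete pair $(\mu_i,\nu_i)$, and one coordinate per character pair $(\varepsilon_i,\kappa_i)$. The position of a character coordinate in the ordered tuple is governed by $\mathrm{Re}(\kappa_i)$, while its value and sign are governed by $\varepsilon_i$ together with adjacency to the zero block of $\lambda_d$. Identifying the extra coordinate produced by $(\varepsilon_{t+1},\kappa_{t+1})$ in $\pi'_1$ is then a matter of tracking where the magnitude $|1+n-(p+q)/2|$ sits among those already present, and what sign convention $(-1)^{(p-q)/2}$ enforces there.

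Next I would carry out the case analysis driven by the three hypotheses. In case (1), the relation $\frac{p-q}{2} = k(\lambda_d)-l(\lambda_d)$ forces the parity implicit in $\varepsilon_{t+1}$ to match the shift $\frac{p-q}{2}$ appearing in the standard form of $U(n)$-types from Proposition \ref{prop:CorKT}, so the new coordinate is unambiguously $\frac{p-q}{2}$ and is inserted flush with the zero block. In cases (2) and (3) the discrepancy of $\pm 1$ leaves two a priori competing insertion positions for the new coordinate; the hypothesis on whether $e_{k(\lambda_d)+1}+e_{k(\lambda_d)+z(\lambda_d)}\in\Psi$ is precisely what selects the correct one, again pinning the inserted coordinate to $\frac{p-q}{2}$. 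In each case I would also verify that the dominance/parity constraints on the remaining coordinates from $\lambda_d$, $(\mu,\nu)$, $(\varepsilon,\kappa)$ are unaffected, so that the other entries of the output weight are identical to those coming from $\pi'$.

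The main obstacle is the bookkeeping of this case analysis: Proposition \ref{prop:LKT-Sp} is intricate, and the sign rules for the character block depend subtly on the size and parity of the zero block of $\lambda_d$ and on which compact root is simple in $\Psi$. Once each of the three subcases is verified, the sets $\mathcal{A}(\pi')$ and $\mathcal{A}(\pi'_1)$ are parametrized by the same underlying discrete data, with $\sigma'\mapsto\sigma'_1$ realized as a manifest bijection, which yields the asserted equality.
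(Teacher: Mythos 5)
Your overall strategy coincides with the paper's: compare the outputs of the LKT algorithm (Proposition \ref{prop:LKT-Sp}) for $\pi'$ and $\pi'_1$, observe that a single coordinate is inserted, and use the three hypotheses to show that coordinate is $\frac{p-q}{2}$. The key identification $k(\lambda_d)-l(\lambda_d)=u-r$ and the case split by $\pm1$ with $\Psi$ breaking the tie are exactly the paper's argument.

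However, there is a substantive misunderstanding in your second paragraph. You claim that ``the position of a character coordinate in the ordered tuple is governed by $\mathrm{Re}(\kappa_i)$'' and that identifying the new coordinate ``is then a matter of tracking where the magnitude $|1+n-\frac{p+q}{2}|$ sits among those already present.'' This is wrong: in Proposition \ref{prop:LKT-Sp}, each $(\varepsilon_j,\kappa_j)$ pair contributes a \emph{zero} entry to $\lambda_a$, independently of the value of $\kappa_j$; the $\kappa$ parameters play no role whatsoever in the lowest $K$-type computation. All the new $(\varepsilon_{t+1},\kappa_{t+1})$ does is enlarge the zero block of $\lambda_a$ by one (so $w\mapsto w+1$) and possibly increment $h=\#\{j\mid\varepsilon_j=(-1)^{u-r+1}\}+\#\{j\mid\mu_j=0\}+[\frac{z+1}{2}]$, depending on whether $\varepsilon_{t+1}=(-1)^{u-r+1}$. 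The new output coordinate is therefore $u-r+\eta$ with $\eta\in\{0,\pm1\}$ determined by $h$, $z$, and $\Psi$: in case (1) $\varepsilon_{t+1}=(-1)^{u-r}$, so $h$ is unchanged, $\eta=0$, and the new entry is $u-r=\frac{p-q}{2}$; in cases (2), (3) $\varepsilon_{t+1}=(-1)^{u-r+1}$, so $h$ increases by one, and the condition $e_{k+1}+e_{k+z}\in\Psi$ (resp. $\notin\Psi$) forces $\eta=+1$ (resp. $\eta=-1$), giving $u-r\pm1=\frac{p-q}{2}$. Your discussion gestures at this but gets the mechanism wrong.

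Also, your claim that (F-2) for $Sp(2(n+1),\mathbb{R})$ ``remains in force so the possible modification does not trigger'' is unjustified: the hypotheses (1)--(3) concern only $(\lambda_d,\Psi)$ and place no constraint on the existing $\kappa_i$'s, so the modification can in fact occur. The proposition survives regardless because the modification (replacing two character pairs $(\varepsilon_i,\kappa_i)$, $(\varepsilon_{t+1},\kappa_{t+1})$ with $\varepsilon_i\neq\varepsilon_{t+1}$ by a $GL(2)$ pair $(\mu_{s+1},\nu_{s+1})=(0,2\kappa_i)$) leaves both $\lambda_a$ and $h$ unchanged, hence does not alter the LKT set — but this needs to be stated, not assumed away.
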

\begin{proof}
By the algorithm (Theorem \ref{prop:LKT-Sp}) to calculate $\mathcal{A}(\pi')$
and $\mathcal{A}(\pi'_{1})$ respectively, the only change is to add
one more entry. Note that $k(\lambda_{d})-l(\lambda_{d})=u-r$ in
the algorithm. The added entry is $u-r$ in case (1), $u-r+1$ in
case (2), $u-r-1$ in case (3), which is always $\frac{p-q}{2}$.\end{proof}
\begin{lem}
\label{lem:cond_n} Let $\pi'=\pi(\lambda_{d},\Psi,\mu,\nu,\varepsilon,\kappa)\in\mathcal{R}(Sp(2n,\mathbb{R}))$.
Let $p$ and $q$ be nonnegative integers with $p+q$ even. Consider
the following conditions.

$(i)$ $\theta_{p,q}(\pi')\neq0$, and $p+q\leqslant2n$. 

$(ii)$ $\theta_{p',q'}(\pi')\neq0$ for $p'=p+n-\frac{p+q}{2}$ and
$q'=q+n-\frac{p+q}{2}$.

$(iii)$ $(\lambda_{d},\Psi)$ satisfy the hypothesis of Proposition
\ref{prop:cond_lambda} (concerning only $p-q$). 

$(iv)$ $\mathcal{A}(\pi')\subseteq\mathcal{D}(\pi')$ for the dual
pair $(O(p,q),Sp(2n,\mathbb{R}))$.

Then $(i)\Rightarrow(ii)\Leftrightarrow(iii)\Rightarrow(iv)$.\end{lem}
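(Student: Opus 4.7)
The plan is to prove the chain $(i)\Rightarrow(ii)\Leftrightarrow(iii)\Rightarrow(iv)$ one link at a time. A useful initial observation is that $p'+q'=2n$ and $p'-q'=p-q$, so $(O(p',q'),Sp(2n,\mathbb{R}))$ is an equal-rank dual pair sharing the same value of $p-q$; since the hypothesis in Proposition~\ref{prop:cond_lambda} depends only on $p-q$, condition $(iii)$ can equally be tested against this equal-rank pair, which is where $(ii)$ lives.

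For $(i)\Rightarrow(ii)$, I would set $k:=n-\frac{p+q}{2}\geqslant 0$ and iterate Kudla's persistence principle $k$ times: from $\theta_{p,q}(\pi')\neq 0$ it yields successively $\theta_{p+1,q+1}(\pi')\neq 0,\dots,\theta_{p+k,q+k}(\pi')=\theta_{p',q'}(\pi')\neq 0$. For $(ii)\Leftrightarrow(iii)$, I would appeal to the equal-rank explicit theta correspondence of \cite{Paul2005howe,Moeglin1989correspondance}. At equal rank, any nonzero $\theta_{p',q'}(\pi')$ has a discrete Langlands datum on the $O(p',q')$ side obtained by splitting $(\lambda_d,\Psi)$ according to the signature, and the constraint $2a-2d=p'-q'=p-q$ together with the non-parity condition (F-2) on the $O$-side translates exactly into the trichotomy of Proposition~\ref{prop:cond_lambda}: either $\frac{p-q}{2}=k(\lambda_d)-l(\lambda_d)$, or it deviates by $\pm 1$ with the sign dictated by whether $e_{k(\lambda_d)+1}+e_{k(\lambda_d)+z(\lambda_d)}\in\Psi$. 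Conversely, under $(iii)$ one constructs an explicit preimage $\pi\in\mathcal{R}(O(p',q'))$ by parameter-matching and verifies directly that $\theta_n(\pi)=\pi'$.

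For $(iii)\Rightarrow(iv)$, I would run the algorithm of Proposition~\ref{prop:LKT-Sp} to enumerate $\mathcal{A}(\pi')$. The key observation, already implicit in the proof of Proposition~\ref{prop:cond_lambda}, is that under $(iii)$ the shift quantity $u-r=k(\lambda_d)-l(\lambda_d)$ produced by the algorithm is aligned with $\frac{p-q}{2}$ so that every $\sigma'\in\mathcal{A}(\pi')$ takes the form $\sigma'=(\frac{p-q}{2},\dots,\frac{p-q}{2})+(c_1,\dots,c_n)$ with the tuple $(c_1,\dots,c_n)$ satisfying the harmonicity inequalities of Proposition~\ref{prop:CorKT}(2) for $(O(p,q),Sp(2n,\mathbb{R}))$. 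Thus $\sigma'$ lies in the space of joint harmonics $\mathcal{H}$, and its degree $\sum|c_i|$ is minimal among $K$-types of $\pi'$ of the same parity; Lemma~\ref{lem:parity} removes the parity qualifier, yielding $\sigma'\in\mathcal{D}(\pi')$.

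The main obstacle is this last step. Proposition~\ref{prop:LKT-Sp} is a multi-branch algorithm whose outputs depend on the fine structure of all six parameters $(\lambda_d,\Psi,\mu,\nu,\varepsilon,\kappa)$, and one must verify, branch by branch, that after the $\frac{p-q}{2}$-shift the resulting tuples satisfy $2\#\{i:c_i\geqslant 2\}+\#\{i:c_i=1\}\leqslant p$ and its symmetric counterpart for the negative entries against $q$. This is a combinatorial bookkeeping matching the block sizes of $\lambda_d$ and the contributions of $(\mu,\nu,\varepsilon,\kappa)$ against the signature $(p,q)$; condition $(iii)$ is precisely what makes this bookkeeping succeed, but confirming it in every branch is where the real work sits.
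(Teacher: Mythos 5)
Your first two links match the paper's own argument: Kudla's persistence principle iterated $n-\frac{p+q}{2}$ times gives $(i)\Rightarrow(ii)$, and the explicit equal-rank correspondence in \cite{Paul2005howe} gives $(ii)\Leftrightarrow(iii)$. Your observation that $p'+q'=2n$ and $p'-q'=p-q$ is exactly the right setup.

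The gap is in $(iii)\Rightarrow(iv)$. You sketch a direct algorithmic verification, but you assert rather than establish the crucial claim that the degree $\sum|c_i|$ of each $\sigma'\in\mathcal{A}(\pi')$ is minimal among $K$-types of $\pi'$ of the same parity. Membership in the joint-harmonics space $\mathcal{H}$ for the pair $(O(p,q),Sp(2n,\mathbb{R}))$ does not by itself imply minimal degree in $\pi'$: $\mathcal{H}$ contains $K$-types of many degrees, and $\mathcal{D}(\pi')$ is defined purely by degree-minimality inside $\pi'$, not by occurrence in $\mathcal{H}$. (Moreover, $(iii)$ does not guarantee $\theta_{p,q}(\pi')\neq 0$ for the specific $(p,q)$, so the identification of minimal-degree $K$-types with $\mathcal{H}$-harmonics that one usually invokes under nonvanishing is not directly available for the $(p,q)$ pair.) You yourself flag that the branch-by-branch bookkeeping over Proposition~\ref{prop:LKT-Sp} ``is where the real work sits''; that work is not done. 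The paper's route is shorter and avoids the issue: since the degree of a $K$-type for $Sp(2n,\mathbb{R})$ depends only on $p-q$ (Proposition~\ref{prop:CorKT}), and $p'-q'=p-q$, the set $\mathcal{D}(\pi')$ is literally the same whether computed for $(O(p,q),Sp(2n,\mathbb{R}))$ or the equal-rank pair $(O(p',q'),Sp(2n,\mathbb{R}))$; one then invokes \cite[Corollary~37]{Paul2005howe}, which settles $\mathcal{A}(\pi')\subseteq\mathcal{D}(\pi')$ at equal rank under $(ii)$, and transfers the conclusion. Replacing your branch-by-branch sketch with that two-line transfer argument closes the gap.
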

\begin{proof}
$(i)\Rightarrow(ii)$: By Kudla's Persistence Principle as $n-\frac{p+q}{2}\geqslant0$.

$(ii)\Leftrightarrow(iii)$ From \cite[Th.18]{Paul2005howe} we can
read off the full description of $\theta_{p',q'}$ for $p'+q'=2n$,
and see that $\theta_{p',q'}(\pi')\neq0$ if and only if $(\lambda_{d},\Psi)$
satisfy the hypothesis of Proposition \ref{prop:cond_lambda}.

$(ii)\Rightarrow(iv)$ Note that the degree of a $K$-type for $Sp(2n,\mathbb{R})$
depends only on the difference $p'-q'=p-q$ (by Proposition \ref{prop:CorKT}).
By \cite[Cor.37]{Paul2005howe}, $\mathcal{A}(\pi')\subseteq\mathcal{D}(\pi')$.\end{proof}
\begin{prop}
\label{prop:IndP}If $p+q\neq2n+2$ and $(\lambda_{d},\Psi)$ satisfy
the hypothesis of Proposition \ref{prop:cond_lambda}, then $\theta_{n+1}(\pi)=\pi'_{1}$.\end{prop}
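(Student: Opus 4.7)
The plan is to reduce the proposition to the sufficient condition provided by Proposition \ref{prop:LKT+1}: it suffices to exhibit a lowest $K$-type of $\pi'_1$ that also occurs in $\theta_{n+1}(\pi)$. Note that $\theta_{n+1}(\pi)\neq0$ by Kudla's persistence principle, since we are given $\theta_n(\pi)=\pi'\neq0$, so Lemma \ref{lem:min-deg} applies both at level $n$ and at level $n+1$, giving
\[
\mathcal{D}(\theta_{n+1}(\pi))=\phi_{n+1}(\mathcal{D}(\pi))=\phi_{n+1}(\phi_{p,q}(\mathcal{D}(\pi'))),
\]
where in the last equality I used that $\mathcal{D}(\pi)$ for $O(p,q)$ is independent of the symplectic partner (see the remark after Proposition \ref{prop:CorKT}) and that $\phi_{p,q}$ is the inverse of $\phi_n$ on joint harmonics.

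The core of the argument is to track a single distinguished $K$-type through the joint-harmonic correspondence. First, because $(\lambda_d,\Psi)$ satisfies the hypothesis of Proposition \ref{prop:cond_lambda}, the implication $(iii)\Rightarrow(iv)$ of Lemma \ref{lem:cond_n} gives $\mathcal{A}(\pi')\subseteq\mathcal{D}(\pi')$; in particular we may pick some $\sigma'\in\mathcal{A}(\pi')\cap\mathcal{D}(\pi')$. Since $\sigma'\in\mathcal{D}(\pi')$, it lies in the space $\mathcal{H}$ of joint harmonics for $(O(p,q),Sp(2n,\mathbb{R}))$, so $\sigma:=\phi_{p,q}(\sigma')\neq0$. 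The preceding proposition then gives $\phi_{n+1}(\sigma)=\sigma'_1$, where $\sigma'_1$ is obtained from $\sigma'$ by appending the entry $\tfrac{p-q}{2}$. Hence $\sigma'_1\in\mathcal{D}(\theta_{n+1}(\pi))$, so in particular $\theta_{n+1}(\pi)$ contains $\sigma'_1$ as a $K$-type.

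Finally, invoking Proposition \ref{prop:cond_lambda} a second time, the same hypothesis yields $\mathcal{A}(\pi'_1)=\{\sigma'_1\mid\sigma'\in\mathcal{A}(\pi')\}$, so our $\sigma'_1$ is a lowest $K$-type of $\pi'_1$. Proposition \ref{prop:LKT+1}, together with the standing assumption $p+q\neq2n+2$, then forces $\theta_{n+1}(\pi)=\pi'_1$.

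The delicate points of this strategy are, I expect, twofold. The first is verifying that the element $\sigma'\in\mathcal{A}(\pi')$ chosen above really does lie in $\mathcal{D}(\pi')$, which is precisely what the chain $(iii)\Rightarrow(iv)$ in Lemma \ref{lem:cond_n} provides; without this, $\phi_{p,q}(\sigma')$ might be zero and the bridge between the two representations collapses. The second is making sure that the ``added entry'' $\tfrac{p-q}{2}$ produced by the harmonic correspondence $\phi_{n+1}\circ\phi_{p,q}$ is the same entry that governs the lowest-$K$-type calculation for $\pi'_1$ under the three cases of Proposition \ref{prop:cond_lambda}; this compatibility is exactly what makes the explicit induction principle work, and is the substantive content being exploited here.
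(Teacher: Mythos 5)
Your proof is correct and follows essentially the same approach as the paper's: both arguments take $\sigma'\in\mathcal{A}(\pi')\subseteq\mathcal{D}(\pi')$ (via Lemma \ref{lem:cond_n}), push it through $\phi_{n+1}\circ\phi_{p,q}$ to get $\sigma'_1\in\mathcal{D}(\theta_{n+1}(\pi))$, observe $\sigma'_1\in\mathcal{A}(\pi'_1)$ by Proposition \ref{prop:cond_lambda}, and close with Proposition \ref{prop:LKT+1}. You have merely spelled out the intermediate invocations of Kudla's persistence and Lemma \ref{lem:min-deg} that the paper leaves implicit.
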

\begin{proof}
Take any $\sigma\in\mathcal{A}(\pi')\subseteq\mathcal{D}(\pi')$,
we get $\sigma'_{1}\in\mathcal{A}(\pi'_{1})\cap\mathcal{D}(\theta_{n+1}(\pi))$.
So $\theta_{n+1}(\pi)$ contains a lowest $K$-type of $\pi'_{1}$,
and $\theta_{n+1}(\pi)=\pi'_{1}$ by Proposition \ref{prop:LKT+1}.\end{proof}
\begin{thm}
[Explicit induction principle on $n$] \label{thm:EIP_n} Let $\pi'=\pi(\lambda_{d},\Psi,\mu,\nu,\varepsilon,\kappa)\in\mathcal{R}(Sp(2n,\mathbb{R}))$,
$\theta_{p,q}(\pi')\neq0$ with even $p+q\neq2n+2$. Suppose that
$\pi'$ satisfies the hypothesis of Proposition \ref{prop:cond_lambda}
 (which is automatically true if $p+q\leqslant2n$ by Lemma \ref{lem:cond_n}).
Then for any positive integer $k$ such that $\frac{p+q}{2}\notin[n+1,n+k]$
(if $p+q\leqslant2n$, then for all $k\geqslant1$),
\begin{align*}
\theta_{n+k}(\theta_{p,q}(\pi')) & =\pi(\lambda_{d},\Psi,\mu,v,(\varepsilon|((-1)^{\frac{p-q}{2}},(-1)^{\frac{p-q}{2}},\dots,(-1)^{\frac{p-q}{2}})),\\
 & \qquad(\kappa|(1+n-\frac{p+q}{2},2+n-\frac{p+q}{2},\dots,k+n-\frac{p+q}{2})))
\end{align*}
with a possible modification: if the resulting parameters contain
some entries $\kappa_{i}=\pm\kappa_{j}$ with $\varepsilon_{i}\neq\varepsilon_{j}$,
delete $\varepsilon_{i}$, $\varepsilon_{j}$, $\kappa_{i}$, $\kappa_{j}$
from $(\varepsilon,\kappa)$, and add entries $(0,2\kappa_{i})$ into
$(\mu,\nu)$.\end{thm}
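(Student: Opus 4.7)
The plan is to proceed by induction on $k$. The base case $k=1$ is Proposition~\ref{prop:IndP} applied to $\pi := \theta_{p,q}(\pi')$: by Howe duality $\theta_{n}(\pi) = \pi'$, so the hypothesis on $(\lambda_{d},\Psi)$ together with $p+q \neq 2n+2$ yields $\theta_{n+1}(\pi) = \pi'_{1}$ in exactly the required form (including the single-step modification already built into Proposition~\ref{prop:IndP}).

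For the inductive step, suppose $\pi'_{k} := \theta_{n+k}(\pi)$ has already been identified with the expression given by the theorem (with the cumulative modifications from the previous $k$ steps). I would then apply Proposition~\ref{prop:IndP} once more, now for the dual pair $(O(p,q),Sp(2(n+k),\mathbb{R}))$ with $\pi'_{k}$ in the role of $\pi'$. Three points need to be checked. First, the $(\lambda_{d},\Psi)$-part of the Langlands parameters of $\pi'_{k}$ coincides with that of the original $\pi'$, because all previous steps modified only the $(\varepsilon,\kappa)$ and $(\mu,\nu)$ data; since $p-q$ is also unchanged, the hypothesis of Proposition~\ref{prop:cond_lambda} is inherited. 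Second, the excluded case $p+q = 2(n+k)+2$ translates into $\frac{p+q}{2} = n+k+1$, which is forbidden by $\frac{p+q}{2}\notin[n+1,n+k+1]$ at step $k+1$. Third, $\theta_{p,q}(\pi'_{k}) = \pi$ is nonzero by Howe duality. Proposition~\ref{prop:IndP} then yields $\theta_{n+k+1}(\pi) = (\pi'_{k})_{1}$, obtained by appending $(-1)^{\frac{p-q}{2}}$ to $\varepsilon$ and $(k+1)+n-\frac{p+q}{2}$ to $\kappa$, possibly modified in the prescribed way. This is precisely the formula claimed for step $k+1$.

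The main thing to watch is the bookkeeping of modifications. At step $j$ the newly added $\kappa$-entry is $j + n - \frac{p+q}{2}$; these $k$ entries are pairwise distinct (they differ by nonzero integers) and all carry the common sign $(-1)^{\frac{p-q}{2}}$, so the new entries never conflict among themselves. A per-step modification can arise only from a clash between a newly added entry and an entry already present (either in the original parameters of $\pi'$ or added at an earlier step, where it shares the sign $(-1)^{\frac{p-q}{2}}$). Because modifications only transfer entries from $(\varepsilon,\kappa)$ into $(\mu,\nu)$ and never back, the step-by-step modifications produce exactly the same final parameters as collecting all $k$ extensions at once and applying the single consolidated modification described in the theorem statement.

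The main technical obstacle is confirming at each step that the lowest $K$-type matching hypothesis of Proposition~\ref{prop:LKT+1} holds for $\pi'_{k}$. This reduces to the inclusion $\mathcal{A}(\pi'_{k})\subseteq\mathcal{D}(\pi'_{k})$ furnished by the implication $(iii)\Rightarrow(iv)$ of Lemma~\ref{lem:cond_n}, together with the compatibility of $\phi_{n+k+1}\circ\phi_{p,q}$ with the ``append $\frac{p-q}{2}$'' operation on lowest $K$-types from Proposition~\ref{prop:cond_lambda}. Both features persist throughout the induction because they depend only on $(\lambda_{d},\Psi)$ and on the difference $p-q$, all of which remain fixed at every step, so the induction closes and the theorem follows.
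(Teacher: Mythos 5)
Your proof is correct and takes essentially the same approach as the paper: the paper's entire argument is the one-line instruction ``Use Corollary \ref{prop:IndP} repeatedly for $k$ times,'' and you have simply spelled out that induction in detail, verifying that the $(\lambda_d,\Psi)$-data and $p-q$ (hence the hypotheses of Proposition \ref{prop:cond_lambda} and the exclusions from Lemma \ref{lem:cond_n}) persist across steps, and that the per-step modifications agree with the consolidated modification stated in the theorem.
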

\begin{proof}
Use Corollary \ref{prop:IndP} repeatedly for $k$ times.
\end{proof}
For $\pi\in\mathcal{R}(O(p,q))$ with $p+q$ even, define the \emph{first
occurrence index}
\[
n(\pi)=\min\{0\leqslant k\in\mathbb{Z}\mid\theta_{k}(\pi)\neq0\}.
\]
  By the nonvanishing of the stable range theta liftings, $n(\pi)\leqslant p+q$.
Therefore, to calculate $\theta_{n}$ for all $n$, by the explicit
induction principle on $n$, it suffices to consider for $1\leqslant n\leqslant p+q$.

\subsection{Explicit induction principle on $(p,q)$}

Let $\pi'\in\mathcal{R}(Sp(2n,\mathbb{R}))$ and $\theta_{p,q}(\pi')=\pi\in\mathcal{R}(O(p,q))$
with $p+q$ even. Let $(V',(,))$ be a real vector space with nondegenerate
symmetric bilinear form $(,)$ of signature $(p+1,q+1)$ with the
isometry group $O(V')\cong O(p+1,q+1)$. Take an isotropic subspace
$V_{0}$ of dimension $1$ in $V'$.  Then the stabilizer of $V_{0}$
in $O(V')$ is a parabolic subgroup $M'A'N'$ of $O(p+1,q+1)$ with
Levi factor
\[
M'A'\cong O(p,q)\times GL(1,\mathbb{R}).
\]

\begin{lem}
[{\cite[Th.30(2)]{Paul2005howe}}] There exists a nontrivial $O(p+1,q+1)\times Sp(2n,\mathbb{R})$-map
(on the level Harish-Chandra modules)
\[
\omega\longrightarrow\mathrm{Ind}_{M'A'N'}^{O(p+1,q+1)}(\pi\otimes\xi\otimes\tri_{N'})\otimes\pi',
\]
where $\xi$ is the character of $GL(1,\mathbb{R})$ with $\xi(g)=|\det(g)|^{n-\frac{p+q}{2}}$,
and $\tri_{N'}$ is the trivial representation of $N'$. Let $I$
denote the above normalized induction. Then $\theta_{p+1,q+1}(\pi')$
is an irreducible subquotient of $I$.
\end{lem}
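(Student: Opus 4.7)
The plan is to use the mixed Schr\"odinger model of the oscillator representation adapted to the isotropic line $V_0\subset V'$, together with Frobenius reciprocity, in the spirit of the induction principle of Kudla \cite{Kudla1986local} and Moeglin \cite{Moeglin1989correspondance}. Starting from the Witt decomposition $V'=V_0\oplus V\oplus V_0^{*}$, one chooses a compatible polarization of the ambient symplectic space $V'\otimes W$; in the resulting model the unipotent radical $N'$ acts by translation/multiplication operators along the $V_0$-directions, and a standard computation identifies the $(\mathfrak{g},K)$-level $N'$-coinvariants of $\omega$, as an $M'A'\times Sp(2n,\mathbb{R})$-module, with the oscillator representation $\omega_0$ of the smaller pair $(O(p,q),Sp(2n,\mathbb{R}))$ tensored with a one-dimensional character of $M'A'=O(p,q)\times GL(1,\mathbb{R})$.

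The first key step is to pin down that character exactly. The $GL(1,\mathbb{R})$-factor of $M'A'$ acts on the $V_0$-part of the chosen Lagrangian, which has dimension $n$, by the $n$-th power of the standard character, giving an unnormalized contribution $|\det(g)|^{n}$; combining with the half-sum $\delta_{P'}^{1/2}$, whose exponent on $GL(1,\mathbb{R})$ equals $\frac{p+q}{2}$ for this maximal parabolic of $O(p+1,q+1)$, yields precisely $\xi(g)=|\det(g)|^{n-\frac{p+q}{2}}$ after passing to normalized induction. (Unlike the symplectic-side induction principle recorded just before, no sign character is produced here, since on the orthogonal Levi side the Kudla splitting map of \cite{Kudla1994splitting} contributes no $\mathrm{sgn}(\det)$.)

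Once the character is correct, Frobenius reciprocity (second adjointness, applied to $M'A'N'$ on Harish-Chandra modules) gives
\[
\mathrm{Hom}_{O(p+1,q+1)\times Sp(2n,\mathbb{R})}\!\bigl(\omega,\ I\otimes\pi'\bigr)\;\cong\;\mathrm{Hom}_{M'A'\times Sp(2n,\mathbb{R})}\!\bigl(\omega_0\otimes\xi,\ (\pi\otimes\xi)\otimes\pi'\bigr),
\]
and the right-hand side is nonzero because $\pi=\theta_{p,q}(\pi')$ already supplies a nonzero $O(p,q)\times Sp(2n,\mathbb{R})$-map $\omega_0\to\pi\otimes\pi'$. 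This produces the asserted nontrivial map $\omega\to I\otimes\pi'$.

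For the subquotient claim, factor this map through the maximal $\pi'$-isotypic quotient of $\omega$ as an $Sp(2n,\mathbb{R})$-module, which by Howe duality \cite{Howe1989transcending} has the form $\Theta(\pi')\otimes\pi'$ with $\Theta(\pi')$ of finite length and admitting $\theta_{p+1,q+1}(\pi')$ as its unique irreducible quotient. Any nonzero $O(p+1,q+1)$-map $\Theta(\pi')\to I$ has image whose unique irreducible quotient is $\theta_{p+1,q+1}(\pi')$, so $\theta_{p+1,q+1}(\pi')$ appears as an irreducible subquotient of $I$. The main technical obstacle is the first step: making the identification of the $N'$-coinvariants with $\omega_0\otimes\xi$ rigorous at the $(\mathfrak{g},K)$-module level, and keeping careful track of the splitting map together with $\delta_{P'}^{1/2}$ so that the exponent comes out exactly $n-\frac{p+q}{2}$; everything after that is formal.
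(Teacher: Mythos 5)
This lemma is stated in the paper as a direct citation of \cite[Theorem~30(2)]{Paul2005howe}; the paper supplies no proof of its own, so there is nothing internal to compare against. Your sketch reconstructs what is, in essence, the standard argument from the literature (Kudla \cite{Kudla1986local}, Moeglin \cite{Moeglin1989correspondance}, and Paul's own proof of her Theorem~30): mixed Schr\"odinger model adapted to the Witt decomposition $V'=V_0\oplus V\oplus V_0^{*}$, computation of the Jacquet module of $\omega$ along $P'=M'A'N'$, Frobenius reciprocity, and the finite-length structure of $\Theta(\pi')$ from Howe duality. The overall strategy is the right one, and the passage from ``nonzero map $\omega\to I\otimes\pi'$'' to ``$\theta_{p+1,q+1}(\pi')$ is an irreducible subquotient of $I$'' via the unique-irreducible-quotient property of $\Theta(\pi')$ is carried out correctly.

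Two cautions, neither fatal. First, the assertion that the $N'$-coinvariants of $\omega$ \emph{are} $\omega_0\otimes\xi$ is an oversimplification: in all known versions of the induction principle the (normalized) Jacquet module of $\omega$ with respect to a maximal parabolic has a Kudla-type filtration of length $\geqslant 2$ once $n\geqslant 1$, and $\omega_0\otimes\xi$ appears only as the top quotient (the ``constant-term'' subquotient). What the argument actually needs is precisely a surjection of $M'A'\times Sp(2n,\mathbb{R})$-modules from the normalized Jacquet module onto $\omega_0\otimes\xi$; that suffices to produce the element of the Hom-space on the right-hand side of your Frobenius-reciprocity display. You flag the rigor issue yourself, but the statement should be weakened to ``admits $\omega_0\otimes\xi$ as a quotient.'' Second, in pinning down the exponent $n-\frac{p+q}{2}$ your dimension count (``the $V_0$-part of the chosen Lagrangian has dimension $n$'') is convention-dependent: $V_0^{*}\otimes W$ itself has dimension $2n$, and whether the unnormalized contribution is $|\det|^{n}$ or $|\det|^{2n}$ before the $\delta_{P'}^{1/2}$ correction depends on how the mixed model's measure and the Kudla splitting are normalized. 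Since the paper's parametrization follows Paul's conventions exactly, anyone wanting to make this rigorous should simply track Paul's normalization rather than rederive it; the absence of a $\mathrm{sgn}(\det)$ factor here (in contrast with the symplectic-side analogue) is, as you observe, a genuine asymmetry built into the Kudla splitting of \cite{Kudla1994splitting}.
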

Let $\theta_{p,q}(\pi')=\pi=\pi_{\zeta}(\lambda_{d},\xi,\Psi,\mu,\nu,\varepsilon,\kappa)$,
and $\boldsymbol{I}=\boldsymbol{I}(\lambda_{d},\xi,\Psi,\mu,\nu,(\varepsilon|(1)),(\kappa|(n-\frac{p+q}{2})))$
be the standard module of parabolic induction of $O(p+1,q+1)$ with
the given Langlands parameters. Then $I$ is a subquotient of $\boldsymbol{I}$,
and $\theta_{p+1,q+1}(\pi')$ is an irreducible subquotient of $\boldsymbol{I}$. 

When $p+q\neq2n$, let $\pi_{1,1}=\pi_{\zeta}(\lambda_{d},\xi,\Psi,\mu,\nu,(\varepsilon|(1)),(\kappa|(n-\frac{p+q}{2})))$
with a possible modification: if $\pm\kappa_{i}=n-\frac{p+q}{2}$
and $\varepsilon_{i}\neq1$ for some $i$, delete these four entries,
and add $(\mu_{s+1},\nu_{s+1})=(0,2\kappa_{i})$ into $(\mu,\nu)=((\mu_{1},\dots,\mu_{s}),(\nu_{1},\dots,\nu_{s}))$.
Since $n-\frac{p+q}{2}\neq0$, no zero entry is added. So the resulting
parameters satisfy the condition (F-2). Whether modified or not,
$\pi_{1,1}$ is infinitesimal equivalent to an irreducible subquotient
of $\boldsymbol{I}$ (see \cite{Vogan1981representations,Vogan1984unitarizability}).
\begin{prop}
\label{prop:LKT+11}  If $\theta_{p+1,q+1}(\pi')$ contains a lowest
$K$-type of $\pi_{1,1}$, then $\theta_{p+1,q+1}(\pi')=\pi_{1,1}$.\end{prop}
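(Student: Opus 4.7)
The plan is to mirror the argument used for Proposition~\ref{prop:LKT+1}, now in the orthogonal-group setting. First I would observe that the standard module $\boldsymbol{I}=\boldsymbol{I}(\lambda_{d},\xi,\Psi,\mu,\nu,(\varepsilon|(1)),(\kappa|(n-\frac{p+q}{2})))$ is a parabolically induced module for $O(p+1,q+1)$, and its $K$-type structure depends only on the discrete Langlands data ($\lambda_{d}$, $\Psi$, $\xi$, $\mu$, and the sign $\varepsilon$-part), not on the continuous parameters $\nu$ and $\kappa$. Consequently the algorithm of Proposition~\ref{prop:LKT-O}, combined with Proposition~\ref{prop:LKT-sgn}, computes $\mathcal{A}(\boldsymbol{I})$ directly from those discrete data.

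Next I would apply the same algorithm to the parameters of $\pi_{1,1}$ and argue $\mathcal{A}(\boldsymbol{I})=\mathcal{A}(\pi_{1,1})$. In the unmodified case this is automatic since the discrete data agree. In the modified case one pair $(\varepsilon_{i},\kappa_{i})$ with $\varepsilon_{i}\neq 1$ and $\pm\kappa_{i}=n-\frac{p+q}{2}$ is removed from $(\varepsilon,\kappa)$ and a pair $(0,2\kappa_{i})$ is inserted into $(\mu,\nu)$; the inserted $\mu$-entry is $0$ and the sign contribution is absorbed into the $GL(2,\mathbb{R})$-induced factor. Since the lowest-$K$-type algorithm of Proposition~\ref{prop:LKT-O} depends only on $\lambda_{d}$, $\Psi$, $\xi$, $\zeta$, $\mu$, $\varepsilon$, this reshuffling of continuous parameters leaves the output unchanged. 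The hypothesis $p+q\neq 2n$ ensures $n-\frac{p+q}{2}\neq 0$, so that no zero is introduced into $\kappa$ and the non-parity condition (F-2) is preserved, making $\pi_{1,1}$ a well-defined irreducible quotient.

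Finally, both $\theta_{p+1,q+1}(\pi')$ and $\pi_{1,1}$ are (equivalent to) irreducible subquotients of the standard module $\boldsymbol{I}$. By Vogan \cite{Vogan1979algebraic}, each lowest $K$-type of a standard module occurs in it with multiplicity one, so any $\tau\in\mathcal{A}(\boldsymbol{I})$ lies in exactly one irreducible subquotient of $\boldsymbol{I}$. By hypothesis $\theta_{p+1,q+1}(\pi')$ contains some $\tau\in\mathcal{A}(\pi_{1,1})=\mathcal{A}(\boldsymbol{I})$, and $\pi_{1,1}$ also contains $\tau$, so the two subquotients must coincide; that is, $\theta_{p+1,q+1}(\pi')=\pi_{1,1}$.

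The main obstacle is the bookkeeping in the modification case: I would need to check carefully that replacing $(\varepsilon_{i},\kappa_{i})$ with an extra $GL(2,\mathbb{R})$-factor $\tau(0,2\kappa_{i})$ does not alter the combinatorial input to the lowest-$K$-type algorithm (sign signatures, parities, compact-root conditions) for $O(p+1,q+1)$. Once that is verified, the argument is a direct transcription of the symplectic analog, and no further new ingredient is required.
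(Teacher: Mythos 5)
Your proof follows the paper's argument and is essentially correct, but one claim is overstated: you assert $\mathcal{A}(\boldsymbol{I})=\mathcal{A}(\pi_{1,1})$, whereas what the paper proves (and what is actually true) is only the containment $\mathcal{A}(\pi_{1,1})\subseteq\mathcal{A}(\boldsymbol{I})$. In the orthogonal case, unlike the symplectic Proposition~\ref{prop:LKT+1}, the standard module $\boldsymbol{I}$ for $O(p+1,q+1)$ can have \emph{two} irreducible quotients $\pi_{1}(\ldots)$ and $\pi_{-1}(\ldots)$ distinguished by the sign $\zeta$ (this happens exactly when $\lambda_{d}$ contains no zero entry and the $\kappa$ of $\boldsymbol{I}$ contains a zero entry, cf.\ Lemma~\ref{lem:non-par-O}(1)). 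The $K$-structure of $\boldsymbol{I}$ depends only on the discrete data and hence sees both sign choices, so $\mathcal{A}(\boldsymbol{I})$ is the union of the lowest $K$-type sets of both quotients and may strictly contain $\mathcal{A}(\pi_{1,1})$; this is precisely what the paper's parenthetical ``(with all the signs obtained from all choices of $\zeta$)'' is flagging. Your list of the relevant discrete data also omits $\zeta$, which is what determines the sign half of a lowest $K$-type via Proposition~\ref{prop:LKT-sgn}. That said, the final step of your argument only uses that the chosen $\tau$ lies in $\mathcal{A}(\boldsymbol{I})$, which the containment already gives, so the conclusion is unaffected; just be aware that the equality you wrote is not a ``direct transcription of the symplectic analog,'' because the orthogonal standard module can be reducible at top in a way the symplectic one is not.
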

\begin{proof}
The $K$-structure of a standard module depends only on the discrete
Langlands parameters, so $\mathcal{A}(\boldsymbol{I})$ is calculated
from $(\lambda_{d},\xi,\Psi,\mu,\nu,(\varepsilon|(1)),$ $(\kappa|(n-\frac{p+q}{2})))$
by the algorithm in Proposition \ref{prop:LKT-O} and \ref{prop:LKT-sgn}
(with all the signs obtained from all choices of $\zeta$). Comparing
to the algorithm to compute $\mathcal{A}(\pi_{1,1})$, we see $\mathcal{A}(\pi_{1,1})\subseteq\mathcal{A}(\boldsymbol{I})$.
Both $\theta_{p+1,q+1}(\pi')$ and $\pi_{1,1}$ are infinitesimal
equivalent to irreducible subquotients of $\boldsymbol{I}$, and they
contain the same lowest $K$-type of $\boldsymbol{I}$ which occurs
with multiplicity one, so they are infinitesimal equivalent to each
other.\end{proof}
\begin{prop}
\label{prop:sig_11}Suppose $\sigma=(a_{1},\dots,a_{x},0,\dots,0;\epsilon)\otimes(b_{1},\dots,b_{x},0,\dots,0;\eta)$
is an $O(p)\times O(q)$-type, with $a_{x}>0$ and $b_{y}>0$. (When
$p=2x$, $\epsilon=\pm1$ give the same $O(p)$-type, but we choose
$\epsilon=1$ for the convenience. We write $\epsilon=-1$ only if
$p>2x$. Similarly, we write $\eta=-1$ only if $q>2y$.) If $\sigma'=\phi_{n}(\sigma)\neq0$.
Then $\phi_{p+1,q+1}(\sigma')=\sigma_{1,1}$, where $\sigma_{1,1}$
is defined as 
\[
\sigma_{1,1}=(a_{1},\dots,a_{x},\frac{1-\epsilon}{2},\underbrace{0,\dots,0}_{[\frac{p+1}{2}]-x-1};\epsilon)\otimes(b_{1},\dots,b_{y},\frac{1-\eta}{2},\underbrace{0,\dots,0}_{[\frac{q+1}{2}]-y-1};\eta).
\]
\end{prop}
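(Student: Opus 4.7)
The plan is to verify this by direct computation, pushing $\sigma$ through Proposition~\ref{prop:CorKT}(1) to get $\sigma'$, and then running Proposition~\ref{prop:CorKT}(1) in reverse for the larger dual pair $(O(p+1,q+1),Sp(2n,\mathbb{R}))$ to read off $\phi_{p+1,q+1}(\sigma')$. Since $(p+1)-(q+1)=p-q$, the shift $\tfrac{p-q}{2}$ appearing in Proposition~\ref{prop:CorKT}(1) is unchanged, so after applying $\phi_n$ the tuple
\[
(c_1,\dots,c_n)=(a_1,\dots,a_x,\underbrace{1,\dots,1}_{\alpha},0,\dots,0,\underbrace{-1,\dots,-1}_{\beta},-b_y,\dots,-b_1),\qquad \alpha=\tfrac{1-\epsilon}{2}(p-2x),\ \beta=\tfrac{1-\eta}{2}(q-2y),
\]
is also the correct tuple to describe $\sigma'$ relative to $(O(p+1,q+1),Sp(2n,\mathbb{R}))$. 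The occurrence criterion of Proposition~\ref{prop:CorKT}(2) depends only on the multiplicities of $\pm 2,\pm 1$-entries versus $p$ and $q$, and these inequalities can only weaken when $p,q$ are replaced by $p+1,q+1$, so $\phi_{p+1,q+1}(\sigma')\neq 0$ is automatic.

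The core step is then to re-parametrize $(c_1,\dots,c_n)$ in the form demanded by Proposition~\ref{prop:CorKT}(1) for the larger pair, namely with block sizes $\alpha'=\tfrac{1-\epsilon'}{2}((p+1)-2x')$ and $\beta'=\tfrac{1-\eta'}{2}((q+1)-2y')$. The key observation is that when $\epsilon=-1$ (so $\alpha=p-2x\geq 1$), taking $\epsilon'=-1$, $x'=x+1$, and $a'_{x+1}=1$ yields $\alpha'=(p+1)-2(x+1)=\alpha-1$; that is, one of the old block-$1$'s is absorbed into the new $a'$-sequence. When $\epsilon=+1$ (so $\alpha=0$), one keeps $\epsilon'=+1$, $x'=x$, and no change is needed. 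In both cases the first factor of $\phi_{p+1,q+1}(\sigma')$ becomes $(a_1,\dots,a_x,\tfrac{1-\epsilon}{2},0,\dots,0;\epsilon)$, with the inserted entry $\tfrac{1-\epsilon}{2}\in\{0,1\}$ handling both possibilities uniformly. The analogous argument on the negative side of $(c_1,\dots,c_n)$ produces the $O(q+1)$-factor $(b_1,\dots,b_y,\tfrac{1-\eta}{2},0,\dots,0;\eta)$, which together give exactly $\sigma_{1,1}$.

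The one subtle point, and the place where care is required, is the sign convention at the boundary: the formula must be interpreted under the convention stated in the proposition ($\epsilon=-1$ only when $p>2x$, etc.). When $p=2x+1$ (forced odd, hence $q$ odd too by the parity of $p+q$) and $\epsilon=-1$, one checks that $p+1=2(x+1)$ meets the boundary where $\epsilon'=\pm 1$ would give the same $O(p+1)$-type, so the output $(a_1,\dots,a_x,1,0,\dots,0;\epsilon=-1)$ is still consistent. The parity hypothesis that $p+q$ is even, together with the hypothesis $a_x,b_y>0$, keeps all the bookkeeping on both factors synchronized, and the remaining verification is pure substitution into the formulas of Proposition~\ref{prop:CorKT}.
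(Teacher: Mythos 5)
Your proof is correct and carries out exactly the direct verification from Proposition~\ref{prop:CorKT} that the paper intends (the paper omits the proof entirely, just as it proves the companion $\phi_{n+1}$ statement merely by ``check by the explicit descriptions... in Proposition~\ref{prop:CorKT}''). Your treatment of the boundary case $p=2x+1$, $\epsilon=-1$ matches the remark the paper attaches to this proposition.
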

\begin{rem*}
Notice that when $p=2x+1$ and $\epsilon=-1$, the resulting $(a_{1},\dots,a_{x},1;-1)$
should be rewritten as $(a_{1},\dots,a_{x},1;1)$ if we want to repeat
this algorithm to get $\phi_{p+2,q+2}(\sigma')$. Similarly for $q=2y+1$
and $\eta=-1$.\end{rem*}
\begin{prop}
\label{prop:cond2} Let $p+q\neq2n$, $\pi=\pi_{\zeta}(\lambda_{d},\xi,\Psi,\mu,\nu,\varepsilon,\kappa)$,
and $\pi_{1,1}$ be as above. Suppose that $\zeta=\xi=1$, and either
$\lambda_{d}$ contains a zero entry or some $(\varepsilon_{i},\kappa_{i})=(1,0)$.
Then for any $\sigma\in\mathcal{A}(\pi)$, we have $\sigma_{1,1}\in\mathcal{A}(\pi_{1,1})$,
where $\sigma_{1,1}$ is obtained from $\sigma$ by the above algorithm.\end{prop}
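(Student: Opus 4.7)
The plan is to prove this by tracking how the lowest $K$-type algorithm from Proposition \ref{prop:LKT-O} (together with the sign prescription of Proposition \ref{prop:LKT-sgn}) reacts to the passage from the Langlands parameters of $\pi$ to those of $\pi_{1,1}$. Recall that the parameters of $\pi_{1,1}$ are obtained from those of $\pi$ by appending the single entry $1$ to $\varepsilon$ and the single entry $n - \frac{p+q}{2}$ to $\kappa$ (with the possible modification that merges this into the $(\mu,\nu)$ block when it collides with an existing $(\varepsilon_i,\kappa_i)$). Since $p+q\neq 2n$ the appended $\kappa$-entry is nonzero, so the modified parameters still satisfy the non-parity condition (F-2), and hence $\pi_{1,1}$ is well-defined as an irreducible Langlands quotient.

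First I would apply the algorithm of Proposition \ref{prop:LKT-O} to the parameters of $\pi_{1,1}$, running it in parallel with the same algorithm applied to the parameters of $\pi$. The discrete data $(\lambda_d,\xi,\Psi,\mu,\nu)$ are unchanged, so the contribution of those blocks to every lowest $K$-type is identical. The only difference comes from processing the extra $\chi(1, n-\frac{p+q}{2})$ factor: its sign being $+1$ instructs the algorithm to attach an additional slot to both the $O(p)$-part and the $O(q)$-part of each resulting $K$-type, with value $0$ if the current sign on that side is $+1$ and value $1$ if it is $-1$. This is exactly the prescription $\sigma\mapsto\sigma_{1,1}$ written in Proposition \ref{prop:sig_11}. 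Thus, for each $\sigma\in\mathcal{A}(\pi)$, the algorithm produces $\sigma_{1,1}$ as a lowest $K$-type for $\pi_{1,1}$, provided the signs $(\epsilon,\eta)$ of $\sigma$ are the ones actually selected by the algorithm for $\pi_{1,1}$.

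The role of the hypotheses $\zeta=\xi=1$ and ``$\lambda_d$ contains a zero entry or some $(\varepsilon_i,\kappa_i)=(1,0)$'' is exactly to pin down these signs. By Lemma \ref{lem:non-par-O}, when $\lambda_d$ has no zero entry and $\kappa$ has no zero entry, the two choices $\zeta=\pm 1$ give distinct representations distinguished precisely by the signs of their lowest $K$-types. The hypothesis rules out this ambiguity: either a zero entry of $\lambda_d$ collapses the $\xi$ degree of freedom (forcing $\xi=1$ to be a valid normalization on both $\pi$ and $\pi_{1,1}$ and making the limit of discrete series produce a $(+,+)$ sign on the $K$-type), or an existing $(1,0)$ in $(\varepsilon,\kappa)$ already supplies a trivial $GL(1)$ factor whose presence in the induction forces the signs of the lowest $K$-types of both $\pi$ and $\pi_{1,1}$ to be $(+,+)$. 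Hence the condition $\zeta=1$ propagates unchanged from $\pi$ to $\pi_{1,1}$, and the signs $(\epsilon,\eta)$ computed on both sides by the algorithm coincide.

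The main obstacle is the bookkeeping in the sign-selection part of the algorithm (Proposition \ref{prop:LKT-sgn}): one must verify case by case that appending the pair $(1, n-\frac{p+q}{2})$ to $(\varepsilon,\kappa)$, as well as the possible modification that moves a canceled pair into the $(\mu,\nu)$ block, interacts with the sign rule in a way that preserves the signs $(\epsilon,\eta)$ of the $\sigma$-output. The hypothesis on $\lambda_d$ or $(\varepsilon_i,\kappa_i)=(1,0)$ is precisely engineered so that this bookkeeping never toggles the ambient sign, so the verification reduces to checking a short list of cases in the algorithm, after which the conclusion $\sigma_{1,1}\in\mathcal{A}(\pi_{1,1})$ is immediate from matching outputs.
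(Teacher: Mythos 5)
Your outline takes the same approach as the paper — compare the two runs of the lowest $K$-type algorithm and show the output of the $\pi_{1,1}$-run is the $\sigma\mapsto\sigma_{1,1}$ image of the $\pi$-run — but the actual verification, which is the entire content of the proposition, is missing and one of your intermediate claims is wrong.

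The mistaken claim is in your third paragraph: you assert that the hypotheses (a zero entry in $\lambda_d$, or some $(\varepsilon_i,\kappa_i)=(1,0)$) ``force the signs of the lowest $K$-types of both $\pi$ and $\pi_{1,1}$ to be $(+,+)$.'' That is not what Proposition \ref{prop:LKT-sgn} says. Under those hypotheses, with $\zeta=\xi=1$, cases (2), (4), (5) of Proposition \ref{prop:LKT-sgn} apply and they give signs $(1;1)$ only when $\beta\geqslant\gamma$, where $\beta=\#\{j:\varepsilon_j=1\}$ and $\gamma=\#\{j:\varepsilon_j=-1\}$; when $\beta<\gamma$ the signs are $(1;-1)$ or $(-1;1)$ depending on whether $\Lambda_1$ or $\Lambda_2$ has more zeros. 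So the role of the hypotheses is not to make the signs trivial but to make the sign prescription \emph{unambiguous} (a single sign pair is selected rather than a $\pm$ pair as in case (1) of Proposition \ref{prop:LKT-sgn}), which is what lets one match $\sigma$ with $\sigma_{1,1}$ slot by slot.

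The missing verification is the comparison of the two algorithm runs. Passing from $\pi$ to $\pi_{1,1}$ changes the input to Proposition \ref{prop:LKT-O} only by replacing $\beta$ with $\beta+1$ (whether or not the ``modification'' kicks in, the net effect on the quantity $h=\min\{z,z'\}+\#\{j:\mu_j=0\}+\min\{\beta,\gamma\}$ and on the sign-comparison is the same). One must then split on cases: if $\beta\geqslant\gamma$, $h$ is unchanged and the signs stay $(1;1)$, so the output highest weight gets only an extra $0$ on each side, which is $\sigma_{1,1}$ for signs $(1;1)$; if $\beta<\gamma$, $h$ increases by one, adding one more entry $1$ to whichever side has fewer zeros (the side with sign $-1$), which is exactly $\sigma_{1,1}$ for signs $(1;-1)$ or $(-1;1)$. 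The boundary case $\beta+1=\gamma$, where the sign pattern flips from mixed to $(1;1)$ after the increment, has to be reconciled with the rewriting convention in the remark after Proposition \ref{prop:sig_11}. None of this is in your write-up — you only assert in your final paragraph that ``the verification reduces to checking a short list of cases,'' and you describe the algorithm's mechanism inaccurately (it does not process $\varepsilon$-entries one at a time attaching slots; it computes $h$ globally). Doing that case analysis is the proof.
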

\begin{proof}
Let $\beta=\#\{i\mid\varepsilon_{i}=1\}$ and $\gamma=\#\{i\mid\varepsilon_{i}=-1\}$.
For any $\sigma\in\mathcal{A}(\pi)$, under the assumption for parameters
of $\pi$, the signs of $\sigma$ is determined by $\beta$, $\gamma$
and the highest weight $(\Lambda_{1};\Lambda_{2})$ of $\sigma$ as
in Proposition \ref{prop:LKT-sgn} of Appendix \ref{App:CompLKT}:

If $\beta\geqslant\gamma$, then the signs of $\sigma$ are $(1,1)$.

If $\beta<\gamma$, then the signs of $\sigma$ are $(1;-1)$ if $\Lambda_{1}$
has more zeros than $\Lambda_{2}$, and $(-1;1)$ otherwise. 

(Note that when $\beta<\gamma$, by Proposition \ref{prop:LKT-O},
$\Lambda_{1}$ or $\Lambda_{2}$ contains no zero entry $\Rightarrow\beta+1=\gamma\Rightarrow p$
and $q$ are odd. So the signs of $\sigma$ are well written as in
the Proposition \ref{prop:sig_11}.) 

Compare the algorithms (Proposition \ref{prop:LKT-O}, \ref{prop:LKT-sgn})
to get $\mathcal{A}(\pi)$ and $\mathcal{A}(\pi_{1,1})$. The only
change is from $\beta$ to $\beta+1$. If $\beta\geqslant\gamma$
for $\sigma$, we get a lowest $K$-type of $\pi_{1,1}$ with the
same nonzero entries. If $\beta<\gamma$ for $\sigma$, we get a lowest
$K$-type of $\pi_{1,1}$ with the same nonzero entries, and one more
entry $1$ in the left or right part that contains less zero entries.
Always the resulting lowest $K$-type of $\pi_{1,1}$ is $\sigma_{1,1}$.

(Note that if $\beta+1=\gamma$, then the the resulting lowest $K$-type
has signs $(1,1)$, which coincides with the remark after Proposition
\ref{prop:sig_11}.)\end{proof}
\begin{lem}
\label{lem:cond_pq} Let $\pi=\pi_{\zeta}(\lambda_{d},\xi,\Psi,\mu,\nu,\varepsilon,\kappa)\in\mathcal{R}(O(p,q))$
with $p+q$ even. Consider the following conditions.

$(i)$ $n(\pi)\leqslant\frac{p+q}{2}-1$.

$(ii)$ $\zeta=\xi=1$, and either $\lambda_{d}$ contains a zero
entry or some $(\varepsilon_{i},\kappa_{i})=(1,0)$.

$(iii)$ $\mathcal{A}(\pi)\subseteq\mathcal{D}(\pi)$.

Then $(i)\Leftrightarrow(ii)\Rightarrow(iii)$.\end{lem}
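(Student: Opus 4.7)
The plan is to mirror the strategy used in the proof of Lemma \ref{lem:cond_n} on the symplectic side, now working with the parametrization of $\mathcal{R}(O(p,q))$ from Subsection \ref{subsec:Par-O}. The equivalence $(i)\Leftrightarrow(ii)$ should come from an explicit description of theta lifts in the almost-equal-rank case $2n'=p+q-2$, and the implication $(ii)\Rightarrow(iii)$ should come from a degree computation analogous to \cite[Corollary~37]{Paul2005howe}.

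For the equivalence $(i)\Leftrightarrow(ii)$, I would first observe that by Kudla's persistence principle, condition $(i)$ is equivalent to the single statement $\theta_{\frac{p+q}{2}-1}(\pi)\neq0$, i.e., nonvanishing in the almost equal rank case in which $O(p,q)$ is the larger member of the dual pair. The full explicit description of $\theta_{n'}$ for $\pi\in\mathcal{R}(O(p,q))$ with $2n'=p+q-2$ is available in \cite{Paul2005howe} (the analog, with $O$ and $Sp$ exchanged, of \cite[Theorem~18]{Paul2005howe} that was used in the proof of Lemma \ref{lem:cond_n}). From that list I would read off the precise nonvanishing criterion and verify by direct inspection that it coincides with condition $(ii)$: the requirement $\zeta=\xi=1$ selects the quotient of $O(p,q)$ that actually participates in the correspondence, while the requirement that either $\lambda_d$ contain a zero entry or some $(\varepsilon_i,\kappa_i)=(1,0)$ is the parameter-level manifestation of a ``trivial direction'' that allows the symplectic rank to drop by one below the equal-rank threshold.

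For $(ii)\Rightarrow(iii)$, once $(i)$ is available I would apply Lemma \ref{lem:min-deg} to the nonzero lift $\pi':=\theta_{\frac{p+q}{2}-1}(\pi)$, obtaining $\phi_{\frac{p+q}{2}-1}(\mathcal{D}(\pi))=\mathcal{D}(\pi')$ and hence an explicit numerical value for $\deg(\pi)=\deg(\pi')$ via Proposition \ref{prop:CorKT}(3). Then I would compute $\mathcal{A}(\pi)$ via Proposition \ref{prop:LKT-O} and \ref{prop:LKT-sgn} and check directly that every $\sigma\in\mathcal{A}(\pi)$ attains the degree of $\mathcal{D}(\pi)$. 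The key point is that, under $(ii)$, the algorithm outputs lowest $K$-types of the shape $(a_1,\dots,a_x,0,\dots,0;\epsilon)\otimes(b_1,\dots,b_y,0,\dots,0;\eta)$ whose degree $\sum a_i+\sum b_i+\frac{1-\epsilon}{2}(p-2x)+\frac{1-\eta}{2}(q-2y)$ matches the value extracted from $\mathcal{D}(\pi')$.

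The main obstacle is the second step, specifically the tracking of the signs. The degree of an $O(p)\times O(q)$-type is sensitive to $(\epsilon,\eta)$ through the terms $\frac{1-\epsilon}{2}(p-2x)$ and $\frac{1-\eta}{2}(q-2y)$, while the signs produced by Proposition \ref{prop:LKT-sgn} depend intricately on the counts $\beta,\gamma$ of $\pm1$ entries in $\varepsilon$ and on the distribution of zeros in the highest weight, exactly as was exploited in Proposition \ref{prop:cond2}. The verification reduces to a finite case split on whether $\lambda_d$ has a zero entry or some $(\varepsilon_i,\kappa_i)=(1,0)$, and in each case one must check that the sign choice produced by the algorithm is precisely the one that minimizes degree; any competing sign choice either is forbidden by condition (F-2) or yields a strictly larger value of $\frac{1-\epsilon}{2}(p-2x)+\frac{1-\eta}{2}(q-2y)$. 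I expect this to be tedious but routine once the algorithms in Appendix \ref{App:CompLKT} are laid out side by side.
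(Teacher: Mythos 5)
Your treatment of $(i)\Leftrightarrow(ii)$ matches the paper: both directions are read off from Paul's complete description of the almost-equal-rank case $2n'=p+q-2$ (his Theorems 15 and 18), with Kudla's persistence principle reducing $(i)$ to the single nonvanishing statement $\theta_{\frac{p+q}{2}-1}(\pi)\neq 0$. That part is fine.

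For $(ii)\Rightarrow(iii)$ you take a genuinely different route, and it has a gap. The paper does not recompute degrees from scratch; it invokes \cite[Corollary~37]{Paul2005howe} as a black box, which under hypothesis $(i)$ already produces, for each $\sigma\in\mathcal{A}(\pi)$, an element $\delta\in\mathcal{A}(\pi)\cap\mathcal{D}(\pi)$ with the \emph{same highest weight} as $\sigma$ (possibly differing in signs). Condition $(ii)$ is then used for one purpose only: by Proposition~\ref{prop:LKT-sgn}, when $\zeta=\xi=1$ and either $\lambda_d$ has a zero entry or some $(\varepsilon_i,\kappa_i)=(1,0)$, the sign of a lowest $K$-type is \emph{uniquely determined} by its highest weight, so $\delta=\sigma$ and we are done. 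Your plan instead tries to extract an explicit numerical value of $\deg(\pi)$ from the equality $\deg(\pi)=\deg(\pi')$ via Lemma~\ref{lem:min-deg} and then confirm that each $\sigma\in\mathcal{A}(\pi)$ attains it. But $\deg(\pi')$ is defined through $\mathcal{D}(\pi')$, which you do not yet know; Lemma~\ref{lem:min-deg} only transports $\mathcal{D}(\pi)$ to $\mathcal{D}(\pi')$ and equates the two degrees, it does not hand you a number. To make your route operational you would have to independently determine $\mathcal{D}(\pi')$ (or $\mathcal{D}(\pi)$), and in practice you would be re-proving a special case of Corollary~37. Your instinct that the sign-tracking is the delicate point is correct — that is exactly what $(ii)$ is used for in the paper — but the cleaner logic is: Corollary~37 handles the highest-weight matching, and $(ii)$ eliminates the residual sign ambiguity.
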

\begin{proof}
$(i)\Rightarrow(ii)$: Let $n=\frac{p+q}{2}-1\geqslant n(\pi)$. Then
$\theta_{n}(\pi)\neq0$ by Kudla's Persistence Principle. So $(ii)$
can read off from the full description of $\theta_{n}$ with $2n+2=p+q$
in \cite[Th.15, Th.18]{Paul2005howe}.

$(ii)\Rightarrow(i)$: \cite[Th.15, Th.18]{Paul2005howe} explicitly
gives $\theta_{n}(\pi)\neq0$ for $n=\frac{p+q}{2}-1$ when $(ii)$
holds.

$(i)$ and $(ii)\Rightarrow(iii)$: By \cite[Cor.37]{Paul2005howe},
$(i)$ implies that for any $\sigma\in\mathcal{A}(\pi)$, there is
$\delta\in\mathcal{A}(\pi)\cap\mathcal{D}(\pi)$ with the same highest
weight as that of $\sigma$. However, under $(ii)$ the choice for
signs of lowest $K$-types of $\pi$ with the given highest weight
is unique (see Proposition \ref{prop:LKT-sgn}), so $\delta=\sigma$.\end{proof}
\begin{thm}
[Explicit induction principle on $(p,q)$] \label{thm:EIP_pq} Let
$\pi=\pi_{\zeta}(\lambda_{d},\xi,\Psi,\mu,\nu,\varepsilon,\kappa)$
$\in\mathcal{R}(O(p,q))$ with $p+q$ even. Suppose that $n(\pi)\leqslant\frac{p+q}{2}-1$
(or equivalently, $\zeta=\xi=1$ and either $\lambda_{d}$ contains
a zero entry or some $(\varepsilon_{i},\kappa_{i})=(1,0)$). Let integers
$n\geqslant n(\pi)$ and $k\geqslant1$ satisfy $n\notin[\frac{p+q}{2},\frac{p+q}{2}+k-1]$.
 (If $n(\pi)\leqslant n\leqslant\frac{p+q}{2}-1$, we may take any
$k\geqslant1$.) Then 
\begin{align*}
\theta_{p+k,q+k}(\theta_{n}(\pi)) & =\pi_{1}(\lambda_{d},1,\Psi,\mu,\nu,(\varepsilon|(1,1,\dots,1)),\\
 & \qquad(\kappa|(n-\frac{p+q}{2},n-1-\frac{p+q}{2},\dots,n-k+1-\frac{p+q}{2})))
\end{align*}
with a possible modification: if the resulting parameters contain
some entries $\kappa_{i}=\pm\kappa_{j}$ with $\varepsilon_{i}\neq\varepsilon_{j}$,
delete $\varepsilon_{i}$, $\varepsilon_{j}$, $\kappa_{i}$, $\kappa_{j}$
from $(\varepsilon,\kappa)$, and add entries $(0,2\kappa_{i})$ into
$(\mu,\nu)$.\end{thm}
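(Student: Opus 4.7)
The plan is to extract a one-step claim (from $(p,q)$ to $(p+1,q+1)$) and then iterate it $k$ times, mirroring the proof of Theorem \ref{thm:EIP_n}. The one-step claim reads: under the standing hypothesis, if $n\geqslant n(\pi)$ and $n\neq\frac{p+q}{2}$, then
\[
\theta_{p+1,q+1}(\theta_{n}(\pi))=\pi_{1}(\lambda_{d},1,\Psi,\mu,\nu,(\varepsilon\mid(1)),(\kappa\mid(n-\tfrac{p+q}{2}))),
\]
with the usual possible modification; denote this target by $\pi_{1,1}$.

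For the one-step claim, Lemma \ref{lem:cond_pq} gives $\zeta=\xi=1$, $\mathcal{A}(\pi)\subseteq\mathcal{D}(\pi)$, and places $\pi$ inside the scope of Proposition \ref{prop:cond2}. Pick any $\sigma\in\mathcal{A}(\pi)\subseteq\mathcal{D}(\pi)$. Two applications of Lemma \ref{lem:min-deg}, first to the pair $(O(p,q),Sp(2n,\mathbb{R}))$ and then to $(O(p+1,q+1),Sp(2n,\mathbb{R}))$, produce
\[
\sigma_{1,1}=\phi_{p+1,q+1}(\phi_{n}(\sigma))\in\mathcal{D}(\theta_{p+1,q+1}(\theta_{n}(\pi))),
\]
with $\sigma_{1,1}$ computed explicitly by Proposition \ref{prop:sig_11}. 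Independently, Proposition \ref{prop:cond2} shows $\sigma_{1,1}\in\mathcal{A}(\pi_{1,1})$. Thus $\theta_{p+1,q+1}(\theta_{n}(\pi))$ contains a lowest $K$-type of $\pi_{1,1}$, and Proposition \ref{prop:LKT+11}, whose hypothesis $p+q\neq2n$ is precisely $n\neq\frac{p+q}{2}$, forces $\theta_{p+1,q+1}(\theta_{n}(\pi))=\pi_{1,1}$.

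To iterate, one must verify that $\pi_{1,1}\in\mathcal{R}(O(p+1,q+1))$ again satisfies the hypothesis of Lemma \ref{lem:cond_pq}(ii). Inspecting the formula, $\zeta=\xi=1$ and $\lambda_{d}$ are unchanged, so a zero entry in $\lambda_{d}$ (if present) persists; in the alternative case a pair $(\varepsilon_{i},\kappa_{i})=(1,0)$ survives the ``possible modification'' because the newly appended $\kappa$-entry $n-\frac{p+q}{2}$ is nonzero. Hence the one-step claim applies again, this time to $\pi_{1,1}$, producing $\theta_{p+2,q+2}(\theta_{n}(\pi))$ (noting $\theta_{n}(\pi_{1,1})=\theta_{n}(\pi)$ by Howe duality). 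At the $j$-th iteration, $1\leqslant j\leqslant k$, the relevant orthogonal group is $O(p+j-1,q+j-1)$, the avoidance condition becomes $n\neq\frac{p+q}{2}+j-1$, and the newly appended $\kappa$-entry is $n-\frac{(p+j-1)+(q+j-1)}{2}=n-\frac{p+q}{2}-(j-1)$. Taking $j=1,\dots,k$ reproduces both the exclusion $n\notin[\frac{p+q}{2},\frac{p+q}{2}+k-1]$ and the list of $\kappa$-entries in the theorem; the final modification clause is then read off from Lemma \ref{lem:non-par-O}(3).

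The main obstacle is the one-step claim, and specifically ensuring that the transported $K$-type $\sigma_{1,1}$ lies in the smaller set $\mathcal{A}(\pi_{1,1})$ rather than merely among the $K$-types of $\pi_{1,1}$. This is exactly where hypothesis (ii) of Lemma \ref{lem:cond_pq} is indispensable: it pins down the sign labels of all lowest $K$-types of $\pi$ by a single rule depending only on the counts of $\pm1$'s in $\varepsilon$, so that the shift $\beta\mapsto\beta+1$ induced by appending $(1)$ to $\varepsilon$ interacts cleanly with Proposition \ref{prop:sig_11}. Once this is secured, the remainder of the argument is bookkeeping with Langlands parameters.
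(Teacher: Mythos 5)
Your argument follows the paper's proof exactly: identify $\sigma\in\mathcal{A}(\pi)\subseteq\mathcal{D}(\pi)$ via Lemma~\ref{lem:cond_pq}, transport it to $\sigma_{1,1}\in\mathcal{D}(\theta_{p+1,q+1}(\theta_{n}(\pi)))\cap\mathcal{A}(\pi_{1,1})$ using Lemma~\ref{lem:min-deg} and Propositions~\ref{prop:sig_11}, \ref{prop:cond2}, then pin down the lift by Proposition~\ref{prop:LKT+11}, and iterate. Your explicit check that $\pi_{1,1}$ again satisfies hypothesis (ii) of Lemma~\ref{lem:cond_pq} — because the new $\kappa$-entry $n-\frac{p+q}{2}$ is nonzero so a surviving $(1,0)$ pair or zero in $\lambda_d$ is preserved — is a welcome piece of bookkeeping that the paper compresses into ``repeat this process $k$ times,'' but it is the same argument.
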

\begin{proof}
Since $n(\pi)\leqslant\frac{p+q}{2}-1$, the parameters of $\pi$
satisfy $(ii)$ of Lemma \ref{lem:cond_pq} and $\mathcal{A}(\pi)\subseteq\mathcal{D}(\pi)$.
Take any $\sigma\in\mathcal{A}(\pi)$, we get $\sigma_{1,1}\in\mathcal{D}(\theta_{p+1,q+1}(\theta_{n}(\pi)))$.
 By Proposition \ref{prop:cond2}, $\sigma_{1,1}$ is also a lowest
$K$-type of $\pi_{1,1}$. By Proposition \ref{prop:LKT+11}, $\theta_{p+1,q+1}(\theta_{n}(\pi))=\pi_{1}(\lambda_{d},1,\Psi,\mu,\nu,(\varepsilon|(1)),(\kappa|(n-\frac{p+q}{2})))$
with a possible modification of parameters. Repeat this process for
$k$ times.
\end{proof}

\section{Reducing cases when $p+q=4$}

The following parts of this paper aim to explicitly calculate the
theta lifting
\[
\theta_{n}:\mathcal{R}(O(p,q))\to\mathcal{R}(Sp(2n,\mathbb{R}))\cup\{0\},
\]
for all $n\geqslant1$ when $p+q=4$, in terms of Langlands parameters.
This section reduces the cases that need consideration.

\subsection{Reducing $(p,q)$}

As $O(p,q)=O(q,p)$, there is a bijection $\varphi:\mathcal{R}(O(p,q))\stackrel{\simeq}{\longrightarrow}\mathcal{R}(O(q,p))$.
Under our parametrization $\varphi$ is indeed the following operation
on $(\lambda_{d},\Psi)$ (preserving other parameters):
\begin{itemize}
\item interchange the two parts of $\lambda_{d}=\{*;*\}$,
\item interchange the roles of $\{e_{i}\}$ and $\{f_{j}\}$ for $\Psi$.\end{itemize}
\begin{lem}
[{\cite[Lemma 20]{Paul2005howe}}] \label{lem:exchange-pq} Let
$n$, $p$, and $q$ be nonnegative integers with $p+q$ even. Let
$\pi'\in\mathcal{R}(Sp(2n,\mathbb{R}))$ with contragredient $\pi'^{*}$.
If $\theta_{p,q}(\pi')\neq0$, then
\[
\varphi(\theta_{p,q}(\pi'))=\theta_{q,p}(\pi'^{*}).
\]

\end{lem}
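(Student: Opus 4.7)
The plan is to exploit the equality $O(p,q)=O(q,p)$ as subgroups of $GL(V)$ and to track how negating the symmetric bilinear form on $V$ affects both the dual-pair embedding and the oscillator representation.

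First, I would set the geometric stage. Let $(V,(,))$ have signature $(p,q)$ and let $V^{-}$ denote the same real vector space equipped with the form $-(,)$, of signature $(q,p)$; then $O(V)=O(V^{-})$ as subgroups of $GL(V)$, and the map $\varphi$ is precisely this identification after relabeling the Langlands parameters to swap the roles of the positive and negative parts of the compact Cartan (which is exactly the effect described on $\lambda_{d}=\{*;*\}$ and on $\Psi$ in the paragraph defining $\varphi$). The dual pair $(O(V^{-}),Sp(W))$ sits inside $Sp(V\otimes W,-B)$, where $B=(,)\otimes\langle,\rangle$ is the symplectic form used for the $(O(V),Sp(W))$-pair; and $Sp(V\otimes W,-B)=Sp(V\otimes W,B)$ as subgroups of $GL(V\otimes W)$, so the two embeddings land in the same real symplectic group.

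Second, I would compare the two Weil representations. Via the identity on the underlying Heisenberg group, the oscillator representation of $Sp(V\otimes W,B)$ associated to $\psi(t)=e^{2\pi\mathrm{i}t}$ coincides with the oscillator representation of $Sp(V\otimes W,-B)$ associated to $\bar\psi$. Since $\omega_{\bar\psi}$ is isomorphic to the contragredient $\omega_{\psi}^{*}$, the passage $\psi\to\bar\psi$ converts the theta correspondence to its ``dual'' version, which sends corresponding pairs to their contragredients. One must verify that Kudla's splitting \cite{Kudla1994splitting} is equivariant under this identification: the splitting used for $(O(p,q),Sp(2n,\mathbb{R}))$ with $\psi$ matches, under the identity $O(p,q)=O(q,p)$, the splitting used for $(O(q,p),Sp(2n,\mathbb{R}))$ with $\bar\psi$. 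This compatibility is the technical heart of the lemma, since $p+q$ is even and no genuine metaplectic subtleties enter.

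Third, I would conclude as follows. If $\theta_{p,q}(\pi')=\pi$ in the $\psi$-correspondence for $(O(V),Sp(W))$, then by the identifications of the previous paragraph $\pi'$ corresponds to $\pi$ in the $\bar\psi$-correspondence for $(O(V^{-}),Sp(W))=(O(q,p),Sp(2n,\mathbb{R}))$. Dualizing both sides converts this to the standard $\psi$-correspondence for $(O(q,p),Sp(2n,\mathbb{R}))$, yielding $\theta_{q,p}(\pi'^{*})=\pi^{*}$. Because every irreducible admissible representation of an orthogonal group is self-contragredient, $\pi^{*}\simeq\pi$; under the identification of $\mathcal{R}(O(p,q))$ with $\mathcal{R}(O(q,p))$ that defines $\varphi$, this element is exactly $\varphi(\pi)$, giving $\varphi(\theta_{p,q}(\pi'))=\theta_{q,p}(\pi'^{*})$. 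The main obstacle is the equivariance check for Kudla's splitting under the involution simultaneously swapping $(,)\leftrightarrow-(,)$ and $\psi\leftrightarrow\bar\psi$; once granted, the remainder is essentially bookkeeping with the parametrizations.
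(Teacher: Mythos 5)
The paper does not actually prove this lemma — it is cited verbatim from Paul \cite[Lemma~20]{Paul2005howe}, so there is no ``paper's own proof'' to compare against line by line. Nonetheless, your proposal is substantially the standard argument one would give (and, I believe, close in spirit to Paul's). The three pillars you use — (a) $O(V)=O(V^-)$ and $Sp(V\otimes W,B)=Sp(V\otimes W,-B)$ as subgroups of $GL$, so the embeddings of the dual pair are literally the same; (b) the Heisenberg isomorphism $(v,t)\mapsto(v,-t)$ carrying $\rho_\psi$ on $H(V\otimes W,B)$ to $\rho_{\bar\psi}$ on $H(V\otimes W,-B)$, whence $\omega_\psi^{B}\simeq\omega_{\bar\psi}^{-B}$ and $\omega_{\bar\psi}\simeq\omega_\psi^{*}$; (c) self-duality of every $\pi\in\mathcal{R}(O(p,q))$ (the MVW involution is inner for full orthogonal groups) — are all correct, and assembling them gives $\theta^{\bar\psi}_{q,p}(\pi')=\varphi(\pi)$ and then $\varphi(\pi)=(\theta^{\psi}_{q,p}(\pi'^{*}))^{*}=\theta^{\psi}_{q,p}(\pi'^{*})$, as desired.

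You correctly flag that the sole point needing care is the compatibility of the two Kudla splittings under the simultaneous involution $(,)\leftrightarrow -(,)$, $\psi\leftrightarrow\bar\psi$; since $p+q$ is even the splitting is a genuine homomorphism and one must check the cocycle formulas in \cite{Kudla1994splitting} match up, but this is a finite computation with no real obstruction. Two tiny points worth tightening: first, when you pass from ``$\pi\otimes\pi'$ is a quotient of $\omega_\psi$'' to a statement about $\omega_\psi^{*}$ you land on a \emph{sub}, so you should either invoke Howe duality for subs as well as quotients, or pass to Harish-Chandra modules and dualize there (where the two notions coincide for admissible modules); second, the self-duality $\pi^{*}\simeq\pi$ for $O(p,q)$ deserves a one-line citation (MVW for real classical groups, or the observation that the MVW element can be taken in $O(p,q)$ itself). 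With those noted, the argument is sound.
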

To transfer between the Langlands parameters of $\pi'$ and $\pi'^{*}$,
we only need to:
\begin{itemize}
\item replace $\lambda_{d}=(c_{1},c_{2},\dots,c_{v})$ by $(-c_{v},-c_{v-1},\dots,-c_{1})$,
\item replace $e_{i}$ by $-e_{v+1-i}$ for $\Psi$.
\end{itemize}
Therefore, to calculate $\theta_{n}$ for $O(p,q)$ when $p+q=4$,
it suffices to calculate when $(p,q)=(4,0),(3,1),(2,2)$.

\subsection{First occurrence}

For $\pi\in\mathcal{R}(O(p,q))$ with $p+q$ even, its \emph{first
occurrence index}  is
\[
n(\pi)=\min\{k\geqslant0\mid\theta_{k}(\pi)\neq0\}.
\]
Taking the oscillator representation of $\widetilde{Sp}(0,\mathbb{R})\cong\mu_{2}=\{\pm1\}$
to be the nontrivial character, we get the local theta correspondence
for $(O(p,q),Sp(0,\mathbb{R}))$ as $\tri\leftrightarrow\tri$, where
$\tri$ denotes the trivial representation. In this sense, it is assumed
that $n(\tri)=0$, and $n(\pi)\geqslant1$ for any nontrivial $\pi$.

We say that a reductive dual pair $(G,G')$ of type I is in the \emph{stable
range} with $G$ the smaller member if the defining module of $G'$
has an isotropic subspace of the same dimension as that of the defining
module of $G$. For $(G,G')=(O(p,q),Sp(2n,\mathbb{R}))$, it is in
the stable range with $G$ the smaller member if $n\geqslant p+q$.
The nonvanishing of theta liftings in the stable range (cf. \cite{Li1989singular,ProtsakPrzebinda2008occurrence})
states that:
\begin{prop}
$n(\pi)\leqslant p+q$ for all $\pi\in\mathcal{R}(O(p,q))$ with $p+q$
even.
\end{prop}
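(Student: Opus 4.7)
The plan is, for each $\pi \in \mathcal{R}(O(p,q))$ with $n \geq p+q$, to exhibit a nonzero continuous $G$-intertwining map $\omega \to \pi$; this places $\pi \in \mathcal{R}(G,\omega)$ and hence forces $\theta_n(\pi) \neq 0$. I would construct the map by matrix-coefficient integrals. Fix smooth vectors $\phi_0 \in \omega$ and $v \in \pi$; for each $v^* \in \pi^*$ set
$$I_{v,v^*}(\phi) \;=\; \int_G \langle \omega(g)\phi, \phi_0 \rangle \, \langle \pi(g^{-1}) v^*, v \rangle \, dg.$$
Assembled over $v^*$, this formally defines a $G$-equivariant map $\omega \to \pi$ (the $G$-action is absorbed by a change of variable), so the only substantive issues are absolute convergence of the integral and nonvanishing of the resulting operator.

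Convergence is the analytic heart of the matter, and this is exactly where the stable range hypothesis enters. I would realize $\omega$ in the Schr\"odinger model adapted to a complete polarization of $V \otimes W$ chosen so that $G$ acts on one Lagrangian factor (made possible by $n \geq p+q$, which provides a totally isotropic subspace of $W$ of dimension $p+q$). In this model the matrix coefficients $g \mapsto \langle \omega(g)\phi, \phi_0\rangle$, for $\phi, \phi_0$ in the Harish-Chandra module of $\omega$, admit closed-form Gaussian expressions as in Howe's formulas; the stable range produces enough transverse directions to force rapid (indeed Schwartz) decay of these coefficients on $G$. Combined with the standard moderate growth of matrix coefficients of the admissible $\pi$, this makes the integral absolutely convergent and continuous in $\phi$.

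Nonvanishing I would handle by an approximation argument: matrix coefficients of $\omega|_G$ span a dense subspace of Schwartz functions on $G$, so one can choose $\phi$ making $\langle \omega(\cdot)\phi, \phi_0\rangle$ concentrate near the identity; picking $v, v^*$ so that $\langle \pi(e) v^*, v\rangle \neq 0$, the integral becomes essentially $\langle v^*, v\rangle \neq 0$, giving a nontrivial intertwiner and the conclusion.

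The main obstacle is, without question, the decay estimate for $\omega|_G$ matrix coefficients in the stable range; once that integrability is in hand, $G$-equivariance and nonvanishing reduce to formal manipulations. This is precisely the route of Li (1989), whereas Protsak--Przebinda (2008) instead read nonvanishing off a compatibility of moment-map images for the dual pair, which in the stable range forces the associated variety of $\pi$ to lift to a nonempty fibre on the $G'$-side.
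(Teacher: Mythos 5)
The paper does not give an independent proof of this proposition; it is quoted from the literature, with \cite{Li1989singular} and \cite{ProtsakPrzebinda2008occurrence} cited. So there is no ``paper's own proof'' to compare against on the technical level. Your sketch reproduces Li's matrix-coefficient argument (with the Hermitian pairing replaced by a pairing against $\pi^{*}$ so as to cover non-unitary $\pi$), and the convergence discussion in the stable range is sound in spirit. However, there are two genuine gaps.

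First, the nonvanishing step is not established. You assert that ``matrix coefficients of $\omega|_G$ span a dense subspace of Schwartz functions on $G$,'' and use this to produce $\phi$ with $\langle\omega(\cdot)\phi,\phi_0\rangle$ concentrating near the identity. That density statement is not justified and is not what Li's argument uses. The span of matrix coefficients of $\omega|_G$ lies in a proper $G\times G$-stable subspace of the Schwartz algebra determined by which $G$-representations occur in $\omega$ and with what multiplicities; density in $\mathcal{S}(G)$ is a much stronger claim than what is available. Li's actual nonvanishing argument does not pass through any such density statement: he picks explicit Gaussian vectors in the Schr\"odinger model, computes $\langle\omega(g)\phi,\phi\rangle$ in closed form, and verifies directly that the resulting integral against the matrix coefficient of $\pi$ is strictly positive. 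Replacing that explicit computation by a density heuristic leaves the key step unproved.

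Second, even if the Gaussian computation is restored, Li's theorem is for irreducible \emph{unitary} representations, whereas the proposition is for all irreducible \emph{admissible} $\pi\in\mathcal{R}(O(p,q))$. Your substitution of $\pi^{*}$ for the complex conjugate is the right formal move, but it discards the positivity that Li uses to control the resulting sesquilinear form and to identify the completion as a genuine $(G\times G')$-module. Carrying the argument through for arbitrary admissible $\pi$ is precisely the content of \cite{ProtsakPrzebinda2008occurrence}, which, as you note, proceeds via moment-map and associated-variety considerations rather than by patching Li's integral. You should either invoke that reference for the full admissible case or explain why the positivity step in Li's proof can be dispensed with; as written, the proposal proves the statement for unitary $\pi$ modulo the unjustified density claim, not for all of $\mathcal{R}(O(p,q))$.
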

Let $\mathrm{det}_{p,q}$  be the determinant character of $O(p,q)$.
Its only $O(p)\times O(q)$-type is $(0,\dots,0;-1)\otimes(0,\dots,0;-1)$,
which does not occur in the space of joint harmonics when $n<p+q$
by Proposition \ref{prop:CorKT}. Hence by Lemma \ref{lem:min-deg},
$n(\mathrm{det}_{p,q})\geqslant p+q$, and thus $n(\mathrm{det}_{p,q})=p+q$.

Recently B. Sun and C.-B. Zhu \cite{SunZhu2012conservation} proved
some ``conservation relations'' conjectured by Kudla and Rallis
(\cite{KudlaRallis2005first}) about the first occurrence for local
theta correspondence. Especially, the following relation for $(O(p,q),Sp(2n,\mathbb{R}))$
holds.
\begin{lem}
[\cite{SunZhu2012conservation}] \label{thm:Conser} For $\pi\in\mathcal{R}(O(p,q))$
with $p+q$ even, 
\[
n(\pi)+n(\pi\otimes\mathrm{det}_{p,q})=p+q.
\]
\end{lem}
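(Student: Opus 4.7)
The plan is to split the conservation identity into the two inequalities
\[
n(\pi) + n(\pi \otimes {\det}) \leq p+q \quad \text{and} \quad n(\pi) + n(\pi \otimes {\det}) \geq p+q
\]
and establish each separately. Two preliminary sanity checks are available: for $\pi = \tri$ we have $n(\tri) = 0$ and $\tri \otimes \det = \det$, and the excerpt already proves $n(\det) = p+q$, so the identity holds on this extreme case; at the other extreme, stable range nonvanishing gives $n(\pi), n(\pi\otimes\det) \leq p+q$, which is consistent but far from tight.

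For the upper bound I would use a see-saw / doubling argument. Consider the see-saw diagram
\[
\begin{array}{ccc}
O(p,q) \times O(p,q) & & Sp(2(p+q),\mathbb{R}) \\
\cup & & \cup \\
O(p,q)\text{ (diagonal)} & & Sp(2n,\mathbb{R}) \times Sp(2(p+q-n),\mathbb{R}).
\end{array}
\]
Since the top pair is in the stable range, Howe duality produces a nonzero theta lift of $\pi \otimes (\pi \otimes \det)$ to $Sp(2(p+q),\mathbb{R})$, and applying the see-saw identity (in the form of Frobenius reciprocity between the two reductive dual pairs) gives, for a suitable $0 \leq n \leq p+q$, a nonzero $Sp(2n) \times Sp(2(p+q-n))$-intertwining map from the restricted oscillator representation onto $\theta_n(\pi) \otimes \theta_{p+q-n}(\pi \otimes \det)$. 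That forces both factors to be nonzero, hence $n(\pi) + n(\pi \otimes \det) \leq p+q$.

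For the lower bound I would argue by contradiction, supposing $\theta_n(\pi) \neq 0$ and $\theta_{n'}(\pi \otimes \det) \neq 0$ with $n+n' < p+q$. The key is to translate this hypothesis into the existence of a nontrivial invariant form on a quotient of a degenerate principal series: namely, use the see-saw above (with $n+n'$ in place of $p+q$) to convert the joint nonvanishing into a nonzero $O(p,q)$-invariant functional on a subquotient of $\mathrm{Ind}_{P_{n+n'}}^{Sp(2(n+n'))}\chi$ restricted to the diagonal copy, where $\chi$ is the character dictated by $\det$. Below the threshold $n+n' = p+q$, the relevant degenerate principal series is known to be irreducible (or at least to have no submodule supporting such a functional), which yields the required contradiction.

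The main obstacle is the lower bound: the upper bound is essentially a clean application of stable range nonvanishing plus the see-saw identity, but the lower bound is the technical heart of the Sun--Zhu theorem. It requires genuinely controlling the submodule structure of a family of degenerate principal series of $Sp(2m,\mathbb{R})$ and ruling out certain invariant bilinear forms; these vanishing statements cannot be read off from the $K$-type and infinitesimal-character analysis used elsewhere in this paper, and must be imported as external input (which is why the lemma is attributed to \cite{SunZhu2012conservation}).
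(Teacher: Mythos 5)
The paper offers no proof of this lemma; it is imported as a black box from \cite{SunZhu2012conservation}, so the comparison can only be about whether your sketch faithfully reflects that external result. It does not quite: you have the two inequalities attributed the wrong way round. The direction $n(\pi)+n(\pi\otimes\det)\geq p+q$, which you label the hard ``lower bound,'' is the classical half due to Rallis and Kudla--Rallis, and your description of it is essentially correct: assuming both lifts occur too early, the see-saw produces an $O(p,q)$-invariant functional on a degenerate principal series of $Sp(2(n+n'),\mathbb{R})$ in a range where no such functional can exist. The genuinely new content of \cite{SunZhu2012conservation} is the other direction, $n(\pi)+n(\pi\otimes\det)\leq p+q$, a non-vanishing statement, and your ``clean'' argument for it has a real gap. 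You invoke stable range nonvanishing for the top pair of your see-saw, but the dual pair $\bigl(O(p,q)\times O(p,q),\ Sp(2n,\mathbb{R})\times Sp(2(p+q-n),\mathbb{R})\bigr)$ is never in the stable range for $0<n<p+q$: each irreducible factor would need its symplectic member to have rank at least $p+q$, forcing $n\geq p+q$ and $p+q-n\geq p+q$ simultaneously. The only stable range pair in sight is the diagonal one $(O(p,q),Sp(2(p+q),\mathbb{R}))$, and it yields only the individual bound $n(\pi)\leq p+q$ already recorded in the paper, not the joint inequality. Your final remark --- that the lemma must be taken as external input from Sun--Zhu --- is of course correct and is exactly what the paper does; but the claimed distribution of difficulty and the stable range step both need to be fixed.
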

\begin{prop}
If $\mathrm{det}_{p,q}\neq\pi\in\mathcal{R}(O(p,q))$ with $p+q$
even, then $n(\pi)\leqslant p+q-1$. Therefore, $\mathrm{det}_{p,q}$
is the only element in $\mathcal{R}(O(p,q))$ with first occurrence
index $p+q$.\end{prop}
\begin{proof}
$\pi\otimes\mathrm{det}_{p,q}\neq\tri$ $\Rightarrow n(\pi\otimes\mathrm{det}_{p,q})\geqslant1$
$\Rightarrow n(\pi)=p+q-n(\pi\otimes\mathrm{det}_{p,q})\leqslant p+q-1$.
\end{proof}
Fix $(p,q)$ with $p+q$ even and $\pi\in\mathcal{R}(O(p,q))$. To
explicitly calculate $\theta_{n}(\pi)$ for all $n$, it suffices
to calculate $\theta_{n}(\pi)$ for $n(\pi)\leqslant n\leqslant\max\{n(\pi),\frac{p+q}{2}\}$
by the explicit induction principle Theorem \ref{thm:EIP_n}.

When $p+q=4$, $\theta_{1}$ and $\theta_{2}$ can be explicitly read
off from \cite{Paul2005howe} as (almost) equal rank cases, and will
be written down in Appendix \ref{App:Theta12}. So we only need to
calculate $\theta_{4}(\mathrm{det}_{p,q})$ and $\theta_{3}(\pi)$
for all $\pi\in\mathcal{R}(O(p,q))$ with $n(\pi)=3$.

\section{\label{sec:3-lifts}Theta $3$-lifts when $p+q=4$ and \texorpdfstring{
$n(\pi)=3$}{\textup{$n($π}$)=3$}}

In this section, we will calculate $\theta_{3}(\pi)$ explicitly for
any $\pi\in\mathcal{R}(O(p,q))$ with $n(\pi)=3$ when $(p,q)=(4,0)$,
$(3,1)$, $(2,2)$.
\begin{lem}
[{\cite[Cor.24]{Paul2005howe}}] Let $\pi=\pi_{\zeta}(\lambda_{d},\xi,\Psi,\mu,\nu,\varepsilon,\kappa)\in\mathcal{R}(O(p,q))$
with $p+q$ even. Then $n(\pi)\leqslant\frac{p+q}{2}$ if and only
if either $\xi=\zeta=1$, or $\lambda_{d}$ contains no zero entry
and some $(\varepsilon_{i},\kappa_{i})=(-1,0)$.\end{lem}
\begin{prop}
\label{prop:n=00003D3} Let $\pi=\pi_{\zeta}(\lambda_{d},\xi,\Psi,\mu,\nu,\varepsilon,\kappa)\in\mathcal{R}(O(p,q))$
with $p+q=4$. Then $n(\pi)=3$ if and only if $\pi\neq\mathrm{det}_{p,q}$
and either $\xi=-1$, or $\zeta=-1$ and some $(\varepsilon_{i},\kappa_{i})=(1,0)$.\end{prop}
\begin{proof}
The previous lemma asserts that $n(\pi)\geqslant3\Leftrightarrow(\xi,\zeta)\neq(1,1)$,
and either $\lambda_{d}$ contains a zero entry or all $(\varepsilon_{i},\kappa_{i})\neq(-1,0)$.
Note that $\xi=-1$ only if $\lambda_{d}$ contains a zero entry.
Also note that $\zeta=-1$ only if $\lambda_{d}$ contains no zero
entry and $\kappa$ contains a zero entry.
\end{proof}
When $p+q=4$, this proposition let us list explicitly all $\pi\in\mathcal{R}(O(p,q))$
with $n(\pi)=3$.

\subsection{For $O(4,0)$}

Each $\pi\in\mathcal{R}(O(4,0))$ is parametrized as $\pi=\pi_{1}((m,l;),\xi,\Psi,0,0,$
$0,0)$ with integers $m>l\geqslant0$ and $\xi\in\{\pm1\}$. Now
$\Delta=\{\pm e_{1}\pm e_{2}\}$, and $\Delta_{c}^{+}=\{e_{1}\pm e_{2}\}=\Psi$.

As $O(4,0)=O(4)\times O(0)$ is compact, $\pi$ itself is a $K$-type.
By Proposition \ref{prop:LKT-O} and \ref{prop:LKT-sgn} to compute
lowest $K$-types, this $K$-type is $(m-1,l;\xi)\otimes(;)$. So
\[
\mathrm{det}_{4,0}=\pi_{1}((1,0;),-1,\{e_{1}\pm e_{2}\},0,0,0,0).
\]
By Proposition \ref{prop:n=00003D3}, all $\pi\in\mathcal{R}(O(4,0))$
with $n(\pi)=3$ are 
\[
\pi=\pi_{1}((m,0;),-1,\{e_{1}\pm e_{2}\},0,0,0,0)\quad\text{with }2\leqslant m\in\mathbb{Z}.
\]
For such $\pi$, by Lemma \ref{lem:cond_n}, 
\begin{align*}
\mathcal{A}(\theta_{3}(\pi)) & =\mathcal{D}(\theta_{3}(\pi))=\{\phi_{3}((m-1,0;-1)\otimes(;))\}=\{(m+1,3,3)\}.
\end{align*}
 By Proposition \ref{prop:inf.char} and \ref{prop:CrspIF}, the infinitesimal
character of $\theta_{3}(\pi)$ is $(m,0,1)$. 
\begin{prop}
For $2\leqslant m\in\mathbb{Z}$, there is a unique $\pi'\in\mathcal{R}(Sp(6,\mathbb{R}))$
with the infinitesimal character $(m,0,1)$ and $\mathcal{A}(\pi')=\{(m+1,3,3)\}$.
It is 
\[
\theta_{3}(\pi_{1}((m,0;),-1,\{e_{1}\pm e_{2}\},0,0,0,0))=\pi((m,1,0),\Psi,0,0,0,0),
\]
with $\Psi=\{e_{1}\pm e_{2},e_{2}\pm e_{3},e_{1}\pm e_{3},2e_{1},2e_{2},2e_{3}\}$.\end{prop}
\begin{proof}
Go through Appendix \ref{App:list} which lists $\mathcal{A}(\pi')$
for all $\pi'\in\mathcal{R}(Sp(6,\mathbb{R}))$ with the infinitesimal
character $(\beta,0,1)$, where $\beta\in\mathbb{C}$.
\end{proof}

\subsection{For $O(2,2)$}

Let $\pi=\pi_{\zeta}(\lambda_{d},\xi,\Psi,\mu,\nu,\varepsilon,\kappa)\in\mathcal{R}(O(2,2))$
with $n(\pi)=3$. By Proposition \ref{prop:n=00003D3}, all these
$\pi$ are listed in the following table.

\smallskip{}

\begin{center}
\begin{tabular}{c|c|c|c|c|c|c|c|c}
\hline 
$\zeta$ & $\xi$ & $\lambda_{d}$ & $\Psi$ & $\mu$ & $\nu$ & $\varepsilon$ & $\kappa$ & with\tabularnewline
\hline 
\hline 
\multirow{2}{*}{$1$} & \multirow{2}{*}{$-1$} & $(n;0)$ & $\{e_{1}\pm f_{1}\}$ & \multirow{4}{*}{$0$} & \multirow{4}{*}{$0$} & \multirow{2}{*}{$0$} & \multirow{2}{*}{$0$} & \multirow{2}{*}{$0\leqslant n\in\mathbb{Z}$}\tabularnewline
\cline{3-4} 
 &  & $(0;n)$ & $\{\pm e_{1}+f_{1}\}$ &  &  &  &  & \tabularnewline
\cline{1-4} \cline{7-9} 
\multirow{2}{*}{$-1$} & \multirow{2}{*}{$1$} & \multirow{2}{*}{$0$} & \multirow{2}{*}{$\O$} &  &  & $(1,-1)$ & $(0,\beta)$ & $\beta\in\mathbb{C}\backslash\{0\}$ by (F-2)\tabularnewline
\cline{7-9} 
 &  &  &  &  &  & $(1,1)$ & $(0,\beta)$ & $\beta\in\mathbb{C}\backslash\{\pm1\}$\tabularnewline
\hline 
\end{tabular}
\par\end{center}

\smallskip{}
\begin{itemize}
\item $\det_{2,2}=\pi_{-1}(0,1,\O,0,0,(1,1),(0,1))$. Indeed, it is of the
form as in this table but with the infinitesimal character $(0,1)$
and $\mathcal{A}(\det_{2,2})=\{(0;-1)\otimes(0;-1)\}$.\end{itemize}
\begin{prop}
If $\pi=\pi_{1}((n;0),-1,\{e_{1}\pm f_{1}\},0,0,0,0)\in\mathcal{R}(O(2,2))$
with $0\leqslant n\in\mathbb{Z}$, then
\[
\mathcal{A}(\theta_{3}(\pi))=\{(n+1,-1,-1)\}.
\]
\end{prop}
\begin{proof}
By Proposition \ref{prop:LKT-O} and \ref{prop:LKT-sgn}, $\mathcal{A}(\pi)=\{(n+1;1)\otimes(0;-1)\}$.
The lowest $K$-type of $\pi$ has degree $n+3$, so $\mathrm{deg}(\pi)\leqslant n+3$.
We claim that $\mathrm{deg}(\pi)=n+3$. Otherwise, by Lemma \ref{lem:parity}
on the parity of degrees, there is a $K$-type $(m;\epsilon)\otimes(l;\eta)$
of $\pi$ with degree $\leqslant n+1$. So $m+l\leqslant n+1$. But
$(m;\epsilon)\otimes(l;\eta)$ is not a lowest $K$-type, so $m^{2}+l^{2}>(n+1)^{2}\geqslant(m+l)^{2}$,
which makes a contradiction.

All $K$-types for $O(2,2)$ occurring in the space of joint harmonics
with degree $n+3$ are: $(n+1;1)\otimes(0;-1)$, $(0;-1)\otimes(n+1;1)$,
and $(m;1)\otimes(l;1)$ with $m+l=n+3$. As $(0;-1)\otimes(n+1;1)$
has the same norm as that of the lowest $K$-type, it cannot occur
in $\pi$. Note that $(m;1)\otimes(l;1)$ occurs in $\pi$ only if
$m^{2}+l^{2}>(n+1)^{2}$. Therefore, all $U(3)$-types in $\mathcal{D}(\theta_{3}(\pi))$
other than $(n+1,-1,-1)$ are of the form $(m,0,-l)$ with $m+l=n+3$
and $m^{2}+l^{2}>(n+1)^{2}$. Then
\begin{align*}
||(m,0,-l)|| & =m^{2}+l^{2}+4(m+l)+8\\
 & >(n+1)^{2}+4(n+3)+8\\
 & >||(n+1,-1,-1)||.
\end{align*}
By Lemma \ref{lem:cond_n}, the set $\mathcal{A}(\theta_{3}(\pi))$
consists of $U(3)$-types with minimal norm in $\mathcal{D}(\theta_{3}(\pi))$,
so $\mathcal{A}(\theta_{3}(\pi))=\{(n+1,-1,-1)\}$.\end{proof}
\begin{rem*}
Similarly, if $\pi=\pi_{1}((0;n),-1,\{\pm e_{1}+f_{1}\},0,0,0,0)$
with $0\leqslant n\in\mathbb{Z}$, then $\mathcal{A}(\theta_{3}(\pi))=\{(1,1,-n-1)\}$. \end{rem*}
\begin{prop}
If $\pi=\pi_{-1}(0,1,\O,0,0,(1,-1),(0,\beta))$ with $\beta\neq0$,
then
\[
\mathcal{A}(\theta_{3}(\pi))=\{(1,-1,-1),(1,1,-1)\}.
\]
\end{prop}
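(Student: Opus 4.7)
The plan is to follow the strategy $\mathcal{A}(\pi)\rightsquigarrow\mathcal{D}(\pi)\rightsquigarrow\mathcal{D}(\theta_3(\pi))\rightsquigarrow\mathcal{A}(\theta_3(\pi))$ outlined in the introduction. First I would run the lowest-$K$-type algorithm (Propositions~\ref{prop:LKT-O} and \ref{prop:LKT-sgn}) on the Langlands parameters of $\pi$. With $\lambda_d=\mu=\nu=0$, the only nontrivial discrete input is $\varepsilon=(1,-1)$ (one $+$ and one $-$), so the algorithm should produce two candidates whose highest weights each carry a single nonzero entry equal to $1$, placed on either the $O(p)$- or the $O(q)$-side; the sign refinement forced by $\zeta=-1$ then yields
\[
\mathcal{A}(\pi)=\{(1;1)\otimes(0;-1),\ (0;-1)\otimes(1;1)\},
\]
two $K$-types of norm $1$ and degree $3$.

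Second, I would pin down $\deg(\pi)=3$. The LKTs immediately give $\deg(\pi)\leqslant 3$, and by Lemma~\ref{lem:parity} every $K$-type of $\pi$ has odd degree. The only odd-degree $O(2)\times O(2)$-types of degree strictly below $3$ are $(1;1)\otimes(0;1)$ and $(0;1)\otimes(1;1)$, each of norm $1$. If either occurred in $\pi$ it would tie the LKTs in norm and therefore itself belong to $\mathcal{A}(\pi)$, contradicting Step~1. Hence $\deg(\pi)=3$ and $\mathcal{D}(\pi)$ is contained in the finite list of degree-$3$ $O(2)\times O(2)$-types occurring in the space of joint harmonics $\mathcal{H}$ for $n=3$.

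Third, I would enumerate that list. It consists of exactly six $K$-types: the two LKTs above together with $(3;1)\otimes(0;1)$, $(0;1)\otimes(3;1)$, $(1;1)\otimes(2;1)$, $(2;1)\otimes(1;1)$. Applying $\phi_3$ (Proposition~\ref{prop:CorKT}) and computing Vogan norms, their images in $\widehat{U(3)}$ are
\[
(1,-1,-1),\ (1,1,-1),\ (3,0,0),\ (0,0,-3),\ (1,0,-2),\ (2,0,-1),
\]
of norms $19,19,29,29,25,25$ respectively. Since $p+q=4\leqslant 2n=6$ and $\theta_{2,2}(\theta_3(\pi))=\pi\neq 0$, Lemma~\ref{lem:cond_n} (applied to $\pi'=\theta_3(\pi)$) gives $\mathcal{A}(\theta_3(\pi))\subseteq\mathcal{D}(\theta_3(\pi))=\phi_3(\mathcal{D}(\pi))$. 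The minimum norm in the above list is $19$, attained only by $(1,-1,-1)$ and $(1,1,-1)$, which are the $\phi_3$-images of the LKTs of $\pi$ and hence definitely lie in $\phi_3(\mathcal{D}(\pi))$. This yields $\mathcal{A}(\theta_3(\pi))=\{(1,-1,-1),\,(1,1,-1)\}$.

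The main obstacle is Step~1. In the balanced case $\#\{\varepsilon_j=1\}=\#\{\varepsilon_j=-1\}$ the sign rule of Proposition~\ref{prop:LKT-sgn} is at its most delicate, and one must verify carefully that $\zeta=-1$ selects the pair with signs $(\epsilon;\eta)=(1;-1),(-1;1)$ rather than the companion pair $(1;1)\otimes(0;1),\,(0;1)\otimes(1;1)$; the latter would lower $\deg(\pi)$ to $1$ and produce a totally different answer (minimum norm $13$, with lowest $K$-types $(1,0,0)$ and $(0,0,-1)$). Once Step~1 is secure, the remaining work reduces to the short parity and norm bookkeeping sketched above.
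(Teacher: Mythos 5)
Your proof is correct and follows essentially the same route as the paper's: compute $\mathcal{A}(\pi)$ via Propositions~\ref{prop:LKT-O}--\ref{prop:LKT-sgn}, pin down $\deg(\pi)=3$ by the parity Lemma~\ref{lem:parity} and a norm comparison that rules out the degree-$1$ types, enumerate the degree-$3$ $K$-types in the joint harmonics, and extract the minimal-norm ones via Lemma~\ref{lem:cond_n}. The only cosmetic difference is that you enumerate the six degree-$3$ $O(2)\times O(2)$-types and push them through $\phi_3$, whereas the paper lists the six degree-$3$ $U(3)$-types directly; these are the same list under the degree-preserving bijection $\phi_3$.
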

\begin{proof}
By Proposition \ref{prop:LKT-O} and \ref{prop:LKT-sgn}, $\mathcal{A}(\pi)=\{(1;1)\otimes(0;-1),(0;-1)\otimes(1;1)\}$.
The lowest $K$-types of $\pi$ have degree $3$, so $\deg(\pi)=1$
or $3$ by Lemma \ref{lem:parity} on the parity of degrees. All $K$-types
for $O(2,2)$ with degree $1$ are $(1;1)\otimes(0;1)$ and $(0;1)\otimes(1;1)$.
They have the same norm as that of the lowest $K$-types of $\pi$,
thus cannot occur in $\pi$. So $\deg(\pi)=3$ and $\mathcal{A}(\pi)\subset\mathcal{D}(\pi)$.
Then
\begin{align*}
\phi_{3}(\mathcal{A}(\pi)) & =\{(1,-1,-1),(1,1,-1)\}\\
 & \subset\phi_{3}(\mathcal{D}(\pi))=\mathcal{D}(\theta_{3}(\pi)).
\end{align*}
All $K$-types for $Sp(6,\mathbb{R})$ occurring in the space of joint
harmonics with degree $3$ are: $(1,-1,-1)$, $(1,1,-1)$, $(2,0,-1)$,
$(1,0,-2)$, $(3,0,0)$, and $(0,0,-3)$, among which the first two
take the minimal norm and belong to $\mathcal{D}(\theta_{3}(\pi))$,
so $\mathcal{A}(\theta_{3}(\pi_{c}))=\{(1,-1,-1),(1,1,-1)\}$ by Lemma
\ref{lem:cond_n}.\end{proof}
\begin{prop}
If $\pi=\pi_{-1}(0,1,\O,0,0,(1,1),(0,\beta))$ with $\beta\neq\pm1$,
then 
\[
\mathcal{A}(\theta_{3}(\pi))=\{(1,0,-1)\}.
\]
\end{prop}
\begin{proof}
By Proposition \ref{prop:LKT-O} and \ref{prop:LKT-sgn}, $\mathcal{A}(\pi)=\{(0;-1)\otimes(0;-1)\}$.
The lowest $K$-type of $\pi$ has degree $4$, but does not occur
in the space of joint harmonics for $(O(2,2),Sp(6,\mathbb{R}))$.
Thus $\deg(\pi)<4$. So $\deg(\pi)=0$ or $2$ by Lemma \ref{lem:parity}
on the parity of degrees. The only $K$-type for $O(2,2)$ with degree
$0$ is $(0;1)\otimes(0;1)$. It has the same norm as that of the
lowest $K$-type of $\pi$, thus cannot occur in $\pi$. So $\deg(\pi)=2$.

All $K$-types for $O(2,2)$ with degree $2$ are: $(0;-1)\otimes(0;1)$,
$(0;1)\otimes(0;-1)$, $(1;1)\otimes(1;1)$, $(2;1)\otimes(0;1)$
and $(0;1)\otimes(2;1)$. The first two have the same norm as that
of the lowest $K$-type of $\pi$, and thus cannot occur in $\pi$.
Therefore,
\begin{gather*}
\mathcal{D}(\pi)\subset\{(1;1)\otimes(1;1),(2;1)\otimes(0;1),(0;1)\otimes(2;1)\},\\
\mathcal{D}(\theta_{3}(\pi))=\phi_{3}(\mathcal{D}(\pi))\subset\{(1,0,-1),(2,0,0),(0,0,-2)\}.
\end{gather*}
And the norms $||(1,0,-1)||<||(2,0,0)||=||(0,0,-2)||$.

By Lemma \ref{lem:exchange-pq}, $\theta_{3}(\pi)^{*}=\theta_{3}(\varphi(\pi))=\theta_{3}(\pi)$,
which asserts that $\theta_{3}(\pi)$ has parameter $\lambda_{d}$
of the form $(c,0,-c)$ and $\Psi$ preserved after replacing $e_{i}$
by $-e_{v+1-i}$. By the algorithm in Proposition \ref{prop:LKT-Sp},
the set $\mathcal{A}(\theta_{3}(\pi))$ remains the same after replacing
each $U(3)$-type $(m,n,l)$ by $(-l,-n,-m)$.

By Lemma \ref{lem:cond_n}, the set $\mathcal{A}(\theta_{3}(\pi))$
consists of $K$-types in $\mathcal{D}(\theta_{3}(\pi))$ with minimal
norm, so $\mathcal{A}(\theta_{3}(\pi))=\{(1,0,-1)\}$ or $\{(2,0,0),(0,0,-2)\}$.
The infinitesimal character of $\pi$ is $(\beta,0)$, thus that of
$\theta_{3}(\pi)$ is $(\beta,0,1)$. According to the Appendix \ref{App:list}
which lists $\mathcal{A}(\pi')$ for all $\pi'\in\mathcal{R}(Sp(6,\mathbb{R}))$
with the infinitesimal character $(\beta,0,1)$, we see $\mathcal{A}(\pi')\neq\{(2,0,0),$
$(0,0,-2)\}$.\end{proof}
\begin{thm}
Let $\pi\in\mathcal{R}(O(2,2))$ with $n(\pi)=3$. 

(1) If $\pi\neq\pi_{-1}(0,1,\O,0,0,(1,1),(0,2))$, then the infinitesimal
character of $\theta_{3}(\pi)$ and the set $\mathcal{A}(\theta_{3}(\pi))$
determine a unique element in $\mathcal{R}(Sp(6,\mathbb{R}))$, which
is $\theta_{3}(\pi)$ listed in the following table. 

(2) If $\pi=\pi_{-1}(0,1,\O,0,0,(1,1),(0,2))$, then $\theta_{3}(\pi)=\pi(0,\O,(1),(1),(1),(2))$.
\end{thm}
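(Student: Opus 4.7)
The plan is to combine the explicit computations in the three preceding propositions with the classification in Appendix \ref{App:list}.

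First, I would enumerate all $\pi \in \mathcal{R}(O(2,2))$ with $n(\pi) = 3$. By Proposition \ref{prop:n=00003D3} together with the table preceding the theorem, these split into four explicit families, and in every case the infinitesimal character of $\pi$ has the form $(\beta,0)$ for some $\beta \in \mathbb{C}$; Proposition \ref{prop:CrspIF} then forces the infinitesimal character of $\theta_3(\pi)$ to be $(\beta,0,1)$, which lies in the scope of Appendix \ref{App:list}. The three propositions just proved give $\mathcal{A}(\theta_3(\pi))$ explicitly in each case.

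For part~(1), I would compare the pair $(\text{infinitesimal character}, \mathcal{A}(\theta_3(\pi)))$ against the list in Appendix \ref{App:list}, which records every $\pi' \in \mathcal{R}(Sp(6,\mathbb{R}))$ with infinitesimal character $(\beta,0,1)$ together with $\mathcal{A}(\pi')$. For each non-exceptional family of $\pi$ I expect the list to contain exactly one matching entry, and since the theta correspondence is a bijection this entry must be $\theta_3(\pi)$; the Langlands parameters displayed in the theorem can then simply be read off.

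The main obstacle is part~(2). For $\pi = \pi_{-1}(0,1,\emptyset,0,0,(1,1),(0,2))$ the pair $((0,1,2),\{(1,0,-1)\})$ is attained by two distinct entries of Appendix \ref{App:list}, so lowest $K$-types together with the infinitesimal character are no longer enough. To break the tie I would extract a finer invariant by analyzing $\mathcal{D}(\theta_3(\pi))$ more completely: by Proposition \ref{prop:CorKT} and Lemma \ref{lem:min-deg} this reduces to deciding which of $(2;1)\otimes(0;1)$ and $(0;1)\otimes(2;1)$ actually appears as a minimal-degree $K$-type of $\pi$, information that can be read off via Frobenius reciprocity from an explicit principal-series model of $\pi$. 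The two candidates in Appendix \ref{App:list} will differ in the multiplicity of the corresponding $U(3)$-types $(2,0,0)$ and $(0,0,-2)$, and comparing these multiplicities with the ones forced on $\theta_3(\pi)$ singles out $\pi(0,\emptyset,(1),(1),(1),(2))$.
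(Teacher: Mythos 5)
Your treatment of part (1) is essentially the paper's: compute the infinitesimal characters and lowest $K$-types via the three preceding propositions and match against the classification in Appendix~\ref{App:list}.

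Your plan for part~(2), however, diverges from the paper and has a gap. You propose to refine $\mathcal{D}(\pi)$ for the exceptional $\pi=\pi_{-1}(0,1,\O,0,0,(1,1),(0,2))$ by Frobenius reciprocity on an ``explicit principal-series model of $\pi$,'' and then to compare the multiplicities of the $U(3)$-types $(2,0,0)$, $(0,0,-2)$ in the two candidate targets $\pi_{1}'=\pi(0,\O,(1),(1),(1),(2))$ and $\pi_{2}'=\pi(0,\O,(1),(3),(1),(0))$. Two problems: first, Frobenius reciprocity yields multiplicities in the \emph{standard} induced module $I$, not in its irreducible Langlands quotient $\pi$; since $\pi$ is a proper subquotient here, knowing $m(\sigma,I)$ does not tell you whether $\sigma$ occurs in $\pi$, and no argument is offered for that passage. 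Second, even granting a computation of $\mathcal{D}(\pi)$, you would still need $\mathcal{D}(\pi_{1}')$ and $\mathcal{D}(\pi_{2}')$; Appendix~\ref{App:list} only supplies $\mathcal{A}$, and the paper provides no algorithm for full minimal-degree sets or $K$-type multiplicities of irreducibles of $Sp(6,\mathbb{R})$ from Langlands data, so the comparison step is not actually available.

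The paper instead sidesteps all multiplicity questions. It observes (using Proposition~\ref{prop:det_pp} and the explicit induction principle, Theorem~\ref{thm:EIP_n}) that $\theta_{3}(\det_{1,1})=\pi_{1}'$, whence $\theta_{1,1}(\pi_{1}')\neq0$ and, by Kudla's persistence principle, $\theta_{2,2}(\pi_{1}')\neq0$. It then rules out $\theta_{2}(\theta_{2,2}(\pi_{1}'))\neq0$ by inspecting the Langlands parameters that the explicit induction principle would force on $\pi_{1}'$, concluding $n(\theta_{2,2}(\pi_{1}'))=3$. Since $\pi_{1}'$ does not appear among the theta $3$-lifts accounted for in part~(1), the only remaining preimage is the exceptional $\pi$. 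You should either adopt this route or supply the missing multiplicity analysis for $\pi$, $\pi_{1}'$, and $\pi_{2}'$.
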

\smallskip{}\noindent\resizebox{\textwidth}{!}{%

\begin{tabular}{c|c|c|c|c}
\hline 
\multirow{2}{*}{$\pi$} & inf. char. & \multirow{2}{*}{$\mathcal{A}(\theta_{3}(\pi))$} & \multirow{2}{*}{$\theta_{3}(\pi)$} & \multirow{2}{*}{with}\tabularnewline
 & of $\theta_{3}(\pi)$ &  &  & \tabularnewline
\hline 
\hline 
$\pi_{1}((0;0),-1,$ & \multirow{4}{*}{$(0,0,1)$} & \multirow{2}{*}{$\{(1,-1,-1)\}$} & \multirow{2}{*}{$\pi((0),\{-2e_{1}\},(1),(1),0,0)$} & \multirow{4}{*}{}\tabularnewline
$\{e_{1}\pm f_{1}\},0,0,0,0)$ &  &  &  & \tabularnewline
\cline{1-1} \cline{3-4} 
$\pi_{1}((0;0),-1,$ &  & \multirow{2}{*}{$\{(1,1,-1)\}$} & \multirow{2}{*}{$\pi((0),\{2e_{1}\},(1),(1),0,0)$} & \tabularnewline
$\{\pm e_{1}+f_{1}\},0,0,0,0)$ &  &  &  & \tabularnewline
\hline 
$\pi_{1}((n;0),-1,$ & \multirow{4}{*}{$(n,0,1)$} & \multirow{2}{*}{$\{(n+1,-1,-1)\}$} & $\pi((n,0,-1),\{e_{1}\pm e_{2},\pm e_{2}-e_{3},$ & \multirow{4}{*}{$1\leqslant n\in\mathbb{Z}$}\tabularnewline
$\{e_{1}\pm f_{1}\},0,0,0,0)$ &  &  & $e_{1}\pm e_{3},2e_{1},-2e_{2},-2e_{3}\},0,0,0,0)$ & \tabularnewline
\cline{1-1} \cline{3-4} 
$\pi_{1}((0;n),-1,$ &  & \multirow{2}{*}{$\{(1,1,-n-1)\}$} & $\pi((1,0,-n),\Psi,\{e_{1}\pm e_{2},\pm e_{2}-e_{3},$ & \tabularnewline
$\{\pm e_{1}+f_{1}\},0,0,0,0)$ &  &  & $\pm e_{1}-e_{3},2e_{1},2e_{2},-2e_{3}\},0,0,0,0)$ & \tabularnewline
\hline 
$\pi_{-1}(0,1,\O,0,0,$ & \multirow{4}{*}{$(\beta,0,1)$} & $\{(1,-1,-1),$ & \multirow{2}{*}{$\pi(0,\O,(1),(1),(-1),(\beta))$} & \multirow{2}{*}{$\beta\in\mathbb{C}\backslash\{0\}$}\tabularnewline
$(1,-1),(0,\beta))$ &  & $(1,1,-1)\}$ &  & \tabularnewline
\cline{1-1} \cline{3-5} 
$\pi_{-1}(0,1,\O,0,0,$ &  & \multirow{2}{*}{$\{(1,0,-1)\}$} & \multirow{2}{*}{$\pi(0,\O,(1),(1),(1),(\beta))$} & \multirow{2}{*}{$\beta\in\mathbb{C}\backslash\{\pm1\}$}\tabularnewline
$(1,1),(0,\beta))$ &  &  &  & \tabularnewline
\hline 
\end{tabular}

}\smallskip{}
\begin{proof}
(1) can be checked case-by-case according to Appendix \ref{App:list}
which lists $\mathcal{A}(\pi')$ for all $\pi'\in\mathcal{R}(Sp(6,\mathbb{R}))$
with the infinitesimal character $(\beta,0,1)$.

For (2), now $\theta_{3}(\pi)$ has the infinitesimal character $(2,0,1)$
and $\mathcal{A}(\theta_{3}(\pi))=\{(1,0,-1)\}$. According to Appendix
\ref{App:list}, there are exactly two elements in $\mathcal{R}(Sp(6,\mathbb{R}))$
with such infinitesimal character and set of lowest $K$-types: $\pi_{1}'=\pi(0,\O,(1),(1),(1),(2))$
and $\pi_{2}'=\pi(0,\O,(1),(3),(1),(0))$. Note that $\theta_{2}(\det_{1,1})=\pi(0,\O,(1),(1),0,0)$
 by Proposition \ref{prop:det_pp}, and $\theta_{3}(\det_{1,1})=\pi_{1}'$
by the explicit induction principle on $n$. Then $\theta_{2,2}(\pi_{1}')\neq0$
by Kudla's persistence principle.

We claim that $\theta_{2}(\theta_{2,2}(\pi_{1}'))=0$. Otherwise,
$\pi_{1}'=\theta_{3}(\theta_{2,2}(\pi_{1}'))$ is got from $\theta_{2}(\theta_{2,2}(\pi_{1}'))$
by explicit induction principle on $n$ (Theorem \ref{thm:EIP_n}),
which must contain an entry $1$ in $\kappa$ or $2$ in $\nu$. But
the Langlands parameters of $\pi_{1}'$ are not of this form. Therefore
$n(\theta_{2,2}(\pi_{1}'))=3$. Yet $\pi_{1}'$ is not in the list
of theta $3$-lifts in (1), so $\theta_{2,2}(\pi_{1}')=\pi_{-1}(0,1,\O,0,0,(1,1),(0,2))$.
\end{proof}

\subsection{For $O(3,1)$}

Let $\pi=\pi_{\zeta}(\lambda_{d},\xi,\Psi,\mu,\nu,\varepsilon,\kappa)\in\mathcal{R}(O(3,1))$
with $n(\pi)=3$. By Proposition \ref{prop:n=00003D3}, all these
$\pi$ are listed in the following table.

\smallskip{}

\begin{center}
\begin{tabular}{c|c|c|c|c|c|c|c|c}
\hline 
$\zeta$ & $\xi$ & $\lambda_{d}$ & $\Psi$ & $\mu$ & $\nu$ & $\varepsilon$ & $\kappa$ & with\tabularnewline
\hline 
\hline 
$-1$ & $1$ & $(m;)$ & \multirow{3}{*}{$\O$} & \multirow{3}{*}{$0$} & \multirow{3}{*}{$0$} & $(1)$ & $(0)$ & $1\leqslant m\in\mathbb{Z}$\tabularnewline
\cline{1-3} \cline{7-9} 
\multirow{2}{*}{$1$} & \multirow{2}{*}{$-1$} & \multirow{2}{*}{$(0;)$} &  &  &  & $(1)$ & $(\beta)$ & $\beta\in\mathbb{C}\backslash\{\pm1\}$\tabularnewline
\cline{7-9} 
 &  &  &  &  &  & $(-1)$ & $(\beta)$ & $\beta\in\mathbb{C}$\tabularnewline
\hline 
\end{tabular}
\par\end{center}

\smallskip{}
\begin{itemize}
\item $\det_{3,1}=\pi_{1}((0;),-1,\O,0,0,(1),(1))$.  Indeed, it is of
the form as in this table but with the infinitesimal character $(0,1)$
and $\mathcal{A}(\det_{3,1})=\{(0;-1)\otimes(;-1)\}$.\end{itemize}
\begin{prop}
If $\pi=\pi_{-1}((m;),1,\O,0,0,(1),(0))$ with $1\leqslant m\in\mathbb{Z}$,
then 
\[
\mathcal{A}(\theta_{3}(\pi))=\{(m+1,2,0)\}.
\]
\end{prop}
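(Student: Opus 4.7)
The plan is to follow the same strategy used for the preceding cases in this section: identify $\mathcal{A}(\pi)$, pin down $\deg(\pi)$ via parity and norm comparisons, translate $\mathcal{D}(\pi)$ to $\mathcal{D}(\theta_3(\pi))$ through the explicit correspondence $\phi_3$ from Proposition~\ref{prop:CorKT}, and finally select the unique minimal-norm $U(3)$-type.

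First I would apply Propositions~\ref{prop:LKT-O} and~\ref{prop:LKT-sgn} to the parameters $\pi=\pi_{-1}((m;),1,\O,0,0,(1),(0))$ to read off $\mathcal{A}(\pi)=\{\sigma\}$ with $\sigma=(m;-1)\otimes(;-1)$. Proposition~\ref{prop:CorKT}(3) then yields $\deg(\sigma)=m+\frac{1-(-1)}{2}(3-2)+\frac{1-(-1)}{2}(1-0)=m+2$, so $\deg(\pi)\leqslant m+2$.

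The main obstacle is to rule out smaller degrees. By Lemma~\ref{lem:parity}, any $K$-type of $\pi$ has degree congruent to $m+2$ modulo $2$, so it suffices to exclude all $K$-types $\tau=(a;\epsilon)\otimes(;\eta)$ of $O(3)\times O(1)$ with $\deg(\tau)\leqslant m$. Observing that the norm formula reduces to $||\tau||=(a+1)^2$ on $O(3)\times O(1)$-types (independent of $\epsilon,\eta$), a case check on $(\epsilon,\eta)$ and on $a\geqslant1$ versus $a=0$ shows that $\deg(\tau)\leqslant m$ forces $a\leqslant m$, hence $||\tau||\leqslant||\sigma||=(m+1)^2$. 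Strict inequality is incompatible with $\sigma$ being a lowest $K$-type, and equality forces $\tau\in\mathcal{A}(\pi)=\{\sigma\}$, contradicting $\deg(\sigma)=m+2$. Hence $\deg(\pi)=m+2$ and $\mathcal{D}(\pi)$ is contained in the four-element set of $O(3)\times O(1)$-types of degree $m+2$, namely $\{\sigma,\,(m+1;-1)\otimes(;1),\,(m+1;1)\otimes(;-1),\,(m+2;1)\otimes(;1)\}$, with $\sigma$ certainly present.

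Finally, applying Proposition~\ref{prop:CorKT}(1) gives $\phi_3(\sigma)=(m+1,2,0)$, whereas the other three candidate images are $(m+2,2,1)$, $(m+2,1,0)$, and $(m+3,1,1)$, whose norms $(m+4)^2+5$, $(m+4)^2+5$, and $(m+5)^2+2$ all strictly exceed $||(m+1,2,0)||=(m+3)^2+8$ for every $m\geqslant1$. Because $\phi_3(\sigma)$ wins strictly, it is unnecessary to decide which of the other three candidates actually occur in $\pi$. Invoking Lemma~\ref{lem:cond_n}, which identifies $\mathcal{A}(\theta_3(\pi))$ with the minimal-norm elements of $\mathcal{D}(\theta_3(\pi))=\phi_3(\mathcal{D}(\pi))$, I conclude $\mathcal{A}(\theta_3(\pi))=\{(m+1,2,0)\}$.
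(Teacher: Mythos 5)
Your argument is correct and reproduces the paper's proof: identify $\mathcal{A}(\pi)=\{(m;-1)\otimes(;-1)\}$, establish $\deg(\pi)=m+2$ via the parity lemma and a norm comparison, apply $\phi_3$, and compare $U(3)$-norms to single out $(m+1,2,0)$. One small point of care, which the paper's phrasing shares: for $m\in\{1,2\}$ the spherical types $(0;-1)\otimes(;+1)$ (if $m=1$) and $(0;-1)\otimes(;-1)$ (if $m=2$) also have degree $m+2$ and so are missing from your four-element list; in both your write-up and the paper's, the cleanest fix is to observe that any $\tau\neq\sigma$ occurring in $\pi$ satisfies $(a+1)^2=||\tau||>||\sigma||=(m+1)^2$, hence $a\geqslant m+1$, which excludes these spherical candidates at every degree, not just degree $\leqslant m$.
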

\begin{proof}
By Proposition \ref{prop:LKT-O} and \ref{prop:LKT-sgn}, $\mathcal{A}(\pi)=\{(m;-1)\otimes(;-1)\}$.
So $\deg(\pi)\leqslant\deg((m;-1)\otimes(;-1))=m+2$. We claim that
$\deg(\pi)=m+2$. Otherwise, $\deg(\pi)\leqslant m$ by Lemma \ref{lem:parity}
on the parity of degrees. Let $\delta=(l;\epsilon)\otimes(;\eta)\in\mathcal{D}(\pi)$.
Then $m\geqslant\deg(\delta)\geqslant l$. But $||\delta||=|l+1|^{2}>|m+1|^{2}$,
so $l>m$, which makes a contradiction.

Hence $(m;-1)\otimes(;-1)\in\mathcal{D}(\pi)$, and $(m+1,2,0)\in\mathcal{D}(\theta_{3}(\pi))$.
Other $K$-types for $O(3,1)$ of degree $m+2$ are $(m+1;+1)\otimes(;-1)$,
$(m+1;-1)\otimes(;+1)$, and $(m+2;+1)\otimes(;+1)$. So
\begin{align*}
(m+1,2,0) & \in\mathcal{D}(\theta_{3}(\pi))\\
 & \subseteq\{(m+1,2,0),\ (m+2,1,0),\ (m+2,2,1),\ (m+3,1,1)\}.
\end{align*}
As $||(m+1,2,0)||<||(m+2,1,0)||=||(m+2,2,1)||<||(m+3,1,1)||$, by
Lemma \ref{lem:cond_n}, we have $\mathcal{A}(\theta_{3}(\pi))=\{(m+1,2,0)\}$.\end{proof}
\begin{prop}
Let $\pi=\pi_{1}((0),-1,\O,0,0,(1),(\beta))$ with $\beta\in\mathbb{C}\backslash\{\pm1\}$.
Let $I$ be the standard module of parabolic induction to get $\pi$
as in Subsection \ref{subsec:Par-O}.

(1) For an $O(3)\times O(1)$-type $\sigma=(l;\epsilon)\otimes(;\eta)$,
the multiplicity of $\sigma$ in $I$ is\textup{
\[
m(\sigma,I)=\begin{cases}
1, & \text{if }\epsilon=-1\text{ and }\eta=(-1)^{l+1},\\
0, & \text{otherwise}.
\end{cases}
\]
}

(2) The set $\mathcal{D}(\pi)=\{(1;-1)\otimes(;+1)\}$, and $\mathcal{A}(\theta_{3}(\pi))=\mathcal{D}(\theta_{3}(\pi))=\{(2,2,1)\}$.\end{prop}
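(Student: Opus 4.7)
The plan is to reduce part~(1) to a finite-dimensional branching problem for $O(3)\downarrow O(2)\times O(1)$, and then to leverage the resulting list of $K$-types of $I$ to compute $\mathcal{D}(\pi)$ directly. The standard module $I=\mathrm{Ind}_{MAN}^{O(3,1)}(\rho\otimes\chi\otimes\tri_{N})$ comes from the parabolic with Levi $MA\cong O(2,0)\times GL(1,\mathbb{R})$; the parameter $\xi=-1$ identifies $\rho$ with the determinant character of $O(2,0)\cong O(2)$, and $\varepsilon=1$ forces $\chi(x)=|x|^{\beta}$, which is trivial on $\{\pm1\}\subset GL(1,\mathbb{R})$.

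For part~(1), I will apply Frobenius reciprocity in the compact picture to obtain
\[
m(\sigma,I)=\dim\mathrm{Hom}_{K\cap M}\bigl(\sigma|_{K\cap M},\,\det\nolimits_{O(2)}\otimes\mathbf{1}\bigr).
\]
Identifying the flag stabilizer explicitly places $K\cap M$ inside $K=O(3)\times O(1)$ as $\{(\mathrm{diag}(h,c),c):h\in O(2),\ c\in\{\pm1\}\}$. I then branch $(l;+1)$ of $O(3)$ under this subgroup via spherical harmonics: the zonal subspace is a copy of $(0;+1)$ on which $\mathrm{diag}(I_{2},-1)$ acts by $(-1)^{l}$, and the weight-$\pm k$ pair ($1\leqslant k\leqslant l$) gives a copy of $(k)\otimes\mathrm{sgn}^{(l+k)\bmod 2}$; the case $\epsilon=-1$ follows by tensoring with $\det_{O(3)}|_{O(2)\times O(1)}=\det_{O(2)}\otimes\mathrm{sgn}$. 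Counting the occurrences of $\det_{O(2)}\otimes\mathbf{1}$ in $\sigma|_{K\cap M}$ yields $m(\sigma,I)=0$ unless $\epsilon=-1$, in which case $m=1$ exactly when $\eta=(-1)^{l+1}$, as claimed.

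For part~(2), write $\sigma_{l}=(l;-1)\otimes(;(-1)^{l+1})$; by~(1) these exhaust the $K$-types of $I$, each with multiplicity one. Proposition~\ref{prop:CorKT} gives $\deg(\sigma_{0})=4$, $\deg(\sigma_{1})=2$, and $\deg(\sigma_{l})\geqslant 4$ for $l\geqslant2$, so $\sigma_{1}=(1;-1)\otimes(;+1)$ is the unique $K$-type of $I$ of degree $2$. Once $\sigma_{1}\in\pi$ is established, I conclude $\mathcal{D}(\pi)=\{\sigma_{1}\}$; since $n(\pi)=3$ and $p+q=4\leqslant 2n$, Lemma~\ref{lem:cond_n} yields $\mathcal{A}(\theta_{3}(\pi))\subseteq\mathcal{D}(\theta_{3}(\pi))=\phi_{3}(\mathcal{D}(\pi))$, and Proposition~\ref{prop:CorKT} with $(p-q)/2=1$ computes $\phi_{3}(\sigma_{1})=(1,1,1)+(1,1,0)=(2,2,1)$, so $\mathcal{A}(\theta_{3}(\pi))=\mathcal{D}(\theta_{3}(\pi))=\{(2,2,1)\}$.

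The main obstacle is the verification that $\sigma_{1}\in\pi$, not merely $\sigma_{1}\in I$. I plan to exploit the hypothesis $\beta\notin\{\pm1\}$ to rule out the reducibility points of the standard module at this infinitesimal character, giving $\pi=I$ and the inclusion immediately. If isolated reducibility survives, the backup is to observe that the Vogan norm of $(l;\epsilon)\otimes(;\eta)$ depends only on $l$, so the unique lowest $K$-type of $I$ is $\sigma_{0}=\det_{O(3,1)}$, which necessarily lies in $\pi$ by standard lowest-$K$-type theory, and then to propagate $\sigma_{1}\in\pi$ via coherent continuation/translation from $\sigma_{0}$.
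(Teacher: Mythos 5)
Your treatment of part~(1) is essentially the paper's own: both reduce $m(\sigma,I)$ to a one-dimensional branching count for $K\cap MA\subset O(3)\times O(1)$ via Frobenius reciprocity. You compute the branching with the classical spherical-harmonics decomposition of $O(3)\downarrow O(2)\times O(1)$ (plus a $\det$-twist for $\epsilon=-1$), whereas the paper realizes $(l;+1)$ through the $SU(2)\twoheadrightarrow SO(3)$ cover and isolates the zonal line $\mathbb{C}Z_1^lZ_2^l$; these are equivalent computations, and yours is arguably slightly more self-contained.

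Part~(2), however, has a genuine gap at exactly the point you flag: establishing $\sigma_1=(1;-1)\otimes(;+1)\in\pi$ (rather than merely $\sigma_1\in I$). Your primary strategy --- that $\beta\notin\{\pm1\}$ forces $I$ to be irreducible, hence $\pi=I$ --- is not correct: the standard module $I$ reduces at other (integral) values of $\beta$ as well; the restriction $\beta\neq\pm1$ in the hypothesis is only there to exclude $\pi=\det_{3,1}$, not to guarantee irreducibility of $I$. Your backup strategy, ``propagate $\sigma_1\in\pi$ via coherent continuation/translation from $\sigma_0$,'' is not an argument; there is no translation-functor step that yields the occurrence of a non-lowest $K$-type in an irreducible quotient, and in any case it would not use the input you actually have. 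The missing mechanism is the one the paper uses: by Proposition~\ref{prop:LKT-O}/\ref{prop:LKT-sgn} the lowest $K$-type is $\sigma_0=(0;-1)\otimes(;-1)$ of degree $4$, and since $\phi_3(\sigma_0)=0$ while $\theta_3(\pi)\neq0$, Lemma~\ref{lem:min-deg} gives $\sigma_0\notin\mathcal{D}(\pi)$, so $\deg(\pi)<4$; the parity Lemma~\ref{lem:parity} forces $\deg(\pi)\in\{0,2\}$, and degree $0$ is ruled out because the only degree-$0$ $K$-type $(0;+1)\otimes(;+1)$ is not a $K$-type of $I$ by your part~(1) (or, as the paper argues, because it shares the Vogan norm of $\sigma_0$). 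Hence $\deg(\pi)=2$, and then your part~(1) pins down $\sigma_1$ as the only possible degree-$2$ $K$-type, giving $\mathcal{D}(\pi)=\{\sigma_1\}$. Once that step is inserted your computation of $\phi_3(\sigma_1)=(2,2,1)$ and the application of Lemma~\ref{lem:cond_n} are correct.
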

\begin{proof}
(1) By definition $I=\mathrm{Ind}_{MAN}^{O(3,1)}(\det\otimes\chi_{1,\beta}\otimes\tri_{N})$,
where $MA\cong O(2)\times GL(1,\mathbb{R})$, $\det$ is the determinant
character of $O(2)$, $\chi_{1,\beta}$ is the character of $GL(1,\mathbb{R})$
that maps $x$ to $|x|^{\beta}$, and $\tri_{N}$ is the trivial representation
of $N$. For $K=O(3)\times O(1)$, notice the $K$-intertwining isomorphism
$I|_{K}\cong\mathrm{Ind}_{K\cap MA}^{K}(\det\otimes\chi_{1,\beta})$
(cf. \cite[Prop.4.1.12]{Vogan1981representations}). By Frobenius
reciprocity, the multiplicity
\begin{align*}
m(\sigma,I)=\dim\mathrm{Hom}_{K}(\sigma,I|_{K}) & =\dim\mathrm{Hom}_{K}(\sigma,\mathrm{Ind}_{K\cap MA}^{K}(\det\otimes\chi_{1,\beta}))\\
 & =\dim\mathrm{Hom}_{K\cap MA}(\sigma,\det\otimes\chi_{1,\beta}),
\end{align*}
where $K\cap MA=\{\mathrm{diag}(X,t,t)\mid X\in O(2),\ t=\pm1\}$.
Let $K_{0}=K\cap MA$ and $\lambda=(\det\otimes\chi_{1,\beta})|_{K_{0}}$.
Then $\lambda$ is the character of $K_{0}$ defined by $\lambda(\mathrm{diag}(X,t,t))=\det(X)$.
Write $V$ for the underlying space of $\sigma$. Then 
\[
m(\sigma,I)=\dim\mathrm{Hom}_{K_{0}}(V|_{K_{0}},\lambda)=\dim V(\lambda),
\]
where $V(\lambda)$ is the $\lambda$-isotypic invariant subspace
of $V|_{K_{0}}.$

Let us realize $\sigma$ in a concrete underlying space. Let $L=\{\mathrm{diag}(Y,1)\mid Y\in SO(3)\}$.
Then $L$ is a subgroup of $O(3)\times O(1)$ isomorphic to $SO(3)$,
and $\sigma|_{L}$ is irreducible and of dimension $2l+1$. Recall
that there is a unique $(2l+1)$-dimensional irreducible representation
$(\sigma_{l},V_{l})$ of $SO(3)$, which can be realized via the double
cover 
\[
\mathbf{P}:SU(2)\twoheadrightarrow SO(3),
\]
on the vector space $V_{l}$ of polynomials in $Z_{1}$, $Z_{2}$
that are homogeneous of degree $2l$, with actions defined by 
\[
\left(\sigma_{l}\left(\mathbf{P}\left[\begin{array}{cc}
\alpha & \beta\\
-\bar{\beta} & \bar{\alpha}
\end{array}\right]\right)f\right)\left[\begin{array}{c}
Z_{1}\\
Z_{2}
\end{array}\right]=f\left(\left[\begin{array}{cc}
\alpha & \beta\\
-\bar{\beta} & \bar{\alpha}
\end{array}\right]^{-1}\left[\begin{array}{c}
Z_{1}\\
Z_{2}
\end{array}\right]\right)\quad\text{ for }f\in V_{l}.
\]
Here $f$ is of even degree, so the action is trivial on $\{\pm Id\}=\ker(\mathbf{P})$,
and $\sigma_{l}$ is well defined. Now $\sigma$ can be realized on
$V_{l}$, with actions
\begin{align*}
\sigma(\mathrm{diag}(Y,1)) & =\sigma_{l}(Y),\\
\sigma(\mathrm{diag}(-1,-1,-1,1)) & =(-1)^{l}\epsilon,\\
\sigma(\mathrm{diag}(1,1,1,-1)) & =\eta.
\end{align*}
The actions of the last two matrices are read off from the signs of
$\sigma$.

We can find the $\lambda$-isotypic invariant subspace $V(\lambda)$
in $V=V_{l}$. Let 
\[
r(\theta)=\left[\begin{array}{cc}
\cos\theta & \sin\theta\\
-\sin\theta & \cos\theta
\end{array}\right]\in SO(2).
\]
Let $K_{1}=\{\mathrm{diag}(r(\theta),1,1)\}\cong SO(2)$. Since $K_{1}$
is a subgroup of $K_{0}$ on which $\lambda$ acts trivially, $V(\lambda)\subseteq V^{K_{1}}$
(the subspace of $K_{1}$-fixed vectors in $V$). As 
\[
\mathbf{P}\left[\begin{array}{cc}
e^{\mathrm{i}\theta} & 0\\
0 & e^{-\mathrm{i}\theta}
\end{array}\right]=\left[\begin{array}{cc}
\cos\theta & \sin\theta\\
-\sin\theta & \cos\theta
\end{array}\right]=r(\theta),
\]
 it is easy to see that $V^{K_{1}}=\mathbb{C}Z_{1}^{l}Z_{2}^{l}$,
which is of dimension $1$. Note that $K_{0}$ is generated by $K_{1}$,
$\mathrm{diag}(1,-1,1,1)$, and $\mathrm{diag}(-1,-1,-1,-1)$. So
$V(\lambda)=\mathbb{C}Z_{1}^{l}Z_{2}^{l}$ or $0$, depending on whether
or not the following two equations hold: 
\begin{align*}
\sigma|_{\mathbb{C}Z_{1}^{l}Z_{2}^{l}}(\mathrm{diag}(1,-1,1,1)) & =\lambda(\mathrm{diag}(1,-1,1,1))=-1,\\
\sigma|_{\mathbb{C}Z_{1}^{l}Z_{2}^{l}}\sigma(\mathrm{diag}(-1,-1,-1,-1)) & =\lambda(\mathrm{diag}(-1,-1,-1,-1))=1.
\end{align*}
Note that $\mathrm{diag}(1,-1,1,1)=\mathrm{diag}(-1,-1,-1,1)\cdot\mathrm{diag}(-1,1,-1,1)$.
Since 
\[
\mathbf{P}\left[\begin{array}{cc}
0 & 1\\
-1 & 0
\end{array}\right]=\mathrm{diag}(-1,1,-1)\in SO(3),
\]
$\sigma|_{\mathbb{C}Z_{1}^{l}Z_{2}^{l}}(\mathrm{diag}(-1,1,-1,1))=(-1)^{l}$,
and thus 
\[
\sigma|_{\mathbb{C}Z_{1}^{l}Z_{2}^{l}}(\mathrm{diag}(1,-1,1,1))=(-1)^{l}\epsilon(-1)^{l}=\epsilon.
\]
Moreover,
\[
\sigma(\mathrm{diag}(-1,-1,-1,-1))=\sigma(\mathrm{diag}(-1,-1,-1,1))\cdot\sigma(\mathrm{diag}(1,1,1,-1))=(-1)^{l}\epsilon\eta.
\]
So $V(\lambda)=\begin{cases}
\mathbb{C}Z_{1}^{l}Z_{2}^{l}, & \text{if }\epsilon=-1\text{ and }(-1)^{l}\epsilon\eta=1,\\
0, & \text{otherwise},
\end{cases}$ and we get $m(\sigma,I)=\dim V(\lambda)$.

(2) By Proposition \ref{prop:LKT-O} and \ref{prop:LKT-sgn}, $\mathcal{A}(\pi)=\{\Lambda\}$
with $\Lambda=(0;-1)\otimes(;-1)$. So $\deg(\pi)\leqslant\deg(\Lambda)=4$.
Since $\phi_{3}(\Lambda)=0$, we have $\Lambda\notin\mathcal{D}(\pi)$.
By Lemma \ref{lem:parity} on the parity of degrees, $\deg(\pi)=2$
or $0$. The only $K$-type for $O(3,1)$  of degree $0$ is $(0;+1)\otimes(;+1)$,
which cannot occur in $\pi$ since it has the same norm as that of
the lowest $K$-type of $\pi$. So $\deg(\pi)=2$. 

All $K$-types for $O(3,1)$ of degree $2$ are $(1;+1)\otimes(;-1)$,
$(1;-1)\otimes(;+1)$, and $(2;+1)\otimes(;+1)$. But $(1;+1)\otimes(;-1)$
and $(2;+1)\otimes(;+1)$ do not occur in $I$ by (1), and thus not
in $\pi$. So $\mathcal{D}(\pi)=\{(1;-1)\otimes(;+1)\}$. So $\mathcal{A}(\theta_{3}(\pi))=\mathcal{D}(\theta_{3}(\pi))=\{(2,2,1)\}$
by Lemma \ref{lem:cond_n}.\end{proof}
\begin{prop}
Let $\pi=\pi_{1}((0),-1,\O,0,0,(-1),(\beta))$ with $\beta\in\mathbb{C}$.
Let $I$ be the standard module of parabolic induction to get $\pi$
as in Subsection \ref{subsec:Par-O}.

(1) For an $O(3)\times O(1)$-type $\sigma=(l;\epsilon)\otimes(;\eta)$,
the multiplicity of $\sigma$ in $I$ is \textup{
\[
m(\sigma,I)=\begin{cases}
1, & \text{if }\epsilon=-1\text{ and }\eta=(-1)^{l},\\
0, & \text{otherwise}.
\end{cases}
\]
}

(2) \textup{$\mathcal{A}(\theta_{3}(\pi))=\{(2,2,2)\}$.}\end{prop}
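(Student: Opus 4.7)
The plan for part (1) is to mimic the previous proposition verbatim, the only change being the inducing character on $GL(1,\mathbb{R})$. Since $\chi=\chi(-1,\beta)$ sends $x$ to $\mathrm{sgn}(x)|x|^{\beta}$, the character $\lambda$ of $K\cap MA=\{\mathrm{diag}(X,t,t):X\in O(2),\ t=\pm1\}$ induced by $\det_{O(2)}\otimes\chi(-1,\beta)$ becomes $\lambda(\mathrm{diag}(X,t,t))=\det(X)\cdot t$, carrying an extra factor of $t$ compared to the previous case. Frobenius reciprocity again yields $m(\sigma,I)=\dim V(\lambda)$, and the same $SO(3)$-analysis with $V=V_l=\mathbb{C}[Z_1,Z_2]_{2l}$ localizes $V(\lambda)$ to $V^{K_1}=\mathbb{C}Z_1^l Z_2^l$. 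Evaluating the two extra generators, I expect $\lambda(\mathrm{diag}(1,-1,1,1))=-1$ (unchanged) and $\lambda(\mathrm{diag}(-1,-1,-1,-1))=-1$ (flipped from the previous case by the factor $t$). Combined with the scalars $\sigma|_{\mathbb{C}Z_1^l Z_2^l}(\mathrm{diag}(1,-1,1,1))=\epsilon$ and $\sigma(\mathrm{diag}(-1,-1,-1,-1))=(-1)^l\epsilon\eta$ computed in the previous proof, the two matching conditions read $\epsilon=-1$ and $\eta=(-1)^l$, which is (1).

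For part (2), I will first observe that every lowest $K$-type of $I$ occurs with multiplicity one and therefore survives in the unique irreducible quotient $\pi$, so $\mathcal{A}(\pi)=\mathcal{A}(I)$. By (1), the admissible $K$-types $(l;-1)\otimes(;(-1)^l)$ have norm $(l+1)^2$, minimized at $l=0$, so $\mathcal{A}(\pi)=\{(0;-1)\otimes(;+1)\}$, of degree $3$. The parity lemma \ref{lem:parity} then forces $\deg(\pi)\in\{1,3\}$, and since the two degree-$1$ $K$-types $(1;+1)\otimes(;+1)$, $(0;+1)\otimes(;-1)$ both carry $\epsilon=+1$ and are ruled out by (1), I conclude $\deg(\pi)=3$ and $(0;-1)\otimes(;+1)\in\mathcal{D}(\pi)$.

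It remains to enumerate the degree-$3$ $K$-types admitted by (1), namely $(0;-1)\otimes(;+1)$, $(1;-1)\otimes(;-1)$, $(2;-1)\otimes(;+1)$, and apply $\phi_3$ (Proposition \ref{prop:CorKT}) to obtain the three candidates $(2,2,2)$, $(2,2,0)$, $(3,2,1)\in\mathcal{D}(\theta_3(\pi))$, with norms $20$, $24$, $30$ respectively. Lemma \ref{lem:cond_n} (clause $(i)\Rightarrow(iv)$ applied to $\theta_3(\pi)$ for the dual pair $(O(3,1),Sp(6,\mathbb{R}))$, where $p+q=4\le 2n=6$) gives $\mathcal{A}(\theta_3(\pi))\subseteq\mathcal{D}(\theta_3(\pi))$, so $\mathcal{A}(\theta_3(\pi))$ consists of those candidates with minimal norm, namely $\{(2,2,2)\}$. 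The only nontrivial bookkeeping step is the sign flip in $\lambda$ at the central element $\mathrm{diag}(-1,-1,-1,-1)$; everything else is either a direct transcription of the previous proposition or a routine norm and degree comparison.
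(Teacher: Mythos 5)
Your part (1) is correct and follows the paper's approach exactly: redo the Frobenius reciprocity and $SO(3)$-module computation of the previous proposition with the character $\lambda'(\mathrm{diag}(X,t,t))=\det(X)\,t$, the only change being the extra factor of $t$ which flips the sign at the central element and replaces the condition $\eta=(-1)^{l+1}$ by $\eta=(-1)^l$.

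Your part (2) reaches the right answer but takes a slightly different route to $\mathcal{A}(\pi)$ than the paper. The paper reads off $\mathcal{A}(\pi)=\{(0;-1)\otimes(;+1)\}$ directly from the Langlands parameters via the explicit algorithm of Propositions~\ref{prop:LKT-O} and \ref{prop:LKT-sgn}, whereas you deduce it from the multiplicity formula in (1) by minimizing the norm $(l+1)^2$ over the $K$-types $(l;-1)\otimes(;(-1)^l)$ of $I$, using the identity $\mathcal{A}(\pi)=\mathcal{A}(I)$. That is a nice shortcut, since you have already computed the full $K$-spectrum of $I$. One caveat: the justification you give for $\mathcal{A}(\pi)=\mathcal{A}(I)$ -- ``every lowest $K$-type of $I$ occurs with multiplicity one and therefore survives in the unique irreducible quotient'' -- is not a valid deduction on its own; multiplicity one only guarantees that each lowest $K$-type lives in exactly one irreducible subquotient of $I$, not that this subquotient is the Langlands quotient. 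The correct justification is Vogan's theorem that the lowest $K$-types of a standard module coincide with those of its Langlands quotient (which is precisely what the algorithms in Proposition~\ref{prop:LKT-O}/\ref{prop:LKT-sgn} encode), so the statement you need is standard but deserves a citation rather than the ``therefore.'' After that, your parity-of-degrees argument for $\deg(\pi)=3$, the enumeration of the admissible degree-$3$ $K$-types and their images $(2,2,2),(2,2,0),(3,2,1)$ under $\phi_3$ with norms $20<24<30$, and the appeal to Lemma~\ref{lem:cond_n}(i)$\Rightarrow$(iv) to get $\mathcal{A}(\theta_3(\pi))\subseteq\mathcal{D}(\theta_3(\pi))$ all match the paper and are correct.
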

\begin{proof}
(1) Now $I=\mathrm{Ind}_{MAN}^{O(3,1)}(\det\otimes\chi_{-1,\beta}\otimes\tri_{N})$,
where $MA\cong O(2)\times GL(1,\mathbb{R})$, $\det$ is the determinant
character of $O(2)$, $\chi_{-1,\beta}$ is the character of $GL(1,\mathbb{R})$
that maps $x$ to $\mathrm{sgn}(x)|x|^{\beta}$, and $\tri_{N}$ is
the trivial representation of $N$. For $K=O(3)\times O(1)$, notice
the $K$-intertwining isomorphism $I|_{K}\cong\mathrm{Ind}_{K\cap MA}^{K}(\det\otimes\chi_{-1,\beta})$
(cf. \cite[Prop.4.1.12]{Vogan1981representations}). By Frobenius
reciprocity,
\begin{align*}
m(\sigma,I)=\dim\mathrm{Hom}_{K}(\sigma,I|_{K}) & =\dim\mathrm{Hom}_{K}(\sigma,\mathrm{Ind}_{K\cap MA}^{K}(\det\otimes\chi_{-1,\beta}))\\
 & =\dim\mathrm{Hom}_{K\cap MA}(\sigma,\det\otimes\chi_{-1,\beta}),
\end{align*}
where $K\cap MA=\{\mathrm{diag}(X,t,t)\mid X\in O(2),\ t=\pm1\}$.
Let $K_{0}=K\cap MA$, $\lambda'=(\det\otimes\chi_{-1,\beta})|_{K_{0}}$.
Then $\lambda'$ is the character of $K_{0}$ defined by $\lambda'(\mathrm{diag}(X,t,t))=\det(X)t$.
Write $V$ for the underlying space of $\sigma$. Then
\[
m(\sigma,I)=\dim_{K_{0}}\mathrm{Hom}(V|_{K_{0}},\lambda')=\dim V(\lambda'),
\]
where $V(\lambda')$ is the $\lambda'$-isotypic invariant subspace
of $V|_{K_{0}}.$ By a similar argument as the proof of the last proposition,
replacing $\lambda$ by $\lambda'$, we get
\[
V(\lambda')=\begin{cases}
\mathbb{C}Z_{1}^{l}Z_{2}^{l}, & \text{if }\epsilon=-1\text{ and }(-1)^{l+1}\epsilon\eta=1,\\
0, & \text{otherwise}.
\end{cases}
\]

(2) By Proposition \ref{prop:LKT-O} and \ref{prop:LKT-sgn}, $\mathcal{A}(\pi)=\{\Lambda\}$,
where $\Lambda=(0;-1)\otimes(;+1)$. So $\deg(\pi)\leqslant\deg(\Lambda)=3$.
By Lemma \ref{lem:parity} on the parity of degrees, $\deg(\pi)=1$
or $3$. List all $O(3)\times O(1)$-types of degree $1$ or $3$:
$(0;+1)\otimes(;-1)$, $(1;+1)\otimes(;+1)$, $(2;+1)\otimes(;-1)$,
$(3;+1)\otimes(;+1)$, $(0;-1)\otimes(;+1)$, $(1;-1)\otimes(;-1)$,
$(2;-1)\otimes(;+1)$. 

By (1), the first four $O(3)\times O(1)$-types in this list do not
occur in $\pi$, and the last three ones have degree $3$. So $\deg(\pi)=3$
and 
\begin{alignat*}{1}
(0;-1)\otimes(;+1)\in\mathcal{D}(\pi)\subseteq & \ \{(0;-1)\otimes(;+1),\ (1;-1)\otimes(;-1),\ (2;-1)\otimes(;+1)\},\\
(2,2,2)\in\mathcal{D}(\theta_{3}(\pi))\subseteq & \ \{(2,2,2),\ (2,2,0),\ (3,2,1)\},
\end{alignat*}
with norms $||(2,2,2)||<||(2,2,0)||<||(3,2,1)||$. By Lemma \ref{lem:cond_n},
 $\mathcal{A}(\theta_{3}(\pi))=\{(2,2,2)\}$.
\end{proof}

\begin{thm}
Let $\pi\in\mathcal{R}(O(3,1))$ with $n(\pi)=3$. Then the infinitesimal
character of $\theta_{3}(\pi)$ and the set $\mathcal{A}(\theta_{3}(\pi))$
determine a unique element in $\mathcal{R}(Sp(6,\mathbb{R}))$, which
is $\theta_{3}(\pi)$ listed in the following table.
\end{thm}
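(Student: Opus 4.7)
The plan is to run a case-by-case lookup using the three preparatory propositions that immediately precede the theorem together with the classification in Appendix~\ref{App:list}. By Proposition~\ref{prop:n=00003D3}, every $\pi\in\mathcal{R}(O(3,1))$ with $n(\pi)=3$ belongs to one of the three families tabulated before the statement:
\[
\pi_{-1}((m;),1,\O,0,0,(1),(0))\ (m\geqslant 1),\quad \pi_{1}((0;),-1,\O,0,0,(\pm 1),(\beta)).
\]
For each family I would first read off the infinitesimal character $(x_1,x_2)$ of $\pi$ from its Langlands parameters using the algorithm in Subsection~\ref{sub:InfCh}, obtaining $(m,0)$ in the first family and $(0,\beta)$ in the other two. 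Since $m=\frac{p+q}{2}=2<3=n$, Proposition~\ref{prop:CrspIF}(3) then forces the infinitesimal character of $\theta_3(\pi)$ to be $(x_1,x_2,1)$, which up to permutation and sign changes is of the form $(\beta,0,1)$ with $\beta\in\mathbb{C}$ depending on the family.

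Next, for the set $\mathcal{A}(\theta_3(\pi))$ I would quote the three propositions just proved: $\mathcal{A}(\theta_3(\pi))=\{(m+1,2,0)\}$, $\{(2,2,1)\}$, and $\{(2,2,2)\}$ respectively. With the infinitesimal character and the lowest $K$-types now in hand, the final step is simply to consult Appendix~\ref{App:list}, which tabulates $\mathcal{A}(\pi')$ for every $\pi'\in\mathcal{R}(Sp(6,\mathbb{R}))$ with infinitesimal character of the form $(\beta,0,1)$. In each case I would identify the unique row in the appendix table whose infinitesimal character matches and whose lowest $K$-type set coincides with the prescribed one, and verify that the Langlands parameters of that row agree with the entry tabulated in the theorem.

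The only real substance in this argument is the uniqueness claim, so the bulk of the work is auditing the appendix to confirm that no \emph{other} $\pi'\in\mathcal{R}(Sp(6,\mathbb{R}))$ with infinitesimal character $(\beta,0,1)$ carries the same $\mathcal{A}(\pi')$. I would expect this to be the main obstacle, because the analogous audit for $O(2,2)$ produced the one genuine collision noted in the excerpt (the pair $\pi(0,\O,(1),(1),(1),(2))$ and $\pi(0,\O,(1),(3),(1),(0))$ both have infinitesimal character $(2,0,1)$ and $\mathcal{A}(\pi')=\{(1,0,-1)\}$). One therefore needs to verify directly that the $K$-types $(m+1,2,0)$ with $m\geqslant 1$, $(2,2,1)$, and $(2,2,2)$ each appear as the lowest $K$-type set of exactly one representation in the list — no coincidental collision occurs for $O(3,1)$. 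Because the appendix enumerates the parameters $(\lambda_d,\Psi,\mu,\nu,\varepsilon,\kappa)$ with these infinitesimal characters cleanly (the number of relevant Langlands parameter families being small and controlled by the non-parity condition (F-2) of Lemma~\ref{lem:non-par-Sp}), this collision check is a finite, mechanical verification once the table is consulted.

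Having established uniqueness, the match with the stated table entry is automatic: the candidate $\pi'$ produced by the lookup has the same infinitesimal character and lowest $K$-type set as $\theta_3(\pi)$ and lies in $\mathcal{R}(Sp(6,\mathbb{R}))$, hence must equal $\theta_3(\pi)$. This identifies $\theta_3(\pi)$ with the Langlands parameters displayed in the table of the theorem.
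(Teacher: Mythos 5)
Your proposal is correct and follows essentially the same route as the paper: compute the infinitesimal character of $\theta_3(\pi)$ via Proposition~\ref{prop:CrspIF}, quote the preceding three propositions for $\mathcal{A}(\theta_3(\pi))$, and then read off the unique matching $\pi'\in\mathcal{R}(Sp(6,\mathbb{R}))$ from Appendix~\ref{App:list} by a finite audit. The paper's own proof is exactly this case-by-case consultation of the appendix, so nothing is missing.
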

\noindent\resizebox{\textwidth}{!}{%

\begin{tabular}{c|c|c|c|c}
\hline 
\multirow{2}{*}{$\pi$} & inf.char. & \multirow{2}{*}{$\mathcal{A}(\theta_{3}(\pi))$} & \multirow{2}{*}{$\theta_{3}(\pi)$} & \multirow{2}{*}{with}\tabularnewline
 & of $\theta_{3}(\pi)$ &  &  & \tabularnewline
\hline 
\hline 
\multirow{2}{*}{$\pi_{-1}((m;),1,\O,0,0,(1),(0))$} & \multirow{2}{*}{$(m,0,1)$} & \multirow{2}{*}{$\{(m+1,2,0)\}$} & \multirow{2}{*}{$\pi((m),\{2e_{1}\},(1),(1),0,0)$} & \multirow{2}{*}{$1\leqslant m\in\mathbb{Z}$}\tabularnewline
 &  &  &  & \tabularnewline
\hline 
\multirow{2}{*}{$\pi_{1}((0;),-1,\O,0,0,(1),(0))$} & \multirow{2}{*}{$(0,0,1)$} & \multirow{4}{*}{$\{(2,2,1)\}$} & $\pi((1,0,0),\{e_{1}\pm e_{2},e_{2}\pm e_{3},$ & \multirow{2}{*}{}\tabularnewline
 &  &  & $e_{1}\pm e_{3},2e_{1},2e_{2},-2e_{3}\},0,0,0,0)$ & \tabularnewline
\cline{1-2} \cline{4-5} 
\multirow{2}{*}{$\pi_{1}((0;),-1,\O,0,0,(1),(\beta))$} & \multirow{4}{*}{$(\beta,0,1)$} &  & $\pi((1,0),\{2e_{1},2e_{2},e_{1}\pm e_{2}\},$ & $\beta\in$\tabularnewline
 &  &  & $0,0,(-1),(\beta))$ & $\mathbb{C}\backslash\{0,\pm1\}$\tabularnewline
\cline{1-1} \cline{3-5} 
\multirow{2}{*}{$\pi_{1}((0),-1,\O,0,0,(-1),(\beta))$} &  & \multirow{2}{*}{$\{(2,2,2)\}$} & $\pi((1,0),\{2e_{1},2e_{2},e_{1}\pm e_{2}\},$ & \multirow{2}{*}{$\beta\in\mathbb{C}$}\tabularnewline
 &  &  & $0,0,(1),(\beta))$ & \tabularnewline
\hline 
\end{tabular}

}
\begin{proof}
This can be checked case-by-case according to Appendix \ref{App:list}
which lists $\mathcal{A}(\pi')$ for all $\pi'\in\mathcal{R}(Sp(6,\mathbb{R}))$
with the infinitesimal character $(\beta,0,1)$ for all $\beta\in\mathbb{C}$.
\end{proof}

\section{\label{sec:4-lifts}Theta $4$-lifts of $\mathrm{det}_{p,q}$ when
$p+q=4$}

Let $\det_{p,q}$ denote the determinant character of $O(p,q)$. In
this section, we will calculate $\theta_{4}(\det_{p,q})$ explicitly
when $(p,q)=(4,0)$, $(3,1)$, $(2,2)$. The strategy is to determine
the desired theta lifts by its infinitesimal character and lowest
$K$-types.

First note that the infinitesimal character of $\det_{p,q}$ is $(0,1)$.
By Proposition \ref{prop:CrspIF}, the infinitesimal character of
$\theta_{4}(\det_{p,q})$ is $(0,1,1,2)$. Also note that $\mathcal{A}(\theta_{4}(\det_{p,q}))\subseteq\mathcal{D}(\theta_{4}(\det_{p,q}))$
by Lemma \ref{lem:cond_n}. The only $K$-type of $\det_{2,2}$ is
$(0;-1)\otimes(0;-1)$. By Proposition \ref{prop:CorKT} and Lemma
\ref{lem:min-deg},
\[
\mathcal{A}(\theta_{4}(\mathrm{det}_{2,2}))=\mathcal{D}(\theta_{4}(\mathrm{det}_{2,2}))=\{\phi_{4}((0;-1)\otimes(0;-1))\}=\{(1,1,-1,-1)\}.
\]
Similarly, the only $K$-type of $\det_{3,1}$ is $(0;-1)\otimes(;-1)$
and 
\[
\mathcal{A}(\theta_{4}(\mathrm{det}_{3,1}))=\mathcal{D}(\theta_{4}(\mathrm{det}_{3,1}))=\{\phi_{4}((0;-1)\otimes(;-1))\}=\{(2,2,2,0\};
\]
the only $K$-type of $\det_{4,0}$ is $(0,0;-1)\otimes(;)$ and 
\[
\mathcal{A}(\theta_{4}(\mathrm{det}_{4,0}))=\mathcal{D}(\theta_{4}(\mathrm{det}_{4,0}))=\{\phi_{4}((0,0;-1)\otimes(;))\}=\{(3,3,3,3)\}.
\]

\subsection{Determinant for $O(2,2)$}

Let us calculate $\theta_{2p}(\det_{p,p})\in\mathcal{R}(Sp(4p,\mathbb{R}))$
for any $p\geqslant1$. The infinitesimal character of $\det_{p,p}$
is $(0,1,2,\dots,p-1)$. By Proposition \ref{prop:CrspIF}, the infinitesimal
character of $\theta_{2p}(\mathrm{det}_{p,p})$ is 
\[
(0,1,1,2,2,\dots,p-1,p-1,p)
\]
The only $O(p)\times O(p)$-type of $\det_{p,p}$ is the $(0,\dots,0;-1)\otimes(0,\dots,0;-1)$,
so 
\[
\mathcal{A}(\theta_{2p}(\mathrm{det}_{p,p}))=\mathcal{D}(\theta_{2p}(\mathrm{det}_{p,p}))=\{(\underbrace{1,\dots,1}_{p},\underbrace{-1,\dots,-1}_{p})\}.
\]

\begin{prop}
\label{prop:det_pp} There exists a unique $\pi'\in\mathcal{R}(Sp(4p,\mathbb{R}))$
with $\mathcal{A}(\pi')=\{(\underbrace{1,\dots,1}_{p},\underbrace{-1,\dots,-1}_{p})\}$
and the infinitesimal character $(0,1,1,2,2,\dots,p-1,p-1,p)$. Indeed,
\textup{
\[
\pi'=\theta_{2p}(\mathrm{det}_{p,p})=\pi(0,\O,(1,1,\dots,1),(1,3,5,\dots,2p-1),0,0).
\]
}\end{prop}
\begin{proof}
Let $\pi'=\pi(\lambda_{d},\Psi,\mu,\nu,\varepsilon,\kappa)$. In Proposition
\ref{prop:LKT-Sp} to calculate $\mathcal{A}(\pi')$,
\begin{align*}
 & \lambda_{a}+\rho(\mathfrak{u}\cap\mathfrak{p})-\rho(\mathfrak{u}\cap\mathfrak{k})\\
=\  & (\underbrace{\beta_{1},\dots,\beta_{1}}_{u_{1}},\dots,\underbrace{\beta_{m},\dots\beta_{m}}_{u_{m}},\underbrace{u-r,\dots,u-r}_{w},\underbrace{\gamma_{m},\dots,\gamma_{m}}_{r_{1}},\dots,\underbrace{\gamma_{1},\dots,\gamma_{1}}_{r_{m}})
\end{align*}
with $\beta_{1}\geqslant\cdots\geqslant\beta_{m}\geqslant u-r+1>u-r-1\geqslant\gamma_{m}\geqslant\cdots\geqslant\gamma_{1}$.
Here $u=\sum_{i=1}^{m}u_{i}$, $r=\sum_{i=1}^{m}r_{i}$, and $w$
are the numbers of positive, negative, and zero entries in $\lambda_{a}$
respectively. To get the lowest $K$-type $\sigma=(\underbrace{1,\dots,1}_{p},\underbrace{-1,\dots,-1}_{p})$,
every $\beta_{i}$ with $u_{i}>0$ or $\gamma_{i}$ with $r_{i}>0$
should lie in $\{0,\pm1,\pm\frac{3}{2}\}$. 

First we show $u=r$. If $u>r$, then all $\beta_{i}\geqslant u-r+1\geqslant2$
cannot occur, and thus $0=u>r$. If $u<r$ then all $\gamma_{i}\leqslant u-r-1\leqslant-2$
and thus $0=r>u$. There is always a contradiction.

As $u=r$, the lowest $K$-type $\sigma$ is of the form
\[
(\underbrace{*,\dots,*}_{u},\underbrace{1,\dots,1}_{h},\underbrace{0,\dots,0}_{w-h},\underbrace{*,\dots,*}_{u})\text{ or }(\underbrace{*,\dots,*}_{u},\underbrace{0,\dots,0}_{w-h},\underbrace{-1,\dots,-1}_{h},\underbrace{*,\dots,*}_{u}).
\]
Since $\sigma$ contains only $p$ entries $1$ and $p$ entries $-1$,
we have $w=h=0$. Therefore, $\lambda_{d}$ and $\mu$ contain no
zero entry, and $(\varepsilon,\kappa)$ do not occur.

Let $\alpha_{1}$ be the maximal absolute value of entries of $\lambda_{a}$.
Clearly, $\frac{1}{2}\leqslant\alpha_{1}\in\frac{1}{2}\mathbb{Z}$.
Since $u_{1}>0$ or $r_{1}>0$, we have $\beta_{1}=\alpha_{1}+\frac{u_{1}-r_{1}}{2}+\frac{1}{2}\leqslant\frac{3}{2}$
or $\gamma_{1}=-\alpha_{1}+\frac{u_{1}-r_{1}}{2}-\frac{1}{2}\geqslant-\frac{3}{2}$.
As $|u_{1}-r_{1}|\leqslant1$, we get $\alpha_{1}\leqslant\frac{3}{2}$.
Moreover, if $\alpha_{1}\not\in\mathbb{Z}$, then $u_{1}=r_{1}$ and
thus $\alpha_{1}\leqslant1$. So $\alpha_{1}\in\{\frac{1}{2},1\}$.

 So $\lambda_{d}=(\underbrace{1,\dots,1}_{k},\underbrace{-1,\dots,-1}_{l})$.
As $k-l=u-r=0$, we have $k=l$. Therefore, $u_{1}=r_{1}>0$.

We show $\alpha_{1}\neq1$. Otherwise, suppose $\alpha_{1}=1$. Then
$\beta_{1}=\alpha_{1}+\frac{u_{1}-r_{1}}{2}+\frac{1}{2}=\frac{3}{2}$
and $\gamma_{1}=-\frac{3}{2}$, both of which occur as entries of
$\lambda_{a}+\rho(\mathfrak{u}\cap\mathfrak{p})-\rho(\mathfrak{u}\cap\mathfrak{k})$.
In spite of the choice of $\pm\frac{1}{2}$ adding into $\beta_{1}$
and $\gamma_{1}$, we get $2$ or $-2$ as an entry of the lowest
$K$-type. This is not true for $\sigma$.

So $\alpha_{1}=\frac{1}{2}$. Therefore, $\lambda_{d}$ does not occur
and $\mu_{i}=1$ for all $i$.

In all, $\pi'=\pi(0,\O,\mu,\nu,0,0)$ with $\mu=(1,1,\dots,1)$. The
infinitesimal character of $\pi'$ is
\[
(0,1,1,\dots,p-1,p-1,p)\sim(\frac{1+\nu_{1}}{2},\frac{1-\nu_{1}}{2},\frac{1+\nu_{2}}{2},\frac{1-\nu_{2}}{2},\dots,\frac{1+\nu_{p}}{2},\frac{1-\nu_{p}}{2}).
\]
Here $\sim$ denotes the equivalence up to permutations and sign-changes
of coordinates. So $\nu=(1,3,5,\dots,2p-1)$ is the only choice (up
to equivalence). By uniqueness, $\pi'=\theta_{2p}(\det_{p,p})$.
\end{proof}

\subsection{Determinant for $O(3,1)$}
\begin{prop}
There exists a unique $\pi'\in\mathcal{R}(Sp(8,\mathbb{R}))$ with
infinitesimal character $(0,1,1,2)$ and $\mathcal{A}(\pi')=\{(2,2,2,0)\}$,
which is \textup{
\[
\theta_{4}(\mathrm{det}_{3,1})=\pi((1,0),\Psi,(1),(3),0,0)
\]
with} $\Psi=\{e_{1}\pm e_{2},2e_{1},2e_{2}\}$.\end{prop}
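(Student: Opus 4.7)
The strategy mirrors Proposition \ref{prop:det_pp}: determine $\pi'\in\mathcal{R}(Sp(8,\mathbb{R}))$ by its infinitesimal character $(0,1,1,2)$ and the singleton lowest $K$-type set $\{(2,2,2,0)\}$. The existence half is short: taking the candidate $\pi'=\pi((1,0),\Psi,(1),(3),0,0)$ with $\Psi=\{e_1\pm e_2,2e_1,2e_2\}$, the infinitesimal character formula gives $(1,0,\frac{1+3}{2},\frac{-1+3}{2})\sim(0,1,1,2)$, and running the algorithm of Proposition \ref{prop:LKT-Sp} on these parameters yields precisely the lowest $K$-type $(2,2,2,0)$. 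Since the introduction of this section has already established that $\theta_4(\det_{3,1})$ has infinitesimal character $(0,1,1,2)$ (via Proposition \ref{prop:CrspIF}) and $\mathcal{A}(\theta_4(\det_{3,1}))=\{(2,2,2,0)\}$ (via Proposition \ref{prop:CorKT} and Lemma \ref{lem:min-deg}), uniqueness will identify it with our candidate.

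For uniqueness, I would let $\pi'=\pi(\lambda_d,\Psi,\mu,\nu,\varepsilon,\kappa)$ be an arbitrary representation with the stated invariants and imitate the analysis in Proposition \ref{prop:det_pp}. Write
\[
\lambda_a+\rho(\mathfrak{u}\cap\mathfrak{p})-\rho(\mathfrak{u}\cap\mathfrak{k})=(\beta_1^{u_1},\dots,\beta_m^{u_m},(u-r)^w,\gamma_m^{r_m},\dots,\gamma_1^{r_1})
\]
with the usual separation inequalities. Because the lowest $K$-type $(2,2,2,0)$ has three positive entries, one zero, and no negative entries, I would argue first that the $\gamma$-blocks and $r_i$ are absent (forcing $r=0$), then that the zero entry arises either from the middle block of size $w$ or from a boundary contribution of the $\varepsilon,\kappa$ parameters. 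Combined with the bound $|k_i-l_i|\leqslant 1$, this severely restricts $\lambda_d$ and shows that any nontrivial $\kappa$ can contribute only a single zero entry to $\sigma$ (with forced sign via (F-2)).

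Next I would combine these combinatorial restrictions with the infinitesimal character: the multiset formed by the entries of $\lambda_d$, the pairs $\{\frac{\mu_i+\nu_i}{2},\frac{-\mu_i+\nu_i}{2}\}$, and the $\kappa_j$ must equal $\{0,1,1,2\}$ up to signs and permutations. A short case-by-case elimination (configurations like $\lambda_d$ of length $4$, or nontrivial $\kappa$ with $\kappa_j\in\{0,1,2\}$) either violates $|k_i-l_i|\leqslant 1$, or fails (F-2), or forces the algorithm to output a $K$-type other than $(2,2,2,0)$; the only survivor is $\lambda_d=(1,0)$, $\mu=(1)$, $\nu=(3)$, with $\varepsilon,\kappa$ absent. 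The positive system $\Psi=\{e_1\pm e_2,2e_1,2e_2\}$ is then forced by dominance of $\lambda_d=(1,0)$ together with condition (F-1), since the alternative choice $2e_2\notin\Psi$ would violate (F-1) on the zero entry. The main obstacle is the case analysis in the second paragraph: unlike the symmetric situation of Proposition \ref{prop:det_pp}, the asymmetric shape of $(2,2,2,0)$ (a forced zero entry and no negatives) makes the interaction between the middle block and the $\varepsilon,\kappa$ contributions to the lowest $K$-type delicate, and several small subcases must be checked before they can be ruled out.
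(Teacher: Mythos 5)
Your plan for the first half (existence: plug the candidate into the infinitesimal-character formula and the algorithm of Proposition \ref{prop:LKT-Sp}) matches the paper's closing observation, and the general shape of imitating Proposition \ref{prop:det_pp} is correct. However, the key reduction step you propose for the uniqueness half has a genuine error.

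You argue that because $(2,2,2,0)$ has no negative entries, ``the $\gamma$-blocks and $r_i$ are absent (forcing $r=0$).'' This does not follow. In the algorithm of Proposition \ref{prop:LKT-Sp}, the $\gamma$-entries are bounded only by $\gamma_m\leqslant u-r-1$, and when $u-r=1$ this allows $\gamma_m=0$, i.e.\ a $\gamma$-block can supply a \emph{zero} entry of the lowest $K$-type rather than a negative one. That is precisely what happens in the correct answer: for $\pi((1,0),\Psi,(1),(3),0,0)$ one has $\lambda_a=(1,\tfrac12,0,-\tfrac12)$, so the $-\tfrac12$ entry gives $r_2=1$ (hence $r=1$, not $0$), and the corresponding $\gamma_2=-\tfrac12-\tfrac12+0+(u_1-r_1)=0$ contributes the trailing $0$ of $(2,2,2,0)$; the middle $w$-block contributes $u-r=1$, which becomes $2$ after the $\eta_1=1$ adjustment. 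Thus your attribution of the zero entry to the middle block or to the $(\varepsilon,\kappa)$ boundary is also incorrect. If you pursue $r=0$, the separation inequalities force $\beta_i\geqslant u+1\geqslant 2$ and the $w$-block to sit at $u\geqslant1$, and a short check shows no configuration produces $(2,2,2,0)$; you would simply find no candidate.

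The correct first move, as in the paper, is to pin down $u-r$ (here $u-r=1$: $u-r\geqslant2$ forces all $\beta_i\geqslant3$ hence $u=0$, contradiction; $u-r\leqslant0$ forces all $\gamma_i\leqslant-1$ hence $r=0$, $w=4$, and all entries in $\{0,\pm1\}$, contradiction), and only then deduce $(u,w,r,h)=(2,1,1,1)$, $z=w=1$, and $(v,s,t)=(2,1,0)$. The sign and the size of $u-r$ are the robust invariants to read off from the highest weight, not the sign pattern of the individual entries of $\sigma$, because the $\gamma$-side can legitimately produce non-negative entries whenever $u-r\geqslant1$. The rest of your sketch (use the infinitesimal character multiset, eliminate by $|k_i-l_i|\leqslant1$ and (F-2), fix $\Psi$ by (F-1)) is in the right spirit, but it cannot recover the answer without correcting this starting point.
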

\begin{proof}
Let $\pi'=\pi(\lambda,\Psi,\mu,\nu,\varepsilon,\kappa)$. Recall the
algorithm Proposition \ref{prop:LKT-Sp} to calculate $\mathcal{A}(\pi')=\{\sigma\}$.

First we show $u-r=1$. If $u-r\geqslant2$, then all $\beta_{i}\geqslant u-r+1\geqslant3$
cannot occur, and thus $0=u>r$, which makes a contradiction. If $u-r\leqslant0$,
then all $\gamma_{i}\leqslant u-r-1\leqslant-1$ cannot occur and
$0=r\geqslant u$, and thus $\sigma$ contains only entries in $\{0,\pm1\}$,
which makes a contradiction.

As $u-r=1$ and $u+w+r=4$, we see $(u,w,r)=(2,1,1)$ or $(1,3,0)$.
Note that $\sigma=(2,2,2,0)$ is of the form
\[
(\underbrace{*,\dots,*,}_{u}\underbrace{2,\dots,2}_{h},\underbrace{1,\dots,1}_{w-h},\underbrace{*,\dots,*}_{r})\text{ or }(\underbrace{*,\dots,*,}_{u}\underbrace{1,\dots,1}_{w-h},\underbrace{0,\dots,0}_{h},\underbrace{*,\dots,*}_{r}),
\]
so $h=w$. The second form cannot happen, otherwise $w+r=1$ (the
number of zero entries). So $r=1$ and $(u,w,r)=(2,1,1)$. If $z=0$,
both forms above should occur, which makes lowest $K$-types not unique.
So $z>0$. As $z\leqslant w$, we get $z=w=1$. Therefore, $\lambda_{d}$
contains exactly one zero entry, $\mu$ contains no zero entry, and
$(\varepsilon,\kappa)$ do not occur. Hence $(v,s,t)=(2,1,0)$ or
$(4,0,0)$. 

We show $(v,s,t)\neq(4,0,0)$. Otherwise, $(k,l)=(u,r)=(2,1)$, and
thus $\lambda_{d}=(2,1,0,-1)$ and $\lambda_{a}+\rho(\mathfrak{u}\cap\mathfrak{p})-\rho(\mathfrak{u}\cap\mathfrak{k})=(3,\frac{5}{2},1,-\frac{1}{2})$,
which cannot give the lowest $K$-type $\sigma$.

Thus $(v,s,t)=(2,1,0)$. As $k-l=u-r=1$ and $k+z+l=v$, we get $(k,z,l)=(1,1,0)$.
So $\lambda_{d}=(a_{1},0)$ for $1\leqslant a_{1}\in\mathbb{Z}$.
Now $(a_{1},\frac{\mu_{1}+\nu_{1}}{2},\frac{-\mu_{1}+\nu_{1}}{2})\sim(1,1,2)$.
Note that $\mu_{1}>0$, and that $\mu_{1}$ should be odd if $\nu_{1}=0$.
So $(\frac{\mu_{1}+\nu_{1}}{2},\frac{-\mu_{1}+\nu_{1}}{2})\not\sim(1,1)$.
Thus $(\frac{\mu_{1}+\nu_{1}}{2},\frac{-\mu_{1}+\nu_{1}}{2})\sim(1,2)$
and $\lambda_{d}=(1,0)$. Then $(a_{1},\mu_{1},\nu_{1})=(1,1,\pm3)$
or $(1,3,\pm1)$, and $\lambda_{a}=(1,\frac{1}{2},0,-\frac{1}{2})$
or $(\frac{3}{2},1,0,-\frac{3}{2})$, with the corresponding $\lambda_{a}+\rho(\mathfrak{u}\cap\mathfrak{p})-\rho(\mathfrak{u}\cap\mathfrak{k})=(2,2,1,0)$
or $(2,2,1,-2)$ respectively. But the latter one cannot give the
lowest $K$-type $\sigma$. So $\lambda_{a}=(1,\frac{1}{2},0,-\frac{1}{2})$,
$\mu=(1)$, and $\nu\sim(3)$. In the algorithm, to add $h$ entries
$1$ on the $w$ entries, we have $2e_{2}\in\Psi$. 

By uniqueness, $\pi'=\theta_{4}(\det_{3,1})=\pi((1,0),\Psi,(1),(3),0,0)$
with $\Psi=\{e_{1}\pm e_{2},2e_{1},2e_{2}\}$.
\end{proof}

\subsection{Determinant for $O(4,0)$}
\begin{prop}
\label{prop:lift-det40} There exists a unique $\pi'\in\mathcal{R}(Sp(8,\mathbb{R}))$
with infinitesimal character $(0,1,1,2)$ and $\mathcal{A}(\pi')=\{(3,3,3,3)\}$,
which is 
\[
\theta_{4}(\mathrm{det}_{4,0})=\pi((2,1,0),\Psi,0,0,(-1),(1)),
\]
with $\Psi=\{e_{1}\pm e_{2},e_{2}\pm e_{3},e_{1}\pm e_{3},2e_{1},2e_{2},2e_{3}\}$.\end{prop}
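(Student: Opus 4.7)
The plan is to follow the same template that worked for Proposition \ref{prop:det_pp} and the $\det_{3,1}$ case: let $\pi' = \pi(\lambda_{d},\Psi,\mu,\nu,\varepsilon,\kappa)$ be any element of $\mathcal{R}(Sp(8,\mathbb{R}))$ with infinitesimal character $(0,1,1,2)$ and $\mathcal{A}(\pi') = \{(3,3,3,3)\}$, and use the algorithm of Proposition \ref{prop:LKT-Sp} together with the infinitesimal character formula to pin down the parameters uniquely. The target representation $\pi((2,1,0),\Psi,0,0,(-1),(1))$ has $(v,s,t) = (3,0,1)$, so $v + 2s + t = 4$ as required, and its infinitesimal character $(\lambda_{d} \mid (\kappa_{1})) = (2,1,0,1) \sim (0,1,1,2)$ matches. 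Granted uniqueness, this forces $\pi'$ to equal $\theta_{4}(\det_{4,0})$.

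The core analysis is the case study on $u-r$ (where $u$, $w$, $r$ count positive, zero, negative entries of $\lambda_{a}$, and $u + w + r = v$). First I would rule out $u - r \geq 4$ using $\beta_{m} \geq u-r+1 \geq 5$, which would force a lowest $K$-type entry $\geq 4$, contradicting $\sigma = (3,3,3,3)$. Likewise $u - r \leq -1$ forces some $\gamma_{j} \leq -2$, so an entry $\leq -1$ would appear. Hence $u - r \in \{0,1,2,3\}$. Since every entry of $\sigma$ equals $3$, the middle block (contributing $u-r$ or $u-r \pm 1$) must be compatible with $3$; combined with the requirement that $\mathcal{A}(\pi')$ is a singleton (forcing the $h$ entries in the middle block to be unambiguous, i.e.\ to saturate the range), this cuts the cases down sharply. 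Pursuing each remaining case and matching to the infinitesimal character $(0,1,1,2)$ using the formula of Subsection \ref{sub:InfCh}, I expect to see that $s = 0$ (otherwise the $(\mu,\nu)$ contributions generate entries in $\lambda_{a}$ incompatible with producing $(3,3,3,3)$ uniquely) and $(v,t) = (3,1)$ is forced, with $\lambda_{d}$ determined up to permutation to be $(2,1,0)$ and $(\varepsilon_{1},\kappa_{1}) = (-1,1)$.

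Given $\lambda_{d} = (2,1,0)$ (so $k = 2$, $z = 1$, $l = 0$, $u_{1} = u_{2} = 1$, $r_{1} = r_{2} = 0$, $w = 1$), the algorithm of Proposition \ref{prop:LKT-Sp} computes $\mathcal{A}(\pi')$ by adjusting the middle $w = 1$ entry by $+1/2$ or $-1/2$ (and the $\varepsilon, \kappa$ contribution by a sign determined by $\varepsilon_{1} = -1$). To obtain exactly $(3,3,3,3)$, not a pair with a $0$ or $-1$, the positive system $\Psi$ must force the "plus" choices on the zero block and on the $\kappa$-entry. Verifying that $\Psi = \{e_{1}\pm e_{2}, e_{2}\pm e_{3}, e_{1}\pm e_{3}, 2e_{1}, 2e_{2}, 2e_{3}\}$ is the unique system containing $\Delta_{c}^{+}$, making $\lambda_{d}$ dominant, satisfying (F-1), and producing the singleton $\{(3,3,3,3)\}$, is a direct check. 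The main obstacle will be the case $u - r = 3$ with $(u,w,r)$ such that the block of $3$'s could conceivably come from a different distribution of $\lambda_{d}$ and $\mu$-contributions; excluding those spurious candidates requires carefully matching them against the infinitesimal character $(0,1,1,2)$, whose entries are small enough that any configuration with $a_{1} \geq 3$ or a nontrivial $(\mu,\nu)$ pair violates either the multiset equality or the (F-2) parity condition. Once the uniqueness is established, the conclusion $\theta_{4}(\det_{4,0}) = \pi((2,1,0),\Psi,0,0,(-1),(1))$ is automatic.
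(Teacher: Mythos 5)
Your overall strategy matches the paper's — use the LKT algorithm (Proposition \ref{prop:LKT-Sp}) and the infinitesimal character to force uniqueness — but the execution has real gaps and a couple of outright errors.

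First, your bound $u-r\in\{0,1,2,3\}$ is unnecessarily loose, and the remaining cases are not actually closed, just postponed with ``I expect to see\ldots''. The argument you give for ruling out $u-r\geqslant4$ works verbatim for $u-r\geqslant3$: if $u-r\geqslant3$ then every $\beta_i\geqslant u-r+1\geqslant4$, so every entry of the lowest $K$-type coming from the $u$-block is $\geqslant4$, forcing $u=0$; but then $u-r=-r\leqslant0$, a contradiction. For $u-r\leqslant1$, every $\gamma_i\leqslant u-r-1\leqslant0$ gives an entry $<3$, so $r=0$; then $u\leqslant1$ and $w=4-u\geqslant3$, and the middle $w$-block entries are $u-r$ or $u-r\pm1\leqslant2\neq3$, a contradiction. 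Thus $u-r=2$ exactly, which is where the subsequent analysis should start. Your worry that $u-r=3$ is ``the main obstacle'' is misplaced — it is the easiest case to exclude.

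Second, your concrete description of the verification step is wrong. With $\lambda_d=(2,1,0)$ and $(s,t)=(0,1)$ one has $\lambda_a=(2,1,0,0)$, so $w=2$, not $1$. And the middle $w$-block is adjusted by entries $\eta_j\in\{0,\pm1\}$ (the $h$ entries become $\pm1$, controlled by the form of $\Psi$ on the zero coordinates), not by $\pm\tfrac{1}{2}$; the half-integer adjustments $\delta_i\in\{\pm\tfrac{1}{2}\}$ apply only to the $\beta_i$- and $\gamma_i$-blocks when those are half-integral, and here $\beta_1=\beta_2=3$ are integers so $\delta_1=\delta_2=0$. Finally, the steps you leave as ``I expect to see that $s=0$'' and ``is a direct check'' are exactly the crux: from $u-r=2$ one gets $r=0$, $u=2$, $w=2$, $h=w=2$ (since all $K$-type entries equal $3$); uniqueness of the lowest $K$-type forces $z\geqslant1$, and the constraint $z-\lfloor\frac{z+1}{2}\rfloor\leqslant w-h=0$ forces $z=1$; $w=h$ then forces every $\varepsilon_i=-1$; since $v=k+z+l$ with $k-l=2$ and $z=1$, $v$ is odd and $\geqslant3$, so $(v,s,t)=(3,0,1)$; and then $(\lambda_d\mid\kappa)\sim(0,1,1,2)$ with $\lambda_d$ containing a zero entry forces $\lambda_d=(2,1,0)$, $\kappa\sim(1)$, and $2e_3\in\Psi$. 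These deductions are what make the proof work, and none of them appears in your sketch.
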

\begin{proof}
Let $\pi'=\pi(\lambda,\Psi,\mu,\nu,\varepsilon,\kappa)$. Recall the
algorithm Proposition \ref{prop:LKT-Sp} to calculate $\mathcal{A}(\pi')=\{\sigma\}$.

First we show $u-r=2$. If $u-r\geqslant3$, then all $\beta_{i}\geqslant u-r+1\geqslant4$
cannot occur, and thus $0=u>r$, which makes a contradiction. If $u-r\leqslant1$,
then all $\gamma_{i}\leqslant u-r-1\leqslant0$ cannot occur, and
thus $0=r\geqslant u-1$ and $w=4-u\geqslant3$. Therefore, $w$ entries
$(u-r,\dots,u-r)$ occur, and thus $u-r\in\{2,3,4\}$, which makes
a contradiction.

As all $\gamma_{i}\leqslant u-r-1=1$ cannot occur, $r=0$. So $u=2$,
$w=2$. Clearly $h=w=2$. If $z=0$, the lowest $K$-types will not
be unique. So $z\geqslant1$. And $z-\left[\frac{z+1}{2}\right]\leqslant w-h=0$,
so $z=1$. Since $w=h=2$, we get all $\varepsilon_{i}=-1$.

As $k-l=u-r=2$ and $z=1$, the odd integer $v=k+z+l\geqslant3$.
So $(v,s,t)=(3,0,1)$  and $(k,l)=(2,0)$. Note that $(\lambda_{d}\mid\kappa)\sim(0,1,1,2)$,
and $\lambda_{d}$ contains an entry $0$. So $\lambda_{d}=(2,1,0)$,
and $\kappa\sim(1)$. In the algorithm, to add $h$ entries $1$ on
the $w$ entries, we have $2e_{3}\in\Psi$.

By uniqueness, $\pi'=\theta_{4}(\mathrm{det}_{4,0})=\pi((2,1,0),\Psi,0,0,(-1),(1))$
with $\Psi=\{e_{1}\pm e_{2},e_{2}\pm e_{3},e_{1}\pm e_{3},2e_{1},2e_{2},2e_{3}\}$.
\end{proof}
We collect the results of this section in the following table.

\smallskip{}\noindent\resizebox{\textwidth}{!}{%

\begin{tabular}{c|c|c|c|c|c}
\hline 
\multirow{2}{*}{$(p,q)$} & \multirow{2}{*}{$\mathrm{det}_{p,q}$} & inf. char.  & \multirow{2}{*}{$\mathcal{A}(\theta_{4}(\mathrm{det}_{p,q}))$} & \multirow{2}{*}{$\theta_{4}(\mathrm{det}_{p,q})$} & \multirow{2}{*}{$\Psi$}\tabularnewline
 &  & of $\theta_{4}(\mathrm{det}_{p,q})$ &  &  & \tabularnewline
\hline 
\hline 
\multirow{2}{*}{$(2,2)$} & $\pi_{-1}(0,1,\O,0,0,$ & \multirow{6}{*}{$(0,1,1,2)$} & $\{(1,1,-1,$ & \multirow{2}{*}{$\pi(0,\O,(1,1),(1,3),0,0)$} & \multirow{2}{*}{$\O$}\tabularnewline
 & $(1,1),(0,1))$ &  & $-1)\}$ &  & \tabularnewline
\cline{1-2} \cline{4-6} 
\multirow{2}{*}{$(3,1)$} & $\pi_{1}((0;),-1,\O,$ &  & \multirow{2}{*}{$\{(2,2,2,0\}$} & \multirow{2}{*}{$\pi((1,0),\Psi,(1),(3),0,0)$} & \multirow{2}{*}{$\{e_{1}\pm e_{2},2e_{1},2e_{2}\}$}\tabularnewline
 & $0,0,(1),(1))$ &  &  &  & \tabularnewline
\cline{1-2} \cline{4-6} 
\multirow{2}{*}{$(4,0)$} & $\pi_{1}((1,0;),-1,$ &  & \multirow{2}{*}{$\{(3,3,3,3)\}$} & \multirow{2}{*}{$\pi((2,1,0),\Psi,0,0,(-1),(1))$} & $\{e_{1}\pm e_{2},e_{2}\pm e_{3},$\tabularnewline
 & $\{e_{1}\pm e_{2}\},0,0,0,0)$ &  &  &  & $e_{1}\pm e_{3},2e_{1},2e_{2},2e_{3}\}$\tabularnewline
\hline 
\end{tabular}

}\smallskip{}

\section*{Acknowledgments}

This paper covers the main parts of the author's doctoral dissertation
in HKUST. It was written during his stay as a post-doctor in Peking
University, and completed in Sun Yat-sen University. The work was
done partially while the author was visiting the Institute for Mathematical
Sciences, National University of Singapore in 2016. The visit was
supported by the Institute. The author would like to express his deep
gratitude to his thesis advisor Professor Jian-Shu Li for guidance
to this problem and enlightening discussions, and for providing various
help during and after his PhD life. The author would also like to
thank Professor Roger Howe, Professor Binyong Sun and Doctor Yixin
Bao for inspiring conversations about this subject. 

\appendix

\section{\label{App:CompLKT}Algorithms to compute Lowest $K$-types}

In this appendix, we quote from \cite{Paul2005howe} the algorithms
for $\mathcal{R}(Sp(2n,\mathbb{R}))$ and $\mathcal{R}(O(p,q))$ (with
$p+q$ even) to calculate the set of lowest $K$-types from the Langlands
parameters.

\subsection{For $Sp(2n,\mathbb{R})$}
\begin{prop}
[{\cite[Prop.6]{Paul2005howe}}] \label{prop:LKT-Sp} Let $\pi=\pi(\lambda_{d},\Psi,\mu,\nu,\varepsilon,\kappa)\in\mathcal{R}(Sp(2n,\mathbb{R}))$,
\begin{align*}
\lambda_{d} & =(\underbrace{a_{1},\dots,a_{1}}_{k_{1}},\underbrace{a_{2},\dots,a_{2}}_{k_{2}},\dots,\underbrace{a_{b},\dots a_{b}}_{k_{b}},\underbrace{0,\dots,0}_{z},\\
 & \quad\ \underbrace{-a_{b},\dots,-a_{b}}_{l_{b}},\dots,\underbrace{-a_{2},\dots,-a_{2}}_{l_{2}},\underbrace{-a_{1},\dots,-a_{1}}_{l_{1}})\in\mathbb{Z}^{v},
\end{align*}
with integers $a_{1}>a_{2}>\cdots>a_{b}>0$, and $|k_{i}-l_{i}|\leqslant1$
for all $i$. Let $k=\sum_{i=1}^{b}k_{i}$, $l=\sum_{i=1}^{b}l_{i}$,
$\tilde{k}_{j}=\sum_{i=1}^{j}k_{i}$, and $\tilde{l}_{j}=\sum_{i=1}^{j}l_{i}$.
Notice that $k+z+l=v$.

Let $\lambda_{a}$ be obtained from $(\lambda_{d}\mid\frac{\mu_{1}}{2},\dots,\frac{\mu_{s}}{2},\underbrace{0,\dots,0}_{t},-\frac{\mu_{s}}{2},\dots,-\frac{\mu_{1}}{2})$
by reordering of the coordinates so that the resulting entries are
nonincreasing. Write 
\begin{align*}
\lambda_{a} & =(\underbrace{\alpha_{1},\dots,\alpha_{1}}_{u_{1}},\underbrace{\alpha_{2},\dots,\alpha_{2}}_{u_{2}},\dots,\underbrace{\alpha_{m},\dots\alpha_{m}}_{u_{m}},\underbrace{0,\dots,0}_{w},\\
 & \quad\ \underbrace{-\alpha_{m},\dots,-\alpha_{m}}_{r_{m}},\dots,\underbrace{-\alpha_{2},\dots,-\alpha_{2}}_{r_{2}},\underbrace{-\alpha_{1},\dots,-\alpha_{1}}_{r_{1}})\in\left(\frac{1}{2}\mathbb{Z}\right)^{n},
\end{align*}
with $\alpha_{1}>\alpha_{2}>\cdots>\alpha_{m}>0$. Then for all $i$
we have $\alpha_{i}\in\frac{1}{2}\mathbb{Z}$ and $|u_{i}-r_{i}|\leqslant1$.
If $u_{i}\neq r_{i}$, then $\alpha_{i}\in\mathbb{Z}$. Let $u=\sum_{i=1}^{m}u_{i}$
and $r=\sum_{i=1}^{m}r_{i}$. Notice that $u-r=k-l$.

For $Sp(2n,\mathbb{R})$, the root system is 
\[
\Delta=\{\pm e_{i}\pm e_{j}\mid1\leqslant i<j\leqslant n\}\cup\{\pm2e_{i}\mid1\leqslant i\leqslant n\}.
\]
We fix the standard set of positive compact roots 
\[
\Delta_{c}^{+}=\{e_{i}-e_{j}\mid1\leqslant i<j\leqslant n\}.
\]
Let $\rho(\mathfrak{u}\cap\mathfrak{p})$ and $\rho(\mathfrak{u}\cap\mathfrak{k})$
be one-half sums of the noncompact and compact roots with respect
to which $\lambda_{a}$ is strictly dominant, respectively. Then $\lambda_{a}+\rho(\mathfrak{u}\cap\mathfrak{p})-\rho(\mathfrak{u}\cap\mathfrak{k})$
\begin{align*}
=(\underbrace{\beta_{1},\dots,\beta_{1}}_{u_{1}},\dots, & \underbrace{\beta_{m},\dots\beta_{m}}_{u_{m}},\underbrace{u-r,\dots,u-r}_{w},\underbrace{\gamma_{m},\dots,\gamma_{m}}_{r_{m}},\dots,\underbrace{\gamma_{1},\dots,\gamma_{1}}_{r_{1}}),\\
with\quad\beta_{i} & =\alpha_{i}+\frac{1}{2}+\frac{u_{i}-r_{i}}{2}+\sum_{1\leqslant j<i}(u_{j}-r_{j}),\\
\gamma_{i} & =-\alpha_{i}-\frac{1}{2}+\frac{u_{i}-r_{i}}{2}+\sum_{1\leqslant j<i}(u_{j}-r_{j}).
\end{align*}
Then the lowest $K$-types of $\pi$ are precisely those of the form
\begin{eqnarray*}
\Lambda & = & \lambda_{a}+\rho(\mathfrak{u}\cap\mathfrak{p})-\rho(\mathfrak{u}\cap\mathfrak{k})+\delta_{L},\\
\text{with}\quad\delta_{L} & = & (\underbrace{\delta_{1},\dots,\delta_{1}}_{u_{1}},\dots,\underbrace{\delta_{m},\dots\delta_{m}}_{u_{m}},\eta_{1},\dots,\eta_{w},\underbrace{\delta_{m},\dots,\delta_{m}}_{r_{m}},\dots,\underbrace{\delta_{1},\dots,\delta_{1}}_{r_{1}}),
\end{eqnarray*}
satisfying the following conditions:

(1) If $\beta_{i}\in\mathbb{Z}$, then $\delta_{i}=0$.

(2) Suppose $\beta_{i}\in\mathbb{Z}+\frac{1}{2}$. Then $\delta_{i}\in\{\pm\frac{1}{2}\}$.
If $\alpha_{i}$ does not occur as an entry in $\lambda_{d}$ then
both choices occur. If $\alpha_{i}=a_{j}$, then $\delta_{i}=\frac{1}{2}$
if $e_{\tilde{k}_{j-1}+1}+e_{v-\tilde{l}_{j}+1}\in\Psi$, and $\delta_{i}=-\frac{1}{2}$
otherwise. 

(3) We have $(\eta_{1},\dots,\eta_{w})=(\underbrace{1,\dots1}_{h},0,\dots,0)$
or $(0,\dots,\underbrace{-1,\dots,-1}_{h})$, with $h=\left[\frac{z+1}{2}\right]+\#\{j\mid\mu_{j}=0\}+\#\{j\mid\varepsilon_{j}=(-1)^{u-r+1}\}$.
If $z=0$ then both choices occur. If $z>0$, then $(\eta_{1},\dots,\eta_{w})$
is of the first form whenever $e_{k+1}+e_{k+z}\in\Psi$ (this includes
the case $z=1$ where the condition becomes $2e_{k+1}\in\Psi$), and
of the second form otherwise.
\end{prop}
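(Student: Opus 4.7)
The strategy is to apply Vogan's classification of lowest $K$-types for standard modules in stages matching the Langlands data $(\lambda_{d},\Psi,\mu,\nu,\varepsilon,\kappa)$, which parametrize the Levi decomposition $MA \cong Sp(2v,\mathbb{R})\times GL(2,\mathbb{R})^{s}\times GL(1,\mathbb{R})^{t}$. The foundational fact is that the lowest $K$-types of the standard module $\mathrm{Ind}_{MAN}^{Sp(2n,\mathbb{R})}(\rho\otimes\tau\otimes\chi\otimes\tri_{N})$ coincide (each with multiplicity one) with those of its irreducible Langlands quotient $\pi(\lambda_{d},\Psi,\mu,\nu,\varepsilon,\kappa)$, and that they depend only on the discrete part of the data together with the positive system $\Psi$; the continuous parameters $\nu$ and $\kappa$ enter only through the infinitesimal character.

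First, I would compute the lowest $K$-types of the limit of discrete series $\rho=\rho(\lambda_{d},\Psi)$ of $Sp(2v,\mathbb{R})$. Vogan's formula writes these as
\[
\Lambda_{0}=\lambda_{d}+\rho(\mathfrak{u}\cap\mathfrak{p})-\rho(\mathfrak{u}\cap\mathfrak{k})+\delta_{0},
\]
where $\mathfrak{u}$ is the nilradical of the $\Psi$-positive Borel, and $\delta_{0}$ is a half-integral correction that vanishes on integral entries, is free to choose sign $\pm\tfrac{1}{2}$ on half-integral entries whose underlying value does not appear in $\lambda_{d}$, and is pinned otherwise by the presence or absence of the non-compact simple root $e_{\tilde{k}_{j-1}+1}+e_{v-\tilde{l}_{j}+1}$ in $\Psi$. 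This yields case (2) of the statement. Next, I would feed in the $GL(2,\mathbb{R})^{s}$ factor: each relative limit of discrete series $\tau(\mu_{i},\nu_{i})$ has lowest $K$-type $(\mu_{i}+1;1)$, and under normalized induction it inserts the pair $(\mu_{i}/2,-\mu_{i}/2)$ into the merged Harish-Chandra parameter. After reordering into $\lambda_{a}$, the explicit formulas for $\beta_{i},\gamma_{i}$ in the proposition are exactly $\lambda_{a}+\rho(\mathfrak{u}\cap\mathfrak{p})-\rho(\mathfrak{u}\cap\mathfrak{k})$ expressed in terms of the new block structure $(u_{i},r_{i})$.

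Third, I would handle the zero block, where the $t$ characters $\chi(\varepsilon_{j},\kappa_{j})$ on $GL(1,\mathbb{R})$ merge with the $z$ zero entries of $\lambda_{d}$ and with any zero $\mu_{j}$'s. Only the sign part $\mathrm{sgn}^{(1-\varepsilon_{j})/2}$ of $\chi(\varepsilon_{j},\kappa_{j})$ affects $K$-structure, so the count $h$ in (3) is precisely the number of $\varepsilon_{j}$ whose sign matches $(-1)^{u-r+1}$, plus the number of $\mu_{j}=0$, plus $\lceil z/2\rceil$ from Vogan's rule applied to the zero block of $\lambda_{d}$; the left-versus-right placement of the string of $\pm 1$'s in $\eta$ is pinned by $\Psi$ through the simple root $e_{k+1}+e_{k+z}$ (specializing to $2e_{k+1}$ when $z=1$), and both placements occur when $z=0$ leaves no zero of $\lambda_{d}$ to fix the direction. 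The main obstacle is the combinatorial bookkeeping of the reordering $(\lambda_{d}\mid\mu/2,0,-\mu/2)\mapsto\lambda_{a}$: the shift $\rho(\mathfrak{u}\cap\mathfrak{p})-\rho(\mathfrak{u}\cap\mathfrak{k})$ depends on the new block sizes $u_{i},r_{i}$ rather than on the original $k_{i},l_{i}$, and translating Vogan's abstract criterion into the explicit index conditions $e_{\tilde{k}_{j-1}+1}+e_{v-\tilde{l}_{j}+1}\in\Psi$ and $e_{k+1}+e_{k+z}\in\Psi$ requires carefully tracking how the pairs contributed by $\tau(\mu_{i},\nu_{i})$ interleave with the blocks of $\lambda_{d}$.
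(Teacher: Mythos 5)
This proposition is quoted verbatim from Paul's paper (\cite[Proposition 6]{Paul2005howe}) and the present paper offers no proof of it: Appendix \ref{App:CompLKT} is explicitly a recollection of algorithms, not an argument. There is therefore no proof in the paper to compare your outline against. That said, your reconstruction does identify the right ingredients from Vogan's theory — that lowest $K$-types of a standard module and of its Langlands quotient agree with multiplicity one, that they depend only on the discrete part of the Langlands data together with $\Psi$, that one merges the discrete Harish-Chandra parameters of the Levi factors, reorders, applies the shift $\rho(\mathfrak{u}\cap\mathfrak{p})-\rho(\mathfrak{u}\cap\mathfrak{k})$, and then adds half-integral corrections. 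But as written this is a plan rather than a proof: the entire combinatorial content of Paul's Proposition 6 lies in verifying that, after the interleaving of the blocks of $\lambda_{d}$ with the pairs $\pm\mu_{i}/2$ and the zero entries contributed by $z$, $t$, and any $\mu_{i}=0$, the $\rho$-shift comes out to exactly the stated $\beta_{i},\gamma_{i}$ in terms of $u_{i},r_{i}$, and that the freedom versus pinning of $\delta_{i}$ and $\eta$ is governed by precisely the simple roots $e_{\tilde{k}_{j-1}+1}+e_{v-\tilde{l}_{j}+1}$ and $e_{k+1}+e_{k+z}$. You acknowledge this as "the main obstacle" but do not carry it out, and that bookkeeping is the whole theorem. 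Also, your opening framing of the argument as proceeding "in stages" (first computing lowest $K$-types of $\rho$ on $Sp(2v,\mathbb{R})$, then feeding in the $GL(2)$ and $GL(1)$ factors) is misleading: Vogan's algorithm is applied once to the merged parameter $\lambda_{a}$, not iteratively by factor, and the block sizes $(u_{i},r_{i})$ in the $\rho$-shift do not decompose cleanly along the Levi factors. Your own later paragraphs switch to the merged picture, which is the correct one, so this is a defect of exposition more than of substance — but if you intend to present this as a proof you should reconcile the two framings and actually perform the index-tracking.
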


\subsection{For $O(p,q)$ with $p+q$ even}
\begin{prop}
[{\cite[Prop.10]{Paul2005howe}}] \label{prop:LKT-O} Let $\pi=\pi_{\zeta}(\lambda_{d},\xi,\Psi,\mu,\nu,\varepsilon,\kappa)\in\mathcal{R}(O(p,q))$
with $p+q$ even. Let $ $
\begin{align*}
\lambda_{d} & =(\underbrace{a_{1},\dots,a_{1}}_{k_{1}},\underbrace{a_{2},\dots,a_{2}}_{k_{2}},\dots,\underbrace{a_{b},\dots a_{b}}_{k_{b}},\underbrace{0,\dots,0}_{z};\\
 & \quad\ \underbrace{a_{1},\dots,a_{1}}_{l_{1}},\underbrace{a_{2},\dots,a_{2}}_{l_{2}},\dots,\underbrace{a_{b},\dots,a_{b}}_{l_{b}},\underbrace{0,\dots,0}_{z'})
\end{align*}
 with integers $a_{1}>a_{2}>\cdots>a_{b}>0$, $|k_{i}-l_{i}|\leqslant1$
for all $i$, and $|z-z'|\leqslant1$. Let $k=\sum_{i=1}^{b}k_{i}$,
$l=\sum_{i=1}^{b}l_{i}$, $\tilde{k}_{j}=\sum_{i=1}^{j}k_{i}$, and
$\tilde{l}_{j}=\sum_{i=1}^{j}l_{i}$.

Let $\lambda_{a}$ be obtained from $(\lambda_{d}\mid(\frac{\mu_{1}}{2},\dots,\frac{\mu_{s}}{2},\underbrace{0,\dots,0}_{\left[\frac{t}{2}\right]};\ \frac{\mu_{1}}{2},\dots,\frac{\mu_{s}}{2},\underbrace{0,\dots,0}_{\left[\frac{t}{2}\right]})$,
by reordering of the coordinates so that the resulting entries in
both parts are nonincreasing. Write 
\begin{align*}
\lambda_{a} & =(\underbrace{\alpha_{1},\dots,\alpha_{1}}_{u_{1}},\underbrace{\alpha_{2},\dots,\alpha_{2}}_{u_{2}},\dots,\underbrace{\alpha_{m},\dots a_{m}}_{u_{m}},\underbrace{0,\dots,0}_{x};\\
 & \quad\ \underbrace{\alpha_{1},\dots,\alpha_{1}}_{r_{1}},\underbrace{\alpha_{2},\dots,\alpha_{2}}_{r_{2}},\dots,\underbrace{\alpha_{m},\dots a_{m}}_{r_{m}},\underbrace{0,\dots,0}_{y})
\end{align*}
with $\alpha_{1}>\alpha_{2}>\cdots>\alpha_{m}>0$. Then for all $i$
we have $\alpha_{i}\in\frac{1}{2}\mathbb{Z}$, $|u_{i}-r_{i}|\leqslant1$
and $|x-y|\leqslant1$. If $u_{i}\neq r_{i}$, then $\alpha_{i}\in\mathbb{Z}$.
Let $u=\sum_{i=1}^{m}u_{i}$ and $r=\sum_{i=1}^{m}r_{i}$. Notice
that $u-r=k-l$ and $x-y=z-z'$. 

We take the set of roots $\Delta$ for $O(p,q)$, and fix the standard
set of positive compact roots $\Delta_{c}^{+}$, which are as in the
following table (with $p_{0}=\left[\frac{p}{2}\right]$ and $q_{0}=\left[\frac{q}{2}\right]$):

\smallskip{}\noindent\resizebox{\textwidth}{!}{%

\begin{tabular}{c|c|c}
\hline 
 & if $p,q$ are both even & if $p,q$ are both odd\tabularnewline
\hline 
\hline 
\multirow{3}{*}{$\Delta$} & $\{\pm e_{i}\pm e_{j}\mid1\leqslant i<j\leqslant p_{0}\}$ & $\{\pm e_{i}\pm e_{j}\mid1\leqslant i<j\leqslant p_{0}\}$\tabularnewline
 & $\cup\{\pm f_{i}\pm f_{j}\mid1\leqslant i<j\leqslant q_{0}\}$ & $\cup\{\pm f_{i}\pm f_{j}\mid1\leqslant i<j\leqslant q_{0}\}$\tabularnewline
 & $\cup\{\pm e_{i}\pm f_{j}\mid1\leqslant i\leqslant p_{0},\ 1\leqslant j\leqslant q_{0}\}$ & $\cup\{\pm e_{i}\pm f_{j},\pm e_{i},\pm f_{j}\mid1\leqslant i\leqslant p_{0},\ 1\leqslant j\leqslant q_{0}\}$\tabularnewline
\hline 
\multirow{3}{*}{$\Delta_{c}^{+}$} & $\{e_{i}\pm e_{j}\mid1\leqslant i<j\leqslant p_{0}\}$ & $\{e_{i}\pm e_{j}\mid1\leqslant i<j\leqslant p_{0}\}$\tabularnewline
 & $\cup\{\pm f_{i}\pm f_{j}\mid1\leqslant i<j\leqslant q_{0}\}$ & $\cup\{f_{i}\pm f_{j}\mid1\leqslant i<j\leqslant q_{0}\}$\tabularnewline
 &  & $\cup\{e_{i},f_{j}\mid1\leqslant i\leqslant p_{0},\ 1\leqslant j\leqslant q_{0}\}$\tabularnewline
\hline 
\end{tabular}

}\smallskip{}

Let $\rho(\mathfrak{u}\cap\mathfrak{p})$ and $\rho(\mathfrak{u}\cap\mathfrak{k})$
be one-half sums of the noncompact and compact roots with respect
to which $\lambda_{a}$ is strictly dominant, respectively. Then $\lambda_{a}+\rho(\mathfrak{u}\cap\mathfrak{p})-\rho(\mathfrak{u}\cap\mathfrak{k})$
\begin{gather*}
=(\underbrace{\beta_{1},\dots,\beta_{1}}_{u_{1}},\dots,\underbrace{\beta_{m},\dots\beta_{m}}_{u_{m}},\underbrace{0,\dots,0}_{x};\underbrace{\gamma_{1},\dots,\gamma_{1}}_{r_{1}},\dots,\underbrace{\gamma_{m},\dots,\gamma_{m}}_{r_{m}},\underbrace{0,\dots,0}_{y}),
\end{gather*}
\begin{align*}
\beta_{i} & =\alpha_{i}+\frac{1}{2}-u+r+\frac{u_{i}-r_{i}}{2}+\sum_{1\leqslant j<i}(u_{j}-r_{j}),\\
\gamma_{i} & =\alpha_{i}+\frac{1}{2}+u-r-\frac{u_{i}-r_{i}}{2}-\sum_{1\leqslant j<i}(u_{j}-r_{j}).
\end{align*}
Then the highest weights of the lowest $K$-types of $\pi$ are precisely
those of the form 
\begin{align*}
\Lambda_{0} & =\lambda_{a}+\rho(\mathfrak{u}\cap\mathfrak{p})-\rho(\mathfrak{u}\cap\mathfrak{k})+\delta_{L},\\
\text{with }\delta_{L} & =(\underbrace{\delta_{1},\dots,\delta_{1}}_{u_{1}},\dots,\underbrace{\delta_{m},\dots\delta_{m}}_{u_{m}},\eta_{1},\dots,\eta_{x};\underbrace{-\delta_{1},\dots,-\delta_{1}}_{r_{1}},\dots,\underbrace{-\delta_{m},\dots,-\delta_{m}}_{r_{m}},\xi_{1},\dots\xi_{y}),
\end{align*}
satisfying the following conditions:

(1) If $\beta_{i}\in\mathbb{Z}$, then $\delta_{i}=0$.

(2) Suppose $\beta_{i}\in\mathbb{Z}+\frac{1}{2}$. Then $\delta_{i}\in\{\pm\frac{1}{2}\}$.
If $\alpha_{i}$ does not occur as an entry in $\lambda_{d}$ then
both choices occur. If $\alpha_{i}=a_{j}$, then $\delta_{i}=\frac{1}{2}$
if $e_{\tilde{k}_{j}}-f_{\tilde{l}_{j}}\in\Psi$, and $\delta_{i}=-\frac{1}{2}$
otherwise.

(3) We have $(\eta_{1},\dots,\eta_{x};\xi_{1},\dots\xi_{y})=(\underbrace{1,\dots,1}_{h},0,\dots,0;0,\dots,0)$
or $(0,\dots0;\underbrace{1,\dots,1}_{h},0,\dots,0)$, with $h=\min\{z,z'\}+\#\{j\mid\mu_{j}=0\}+\min\{\beta,\gamma\}$,
where $\beta=\#\{j\mid\varepsilon_{j}=1\}$ and $\gamma=\#\{j\mid\varepsilon_{j}=-1\}$.
If $z+z'=0$ then both choices occur. If $z+z'>0$, then $(\eta_{1},\dots,\eta_{w})$
is of the first form whenever $e_{k+z}-f_{l+z'}\in\Psi$, and of the
second form otherwise.
\end{prop}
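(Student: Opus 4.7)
The plan is to derive this lowest $K$-type formula via Vogan's minimal $K$-type theory, applied to the Langlands quotient of the standard induced module. Write $\pi = \pi_\zeta(\lambda_d,\xi,\Psi,\mu,\nu,\varepsilon,\kappa)$ as the unique irreducible quotient of $J = \mathrm{Ind}_{MAN}^{O(p,q)}(\rho\otimes\tau\otimes\chi\otimes\tri_N)$ with $MA \cong O(2a,2d)\times GL(2,\mathbb{R})^s\times GL(1,\mathbb{R})^t$. By Vogan's theorem (\cite{Vogan1981representations}, Chapter 6), the lowest $K$-types of $\pi$ are precisely the lowest $K$-types of $J$, and they stand in bijection with the lowest $K\cap M$-types $\Lambda_M$ of the Levi piece $\rho\otimes\tau\otimes\chi$ via the $\rho$-shift $\Lambda = \Lambda_M + \rho(\mathfrak{u}\cap\mathfrak{p}) - \rho(\mathfrak{u}\cap\mathfrak{k})$, where $\mathfrak{u}$ is the nilradical determined by $\Psi$.

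First I would catalog the lowest $K\cap M$-types of the Levi representation. For the limit of discrete series $\rho = \rho(\lambda_d,\xi,\Psi)$ on $O(2a,2d)$, Schmid's version of the Blattner formula identifies the lowest $K\cap M$-type highest weight as $\lambda_d$ shifted by half-sums of the noncompact/compact $\Psi$-positive roots inside $\mathfrak{o}(2a,2d)$, with the parameter $\xi$ singling out one summand when $z+z'>0$. For $\tau(\mu_i,\nu_i)$ on $GL(2,\mathbb{R})$, the lowest $U(2)$-type $(\mu_i+1;1)$ is built into the definition, and the character $\chi(\varepsilon,\kappa)$ contributes only a sign via $\varepsilon$ on $K\cap GL(1,\mathbb{R}) = \{\pm 1\}$.

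Next I would translate these $K\cap M$-data into the combinatorial objects of the statement. The combined $\mathfrak{t}^*$-weight, after merging $\lambda_d$ with the $\frac{\mu_j}{2}$'s from $\tau$ and reordering to put both halves in nonincreasing order, is exactly $\lambda_a$; the block sizes $(u_i,r_i,x,y)$ record how the positive, negative, and zero entries recombine. A direct computation of $\rho(\mathfrak{u}\cap\mathfrak{p})-\rho(\mathfrak{u}\cap\mathfrak{k})$ against the standard $\Delta_c^+$ in the table (splitting the cases $p,q$ both even or both odd, the latter producing the extra short roots $\pm e_i,\pm f_j$) yields precisely the stated formulas for $\beta_i,\gamma_i$. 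The admissible shifts $\delta_L$ then enumerate exactly the freedom in extending a lowest $K\cap M$-type: condition (1) (integer $\beta_i$) means the corresponding block sits in a rigid $\mathfrak{su}(u_i+r_i)$-type with no further sign ambiguity; condition (2) ($\beta_i\in\mathbb{Z}+\tfrac12$) records the $\pm\tfrac12$ shift inherited from $\tau$, forced by $\Psi$ via the root $e_{\tilde{k}_j}-f_{\tilde{l}_j}$ when $\alpha_i = a_j$ appears in $\lambda_d$; condition (3) counts and places the additional $1$'s arising from the $z,z'$ zero entries of $\lambda_d$, the zero $\mu_j$'s, and the $\min\{\beta,\gamma\}$ cancellations on the $\chi$ side, with the placement governed by whether $e_{k+z}-f_{l+z'}\in\Psi$.

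The main obstacle will be the bookkeeping at each stage: verifying that the $\rho$-shift computed root-by-root against the standard bases produces the displayed $\beta_i,\gamma_i$, and that the ambiguities parametrized by $\delta_L$ correspond one-to-one with the choices in the Levi data. Especially delicate are the parity distinction between $p,q$ even and odd (which changes $\Delta_c^+$), the interaction between $\xi$ and $\Psi$ for the $O(2a,2d)$-limit of discrete series when $z+z'>0$, and the verification that the $\min\{\beta,\gamma\}$ contribution to $h$ truly arises from pairing opposite-sign characters $\chi(\varepsilon_i,\kappa_i)$ after the Langlands quotient is taken.
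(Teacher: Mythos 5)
This proposition is not proved in the paper: as the citation in its heading indicates, it is quoted from \cite{Paul2005howe} (Proposition 10), and Appendix \ref{App:CompLKT} opens by stating that it quotes these algorithms precisely so that they can be applied as black boxes in Sections \ref{sec:4-lifts} and \ref{sec:3-lifts}. There is therefore no in-paper proof against which to compare your sketch.

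That said, your outline does reflect the strategy used in the cited source: realize $\pi$ as the Langlands quotient of $J=\mathrm{Ind}_{MAN}^{O(p,q)}(\rho\otimes\tau\otimes\chi\otimes\tri_{N})$; use the multiplicity-one property of lowest $K$-types in standard modules to reduce $\mathcal{A}(\pi)$ to $\mathcal{A}(J)$; invoke Vogan's $\rho(\mathfrak{u}\cap\mathfrak{p})-\rho(\mathfrak{u}\cap\mathfrak{k})$ shift to pass from lowest $K\cap M$-types of $\rho\otimes\tau\otimes\chi$ to lowest $K$-types of $J$; and then unwind the combinatorics. Your identification of where the ingredients come from --- the merge-and-sort producing $\lambda_{a}$, the $\rho$-shift producing $\beta_{i},\gamma_{i}$, the half-integer ambiguity from $\tau$ producing $\delta_{i}$, and the count $h$ assembled from the zero blocks of $\lambda_{d}$, the $\mu_{j}=0$ factors, and the $\min\{\beta,\gamma\}$ pairing of $\chi$-factors --- is consistent with the stated algorithm, and you correctly flag the delicate points (parity of $p,q$; the role of $\xi$ and $\Psi$ when $z+z'>0$; the interaction with the Langlands quotient).

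As it stands, though, your proposal is a plan rather than a proof. Everything that actually produces the specific formulas --- the root-by-root evaluation of $\rho(\mathfrak{u}\cap\mathfrak{p})-\rho(\mathfrak{u}\cap\mathfrak{k})$ against the two tabulated $\Delta_{c}^{+}$'s, the verification that after merge-and-sort the shift lands on the displayed $\beta_{i},\gamma_{i}$, the one-sided versus two-sided dichotomies in conditions (2) and (3) governed by whether $\alpha_{i}$ occurs as an entry of $\lambda_{d}$ or whether $z+z'>0$, and the separation of the sign information into the companion Proposition \ref{prop:LKT-sgn} --- is deferred to ``bookkeeping.'' These computations are exactly the substance of Paul's proof; to turn your sketch into a proof you would need to carry them out rather than gesture at them. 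For the purposes of the present paper, however, the correct observation is simply that the statement is a cited result, not something the paper establishes.
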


\begin{prop}
[{\cite[Prop.13]{Paul2005howe}}] \label{prop:LKT-sgn} Let $\pi=\pi_{\zeta}(\lambda_{d},\xi,\Psi,\mu,\nu,\varepsilon,\kappa)\in\mathcal{R}(O(p,q))$
with $p+q$ even. Let $\Lambda_{0}=(\Lambda_{1},\Lambda_{2})$ be
the highest weight of a lowest $K$-type of $\pi$ as in Proposition
\ref{prop:LKT-O}, with associated $z$, $z'$, $\beta$ and $\gamma$.
 Then the signs of any lowest $K$-type of $\pi$ with highest $\Lambda_{0}$
are given as follows:

(1) Suppose $z+z'=0$ and all $\kappa_{i}\neq0$. If $\beta\geqslant\gamma$
then both $(1;1)$ and $(-1;-1)$ occur as signs. (The resulting two
K-types may coincide.) If $\beta<\gamma$ then both $(1;-1)$ and
$(-1;1)$ occur as signs.

(2) Suppose $z+z'=0$ and some $(\varepsilon_{i},\kappa_{i})=(1,0)$.
If $\beta\geqslant\gamma$ then the signs are $(\zeta;\zeta)$. If
$\beta<\gamma$ then the signs are $(\zeta;-\zeta)$ if $\Lambda_{1}$
has more zeros then $\Lambda_{2}$, and $(-\zeta;\zeta)$ otherwise.

(3) Suppose $z+z'=0$ and some $(\varepsilon_{i},\kappa_{i})=(-1,0)$.
If $\beta\geqslant\gamma$ then the signs are $(\zeta;\zeta)$ if
$\Lambda_{1}$ has more zeros then $\Lambda_{2}$, and $(-\zeta,-\zeta)$
otherwise. If $\beta<\gamma$, then the signs are $(\zeta;-\zeta)$.

(4) Suppose $z+z'>0$ and $\beta\geqslant\gamma$. Then the signs
are $(\xi;\xi)$.

(5) Suppose $z+z'>0$ and $\beta<\gamma$. Then the signs are $(\xi;-\xi)$
if $\Lambda_{1}$ has more zeros then $\Lambda_{2}$, and $(-\xi;\xi)$
otherwise.
\end{prop}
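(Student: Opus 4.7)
The plan is to prove Proposition \ref{prop:LKT-sgn} by tracking signs through the parabolic induction construction of $\pi = \pi_\zeta(\lambda_d, \xi, \Psi, \mu, \nu, \varepsilon, \kappa)$ and then determining how the two outer elements $\epsilon_L = \mathrm{diag}(-I_p, I_q)$ and $\epsilon_R = \mathrm{diag}(I_p, -I_q)$ of $O(p) \times O(q)$ act on the lowest $K$-type space. Recall that $\pi$ is realized as an irreducible quotient of $\mathrm{Ind}_{MAN}^{O(p,q)}(\rho \otimes \tau \otimes \chi \otimes \tri_N)$ with $MA \cong O(2a,2d) \times GL(2,\mathbb{R})^s \times GL(1,\mathbb{R})^t$, and Proposition \ref{prop:LKT-O} already pins down the highest weight $\Lambda_0 = (\Lambda_1;\Lambda_2)$; only the pair of signs $(\epsilon,\eta) \in \{\pm 1\}^2$ remains to be determined.

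First I would identify the signs of the lowest $K$-type of $\rho = \rho(\lambda_d, \xi, \Psi)$ on $O(2a, 2d)$. When $z + z' = 0$, the Harish-Chandra module $\mathrm{Ind}_{SO(2a,2d)}^{O(2a,2d)} \rho(\lambda_d, \Psi)$ is irreducible, and its lowest $K$-type carries both signs $(1;1)$ and $(-1;-1)$ simultaneously; the parameter $\xi$ is forced to be $1$. When $z + z' > 0$, this induction splits, and by definition of the parametrization the summand $\rho(\lambda_d, \xi, \Psi)$ carries lowest $K$-type signs $(\xi;\xi)$. This gives the starting signs on the $O(2a,2d)$-factor and directly accounts for case (4) and for the $\beta \geqslant \gamma$ sub-cases of the proposition.

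Next I would study how the sign contributions from the $GL(1,\mathbb{R})^t$ factors propagate under normalized parabolic induction. By the $K$-intertwining isomorphism $I|_K \cong \mathrm{Ind}_{K \cap MA}^K (\rho|_{K \cap M} \otimes \chi|_{K \cap A})$ and Frobenius reciprocity, the multiplicity of a $K$-type $\sigma$ with highest weight $\Lambda_0$ in the induced module reduces to a dimension of Hom over $K \cap MA$, in which $K \cap A \cong O(1)^t$ acts by $\prod \mathrm{sgn}(\cdot)^{(1-\varepsilon_j)/2}$. Since the non-parity condition (F-2) forces $(\varepsilon_i,\kappa_i), (\varepsilon_j,\kappa_j)$ with $\kappa_i = \pm \kappa_j$ to have matching signs, exactly $\min\{\beta,\gamma\}$ of these sign factors can be paired off into cancelling groups, and the remaining $|\beta - \gamma|$ unpaired factors of sign $-1$ produce a net sign flip. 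A direct computation on the lowest $K$-type vector, in the spirit of the Frobenius reciprocity calculations performed in Section \ref{sec:3-lifts} for $O(3,1)$, shows that this flip acts by $-1$ on exactly one of the two sides, and it is the side corresponding to the location of the ``extra'' $1$'s recorded in $(\eta_1,\dots,\eta_x;\xi_1,\dots,\xi_y)$ of Proposition \ref{prop:LKT-O} — namely the side with more zeros in $\Lambda_1$ or $\Lambda_2$. This yields cases (1), (3), and (5).

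Finally I would address $\zeta$. When $\lambda_d$ contains no zero entry but some $(\varepsilon_i,\kappa_i) = (\pm 1,0)$ appears, Lemma \ref{lem:non-par-O} gives two nonisomorphic irreducible quotients of the standard module, distinguished precisely by the signs of their lowest $K$-type; the labelling convention is that $\zeta$ records the sign that appears in the first (or, if two appear, in both) slot, which matches exactly the formulas of cases (2) and (3). The main obstacle in carrying out this plan will be step two: making the Frobenius-reciprocity sign bookkeeping rigorous at the vector level, because one must identify a concrete lowest-weight vector inside $\mathrm{Ind}_{K \cap MA}^{K}(\cdots)$ and compute the action of suitably normalized representatives of $\epsilon_L$ and $\epsilon_R$, with the normalizations chosen compatibly with the parabolic-induction conventions and Kudla's splitting fixed in Subsection \ref{subsec:Par-O}. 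The finicky interaction between the sign $\xi$ inherited from $O(2a,2d)$ and the $|\beta - \gamma|$ unpaired sign characters is where the bulk of the careful work lies.
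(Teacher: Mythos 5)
The paper does not prove this statement: it is quoted verbatim from \cite[Proposition 13]{Paul2005howe} in Appendix \ref{App:CompLKT}, so there is no in-paper argument to measure yours against. Judged as an independent proof, your outline has the right general shape (induce, compute $K$-multiplicities, track the outer sign elements of $O(p)\times O(q)$), but the decisive step is exactly where it breaks down.

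The mechanism you propose for the $GL(1,\mathbb{R})^{t}$ factors is not correct as stated. First, a pair of characters with $\varepsilon_{i}=1$, $\varepsilon_{j}=-1$ does not ``cancel'': such a pair contributes one of the $\min\{\beta,\gamma\}$ extra entries equal to $1$ in the highest weight itself (the quantity $h$ in Proposition \ref{prop:LKT-O}(3)), which changes $\Lambda_{0}$ rather than its signs. Second, a ``net sign flip'' produced by the $|\beta-\gamma|$ leftover sign characters would, read literally as a product of signs, occur only when $\gamma-\beta$ is odd; the proposition asserts that the two components carry opposite signs exactly when $\beta<\gamma$, independently of the parity of $\gamma-\beta$, so the naive multiplicativity you invoke cannot be the right bookkeeping. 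Third, Frobenius reciprocity on the standard module only tells you which $K$-types occur in $I|_{K}$; since the norm $||\sigma||$ depends only on the highest weight and not on the signs, the standard module generically contains several minimal-norm $K$-types differing only in signs, and precisely in the case of Lemma \ref{lem:non-par-O}(1) where there are two irreducible quotients $\pi_{\pm1}$ these candidates split between the two quotients. Deciding which sign lands in which quotient is the actual content of parts (2) and (3), and it requires Vogan's lowest-$K$-type/bottom-layer analysis of the subquotient structure, not just occurrence in $I$. Finally, your treatment of $\zeta$ is circular: by Lemma \ref{lem:non-par-O} the label $\zeta$ is \emph{defined} through the signs of the lowest $K$-type as computed by this very proposition, so it cannot be cited as an independently fixed convention that ``matches the formulas.''
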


\section{\label{App:Theta12}Theta $1$-lifts and $2$-lifts when $p+q=4$}

For the sake of completeness, in this appendix we list all nonzero
theta $1$-lifts and $2$-lifts for $O(p,q)$ when $(p,q)=(4,0)$,
$(3,1)$, $(2,2)$, which is read off from \cite[Th.15, Th.18]{Paul2005howe}.

\subsection{For $O(4,0)$}

Let $\pi=\pi_{1}((m,l;),\xi,\{e_{1}\pm e_{2}\},0,0,0,0)\in\mathcal{R}(O(4,0))$,
with integers $m>l\geqslant0$.  All nonzero theta $1$-lifts and
$2$-lifts of $\pi$ are in the following table:

\smallskip{}

\begin{center}
\begin{tabular}{c|c|c|c|c}
\hline 
 & $m$ & $l$ & $\xi$ & \tabularnewline
\hline 
\hline 
$\theta_{1}(\pi)\neq0$ & $\geqslant1$ & $0$ & \multirow{2}{*}{$1$} & $\theta_{1}(\pi)=\pi((m),\{2e_{1}\},0,0,0,0)$\tabularnewline
\cline{1-3} \cline{5-5} 
$\theta_{2}(\pi)\neq0$ & $>l$ & $\geqslant0$ &  & $\theta_{2}(\pi)=\pi((m,l),\{e_{1}\pm e_{2},2e_{1},2e_{2}\},0,0,0,0)$\tabularnewline
\hline 
\end{tabular}
\par\end{center}


\subsection{For $O(3,1)$}

Let $\pi=\pi_{\zeta}((m;),\xi,\O,0,0,\varepsilon,\kappa)\in\mathcal{R}(O(3,1))$
with $0\leqslant m\in\mathbb{Z}$, $\varepsilon=(\varepsilon_{1})$
and $\kappa=(\kappa_{1})$. All $\pi$ with $\theta_{1}(\pi)\neq0$
are in the following table:

\smallskip{}

\begin{center}
\begin{tabular}{c|c|c|c|c}
\hline 
$\zeta$ & $\xi$ & $m$ & $(\varepsilon_{1},\kappa_{1})$ & $\theta_{1}(\pi)$\tabularnewline
\hline 
\hline 
\multirow{2}{*}{$1$} & \multirow{2}{*}{$1$} & $\geqslant0$ & $(1,0)$ & $\pi((m),\{2e_{1}\},0,0,0,0)$\tabularnewline
\cline{3-5} 
 &  & $0$ & $\neq(1,0)$ & $\pi(0,\O,0,0,(-\varepsilon_{1}),(\kappa_{1}))$\tabularnewline
\hline 
\end{tabular}
\par\end{center}

\smallskip{}

All $\pi$ with $\theta_{2}(\pi)\neq0$ are in the following table: 

\smallskip{}

\begin{center}
\begin{tabular}{c|c|c|c|c}
\hline 
$\zeta$ & $\xi$ & $m$ & $(\varepsilon_{1},\kappa_{1})$ & $\theta_{2}(\pi)$\tabularnewline
\hline 
\hline 
\multirow{2}{*}{$1$} & \multirow{3}{*}{$1$} & \multirow{2}{*}{$\geqslant0$} & $\neq(-1,0)$ & $\pi((m),\{2e_{1}\},0,0,(-\varepsilon_{1}),(\kappa_{1}))$\tabularnewline
\cline{4-5} 
 &  &  & $(-1,0)$ & $\pi((m,0),\{e_{1}\pm e_{2},2e_{1},-2e_{2}\},0,0,0,0)$\tabularnewline
\cline{1-1} \cline{3-5} 
$-1$ &  & $\geqslant1$ & $(-1,0)$ & $\pi((m,0),\{e_{1}\pm e_{2},2e_{1},2e_{2}\},0,0,0,0)$\tabularnewline
\hline 
\end{tabular}
\par\end{center}


\subsection{For $O(2,2)$}

Let $\pi=\pi_{\zeta}(\lambda_{d},\xi,\Psi,\mu,\nu,\varepsilon,\kappa)\in\mathcal{R}(O(2,2))$.
 All $\pi$ with $\theta_{1}(\pi)\neq0$ are in the following table:

\smallskip{}\noindent\resizebox{\textwidth}{!}{%

\begin{tabular}{c|c|c|c|c|c|c|c|c|c}
\hline 
$\zeta$ & $\xi$ & $\lambda_{d}$ & $\Psi$ & $\mu$ & $\nu$ & $\varepsilon$ & $\kappa$ & $\theta_{1}(\pi)$ & with\tabularnewline
\hline 
\hline 
\multirow{3}{*}{$1$} & \multirow{3}{*}{$1$} & $(m;0)$ & $\{e_{1}\pm f_{1}\}$ & \multirow{3}{*}{$0$} & \multirow{3}{*}{$0$} & \multirow{2}{*}{$0$} & \multirow{2}{*}{$0$} & $\pi((m),\{2e_{1}\},0,0,0,0)$ & \multirow{2}{*}{$0\leqslant m\in\mathbb{Z}$}\tabularnewline
\cline{3-4} \cline{9-9} 
 &  & $(0;m)$ & $\{\pm e_{1}+f_{1}\}$ &  &  &  &  & $\pi((-m),\{-2e_{1}\},0,0,0,0)$ & \tabularnewline
\cline{3-4} \cline{7-10} 
 &  & $0$ & $\O$ &  &  & $(\varepsilon_{1},1)$ & $(\kappa_{1},0)$ & $\pi(0,\O,0,0,(\varepsilon_{1}),(\kappa_{1}))$ & $(\varepsilon_{1},\kappa_{1})\neq(-1,0)$\tabularnewline
\hline 
\end{tabular}

}\smallskip{}

All $\pi$ with $\theta_{2}(\pi)\neq0$ are in the following table: 

\smallskip{}\noindent\resizebox{\textwidth}{!}{%

\begin{tabular}{c|c|c|c|c|c|c|c|c|c}
\hline 
$\zeta$ & $\xi$ & $\lambda_{d}$ & $\Psi$ & $\mu$ & $\nu$ & $\varepsilon$ & $\kappa$ & $\theta_{2}(\pi)$ & with\tabularnewline
\hline 
\hline 
\multirow{10}{*}{$1$} & \multirow{12}{*}{$1$} & \multirow{4}{*}{$(m;l)$} & \multirow{2}{*}{$\{e_{1}$$\pm f_{1}\}$} & \multirow{4}{*}{$0$} & \multirow{4}{*}{$0$} & \multirow{6}{*}{$0$} & \multirow{6}{*}{$0$} & $\pi((m,-l),\{e_{1}\pm e_{2},$ & \multirow{2}{*}{integers $m\geqslant l\geqslant0$}\tabularnewline
 &  &  &  &  &  &  &  & $2e_{1},-2e_{2}\},0,0,0,0)$ & \tabularnewline
\cline{4-4} \cline{9-10} 
 &  &  & \multirow{2}{*}{$\{\pm e_{1}$$+f_{1}\}$} &  &  &  &  & $\pi((m,-l),\{\pm e_{1}-e_{2},$ & \multirow{2}{*}{integers $l\geqslant m\geqslant0$}\tabularnewline
 &  &  &  &  &  &  &  & $2e_{1},-2e_{2}\},0,0,0,0)$ & \tabularnewline
\cline{3-6} \cline{9-10} 
 &  & \multirow{8}{*}{$0$} & \multirow{8}{*}{$\O$} & \multirow{2}{*}{$(\mu_{1})$} & \multirow{2}{*}{$(\nu_{1})$} &  &  & $\pi(0,\O,(\mu_{1}),(\nu_{1}),$ & $0\leqslant\mu_{1}\in\mathbb{Z}$, $\nu_{1}\in\mathbb{C}$\tabularnewline
 &  &  &  &  &  &  &  & $0,0)$ & $\nu_{1}=0\Rightarrow$$\mu_{1}$ is odd\tabularnewline
\cline{5-10} 
 &  &  &  & \multirow{6}{*}{$0$} & \multirow{6}{*}{$0$} & \multirow{2}{*}{$(\varepsilon_{1},\varepsilon_{2})$} & \multirow{2}{*}{$(\kappa_{1},\kappa_{2})$} & $\pi(0,\O,0,0,$ & \multirow{2}{*}{all $(\varepsilon_{i},\kappa_{i})\neq(-1,0)$}\tabularnewline
 &  &  &  &  &  &  &  & $(\varepsilon_{1},\varepsilon_{2}),(\kappa_{1},\kappa_{2}))$ & \tabularnewline
\cline{7-10} 
 &  &  &  &  &  & \multirow{4}{*}{$(\varepsilon_{1},-1)$} & \multirow{4}{*}{$(\kappa_{1},0)$} & $\pi((0),\{-2e_{1}\},$ & \tabularnewline
 &  &  &  &  &  &  &  & $0,0,(\varepsilon_{1}),(\kappa_{1}))$ & \multirow{2}{*}{$(\varepsilon_{1},\kappa_{1})\neq(1,0)$}\tabularnewline
\cline{1-1} \cline{9-9} 
\multirow{2}{*}{$-1$} &  &  &  &  &  &  &  & $\pi((0),\{2e_{1}\},$ & \tabularnewline
 &  &  &  &  &  &  &  & $0,0,(\varepsilon_{1}),(\kappa_{1}))$ & \tabularnewline
\hline 
\end{tabular}

}\smallskip{}

\section{\label{App:list}Representations of $Sp(6,\mathbb{R})$ with infinitesimal
character $(\beta,0,1)$}

In this appendix, we list all $\pi\in\mathcal{R}(Sp(6,\mathbb{R}))$
with the infinitesimal character $(\beta,0,1)$ (for any $\beta\in\mathbb{C}$)
and calculate $\mathcal{A}(\pi)$ in every case. Let $\pi=\pi(\lambda_{d},\Psi,\mu,\nu,\varepsilon,\kappa)$,
with $MA\cong Sp(2v,\mathbb{R})\times GL(2,\mathbb{R})^{s}\times GL(1,\mathbb{R})^{t}$.
As $v+2s+t=3$, $(v,s,t)$ $=$ $(3,0,0)$, $(2,0,1)$, $(1,0,2)$,
$(0,0,3)$, $(0,1,1)$, or $(1,1,0)$. Let $\sim$ denote the equivalence
up to permutations and sign-changes of coordinates. Without loss of
generality, we assume that \emph{$\beta$, $\nu_{i}$ and $\kappa_{j}$
all lie in $\{z\in\mathbb{C}\mid\mathrm{Re}(z)\geqslant0\}$}.

\subsection{$\mathbf{(v,s,t)=(3,0,0)}$}

$\lambda_{d}\sim(0,0,1)$, $(1,0,1)$ or $(\beta,0,1)$ with $2\leqslant\beta\in\mathbb{Z}$.

\smallskip{}\noindent\resizebox{\textwidth}{!}{%

\begin{tabular}{c|c|c|c}
\hline 
\multirow{2}{*}{$\lambda_{d}=\lambda_{a}$} & $\rho(\mathfrak{u}\cap\mathfrak{p})$ & \multirow{2}{*}{$\Psi$} & \multirow{2}{*}{$\mathcal{A}(\pi)$}\tabularnewline
 & $-\rho(\mathfrak{u}\cap\mathfrak{k})$ &  & \tabularnewline
\hline 
\hline 
\multirow{2}{*}{$(1,0,0)$} & \multirow{2}{*}{$(1,1,1)$} & $\{e_{1}\pm e_{2},e_{2}\pm e_{3},e_{1}\pm e_{3},2e_{1},2e_{2},-2e_{3}\}$ & $\{(2,2,1)\}$\tabularnewline
\cline{3-4} 
 &  & $\{e_{1}\pm e_{2},\pm e_{2}-e_{3},e_{1}\pm e_{3},2e_{1},2e_{2},-2e_{3}\}$ & $\{(2,1,0)\}$\tabularnewline
\hline 
\multirow{2}{*}{$(0,0,-1)$} & \multirow{2}{*}{$(-1,-1,-1)$} & $\{e_{1}\pm e_{2},\pm e_{2}-e_{3},\pm e_{1}-e_{3},2e_{1},-2e_{2},-2e_{3}\}$ & $\{(0,-1,-2)\}$\tabularnewline
\cline{3-4} 
 &  & $\{\pm e_{1}-e_{2},\pm e_{2}-e_{3},\pm e_{1}-e_{3},2e_{1},-2e_{2},2e_{3}\}$ & $\{(-1,-2,-2)\}$\tabularnewline
\hline 
\multirow{4}{*}{$(1,0,-1)$} & \multirow{4}{*}{$(\frac{1}{2},0,-\frac{1}{2})$} & $\{e_{1}\pm e_{2},\pm e_{2}-e_{3},e_{1}\pm e_{3},2e_{1},2e_{2},-2e_{3}\}$ & $\{(2,1,-1)\}$\tabularnewline
\cline{3-4} 
 &  & $\{e_{1}\pm e_{2},\pm e_{2}-e_{3},e_{1}\pm e_{3},2e_{1},-2e_{2},-2e_{3}\}$ & $\{(2,-1,-1)\}$\tabularnewline
\cline{3-4} 
 &  & $\{e_{1}\pm e_{2},\pm e_{2}-e_{3},\pm e_{1}-e_{3},2e_{1},2e_{2},-2e_{3}\}$ & $\{(1,1,-2)\}$\tabularnewline
\cline{3-4} 
 &  & $\{e_{1}\pm e_{2},\pm e_{2}-e_{3},\pm e_{1}-e_{3},2e_{1},-2e_{2},-2e_{3}\}$ & $\{(1,-1,-2)\}$\tabularnewline
\hline 
\multirow{2}{*}{$(\beta,1,0)$} & \multirow{2}{*}{$(1,2,2)$} & $\{e_{1}\pm e_{2},e_{2}\pm e_{3},e_{1}\pm e_{3},2e_{1},2e_{2},2e_{3}\}$ & $\{(\beta+1,3,3)\}$\tabularnewline
\cline{3-4} 
 &  & $\{e_{1}\pm e_{2},e_{2}\pm e_{3},e_{1}\pm e_{3},2e_{1},2e_{2},-2e_{3}\}$ & $(\beta+1,3,1)\}$\tabularnewline
\hline 
\multirow{2}{*}{$(0,-1,-\beta)$} & \multirow{2}{*}{$(-2,-2,-1)$} & $\{\pm e_{1}-e_{2},\pm e_{2}-e_{3},\pm e_{1}-e_{3},2e_{1},-2e_{2},-2e_{3}\}$ & $\{(-1,-3,-\beta-1)\}$\tabularnewline
\cline{3-4} 
 &  & $\{\pm e_{1}-e_{2},\pm e_{2}-e_{3},\pm e_{1}-e_{3},-2e_{1},-2e_{2},-2e_{3}\}$ & $\{(-3,-3,-\beta-1)$\tabularnewline
\hline 
\multirow{2}{*}{$(\beta,0,-1)$} & \multirow{2}{*}{$(1,0,0)$} & $\{e_{1}\pm e_{2},\pm e_{2}-e_{3},e_{1}\pm e_{3},2e_{1},2e_{2},-2e_{3}\}$ & $\{(\beta+1,1,-1)\}$\tabularnewline
\cline{3-4} 
 &  & $\{e_{1}\pm e_{2},\pm e_{2}-e_{3},e_{1}\pm e_{3},2e_{1},-2e_{2},-2e_{3}\}$ & $\{(\beta+1,-1,-1)\}$\tabularnewline
\hline 
\multirow{2}{*}{$(1,0,-\beta)$} & \multirow{2}{*}{$(0,0,-1)$} & $\{e_{1}\pm e_{2},\pm e_{2}-e_{3},\pm e_{1}-e_{3},2e_{1},2e_{2},-2e_{3}\}$ & $\{(1,1,-\beta-1)\}$\tabularnewline
\cline{3-4} 
 &  & $\{e_{1}\pm e_{2},\pm e_{2}-e_{3},\pm e_{1}-e_{3},2e_{1},-2e_{2},-2e_{3}\}$ & $\{(1,-1,-\beta-1)\}$\tabularnewline
\hline 
\end{tabular}

}

\subsection{$\mathbf{(v,s,t)=(2,0,1)}$}

$\varepsilon=(\varepsilon_{1})$, $\kappa=(\kappa_{1})$, and $(\lambda_{d}\mid\kappa)\sim(\beta,0,1)$. 

(F-2): $(\varepsilon_{1},\kappa_{1})\ne(-1,0)$.

(1) $\kappa\sim(\beta)$, $\lambda_{d}\sim(1,0)$, and $(\varepsilon_{1},\beta)\neq(-1,0)$.

\smallskip{}\noindent\resizebox{\textwidth}{!}{%

\begin{tabular}{c|c|c|c|c|c|c|c}
\hline 
\multirow{2}{*}{$\kappa_{1}$} & \multirow{2}{*}{$\lambda_{d}$} & \multirow{2}{*}{$\lambda_{a}$} & $\rho(\mathfrak{u}\cap\mathfrak{p})$ & \multirow{2}{*}{$\Psi$} & \multirow{2}{*}{$\varepsilon_{1}$} & \multirow{2}{*}{$\mathcal{A}(\pi)$} & \multirow{2}{*}{with}\tabularnewline
 &  &  & $-\rho(\mathfrak{u}\cap\mathfrak{k})$ &  &  &  & \tabularnewline
\hline 
\hline 
\multirow{8}{*}{$\beta$} & \multirow{4}{*}{$(1,0)$} & \multirow{4}{*}{$(1,0,0)$} & \multirow{4}{*}{$(1,1,1)$} & \multirow{2}{*}{$\{e_{1}\pm e_{2},2e_{1},2e_{2}\}$} & $1$ & $\{(2,2,2)\}$ & \tabularnewline
\cline{6-8} 
 &  &  &  &  & $-1$ & $\{(2,2,1)\}$ & $\beta\in\mathbb{C}\backslash\{0\}$\tabularnewline
\cline{5-8} 
 &  &  &  & \multirow{2}{*}{$\{e_{1}\pm e_{2},2e_{1},-2e_{2}\}$} & $1$ & $\{(2,0,0)\}$ & \tabularnewline
\cline{6-8} 
 &  &  &  &  & $-1$ & $\{(2,1,0)\}$ & $\beta\in\mathbb{C}\backslash\{0\}$\tabularnewline
\cline{2-8} 
 & \multirow{4}{*}{$(0,-1)$} & \multirow{4}{*}{$(0,0,-1)$} & \multirow{4}{*}{$(-1,-1,-1)$} & \multirow{2}{*}{$\{\pm e_{1}-e_{2},2e_{1},-2e_{2}\}$} & $1$ & $\{(0,0,-2)\}$ & \tabularnewline
\cline{6-8} 
 &  &  &  &  & $-1$ & $\{(0,-1,-2)\}$ & $\beta\in\mathbb{C}\backslash\{0\}$\tabularnewline
\cline{5-8} 
 &  &  &  & \multirow{2}{*}{$\{\pm e_{1}-e_{2},-2e_{1},-2e_{2}\}$} & $1$ & $\{(-2,-2,-2)\}$ & \tabularnewline
\cline{6-8} 
 &  &  &  &  & $-1$ & $\{(-1,-2,-2)\}$ & $\beta\in\mathbb{C}\backslash\{0\}$\tabularnewline
\hline 
\end{tabular}

}\smallskip{}

(2) $\kappa=(0)$, $\varepsilon=(1)$ by (F-2), and $\lambda_{d}\sim(\beta,1)$
with $0\leqslant\beta\in\mathbb{Z}$. When $\beta=0$ this case coincides
with the case (1), so we assume $\beta\geqslant1$.

\smallskip{}\noindent\resizebox{\textwidth}{!}{%

\begin{tabular}{c|c|c|c|c|c|c|c}
\hline 
\multirow{2}{*}{$\kappa_{1}$} & \multirow{2}{*}{$\varepsilon_{1}$} & \multirow{2}{*}{$\beta\in\mathbb{Z}$} & \multirow{2}{*}{$\lambda_{d}$} & \multirow{2}{*}{$\lambda_{a}$} & $\rho(\mathfrak{u}\cap\mathfrak{p})$ & \multirow{2}{*}{$\Psi$} & \multirow{2}{*}{$\mathcal{A}(\pi)$}\tabularnewline
 &  &  &  &  & $-\rho(\mathfrak{u}\cap\mathfrak{k})$ &  & \tabularnewline
\hline 
\hline 
\multirow{6}{*}{$0$} & \multirow{6}{*}{$1$} & \multirow{2}{*}{$1$} & \multirow{2}{*}{$(1,-1)$} & \multirow{2}{*}{$(1,0,-1)$} & \multirow{2}{*}{$(\frac{1}{2},0,-\frac{1}{2})$} & $\{e_{1}\pm e_{2},2e_{1},2e_{2}\}$ & $\{(2,0,-1)\}$\tabularnewline
\cline{7-8} 
 &  &  &  &  &  & $\{\pm e_{1}-e_{2},2e_{1},2e_{2}\}$ & $\{(1,0,-2)\}$\tabularnewline
\cline{3-8} 
 &  & \multirow{4}{*}{$\geqslant2$} & $(\beta,1)$ & $(\beta,1,0)$ & $(1,2,2)$ & $\{e_{1}\pm e_{2},2e_{1},2e_{2}\}$ & $\{(\beta+1,3,2)\}$\tabularnewline
\cline{4-8} 
 &  &  & $(-1,-\beta)$ & $(0,-1,-\beta)$ & $(-2,-2,-1)$ & $\{\pm e_{1}-e_{2},-2e_{1},-2e_{2}\}$ & $\{(-2,-3,-\beta-1)\}$\tabularnewline
\cline{4-8} 
 &  &  & $(\beta,-1)$ & $(\beta,0,-1)$ & $(1,0,0)$ & $\{e_{1}\pm e_{2},2e_{1},-2e_{2}\}$ & $\{(\beta+1,0,-1)\}$\tabularnewline
\cline{4-8} 
 &  &  & $(1,-\beta)$ & $(1,0,-\beta)$ & $(0,0,-1)$ & $\{\pm e_{1}-e_{2},2e_{1},-2e_{2}\}$ & $\{(1,0,-\beta-1)\}$\tabularnewline
\hline 
\end{tabular}

}\smallskip{}

(3) $\kappa\sim(1)$, $\lambda_{d}\sim(\beta,0)$ with $0\leqslant\beta\in\mathbb{Z}$.
When $\beta=1$ this case coincides with (1), so we assume $\beta\neq1$.

\smallskip{}\noindent\resizebox{\textwidth}{!}{%

\begin{tabular}{c|c|c|c|c|c|c|c}
\hline 
\multirow{2}{*}{$\kappa_{1}$} & \multirow{2}{*}{$\beta\in\mathbb{Z}$} & \multirow{2}{*}{$\lambda_{d}$} & \multirow{2}{*}{$\lambda_{a}$} & $\rho(\mathfrak{u}\cap\mathfrak{p})$ & \multirow{2}{*}{$\Psi$ } & \multirow{2}{*}{$\varepsilon_{1}$} & \multirow{2}{*}{$\mathcal{A}(\pi)$}\tabularnewline
 &  &  &  & $-\rho(\mathfrak{u}\cap\mathfrak{k})$ &  &  & \tabularnewline
\hline 
\hline 
\multirow{12}{*}{$1$} & \multirow{4}{*}{$0$} & \multirow{4}{*}{$(0,0)$} & \multirow{4}{*}{$(0,0,0)$} & \multirow{4}{*}{$(0,0,0)$} & \multirow{2}{*}{$\{e_{1}\pm e_{2},2e_{1},-2e_{2}\}$} & $1$ & $\{(1,0,0)\}$\tabularnewline
\cline{7-8} 
 &  &  &  &  &  & $-1$ & $\{(1,1,0)\}$\tabularnewline
\cline{6-8} 
 &  &  &  &  & \multirow{2}{*}{$\{\pm e_{1}-e_{2},2e_{1},-2e_{2}\}$} & $1$ & $\{(0,0,-1)\}$\tabularnewline
\cline{7-8} 
 &  &  &  &  &  & $-1$ & $\{(0,-1,-1)\}$\tabularnewline
\cline{2-8} 
 & \multirow{8}{*}{$\geqslant2$} & \multirow{4}{*}{$(\beta,0)$} & \multirow{4}{*}{$(\beta,0,0)$} & \multirow{4}{*}{$(1,1,1)$} & \multirow{2}{*}{$\{e_{1}\pm e_{2},2e_{1},2e_{2}\}$} & $1$ & $\{(\beta+1,2,2)\}$\tabularnewline
\cline{7-8} 
 &  &  &  &  &  & $-1$ & $\{(\beta+1,2,1)\}$\tabularnewline
\cline{6-8} 
 &  &  &  &  & \multirow{2}{*}{$\{e_{1}\pm e_{2},2e_{1},-2e_{2}\}$} & $1$ & $\{(\beta+1,0,0)\}$\tabularnewline
\cline{7-8} 
 &  &  &  &  &  & $-1$ & $\{(\beta+1,1,0)\}$\tabularnewline
\cline{3-8} 
 &  & \multirow{4}{*}{$(0,-\beta)$} & \multirow{4}{*}{$(0,0,-\beta)$} & \multirow{4}{*}{$(-1,-1,-1)$} & \multirow{2}{*}{$\{\pm e_{1}-e_{2},2e_{1},-2e_{2}\}$} & $1$ & $\{(0,0,-\beta-1)\}$\tabularnewline
\cline{7-8} 
 &  &  &  &  &  & $-1$ & $\{(0,-1,-\beta-1)\}$\tabularnewline
\cline{6-8} 
 &  &  &  &  & \multirow{2}{*}{$\{\pm e_{1}-e_{2},-2e_{1},-2e_{2}\}$} & $1$ & $\{(-2,-2,-\beta-1)\}$\tabularnewline
\cline{7-8} 
 &  &  &  &  &  & $-1$ & $\{(-1,-2,-\beta-1)\}$\tabularnewline
\hline 
\end{tabular}

}

\subsection{$\mathbf{(v,s,t)=(1,0,2)}$}

$\varepsilon=(\varepsilon_{1},\varepsilon_{2})$, $\kappa=(\kappa_{1},\kappa_{2})$,
and $(\lambda_{d}\mid(\kappa_{1},\kappa_{2}))\sim(\beta,0,1)$.

(F-2): $\kappa_{i}=0$ $\Rightarrow$ $\varepsilon_{i}=-1$; $\kappa_{1}=\pm\kappa_{2}$
$\Rightarrow$ $\varepsilon_{1}=\varepsilon_{2}$.

(1) $\lambda_{d}=(0)$. Up to permutations and sign-changes $(\kappa_{1},\kappa_{2})=(\beta,1)$.

\smallskip{}\noindent\resizebox{\textwidth}{!}{%

\begin{tabular}{c|c|c|c|c|c|c}
\hline 
\multirow{2}{*}{$(\kappa_{1},\kappa_{2})$} & \multirow{2}{*}{$\lambda_{d}$} & $\lambda_{a}+\rho(\mathfrak{u}\cap\mathfrak{p})$ & \multirow{2}{*}{$\Psi$} & \multirow{2}{*}{$(\varepsilon_{1},\varepsilon_{2})$} & \multirow{2}{*}{$\mathcal{A}(\pi)$} & \multirow{2}{*}{with}\tabularnewline
 &  & $-\rho(\mathfrak{u}\cap\mathfrak{k})$ &  &  &  & \tabularnewline
\hline 
\hline 
\multirow{6}{*}{$(\beta,1)$} & \multirow{6}{*}{$(0)$} & \multirow{6}{*}{$(0,0,0)$} & \multirow{3}{*}{$\{2e_{1}\}$} & $(1,1)$ & $\{(1,0,0)\}$ & \multirow{2}{*}{}\tabularnewline
\cline{5-6} 
 &  &  &  & $(-1,-1)$ & $\{(1,1,1)\}$ & \tabularnewline
\cline{5-7} 
 &  &  &  & $(1,-1)$ or $(-1,1)$  & $\{(1,1,0)\}$ & $\beta\in\mathbb{C}\backslash\{\pm1\}$\tabularnewline
\cline{4-7} 
 &  &  & \multirow{3}{*}{$\{-2e_{1}\}$} & $(1,1)$ & $\{(0,0,-1)\}$ & \multirow{2}{*}{}\tabularnewline
\cline{5-6} 
 &  &  &  & $(-1,-1)$ & $\{(-1,-1,-1)\}$ & \tabularnewline
\cline{5-7} 
 &  &  &  & $(1,-1)$ or $(-1,1)$  & $\{(0,-1,-1)\}$ & $\beta\in\mathbb{C}\backslash\{\pm1\}$\tabularnewline
\hline 
\end{tabular}

}\smallskip{}

(2) $\lambda_{d}\sim(1)$. Up to permutations and sign-changes $(\kappa_{1},\kappa_{2})=(\beta,0)$.
By (F-2), $\varepsilon_{2}=-1$.

\smallskip{}\noindent\resizebox{\textwidth}{!}{%

\begin{tabular}{c|c|c|c|c|c|c|c|c}
\hline 
\multirow{2}{*}{$(\kappa_{1},\kappa_{2})$} & \multirow{2}{*}{$\varepsilon_{2}$} & \multirow{2}{*}{$\lambda_{d}$} & \multirow{2}{*}{$\Psi$} & \multirow{2}{*}{$\lambda_{a}$} & $\rho(\mathfrak{u}\cap\mathfrak{p})$ & \multirow{2}{*}{$\varepsilon_{1}$} & \multirow{2}{*}{$\mathcal{A}(\pi)$} & \multirow{2}{*}{with}\tabularnewline
 &  &  &  &  & $-\rho(\mathfrak{u}\cap\mathfrak{k})$ &  &  & \tabularnewline
\hline 
\hline 
\multirow{4}{*}{$(\beta,0)$} & \multirow{4}{*}{$-1$} & \multirow{2}{*}{$(1)$} & \multirow{2}{*}{$\{2e_{1}\}$} & \multirow{2}{*}{$(1,0,0)$} & \multirow{2}{*}{$(1,1,1)$} & $1$  & $\{(2,2,1),(2,1,0)\}$ & $\beta\in\mathbb{C}\backslash\{0\}$\tabularnewline
\cline{7-9} 
 &  &  &  &  &  & $-1$ & $\{(2,1,1)\}$ & \tabularnewline
\cline{3-9} 
 &  & \multirow{2}{*}{$(-1)$} & \multirow{2}{*}{$\{-2e_{1}\}$} & \multirow{2}{*}{$(0,0,-1)$} & \multirow{2}{*}{$(-1,-1,-1)$} & $1$  & $\{(0,-1,-2),(-1,-2,-2)\}$ & $\beta\in\mathbb{C}\backslash\{0\}$\tabularnewline
\cline{7-9} 
 &  &  &  &  &  & $-1$ & $\{(-1,-1,-2)\}$ & \tabularnewline
\hline 
\end{tabular}

}\smallskip{}

(3) $\lambda_{d}\sim(\beta)$ with $0\leqslant\beta\in\mathbb{Z}$.
Up to permutations and sign-changes $(\kappa_{1},\kappa_{2})=(1,0)$.
By (F-2), $\varepsilon_{2}=-1$. When $\beta=0$ or $1$, this case
coincides with the cases (1) or (2), so we assume $\beta\geqslant2$.

\smallskip{}\noindent\resizebox{\textwidth}{!}{%

\begin{tabular}{c|c|c|c|c|c|c|c|c}
\hline 
\multirow{2}{*}{$(\kappa_{1},\kappa_{2})$} & \multirow{2}{*}{$\varepsilon_{2}$} & \multirow{2}{*}{$\beta\in\mathbb{Z}$} & \multirow{2}{*}{$\lambda_{d}$} & \multirow{2}{*}{$\Psi$} & \multirow{2}{*}{$\lambda_{a}$} & $\rho(\mathfrak{u}\cap\mathfrak{p})$ & \multirow{2}{*}{$\varepsilon_{1}$} & \multirow{2}{*}{$\mathcal{A}(\pi)$}\tabularnewline
 &  &  &  &  &  & $-\rho(\mathfrak{u}\cap\mathfrak{k})$ &  & \tabularnewline
\hline 
\hline 
\multirow{4}{*}{$(1,0)$} & \multirow{4}{*}{$-1$} & \multirow{4}{*}{$\geqslant2$} & \multirow{2}{*}{$(\beta)$} & \multirow{2}{*}{$\{2e_{1}\}$} & \multirow{2}{*}{$(\beta,0,0)$} & \multirow{2}{*}{$(1,1,1)$} & $1$ & $\{(\beta+1,2,1),(\beta+1,1,0)\}$\tabularnewline
\cline{8-9} 
 &  &  &  &  &  &  & $-1$ & $\{(\beta+1,1,1)\}$\tabularnewline
\cline{4-9} 
 &  &  & \multirow{2}{*}{$(-\beta)$} & \multirow{2}{*}{$\{-2e_{1}\}$} & \multirow{2}{*}{$(0,0,-\beta)$} & \multirow{2}{*}{$(-1,-1,-1)$} & $1$ & $\{(0,-1,-\beta-1),(-1,-2,-\beta-1)\}$\tabularnewline
\cline{8-9} 
 &  &  &  &  &  &  & $-1$ & $\{(-1,-1,-\beta-1)\}$\tabularnewline
\hline 
\end{tabular}

}

\subsection{$\mathbf{(v,s,t)=(0,0,3)}$}

$\varepsilon=(\varepsilon_{1},\varepsilon_{2},\varepsilon_{3})$,
and $\kappa=(\kappa_{1},\kappa_{2},\kappa_{3})\sim(\beta,0,1)$. 

(F-2): $\kappa_{i}=0$ $\Rightarrow$ $\varepsilon_{i}=1$; $\kappa_{i}=\pm\kappa_{j}$
$\Rightarrow$ $\varepsilon_{i}=\varepsilon_{j}$.

Up to permutations and sign-changes $(\kappa_{1},\kappa_{2},\kappa_{3})=(\beta,1,0)$.
By (F-2), $\varepsilon_{3}=1$.

\smallskip{}

\begin{center}
\begin{tabular}{c|c|c|c|c|c}
\hline 
\multirow{2}{*}{$(\kappa_{1},\kappa_{2},\kappa_{3})$} & \multirow{2}{*}{$\varepsilon_{3}$} & $\lambda_{a}+\rho(\mathfrak{u}\cap\mathfrak{p})$ & \multirow{2}{*}{$(\varepsilon_{1},\varepsilon_{2})$} & \multirow{2}{*}{$\mathcal{A}(\pi)$} & \multirow{2}{*}{with}\tabularnewline
 &  & $-\rho(\mathfrak{u}\cap\mathfrak{k})$ &  &  & \tabularnewline
\hline 
\hline 
\multirow{4}{*}{$(\beta,1,0)$} & \multirow{4}{*}{$1$} & \multirow{4}{*}{$(0,0,0)$} & $(1,1)$ & $\{(0,0,0)\}$ & \tabularnewline
\cline{4-6} 
 &  &  & $(-1,-1)$  & $\{(1,1,0),(0,-1,-1)\}$ & $\beta\in\mathbb{C}\backslash\{0\}$\tabularnewline
\cline{4-6} 
 &  &  & $(1,-1)$ & \multirow{2}{*}{$\{(1,0,0),(0,0,-1)\}$} &  $\beta\in\mathbb{C}\backslash\{\pm1\}$ \tabularnewline
\cline{4-4} \cline{6-6} 
 &  &  & $(-1,1)$  &  & $\beta\in\mathbb{C}\backslash\{0,\pm1\}$\tabularnewline
\hline 
\end{tabular}
\par\end{center}


\subsection{$\mathbf{(v,s,t)=(0,1,1)}$}

$\mu=(\mu_{1})$, $\nu=(\nu_{1})$, $\varepsilon=(\varepsilon_{1})$,
$\kappa=(\kappa_{1})$, and
\[
(\frac{\mu_{1}+\nu_{1}}{2},\frac{-\mu_{1}+\nu_{1}}{2},\kappa_{1})\sim(\beta,0,1)
\]

(F-2): $\nu_{1}=0$ $\Rightarrow$ $\mu_{1}$ is odd; $\kappa_{1}=0$
$\Rightarrow$ $\varepsilon_{1}=1$.

(1) $\kappa_{1}=\beta$, and $(\frac{\mu_{1}+\nu_{1}}{2},\frac{-\mu_{1}+\nu_{1}}{2})\sim(0,1)$.
Then $\mu_{1}=1$, $\nu_{1}=1$.

\smallskip{}

\begin{center}
\begin{tabular}{c|c|c|c|c|c|c|c}
\hline 
\multirow{2}{*}{$\mu_{1}$} & \multirow{2}{*}{$\nu_{1}$} & \multirow{2}{*}{$\lambda_{a}$} & $\rho(\mathfrak{u}\cap\mathfrak{p})$ & \multirow{2}{*}{$\kappa_{1}$} & \multirow{2}{*}{$\varepsilon_{1}$} & \multirow{2}{*}{$\mathcal{A}(\pi)$} & \multirow{2}{*}{with}\tabularnewline
 &  &  & $-\rho(\mathfrak{u}\cap\mathfrak{k})$ &  &  &  & \tabularnewline
\hline 
\hline 
\multirow{2}{*}{$1$} & \multirow{2}{*}{$1$} & \multirow{2}{*}{$(\frac{1}{2},0,-\frac{1}{2})$} & \multirow{2}{*}{$(\frac{1}{2},0,-\frac{1}{2})$} & \multirow{2}{*}{$\beta$} & $1$ & $\{(1,0,-1)\}$ & $0\leqslant\beta\in\mathbb{Z}$\tabularnewline
\cline{6-8} 
 &  &  &  &  & $-1$ & $\{(1,1,-1),(1,-1,-1)\}$ & $1\leqslant\beta\in\mathbb{Z}$\tabularnewline
\hline 
\end{tabular}
\par\end{center}

\smallskip{}

(2) $\kappa_{1}=1$, and $(\frac{\mu_{1}+\nu_{1}}{2},\frac{-\mu_{1}+\nu_{1}}{2})\sim(\beta,0)$.
Then $0\leqslant\mu_{1}=\beta=\nu_{1}$ with $0\leqslant\beta\in\mathbb{Z}$,
and $\beta\neq0$ by (F-2). When $\beta=1$ this case coincides with
the case (1), so we assume $\beta\geqslant2$.

\smallskip{}\noindent\resizebox{\textwidth}{!}{%

\begin{tabular}{c|c|c|c|c|c|c}
\hline 
\multirow{2}{*}{$\kappa_{1}$} & $\mu_{1}=\nu_{1}$  & \multirow{2}{*}{$\lambda_{a}$} & $\rho(\mathfrak{u}\cap\mathfrak{p})$ & \multirow{2}{*}{$\varepsilon_{1}$} & \multirow{2}{*}{$\mathcal{A}(\pi)$} & \multirow{2}{*}{with}\tabularnewline
 & $\ \ \ =\beta$ &  & $-\rho(\mathfrak{u}\cap\mathfrak{k})$ &  &  & \tabularnewline
\hline 
\hline 
\multirow{6}{*}{$1$} & \multirow{3}{*}{$2m+1$} & \multirow{3}{*}{$(m+\frac{1}{2},0,-m-\frac{1}{2})$} & \multirow{3}{*}{$(\frac{1}{2},0,-\frac{1}{2})$} & $1$ & $\{(m+1,0,-m-1)\}$ & \multirow{6}{*}{$1\leqslant m\in\mathbb{Z}$}\tabularnewline
\cline{5-6} 
 &  &  &  & \multirow{2}{*}{$-1$} & $\{(m+1,1,-m-1),$ & \tabularnewline
 &  &  &  &  & $\ \ (m+1,-1,-m-1)\}$ & \tabularnewline
\cline{2-6} 
 & \multirow{3}{*}{$2m$} & \multirow{3}{*}{$(m,0,-m)$} & \multirow{3}{*}{$(\frac{1}{2},0,-\frac{1}{2})$} & $1$ & $\{(m+1,0,-m),(m,0,-m-1)\}$ & \tabularnewline
\cline{5-6} 
 &  &  &  & \multirow{2}{*}{$-1$} & $\{(m+1,1,-m),(m+1,-1,-m),$ & \tabularnewline
 &  &  &  &  & $\ \ (m,1,-m-1),(m,-1,-m-1)\}$ & \tabularnewline
\hline 
\end{tabular}

}\smallskip{}

(3) $\kappa_{1}=0$, and $(\frac{\mu_{1}+\nu_{1}}{2},\frac{-\mu_{1}+\nu_{1}}{2})\sim(\beta,1)$.
By (F-2), $\varepsilon_{1}=1$. As $\mu_{1}=|\beta\pm1|\in\mathbb{Z}$,
$0\leqslant\beta\in\mathbb{Z}$. When $\beta=0$ this case coincides
with the case (1), so we assume $\beta\geqslant1$. Notice that in
this case $\mu_{1}$ and $\nu_{1}$ have the same parity, so $\nu_{1}\neq0$
by (F-2).

\smallskip{}\noindent\resizebox{\textwidth}{!}{%

\begin{tabular}{c|c|c|c|c|c|c|c|c}
\hline 
\multirow{2}{*}{$\kappa_{1}$} & \multirow{2}{*}{$\varepsilon_{1}$} & \multirow{2}{*}{$\mu_{1}$} & \multirow{2}{*}{$\nu_{1}$} & \multirow{2}{*}{$\lambda_{a}$} & \multirow{2}{*}{$\beta$} & $\rho(\mathfrak{u}\cap\mathfrak{p})$ & \multirow{2}{*}{$\mathcal{A}(\pi)$} & \multirow{2}{*}{with}\tabularnewline
 &  &  &  &  &  & $-\rho(\mathfrak{u}\cap\mathfrak{k})$ &  & \tabularnewline
\hline 
\hline 
\multirow{7}{*}{$0$} & \multirow{7}{*}{$1$} & \multirow{4}{*}{$\beta-1$} & \multirow{4}{*}{$\beta+1$} & \multirow{4}{*}{$(\frac{\beta-1}{2},0,-\frac{\beta-1}{2})$} & $1$ & $(0,0,0)$ & $\{(1,0,0),(0,0,-1)\}$ & \tabularnewline
\cline{6-9} 
 &  &  &  &  & $2m$ & \multirow{6}{*}{$(\frac{1}{2},0,-\frac{1}{2})$} & $\{(m,0,-m)\}$ & \multirow{6}{*}{$1\leqslant m\in\mathbb{Z}$}\tabularnewline
\cline{6-6} \cline{8-8} 
 &  &  &  &  & \multirow{2}{*}{$2m+1$} &  & $\{(m+1,0,-m),$ & \tabularnewline
 &  &  &  &  &  &  & $\ \ (m,0,-m-1)\}$ & \tabularnewline
\cline{3-6} \cline{8-8} 
 &  & \multirow{3}{*}{$\beta+1$} & \multirow{3}{*}{$\beta-1$} & \multirow{3}{*}{$(\frac{\beta+1}{2},0,-\frac{\beta+1}{2})$} & $2m$ &  & $\{(m+1,0,-m-1)\}$ & \tabularnewline
\cline{6-6} \cline{8-8} 
 &  &  &  &  & \multirow{2}{*}{$2m+1$} &  & $\{(m+2,0,-m-1),$ & \tabularnewline
 &  &  &  &  &  &  & $\ \ (m+1,0,-m-2)\}$ & \tabularnewline
\hline 
\end{tabular}

}

\subsection{$\mathbf{(v,s,t)=(1,1,0)}$}

$\mu=(\mu_{1})$, $\nu=(\nu_{1})$, and $(\lambda_{d}\mid(\frac{\mu_{1}+\nu_{1}}{2},\frac{-\mu_{1}+\nu_{1}}{2}))\sim(\beta,0,1)$. 

(F-2): $\nu_{1}=0$ $\Rightarrow$ $\mu_{1}$ is odd.

(1) $\lambda_{d}=(0)$, and $(\frac{\mu_{1}+\nu_{1}}{2},\frac{-\mu_{1}+\nu_{1}}{2})\sim(\beta,1)$.
Then $\mu_{1}=|\beta\pm1|\in\mathbb{Z}$, and $0\leqslant\beta\in\mathbb{Z}$.
In this case $\mu_{1}$ and $\nu_{1}$ are integers with the same
parity, so $\nu_{1}\neq0$ by (F-2).

\smallskip{}\noindent\resizebox{\textwidth}{!}{%

\begin{tabular}{c|c|c|c|c|c|c|c|c}
\hline 
\multirow{2}{*}{$\lambda_{d}$} & \multirow{2}{*}{$\beta$ } & \multirow{2}{*}{$\mu_{1}$} & \multirow{2}{*}{$\nu_{1}$} & \multirow{2}{*}{$\lambda_{a}$} & $\rho(\mathfrak{u}\cap\mathfrak{p})$ & \multirow{2}{*}{$\Psi$} & \multirow{2}{*}{$\mathcal{A}(\pi)$} & \multirow{2}{*}{with}\tabularnewline
 &  &  &  &  & $-\rho(\mathfrak{u}\cap\mathfrak{k})$ &  &  & \tabularnewline
\hline 
\hline 
\multirow{16}{*}{$(0)$} & \multirow{2}{*}{$0$} & \multirow{2}{*}{$1$} & \multirow{2}{*}{$1$} & \multirow{2}{*}{$(\frac{1}{2},0,-\frac{1}{2})$} & \multirow{2}{*}{$(\frac{1}{2},0,-\frac{1}{2})$} & $\{2e_{1}\}$ & $\{(1,1,-1)\}$ & \multirow{4}{*}{}\tabularnewline
\cline{7-8} 
 &  &  &  &  &  & $\{-2e_{1}\}$ & $\{(1,-1,-1)\}$ & \tabularnewline
\cline{2-8} 
 & \multirow{2}{*}{$1$} & \multirow{2}{*}{$0$} & \multirow{2}{*}{$2$} & \multirow{2}{*}{$(0,0,0)$} & \multirow{2}{*}{$(0,0,0)$} & $\{2e_{1}\}$ & $\{(1,1,0)\}$ & \tabularnewline
\cline{7-8} 
 &  &  &  &  &  & $\{-2e_{1}\}$ & $\{(0,-1,-1)\}$ & \tabularnewline
\cline{2-9} 
 & \multirow{2}{*}{$2m$} & \multirow{6}{*}{$\beta-1$} & \multirow{6}{*}{$\beta+1$} &  & \multirow{12}{*}{$(\frac{1}{2},0,-\frac{1}{2})$} & $\{2e_{1}\}$ & $\{(m,1,-m)\}$ & \multirow{12}{*}{$1\leqslant m\in\mathbb{Z}$}\tabularnewline
\cline{7-8} 
 &  &  &  & \multirow{2}{*}{$(\frac{\beta-1}{2},0,$} &  & $\{-2e_{1}\}$ & $\{(m,-1,-m)\}$ & \tabularnewline
\cline{2-2} \cline{7-8} 
 & \multirow{4}{*}{$2m+1$} &  &  &  &  & \multirow{2}{*}{$\{2e_{1}\}$} & $\{(m+1,1,-m),$ & \tabularnewline
 &  &  &  & \multirow{2}{*}{$-\frac{\beta-1}{2})$} &  &  & $\ \ (m,1,-m-1)\}$ & \tabularnewline
\cline{7-8} 
 &  &  &  &  &  & \multirow{2}{*}{$\{-2e_{1}\}$} & $\{(m+1,-1,-m),$ & \tabularnewline
 &  &  &  &  &  &  & $\ \ (m,-1,-m-1)\}$ & \tabularnewline
\cline{2-5} \cline{7-8} 
 & \multirow{2}{*}{$2m$} & \multirow{6}{*}{$\beta+1$} & \multirow{6}{*}{$\beta-1$} &  &  & $\{2e_{1}\}$ & $\{(m+1,1,-m-1)\}$ & \tabularnewline
\cline{7-8} 
 &  &  &  & \multirow{2}{*}{$(\frac{\beta+1}{2},0,$} &  & $\{-2e_{1}\}$ & $\{(m+1,-1,-m-1)\}$ & \tabularnewline
\cline{2-2} \cline{7-8} 
 & \multirow{4}{*}{$2m+1$} &  &  &  &  & \multirow{2}{*}{$\{2e_{1}\}$} & $\{(m+2,1,-m-1),$ & \tabularnewline
 &  &  &  & \multirow{2}{*}{$-\frac{\beta+1}{2})$} &  &  & $\ \ (m+1,1,-m-2)\}$ & \tabularnewline
\cline{7-8} 
 &  &  &  &  &  & \multirow{2}{*}{$\{-2e_{1}\}$} & $\{(m+2,-1,-m-1),$ & \tabularnewline
 &  &  &  &  &  &  & $\ \ (m+1,-1,-m-2)\}$ & \tabularnewline
\hline 
\end{tabular}

}\smallskip{} 

(2) $\lambda_{d}\sim(1)$, and $(\frac{\mu_{1}+\nu_{1}}{2},\frac{-\mu_{1}+\nu_{1}}{2})\sim(\beta,0)$.
Then $0\leqslant\beta=\mu_{1}=\nu_{1}\in\mathbb{Z}$, and $\beta\neq0$
by (F-2).

\smallskip{}\noindent\resizebox{\textwidth}{!}{%

\begin{tabular}{c|c|c|c|c|c|c}
\hline 
$\mu_{1}=\nu_{1}$  & \multirow{2}{*}{$\lambda_{d}$} & \multirow{2}{*}{$\Psi$} & \multirow{2}{*}{$\lambda_{a}$} & $\rho(\mathfrak{u}\cap\mathfrak{p})$ & \multirow{2}{*}{$\mathcal{A}(\pi)$} & \multirow{2}{*}{with}\tabularnewline
$\ \ \ =\beta$ &  &  &  & $-\rho(\mathfrak{u}\cap\mathfrak{k})$ &  & \tabularnewline
\hline 
\hline 
\multirow{2}{*}{$1$} & $(1)$ & $\{2e_{1}\}$ & $(1,\frac{1}{2},-\frac{1}{2})$ & $(1,\frac{3}{2},\frac{1}{2})$ & $\{(2,2,0)\}$ & \multirow{4}{*}{}\tabularnewline
\cline{2-6} 
 & $(-1)$ & $\{-2e_{1}\}$ & $(\frac{1}{2},-\frac{1}{2},-1)$ & $(-\frac{1}{2},-\frac{3}{2},-1)$ & $\{(0,-2,-2)\}$ & \tabularnewline
\cline{1-6} 
\multirow{2}{*}{$2$} & $(1)$ & $\{2e_{1}\}$ & $(1,1,-1)$ & $(1,1,0)$ & $\{(2,2,-1)\}$ & \tabularnewline
\cline{2-6} 
 & $(-1)$ & $\{-2e_{1}\}$ & $(1,-1,-1)$ & $(0,-1,-1)$ & $\{(1,-2,-2)\}$ & \tabularnewline
\hline 
\multirow{2}{*}{$2m+1$} & $(1)$ & $\{2e_{1}\}$ & $(m+\frac{1}{2},1,-m-\frac{1}{2})$ & $(\frac{1}{2},1,-\frac{1}{2})$ & $\{(m+1,2,-m-1)\}$ & \multirow{2}{*}{$1\leqslant m\in\mathbb{Z}$}\tabularnewline
\cline{2-6} 
 & $(-1)$ & $\{-2e_{1}\}$ & $(m+\frac{1}{2},-1,-m-\frac{1}{2})$ & $(\frac{1}{2},-1,-\frac{1}{2})$ & $\{(m+1,-2,-m-1)\}$ & \tabularnewline
\hline 
\multirow{4}{*}{$2m$} & \multirow{2}{*}{$(1)$} & \multirow{2}{*}{$\{2e_{1}\}$} & \multirow{2}{*}{$(m,1,-m)$} & \multirow{2}{*}{$(\frac{1}{2},1,-\frac{1}{2})$} & $\{(m+1,2,-m),$ & \multirow{4}{*}{$2\leqslant m\in\mathbb{Z}$}\tabularnewline
 &  &  &  &  & $\ \ (m,2,-m-1)\}$ & \tabularnewline
\cline{2-6} 
 & \multirow{2}{*}{$(-1)$} & \multirow{2}{*}{$\{-2e_{1}\}$} & \multirow{2}{*}{$(m,-1,-m)$} & \multirow{2}{*}{$(\frac{1}{2},-1,-\frac{1}{2})$} & $\{(m+1,-2,-m),$ & \tabularnewline
 &  &  &  &  & $\ \ (m,-2,-m-1)\}$ & \tabularnewline
\hline 
\end{tabular}

}\smallskip{}

(3) $\lambda_{d}\sim(\beta)$ with $0\leqslant\beta\in\mathbb{Z}$,
and $(\frac{\mu_{1}+\nu_{1}}{2},\frac{-\mu_{1}+\nu_{1}}{2})\sim(0,1)$.
So $\mu_{1}=\nu_{1}=1$. When $\beta=0$ or $1$, this case coincides
with the cases (1) or (2), so we assume $\beta\geqslant2$.

\smallskip{}\noindent\resizebox{\textwidth}{!}{%

\begin{tabular}{c|c|c|c|c|c|c|c}
\hline 
\multirow{2}{*}{$\lambda_{d}$} & \multirow{2}{*}{$\Psi$} & \multirow{2}{*}{$\mu_{1}$} & \multirow{2}{*}{$\nu_{1}$} & \multirow{2}{*}{$\lambda_{a}$} & $\rho(\mathfrak{u}\cap\mathfrak{p})$ & \multirow{2}{*}{$\mathcal{A}(\pi)$} & \multirow{2}{*}{with}\tabularnewline
 &  &  &  &  & $-\rho(\mathfrak{u}\cap\mathfrak{k})$ &  & \tabularnewline
\hline 
\hline 
$(\beta)$ & $\{2e_{1}\}$ & \multirow{2}{*}{$1$} & \multirow{2}{*}{$1$} & $(\beta,\frac{1}{2},-\frac{1}{2})$ & $(1,\frac{3}{2},\frac{1}{2})$ & $\{(\beta+1,2,0)\}$ & \multirow{2}{*}{$2\leqslant\beta\in\mathbb{Z}$}\tabularnewline
\cline{1-2} \cline{5-7} 
$(-\beta)$ & $\{-2e_{1}\}$ &  &  & $(\frac{1}{2},-\frac{1}{2},-\beta)$ & $(-\frac{1}{2},-\frac{3}{2},-1)$ & $\{(0,-2,-\beta-1)\}$ & \tabularnewline
\hline 
\end{tabular}

}\smallskip{}

\bibliographystyle{amsalpha}
\bibliography{bib}

\providecommand{\bysame}{\leavevmode\hbox to3em{\hrulefill}\thinspace}
\providecommand{\MR}{\relax\ifhmode\unskip\space\fi MR }
\providecommand{\MRhref}[2]{%
  \href{http://www.ams.org/mathscinet-getitem?mr=#1}{#2}
}
\providecommand{\href}[2]{#2}
\begin{thebibliography}{LPTZ03}

\bibitem[AB95]{AdamsBarbasch1995reductive}
Jeffrey Adams and Dan Barbasch, \emph{Reductive dual pair correspondence for
  complex groups}, J. Funct. Anal. \textbf{132} (1995), no.~1, 1--42.
  \MR{1346217 (96h:22003)}

\bibitem[AB98]{AdamsBarbasch1998genuine}
\bysame, \emph{Genuine representations of the metaplectic group}, Compositio
  Math. \textbf{113} (1998), no.~1, 23--66. \MR{1638210 (99h:22013)}

\bibitem[EHW83]{EnrightHoweWallach1983classification}
Thomas Enright, Roger Howe, and Nolan Wallach, \emph{A classification of
  unitary highest weight modules}, Representation theory of reductive groups
  ({P}ark {C}ity, {U}tah, 1982), Progr. Math., vol.~40, Birkh\"auser Boston,
  Boston, MA, 1983, pp.~97--143. \MR{733809 (86c:22028)}

\bibitem[How87]{Howe1987small}
Roger Howe, \emph{Small unitary representations of classical groups}, Group
  representations, ergodic theory, operator algebras, and mathematical physics
  ({B}erkeley, {C}alif., 1984), Math. Sci. Res. Inst. Publ., vol.~6, Springer,
  New York, 1987, pp.~121--150. \MR{880374 (89e:22021)}

\bibitem[How89]{Howe1989transcending}
\bysame, \emph{Transcending classical invariant theory}, J. Amer. Math. Soc.
  \textbf{2} (1989), no.~3, 535--552. \MR{985172 (90k:22016)}

\bibitem[KR05]{KudlaRallis2005first}
Stephen~S. Kudla and Stephen Rallis, \emph{On first occurrence in the local
  theta correspondence}, Automorphic representations, {$L$}-functions and
  applications: progress and prospects, Ohio State Univ. Math. Res. Inst.
  Publ., vol.~11, de Gruyter, Berlin, 2005, pp.~273--308. \MR{2192827
  (2007d:22028)}

\bibitem[Kud86]{Kudla1986local}
Stephen~S. Kudla, \emph{On the local theta-correspondence}, Invent. Math.
  \textbf{83} (1986), no.~2, 229--255. \MR{818351 (87e:22037)}

\bibitem[Kud94]{Kudla1994splitting}
\bysame, \emph{Splitting metaplectic covers of dual reductive pairs}, Israel J.
  Math. \textbf{87} (1994), no.~1-3, 361--401. \MR{1286835 (95h:22019)}

\bibitem[Li89]{Li1989singular}
Jian-Shu Li, \emph{Singular unitary representations of classical groups},
  Invent. Math. \textbf{97} (1989), no.~2, 237--255. \MR{1001840 (90h:22021)}

\bibitem[LPTZ03]{LiPaulTanZhu2003explicit}
Jian-Shu Li, Annegret Paul, Eng-Chye Tan, and Chen-Bo Zhu, \emph{The explicit
  duality correspondence of {$(Sp(p,q), O^\ast(2n))$}}, J. Funct. Anal.
  \textbf{200} (2003), no.~1, 71--100. \MR{1974089 (2004c:22018)}

\bibitem[LTZ01]{LiTanZhu2003tensor}
Jian-Shu Li, Eng-Chye Tan, and Chen-Bo Zhu, \emph{Tensor product of degenerate
  principal series and local theta correspondence}, J. Funct. Anal.
  \textbf{186} (2001), no.~2, 381--431. \MR{1864829 (2002j:22015)}

\bibitem[LV80]{LionVergne1980theweil}
G{\'e}rard Lion and Mich{\`e}le Vergne, \emph{{The Weil representation, Maslov
  index and Theta series}}, Progress in Mathematics, vol.~6, Birkh\"auser,
  Boston, Mass., 1980. \MR{573448 (81j:58075)}

\bibitem[M{\oe}g89]{Moeglin1989correspondance}
Colette M{\oe}glin, \emph{Correspondance de {H}owe pour les paires reductives
  duales: quelques calculs dans le cas archim\'edien}, J. Funct. Anal.
  \textbf{85} (1989), no.~1, 1--85. \MR{1005856 (91b:22021)}

\bibitem[Pau98]{Paul1998howe}
Annegret Paul, \emph{Howe correspondence for real unitary groups}, J. Funct.
  Anal. \textbf{159} (1998), no.~2, 384--431. \MR{1658091 (2000m:22016)}

\bibitem[Pau05]{Paul2005howe}
\bysame, \emph{On the {H}owe correspondence for symplectic-orthogonal dual
  pairs}, J. Funct. Anal. \textbf{228} (2005), no.~2, 270--310. \MR{2175409
  (2006g:20076)}

\bibitem[PP08]{ProtsakPrzebinda2008occurrence}
Victor Protsak and Tomasz Przebinda, \emph{On the occurrence of admissible
  representations in the real {H}owe correspondence in stable range},
  Manuscripta Math. \textbf{126} (2008), no.~2, 135--141. \MR{2403182
  (2008m:22022)}

\bibitem[Prz96]{Przebinda1996duality}
Tomasz Przebinda, \emph{The duality correspondence of infinitesimal
  characters}, Colloq. Math. \textbf{70} (1996), no.~1, 93--102. \MR{1373285
  (96m:22034)}

\bibitem[SZ15]{SunZhu2012conservation}
Binyong Sun and Chen-Bo Zhu, \emph{Conservation relations for local theta
  correspondence}, J. Amer. Math. Soc. \textbf{28} (2015), no.~4, 939--983.
  \MR{3369906}

\bibitem[Vog79]{Vogan1979algebraic}
David~A. Vogan, Jr., \emph{The algebraic structure of the representation of
  semisimple {L}ie groups. {I}}, Ann. of Math. (2) \textbf{109} (1979), no.~1,
  1--60. \MR{519352 (81j:22020)}

\bibitem[Vog81]{Vogan1981representations}
\bysame, \emph{Representations of real reductive {L}ie groups}, Progress in
  Mathematics, vol.~15, Birkh\"auser, Boston, Mass., 1981. \MR{632407
  (83c:22022)}

\bibitem[Vog84]{Vogan1984unitarizability}
\bysame, \emph{Unitarizability of certain series of representations}, Ann. of
  Math. (2) \textbf{120} (1984), no.~1, 141--187. \MR{750719 (86h:22028)}

\bibitem[Wey39]{Weyl1997classical}
Hermann Weyl, \emph{The classical groups: their invariants and
  representations}, Princeton University Press, Princeton, N.J., 1939.
  \MR{0000255 (1,42c)}

\end{thebibliography}

\end{document}